\newcolumntype{P}[1]{>{\centering\arraybackslash}p{#1}}
\newcolumntype{M}[1]{>{\centering\arraybackslash}m{#1}}
\newcommand{\subscript}[2]{$#1 _ #2$}
\renewcommand*\env@matrix[1][*\c@MaxMatrixCols c]{%
  \hskip -\arraycolsep
  \let\@ifnextchar\new@ifnextchar
  \array{#1}}
\newcommand\fref[1]{Fig~\ref{#1}}
\newcommand\Fref[1]{Figure~\ref{#1}}
\newcommand\eqdef{\stackrel{{\normalfont\mbox{\footnotesize{def}}}}{=}}
\newtheorem{theorem}{Theorem}
\newtheorem{lemma}[theorem]{Lemma}
\theoremstyle{definition}
\newtheorem{definition}{Definition}[section]
\newtheorem{remark}{Remark}[section]
\begin{document}

\begin{frontmatter}

\begin{fmbox}
\dochead{Research}


\title{Auditory streaming emerges from fast excitation and slow delayed inhibition}


\author[
  addressref={aff1},                   
  corref={aff1},                       
  email={A.A.Ferrario@exeter.co.uk}   
]{\inits{A.}\fnm{Andrea} \snm{Ferrario}}
\author[
  addressref={aff1},
  email={J.A.Rankin@exeter.ac.uk}
]{\inits{J.}\fnm{James} \snm{Rankin}}


\address[id=aff1]{
  \orgdiv{Department of Mathematics, College of Engineering, Mathematics \& Physical Sciences},             
  \orgname{University of Exeter},          
  \city{Exeter},                              
  \cny{UK}                                    
}



\end{fmbox}


\begin{abstractbox}
\begin{abstract} 
In the auditory streaming paradigm alternating sequences of pure tones can be perceived as a single galloping rhythm (integration) or as two sequences with separated low and high tones (segregation). Although studied for decades, the neural mechanisms underlining this perceptual grouping of sound remains a mystery. With the aim of identifying a plausible minimal neural circuit that captures this phenomenon, we propose a firing rate model with two periodically forced neural populations coupled by fast direct excitation and slow delayed inhibition. By analyzing the model in a non-smooth, slow-fast regime we analytically prove the existence of a rich repertoire of dynamical states and of their parameter dependent transitions. We impose plausible parameter restrictions and link all states with perceptual interpretations. Regions of stimulus parameters occupied by states linked with each percept matches those found in behavioral experiments. Our model suggests that slow inhibition masks the perception of subsequent tones during segregation (forward masking), while fast excitation enables integration for large pitch differences between the two tones. 
\end{abstract}


\begin{keyword}
\kwd{auditory streaming}
\kwd{slow delayed inhibition}
\kwd{fast excitation}
\end{keyword}


\end{abstractbox}
%

\end{frontmatter}



\section{Introduction}
Understanding how our perceptual system encodes multiple objects
simultaneously is an open challenge in sensory neuroscience. In a
busy room we can separate out a  voice of interest from other
voices and ambient sound (\emph{cocktail party problem})
\cite{cherry1953some,bizley2013and}. Theories of feature discrimination developed with mathematical models are based on evidence that different neurons respond to different stimulus features (e.g. visual orientation \cite{hubel1962receptive,Ben1995,Bressloff2001,rankin2017neural}). Primary auditory cortex (ACx) has a topographic map of sound frequency (tonotopy):  a gradient of locations  preferentially responding to frequencies from low to high \cite{romani1982tonotopic,da2011human}. However, feature separation alone cannot account for the auditory system segregating objects  overlapping or interleaved in time (e.g. melodies, voices). Understanding the role of temporal neural mechanisms in perceptual segregation presents an interesting modelling challenge where the same neural populations represent different percepts through temporal encoding.

\subsection{Auditory streaming and auditory cortex} In the auditory system sequences of sounds (streams) that are close in feature space (e.g. frequency) and interleaved in time lead to multiple perceptual interpretations. The so-called auditory streaming paradigm \cite{van1975temporal,bizley2013and} consists of interleaved sequences of tones A and B, separated by a difference in tone frequency (called $df$) and repeating in an ABABAB\ldots pattern (\Fref{fig:figure1}A).  This can be perceived as one integrated stream with an alternating rhythm (Integrated in \Fref{fig:figure1}B) or as two segregated streams (Segregated in \Fref{fig:figure2}B). When $df$ is small we hear integrated and when df is large we hear segregated, but at an intermediate range, which also depends on presentation rate $P\!R$, both percepts are possible (\Fref{fig:figure1}C). In this region of $(\text{df},{P\!R})$ parameter space bistability occurs, where perception switches between integrated and segregated every 2--15\,s~\cite{pressnitzer2008perceptual}. The curve separating integration and bistability is called the fission boundary, while the curve separating bistability and segregation is called coherence boundary \cite{van1975temporal} (\Fref{fig:figure1}C).

\begin{figure}[htbp]
  \centering
  \includegraphics[width=0.9\linewidth]{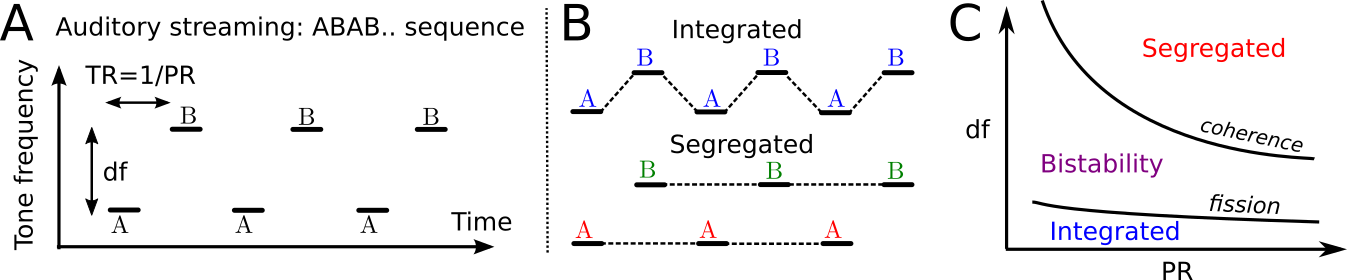}
  \caption{The auditory streaming paradigm (A) Auditory stimuli consist of sequences of interleaved higher pitch A and lower pitch B pure tones with duration $T\!D$, pitch difference $df$ and time difference between tone onsets $T\!R$ (the repetition time; $P\!R\!=\!1/T\!R$ is the repetition rate). (B) The stimulus may be perceived as either an integrated ABAB stream or as two separate streams A-A- and -B-B. (C) Sketch of the perceptual regions when varying $P\!R$ and $d\!f$ (van Noorden diagram), redrawn after \cite{van1975temporal}. Bistability corresponds to the perception of temporal switches between integration and segregation. The curves in the $(P\!R,df)$ space separating integration from bistability and bistability from segregation are called fission and coherence boundaries.}
  \label{fig:figure1}
\end{figure}

\Fref{fig:figure2}A shows our proposal for the encoding of auditory streaming. We follow the hypothesis proposed by \cite{musacchia2014thalamocortical}, where primary and secondary ACx encode respectively perception of the rhythm and the pitch. In our proposed framework the processing of auditory stimuli occurs firstly in primary ACx, which projects to secondary ACx. The various rhythms occurring in the auditory streaming paradigm arise via threshold-crossing detection in the activity of neural populations in secondary ACx. The process underlying bistability is likely resolved downstream of early auditory cortex \cite{rankin2019computational} and will not be addressed in this study.

\begin{figure}[htbp]
  \centering
  \includegraphics[width=0.9\linewidth]{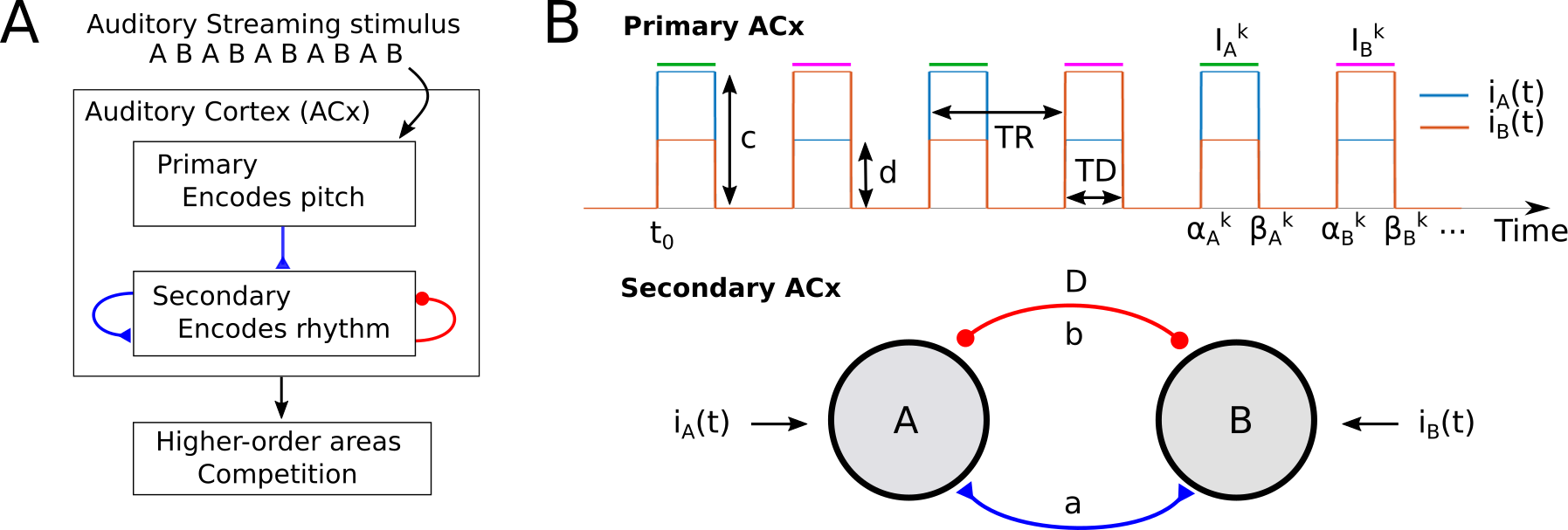}
  \caption{(A) Proposed modelling framework of the auditory streaming paradigm. Two-tone streams are processed in primary ACx. Seconday ACs receives inputs from primary areas and has recurrent excitatory and inhibitory connections. Primary and secondary areas encode respectively pitch and rhythm \cite{musacchia2014thalamocortical}, while high-order cortical areas encode the perceptual switches via competition (bistability).  (B) ACx circuit model. Primary ACx tonotopic responses consist of square-wave A and B tone inputs $i_A$ and $i_B$ with duration $T\!D$ and with the time between tone onsets $T\!R$ (called repetition time - the inverse of the presentation rate (PR)). Parameters $c$ and $d$ respectively represent the connection strength from $i_A$ ($i_B$) to the A(B) and B(A) units. Bottom: sketch of the model circuit consisting of two mutually excitatory and inhibitory populations with strengths $a$ and $b$ respectively, receiving inputs $i_A$ and $i_B$. Inhibition is delayed of the amount $D$.}
  \label{fig:figure2}
\end{figure}

\subsection{Existing models of auditory streaming} Inspired by evidence of feature separation shown in neural recordings in primary auditory cortex (A1) \cite{fishman2004auditory}, many existing models have sidestepped the issue of the temporal encoding of the perceptual interpetations by focusing on a feature representation (reviews: \cite{snyder2017recent,szabo2016computational,rankin2019computational}). Neurons responding primarily to the A or to the B tones are in adjacent locations, spatially separated along A1's tonotopic axis. The so-called neuromechanistic model \cite{rankin2015neuromechanistic} proposed the encoding of percepts based on discrete, tonotopically organised units interacting through plausible neural mechanisms. Models proposed in a neural oscillator framework feature significant redunancy in their structure or work only at specific presentation rate (PR) values \cite{wang2008oscillatory,perez2019uncoupling}. Temporal forward masking results in weaker responses to similar sounds that are close in time (at high PR), but this ubiquitous feature of the auditory system~\cite{moore2012introduction} has been overlooked in previous models.
  
\subsection{Theoretical framework.} The cortical encoding of sensory information involves large neural populations suitably represented by coarse-grained variables like the mean firing rate. The Wilson-Cowan equations \cite{Wilson1972} considered here describe neural populations with excitatory and delayed inhibitory coupling. Variants of these equations include networks with excitatory and inhibitory coupling, intrinsic synaptic dynamics that include neural adaptation, nonlinear gain functions \cite{Laing2002,Shpiro2007,curtu2008mechanisms} and symmetries \cite{diekman2012reduction,diekman2014network}. This framework (and related voltage- or conductance-based formulations) are widely used to study e.g. decision making~\cite{wang2002probabilistic}, perceptual competition in the visual \cite{Wilson2003,diekman2012reduction,vattikuti2016canonical} and in the auditory system \cite{rankin2015neuromechanistic}. Mathematical studies of these models often use a discontinuous (Heaviside) gain function due to its analytical tractability \cite{farcot2010limit}.
  
A range of neural and synaptic activation times often leads to timescale separation~\cite{rinzel1998analysis,izhikevich2007dynamical,ermentrout2010mathematical} as considered here. Singular perturbation theory has been instrumental in revealing the dynamic mechanisms behind neural behaviors involving a slow-fast decomposition, e.g. the generation of spiking and bursting \cite{izhikevich2007dynamical,desroches2016canards}, neural competition~\cite{curtu2008mechanisms,curtu2010singular} and rhythmic behaviors \cite{marder1996principles,rubin2000geometric}. In this work we use these techniques to determine the existence conditions of various dynamical states.
  
We consider the role of delayed inhibition in generating oscillatory activity compatible with auditory percepts. Delayed inhibition produces similar patterns of in- and anti-phase oscillations in spiking neural models \cite{wang1992alternating, ferrario2018bifurcations}. Delays in small neural circuits \cite{ashwin2016mathematical} lead to many interesting phenomena including inhibition-induced Hopf oscillations, oscillator death, multistability and switching between oscillatory solutions \cite{campbell2007time,dhamala2004enhancement}. Two novel features of our study are that the units are not intrinsically oscillating and that periodic forcing drives oscillations. Periodically forced, timescale separated  models of perceptual competition~\cite{jayasuriya2012effects,vattikuti2016canonical,perez2019uncoupling} typically do not feature delays.
 
\subsection{Outline} With the aim of clarifying a plausible model for the processing of ambiguous sounds we present a biologically-inspired neural circuit in ACx with mixed feature and temporal encoding (Section \ref{model_equations}). Section \ref{motivation} describes numerical simulations of the model states linked to percepts in the auditory streaming paradigm. Later sections focus on the analytical derivation of existence conditions for these states in a non-smooth, slow-fast regime. Proofs of most of these results are given in the Supplementary Material for the interested reader. In Sections \ref{fast_dyn} we dissect the model into slow and fast subsystems then analyze quasi-equilibria of the fast subsystem. We use this analysis in Sections \ref{dyn_noinputs} and \ref{dyn_inputs} to classify dynamical states with binary matrix representations (\emph{matrix form}). This tool determines all periodic states, their existence conditions and which states are impossible.  Sections \ref{sec_2TR_states} and \ref{biologically_relevant} classify periodic states for long and short inhibitory delays, respectively. Lastly, in Section \ref{comp_analysis} we show with numerics how these results extend to a smooth setting with reduced timescale separation. Overall, we propose a new method for analytically determining the solutions of a periodically driven networks in a slow-fast setting with delays. When applied to study the auditory streaming paradigm, these methods suggest how competing perceptual interpretations emerge as a result of mutual excitation and slow delayed inhibition in tonotopically localized units in a non-primary part of auditory cortex.
 
 \section{The mathematical model} \label{model_equations}
 We present a model for the encoding of different perceptual interpretations of the auditory streaming paradigm. Following our proposal of rhythm and pitch perception (\Fref{fig:figure2}A) we consider a periodically-driven competition network of two localised Wilson-Cowan units (\Fref{fig:figure2}B) with lumped excitation and inhibition generalised to include dynamics via inhibitory synaptic variables. The units A and B are driven by a stereotyped input signals $i_A$ and $i_B$ representative of neural responses in primary auditory cortex~\cite{fishman2004auditory} at tonotopic locations that preferentially respond to A and to B tones (\Fref{fig:figure2}B). The model is described by the following system of DDEs:
\begin{equation}
    \begin{array}{lcl} 
        \tau \dot{u}_A(t) & = & -u_A(t) + H(a u_B(t)- b s_B(t-D)+ c i_A(t)), \\ 
        \tau \dot{u}_B(t) & = & -u_B(t) + H(a u_A(t)- b s_A(t-D)+ c i_B(t)),  \\
        \dot{s}_A(t) & = & H(u_A(t))(1-s_A(t))/\tau-s_A(t)/\tau_i, \\
        \dot{s}_B(t) & = & H(u_B(t))(1-s_B(t))/\tau-s_B(t)/\tau_i,
    \end{array}\label{model}
\end{equation}
where units $u_A$ and $u_B$ represent the average firing rate of two neural populations encoding sequences of tone (sound) inputs with timescale $\tau$. The Heaviside gain function with activity-threshold $\theta \in (0,1)$: $\{H(x)\!=\!1$ if $x \geq \theta$ and 0 otherwise$\}$ is widely used in firing rate and neuronal field models \cite{curtu2008mechanisms,bressloff2011spatiotemporal} (we later relax this assumption to consider a smooth gain function). Mutual coupling through direct fast excitation has strength $a \geq 0$. The delayed, slowly-decaying inhibition has timescale $\tau_i$, strength $b \geq 0$ and delay $D$ (\Fref{fig:figure2}A). The synaptic variables $s_A$ and $s_B$ describe the time-evolution of the inhibitory dynamics. Typically we will assume $\tau_i$ to be large and $\tau$ to be small. This slow-fast regime and the choice of a Heaviside gain function allows for the derivation of analytical conditions for the existence of biologically relevant network states.

\subsection{Model Inputs}
We model primary ACx inputs to secondary areas as time-dependent, periodic square wave functions $i_A(t)$ and $i_B(t)$ representing the averaged excitatory synaptic currents from primary ACx at A and B tonotopic locations during the repetition of interleaved A and B tone sequences (\fref{fig:figure2}B top). These functions characterize responses to tones in primary ACx (from experiments \cite{fishman2004auditory}) rather than the sound waveform of the tone sequences (motivated in Section \ref{motivation}) and are defined by:
\begin{equation}
    \begin{array}{lcl} 
        i_A(t) & = c\sum_{k=0}^{\infty} \chi_{I_A^k} (t) + d \sum_{k=0}^{\infty} \chi_{I_B^k} (t) &  \\
        i_B(t) & = d \sum_{k=0}^{\infty} \chi_{I_A^k} (t) + c \sum_{k=0}^{\infty} \chi_{I_B^k} (t) & 
    \end{array}
    \label{inputs}
\end{equation}
Where $c\!\geq\!0$ and $d\!\geq\!0$ represent the input strengths from A (B) tonotopic location respectively to the A (B) unit and to the B (A) unit; $\chi_{I}$ is the standard indicator function over the set $I$, defined as $\chi_{I}(t)\!=\!1$ for $t \in I$ and 0 otherwise. The intervals when A and B tones are on (active tone intervals) are respectively $I_A^k\!=\![\alpha_k^A,\beta_k^A]$ and $I_B^k\!=\![\alpha_k^B,\beta_k^B]$ (see Figure \fref{fig:figure2}B top), where
$$ \alpha_A^k\!=\!2kT\!R, \quad \beta_A^k\!=\!2kT\!R\!+\!T\!D, \quad \alpha_B^k\!=\!(2k\!+\!1)T\!R, \quad \beta_B^k\!=\!
(2k\!+\!1)T\!R\!+\!T\!D. $$
Where the parameters $T\!D$ represents the duration of each tone's presentation (see the Discussion for another interpretation of $T\!D$) and $T\!R$ the time between tone onsets (called repetition time; $P\!R\!=\!1/T\!R$ is the presentation rate). Let us name the set of active tone intervals $R$ and its union $I$ as 
$$\Phi=\{ R \subset \mathbb{R} : R=I_k^A \: \text{or} \: R=I_k^B, \: \forall k \in \mathbb{N} \} \quad \mbox{and} \quad I=\bigcup_{R \in \Phi}R.$$ 

 As shown in Figure \ref{fig:figure1}, parameters $T\!D$ and $P\!R\!$ play an important influence on auditory streaming \cite{fishman2004auditory}. We consider $P\!R \in [1,40]$Hz, $T\!R\!\geq\!T\!D$ and $T\!R\!\geq\!D$, where $D$ is the inhibitory delay. These restrictions are typical conditions tested in psychoacoustic experiments. In particular, $T\!R\!\geq\!T\!D$ guarantees no overlaps between tone inputs, i.e.  $I_A^i \cap I_B^j=\emptyset$, $\forall i,j \in \mathbb{N}$.

\begin{remark} [Constraining model parameters]
Assuming $\tau$ sufficiently small and a Heaviside gain function $H$, system \ref{model} with no inputs ($i_A\!=\!i_B\!=\!0$) has two possible equilibrium points: a quiescent  state $P\!=\!(0,0,0,0)$ and an active state $Q\!=\!(1,1,1,1)$. If the difference between excitatory and inhibitory strengths $a\!-\!b\!\geq\!\theta$, then both $P$ and $Q$ exist, and any trajectory of the non-autonomous system is trivially determined by input strength $c$: 
\begin{itemize}
 \item If $c\!<\!\theta$: any trajectory starting from the basin of attraction of $P$ (or $Q$) quickly converges to $P$ ($Q$) and remains at this equilibrium.  
 \item If $c\!\geq\!\theta$: any trajectory converges to $Q$ and remains at this equilibrium. Indeed, if an orbit is in the basin of $P$, the synaptic variables monotonically decrease until one unit turns ON. This turns ON the other unit (since $a\!-\!b\!\geq\!\theta$) and both units remain ON.  
\end{itemize}
To avoid these unrealistic scenarios we assume the following conditions:
\begin{enumerate}[label=(\subscript{U}{{\arabic*}})]
 \centering
 \item $a-b < \theta$ \label{U1}
 \item $c \geq \theta$ \label{U2}
\end{enumerate}
Condition \ref{U1} guarantees that the point $P=(0,0,0,0)$, representing a quiescent state, is the only equilibrium of system \ref{model} with no inputs ($i_A=i_B=0$). Condition \ref{U2} guarantees inputs to be ``strong enough'' to turn ON the A (B) unit at the onset time of the A (B) tone in the absence of inhibition ($b=0$). 
\end{remark}

\section{A motivating example} \label{motivation}
 In this section we present examples of the type of responses studied throughout this work using a smooth version of model \ref{model} and by proposing a link between these responses and the different percepts in the auditory streaming experiments. We use a sigmoid gain function $S(x)=[1+\exp(-\lambda x)]^{-1}$ with fixed slope $\lambda\!=\!30$. Inputs in \ref{inputs} are made continuous using function $S$ by redefining them as:
 \begin{align}
  \begin{split}
   I_A(t) = c \cdot p(t)p(T\!D\!-\!t) + d \cdot  q(t)q(T\!D\!-\!t)\\
   I_B(t) = d \cdot p(t)p(T\!D\!-\!t) + c \cdot q(t)q(T\!D\!-\!t)
  \end{split}
  \label{continuous_inputs}
 \end{align}
 Where $p(t)\!=\!S(\sin(\pi P\!R\!\cdot\!t))$and $q(t)\!=\!S(-\sin(\pi P\!R\!\cdot\!t))$, so that the component $p(t)p(T\!D\!-\!t)$ ($q(t)q(T\!D\!-\!t)$) represents the responses to A (B) tone inputs with duration $T\!D$. These inputs are similar to the discontinuous input shown in Figure \ref{fig:figure2}B but with smooth ramps at the discontinuous jump up and jump down points. 
 
 Psychoacoustic experiments systematically analysed the changes in perceptual outcomes when varying input parameters $P\!R$ and $df$ (Figure \ref{fig:figure1}C). Parameter $P\!R$ is encoded in the model inputs' repetition rates. To model parameter $df$ we take into account the experimental recordings of the average spiking activity from the primary ACx of various animals (macaque \cite{fishman2004auditory,micheyl2005perceptual}, guinea pigs \cite{scholes2015stream}). These recordings show that the average spiking activity at A tonotopic locations decreases non-linearly with $df$ during B tone presentations. We thus assume that the input strength $d$ can be scaled by $df$ according to $d\!=\!c \cdot (1\!-\!df^{1/m})$, where $m$ is a positive integer and $df$ is a unitless parameter in $[0,1]$ which may be converted to semitone units using the formula $12\log(1\!+\!df)$. 

 Figure \ref{fig:figure3new}A shows simulated time histories of all the $2T\!R$-periodic states for different values of parameters $(P\!R,df)$, where all the other parameters are fixed. Blue and red bars indicate the A and B active tone intervals $[0,T\!D]$ and $[T\!R,T\!R\!+\!T\!D]$, respectively, to show when the inputs are on. The system exhibits one of three possible behaviors: (1) a state in which both units cross threshold (total of 4 crossings), (2) a state in which the A unit crosses threshold twice and the B unit once (total of 3 crossings) and (3) a state in which both units cross threshold once (total of 2 crossings). We then summarize the effect of parameters $(P\!R,df)$ on the convergence to the different attractors by running massive simulations at varying parameters $(P\!R,df)$ and counting the number of threshold crossings (Figure \ref{fig:figure3new}B). States (1-3) belong to one of the grey regions in Figure \ref{fig:figure3new}B. We note that state (2) coexist with its complex conjugate state for which the B unit crosses threshold twice and the A unit once (not shown). 

 \begin{figure}[htbp]
  \centering
  \includegraphics[width=0.9\linewidth]{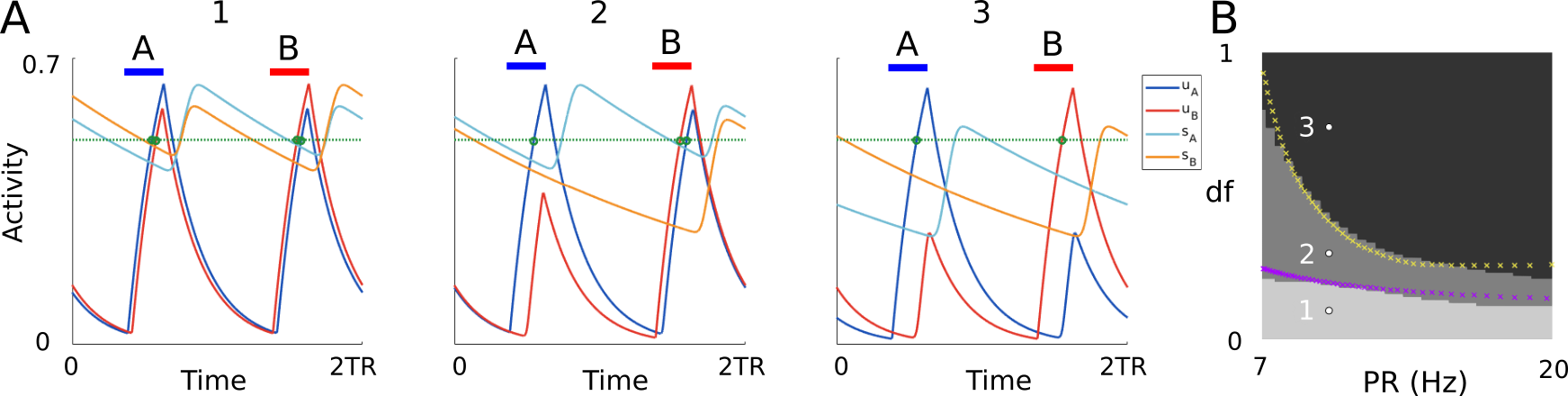}
  \caption{A. Time histories of the $2T\!R$-periodic states in system \ref{model}. Active tone A and B intervals are shown by blue and red bars, respectively. Units' threshold crossings are shown by green dots. B. The total number of threshold crossings for both units is shown in greyscale for simulated trajectories at varying $P\!R$ and $df$ (black = $2$, lightest gray = $4$ crossings). Parameters $P\!R$ and $df$ in panel A are shown by white dots in panel B. Yellow and purple crosses are the experimentally detected coherence and fission boundaries respectively (data replotted from Figure 2 in \cite{almonte2005integration}, digitalized using the software WebPlotDigitizer \cite{rohatgi2017webplotdigitizer}). The remaining parameters are $a\!=\!2$, $b\!=\!2.8$, $c\!=\!5.5$, $D\!=\!0.015$, $\theta\!=\!0.5$, $T\!D\!=\!0.022$, $\tau_i\!=\!0.25$, $\tau\!=\!0.025$, $\theta\!=\!0.5$ and $m\!=\!6$. Simulations are performed using dde23 in Matlab with absolute and relative tolerances set to $10^{-7}$. Initial conditions on the interval $[-D,0]$ are specified as a constant vector function equal to $[1,0,1,0]$. }
  \label{fig:figure3new}
\end{figure}
 
 We propose the following link between these states and the different percepts emerging in auditory streaming (integration, segregation and bistability). In our proposed framework rhythms are tracked by responding (threshold crossing) in the A and B units' activities of $2T\!R$-periodic states. More precisely: 
\begin{itemize}
 \item Integration corresponds to state (1): both units respond to both tones.
 \item Bistability corresponds to state (2): one unit responds to both tones the other unit responds to only one tone.
 \item Segregation corresponds to state (3): no unit responds to both tones.
\end{itemize} 

Following this proposal the states (1-3) match the regions of existence of their equivalent percepts. The transition boundaries between these states fit with the fission and coherence boundaries found experimentally (Figure \ref{fig:figure3new}B). In the next sections we take an analytical approach to study the model's states and their existence conditions. This approach allows us to derive expressions for the fission and coherence boundaries (equations (\ref{boundary_eqn}) in Section \ref{boundaries}) in a mathematically tractable version of the model (\ref{model_equations}). Quantitative comparisons between the analytical and computational approaches are discussed in Section \ref{comp_analysis}.
 
\section{Fast dynamics} \label{fast_dyn}
 In this and the next sections (until Section \ref{comp_analysis}) we present analytical results of the fast subsystem \ref{fast-model} with Heaviside gain. System \ref{model} can decoupled into slow and fast subsystems. The fast subsystem is given by:
\begin{equation}
    \begin{array}{lcl} 
        u_A(r)' & = & -u_A(r) + H(a u_B(r)- b s_B(r-D)+i_A(r)) \\ 
        u_B(r)' & = & -u_B(r) + H(a u_A(r)- b s_A(r-D)+i_B(r))  \\
        s_A(r)' & = & H(u_A(r))(1-s_A(r)) \\
        s_B(r)' & = & H(u_B(r))(1-s_B(r))
    \end{array}
    \label{fast-model}
\end{equation}
Where $'=d/dr$ is the derivative with respect to the fast scale $r=t/\tau$. Activities $u_A$ and $u_B$ take a value of $0$ or $1$, or move rapidly (on the fast time scale) between these two values. We call A(B) ON if $u_A \sim 1$ and OFF if $u_A \sim 0$. The activity of the A (B) unit is determined by the sign of $a u_B(t)\!-\! b s_B(t\!-\!D)\!+\!i_A(t)$ ($a u_A(t)\!-\! b s_A(t-D)\!+\!i_B(t)$). Positive sign changes make $u_A$ ($u_B$) jump up from $0$ to $1$ (turn ON), while negative sign changes in make $u_A$ ($u_B$) jump down from $1$ to $0$ (turn OFF). The synaptic variables can act on either the fast or the slow time scales. If A (B) is ON the variable $s_A$ ($s_B$) jumps to 1 on the fast time scale. Instead, if A (B) is OFF the dynamics of $s=s_A$ (or $s=s_B$) slowly decay according to:
\begin{equation}
 \dot{s}=-s/\tau_i
 \label{slow-model}
\end{equation}

\begin{remark}
\label{remark1} 
The previous considerations demonstrate that $s_A(t)$ ($s_B(t)$) is a monotonically decreasing in time, except for when the A (B) unit makes an OFF to ON transition. 
\end{remark}  

 We proceed by analyzing the system at times $t \in I$, i.e. in one of the active tone intervals. WLOG from the definition of $I$ we assume that $t \in I_k^A$, a generic A tone interval. The analysis below can easily be extended for B tone intervals $I_k^B$ by swapping parameters $c$ and $d$. On the fast time scale the A and B unit satisfy the subsystem:
\begin{equation}
    \begin{array}{lcl} 
        u_A' & = & -u_A + H(au_B-b \tilde{s}_B+c) \\ 
        u_B' & = & -u_B + H(au_A-b \tilde{s}_A+d)  
    \end{array}
    \label{fast-subsystem-I}
\end{equation}
Where $\tilde{s}_{A}\!=\!s_{A}(t\!-\!D)$ and $\tilde{s}_{B}\!=\!s_{B}(t\!-\!D)$. System \ref{fast-subsystem-I} has four equilibrium points: (0,0), (1,0), (0,1) and (1,1), and their existence conditions are reported in Table \ref{equilibria-fast}.
\begin {table}[h]
\begin{center}
\begin{tabular}{ |M{2cm}|M{2cm}|M{2cm}|M{2cm}|M{2cm}| } 
 \hline
  Equilibrium & \centering (0,0) & \centering (1,0) & \centering (0,1) & (1,1) \\ \hline
  Conditions 
  & $\begin{aligned}[t] 
  c\!&<\!b \tilde{s}_B\!+\!\theta \\ 
  d\!&<\!b \tilde{s}_A\!+\!\theta \\ 
 \end{aligned}$
  & $\begin{aligned}[t] 
  c \!&\geq\! b \tilde{s}_B\!+\!\theta \\ 
  a+d \!&<\! b \tilde{s}_A\!+\!\theta \\ 
 \end{aligned}$  
  & $\begin{aligned}[t] 
  a+c \!&<\! b \tilde{s}_B\!+\!\theta \\ 
  d \!&\geq\! b \tilde{s}_A\!+\!\theta \\ 
 \end{aligned}$  
  & $\begin{aligned}[t] 
  a+c \!&\geq\! b \tilde{s}_B\!+\!\theta \\ 
  a+d \!&\geq\! b \tilde{s}_A\!+\!\theta \\ 
 \end{aligned}$  
 \\ \hline 
\end{tabular}
\end{center}
 \label{equilibria-fast}
\caption{Equilibria and existence conditions for the fast subsystem \ref{fast-subsystem-I}}
\end {table}

The full system \ref{model} may jump between these equilibria due to the slow decay of the synaptic variables or when $s_A(t\!-\!D)$ and $s_B(t\!-\!D)$ jumps up to 1. 

\subsection{Basins of attraction} \label{basin_01}
From the inequalities given in Table \ref{equilibria-fast} we note that points $(1,0)$ and $(0,1)$ cannot coexist with any other equilibrium and thus have trivial basins of attraction. However, $(0,0)$ and $(1,1)$ may coexist under the following conditions: 
\begin{equation}
    \begin{array}{lcccl} 
        b \tilde{s}_B+\theta-a & \leq & c & < & b \tilde{s}_B+\theta \\ 
        b \tilde{s}_A+\theta-a & \leq & d & < & b \tilde{s}_A+\theta 
    \end{array}
    \label{bistability-fast}
\end{equation}
Thus we must have $a>0$, i.e. when the excitation is not absent in the model. To study the basin of attraction for these two equilibria, we consider the vector field of system \ref{fast-subsystem-I}. For convenience we introduce the following quantities: $s_1\!=\!(b \tilde{s}_A\!-\!c\!+\!\theta)/a$ and $s_2\!=\!(b \tilde{s}_B\!-\!c\!+\!\theta)/a$. 
Conditions \ref{bistability-fast} hold if and only if
$0 < s_k \leq 1$, for $k=1,2$. Thus we can rewrite system \ref{fast-subsystem-I} as:
\begin{equation}
    \begin{array}{lcl} 
        u_A' & = & -u_A+H(a(u_B-s_2)) \\ 
        u_B' & = & -u_B+H(a(u_A-s_1))
    \end{array}
    \label{fast-subsystem-simple}
\end{equation} 
Since $H$ is the Heaviside function $a$ can be removed. Figure \ref{fig:figure3} shows an example basins of attraction for parameter values for which $(0,0)$ and $(1,1)$ coexist (black circles). The $u_A$- and $u_B$-nullclines are shown in blue and red, respectively. We simulated model \ref{fast-subsystem-simple} starting from several initial conditions, covering the phase space. Simulated trajectories converge either to $(0,0)$ (green) and $(1,1)$ (purple) and show the subdivision in the basin of attraction. 

\begin{figure}[htbp]
  \centering
  \includegraphics[width=0.4\linewidth]{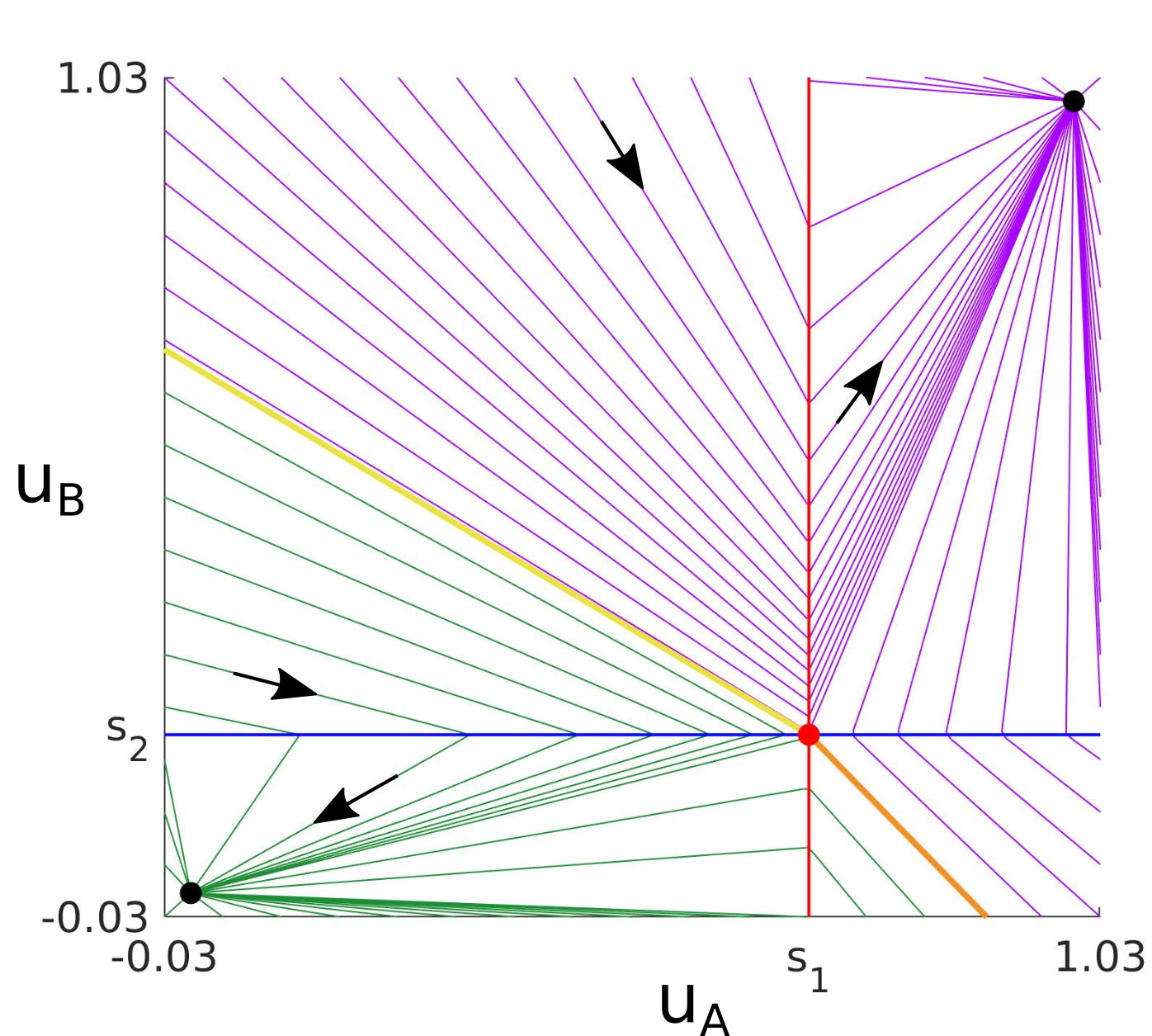}
  \caption{Phase portrait for system \ref{fast-subsystem-simple} with $s_1\!=\!0.7$ and $s_2\!=\!0.4$. Purple and green lines show orbits converging to $(1,1)$ and $(0,0)$, respectively (black circles). Black arrows indicate the direction of convergence. The $u_A$- and $u_B$-nullclines are shown in blue and red, respectively. Yellow and orange lines show the separatrices of the degenerate saddle $(s_1,s_2)$ (red circle). }
  \label{fig:figure3}
\end{figure}

There is a degenerate fixed point $(s_1,s_2)$ (red dot), where separatrices (yellow and orange lines) originate, dividing the phase plane into the regions of attraction shown in the figure. In the Supplementary Material \ref{separatrices} we prove that these curves are give by:
\begin{equation}
\begin{cases} 
(u_A-1)s_2/(s_1-1) & \mbox{if } u_A \leq s_1 \\ 
u_A(s_2-1)/s_1+1  & \mbox{otherwise}
\end{cases}
\nonumber
\end{equation}
The computational analysis of the basin of attractions (including equilibria and separatrices) with steep sigmoidal gains is presented in the Supplementary Material \ref{basin_sigmoid} and leads to qualitatively similar results.

\subsection{Differential convergence to $(1,1)$} \label{differential} 
We now study the differential rate of convergence of the variables $u_A$ and $u_B$ for parameter values where $(1,1)$ is the only stable equilibrium, for an orbit starting from  $(0,0)$. We will use the results below to classify of states of system \ref{model}. For simplicity we consider the case $t \in I_A^k$, as in system \ref{fast-subsystem-I}. Similar considerations hold in the case $t \in I_B^k$. Obviously, $(0,0)$ cannot be an equilibria, thus at least one of the two conditions in Table \ref{equilibria-fast} must not be met. There are three cases to consider: 
\begin{enumerate}
 \item If $c\!\!-b\tilde{s}_B\!\geq\!\theta$ and $d\!-\!b\tilde{s}_A\!\geq\!\theta$ both units turn ON simultaneously following each following the same dynamics $u'\!=\!1\!-\!u$. An orbit starting from $(0,0)$ must therefore reach $(1,1)$ under the same exponential rate of convergence. 
 \item If $c\!-\!b\tilde{s}_B\!\geq\!\theta$, $d\!-\!b\tilde{s}_B\!<\!\theta$ and $a\!+\!d\!-\!b\tilde{s}_A\!\geq\!\theta$ unit B turns ON after A by some small delay $\delta$ ($\sim \tau$). Indeed from $d\!-\!b\tilde{s}_B\!<\!\theta$ and $a\!+\!d\!-\!b\tilde{s}_A\!\geq\!\theta$ there $\exists u^* \in (0,1]$: $au_*\!+\!d\!-\!b\tilde{s}_A\!=\!\theta$. Since $c\!-\!b\tilde{s}_B\!\geq\!\theta$ the fast subsystem reduces to:
 \[\begin{array}{lcl} 
        u_A' & = & 1-u_A \\ 
        u_B' & = & -u_B+H(au_A-b \tilde{s}_A+d) \eqdef -u_B+\eta(u_A)
      \end{array}\]
    Thus, the dynamics of $u_A$ is independent of $u_B$. Consider an orbit starting  $(0,0)$ at $r=0$. From the first equation $u_A(r)$ tends to $1$ exponentially as $r \rightarrow \infty$, reaching a point $u^*$ at time $r^*\!=\!\log[(1\!-\!u^*)^{-1}]$. For $r\!<\!r^*$ we have $u_A(r)\!<\!u^*$, which yields $\eta(u_A(r))\!=\!0$. Since the orbit starts from $u_B=0$, it must remain constant and equal to zero $\forall r\!<\!r^*$. For $r\!\geq\!r^*$, $\eta(u_A(r))\!=\!1$ and $u_A(r) \rightarrow 1$ following the same dynamics as $u_A$ at time $r=0$. On the time scale $t\!=\!\tau r$ of system \ref{model}, the A unit precedes the B unit in converging to $1$ precisely after an infinitesimal delay 
 \begin{equation}
  \delta=\tau \log[(1\!-\!u^*)^{-1}].
  \label{delta}
 \end{equation} 
 \item The case $d\!-\!b\tilde{s}_A\!\geq\!\theta$, $c\!-\!b\tilde{s}_A\!<\!\theta$ and $a\!+\!c\!-\!b\tilde{s}_B\!\geq\!\theta$ is analogous to the previous after replacing $u_A$ with $u_B$. In this case A turns ON a delay $\delta$ after B. 
\end{enumerate}

\subsection{Fast dynamics for $t \in \mathbb{R}\!-\!I$} \label{fast_dynamics_I}
 The analysis for times when inputs are OFF ($t \in \mathbb{R}\!-\!I$) follows analogously by posing $c\!=\!d\!=\!0$ in system \ref{fast-subsystem-I}. Thus $(0,0)$ is an equilibrium for any values of parameters and delayed synaptic quantities $\tilde{s}_{A}$ and $\tilde{s}_{B}$. Instead $(1,1)$ is an equilibrium when
$$a\!-\!b\tilde{s}_{A} \!\geq\! \theta \quad \mbox{and} \quad a\!-\!b\tilde{s}_{B} \!\geq\! \theta.$$

\section{Dynamics in the intervals with no inputs ($\mathbb{R}\!-\!I$)} \label{dyn_noinputs}
The study of equilibria for the fast subsystem described so far constraints the dynamics of the full system in the intervals with no inputs, i.e. in $\mathbb{R}\!-\!I$. The first constraint is that the units can either be both ON, both OFF, or both turning OFF at any time in $\mathbb{R}\!-\!I$ (Theorem below). 

\begin{theorem} [Dunamics ifn $\mathbb{R}\!-\!I$]  \label{thm:uON}
For any $t \in \mathbb{R}\!-\!I$: 
\begin{enumerate}
 \item If A or B is OFF at time $t$, both units are OFF in $(t,t^*]$, where $$t^*=\min_{s \in I} \{s > t \}$$
 \item If A or B is ON at time $t$, both units are ON in $[t_*,t)$, where $$t_*=\max_{s \in I} \{s < t \}$$
\end{enumerate}
\end{theorem}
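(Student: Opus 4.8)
The plan is to read everything off from the fast-subsystem analysis of Section~\ref{fast_dyn}. On $\mathbb{R}\setminus I$ the inputs are absent, so the relevant quasi-equilibria are those of system~\ref{fast-subsystem-I} with $c=d=0$: by Table~\ref{equilibria-fast} the states $(1,0)$ and $(0,1)$ would require $c\ge b\tilde s_B+\theta$, resp.\ $d\ge b\tilde s_A+\theta$, which with $c=d=0$ forces $0\ge\theta>0$ and is impossible. Hence at every $t\in\mathbb{R}\setminus I$ the pair $(u_A,u_B)$ sits, on the slow time scale, at one of the two quasi-equilibria $(0,0)$ or $(1,1)$, any passage between them being completed on the $O(\tau)$ fast scale, i.e.\ instantaneously in the singular limit.

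The key step I would isolate next is a one-way property: on $\mathbb{R}\setminus I$ a unit can never perform an OFF$\to$ON transition while the other unit is OFF. Indeed, for A to turn ON at time $t$ its Heaviside argument must reach $\theta$, i.e.\ $a u_B(t)-b\,s_B(t-D)\ge\theta$ (no input term, since $t\notin I$); as $b,s_B\ge0$ this forces $a u_B(t)\ge\theta>0$, so $u_B$ is not OFF at $t$, and symmetrically for B. This is precisely where the delay $D$ is harmless: the inhibitory term only ever lowers the Heaviside argument, whatever the value $s_B(t-D)$, so it cannot cause a turn-on. Two consequences follow. First, from $(0,0)$ nothing can happen on $\mathbb{R}\setminus I$: both units obey $u'=-u$ and stay at $0$ until an input switches on. Second, the only transition ever available on $\mathbb{R}\setminus I$ is $(1,1)\to(0,0)$, and it occurs at most once on each input-free interval; here one also checks that a transient $(1,0)$ (resp.\ $(0,1)$) cannot be sustained or revisited, because at $(1,0)$ with $c=d=0$ the A-equation reduces to $u_A'=-u_A$, so the orbit decays monotonically to $(0,0)$ — a routine planar Heaviside computation, after which $(0,0)$ is irreversible.

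Given this, the two claims are short. For Claim~1, assume WLOG A is OFF at $t\in\mathbb{R}\setminus I$; then the B-equation has Heaviside argument $a u_A(t)-b\,s_A(t-D)\le a u_A(t)<\theta$ since $u_A\sim0$, so $u_B'=-u_B$ and B is OFF just after $t$. Thus both units are at $(0,0)$ in a right-neighbourhood of $t$, and by the one-way property they remain at $(0,0)$ on all of $(t,t^*)$; at $t^*$ both are still $0$ by continuity of $u_A,u_B$ (the input only begins to act at $t^*$), which gives the closed endpoint. For Claim~2, assume WLOG A is ON at $t\in\mathbb{R}\setminus I$; then the state at $t$ is $(1,1)$ (it cannot be $(1,0)$ by the previous paragraph). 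Since $(1,1)\to(0,0)$ is the only, and irreversible, transition on the input-free interval $(t_*,t^*)$ containing $t$, the state must be $(1,1)$ at every earlier time of that interval, i.e.\ both units are ON on $(t_*,t)$; continuity of $u_A,u_B$ (bounded right-hand side in~\ref{model}) extends this to the endpoint $t_*\in I$.

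I expect the only real work to be the middle paragraph — turning ``no OFF$\to$ON while the partner is OFF'' into the clean statement that the $\mathbb{R}\setminus I$ dynamics is the monotone sequence $(1,1)\to(0,0)$, including the verification that no asymmetric state $(1,0)$ or $(0,1)$ lingers or recurs. The endpoint bookkeeping at $t_*$ and $t^*$ and the reductions in Claims~1--2 are then routine.
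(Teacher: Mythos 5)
Your overall route is the same as the paper's: pass to the input-free fast subsystem, note that $(1,0)$ and $(0,1)$ cannot be quasi-equilibria when $c=d=0$ while $(0,0)$ is unconditionally one (hence absorbing without input), conclude that the only transition available on $\mathbb{R}\!-\!I$ is $(1,1)\to(0,0)$, and read off Claims 1 and 2. The only structural difference is that the paper anchors the argument at $t_*$ and lets the basin dichotomy there decide whether the orbit lands on $P=(0,0)$ or $Q=(1,1)$, whereas you anchor at $t$; that difference is immaterial.

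There is, however, one incorrect step: the assertion that a transient $(1,0)$ (resp.\ $(0,1)$) ``decays monotonically to $(0,0)$.'' At $(1,0)$ with $c=d=0$ the B-equation is $u_B'=-u_B+H(au_A-b\tilde s_A)$, and if $a-b\tilde s_A\ge\theta$ --- exactly the regime in which $(1,1)$ coexists with $(0,0)$ --- then $u_B$ grows while $u_A$ decays; which equilibrium the orbit reaches is decided by the separatrices of Section~\ref{basin_01} (with $c=d=0$), and $(1,0)$ or $(0,1)$ can perfectly well lie in the basin of $(1,1)$. You invoke this false claim in Claim 2 (``it cannot be $(1,0)$ by the previous paragraph'') and, implicitly, in Claim 1, where ``A OFF, B ON'' at $t$ would not simply see B decay: A could be pulled up to $1$ by the still-high $u_B$ before B has fallen. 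The conclusions survive for the simpler reason already contained in your first paragraph: in the singular limit the trajectory on $\mathbb{R}\!-\!I$ occupies a quasi-equilibrium except for instantaneous $O(\tau)$ transients, and the only quasi-equilibria are $(0,0)$ and $(1,1)$, so the hypothesis ``A (or B) is OFF'' at $t$ pins the state to $(0,0)$ and ``ON'' pins it to $(1,1)$. If you want to retain an argument that genuinely tracks the transients, you need the basin analysis, which is exactly what the paper's proof does at $t_*$.
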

\begin{figure}[htbp]
  \centering
  \includegraphics[width=0.5\linewidth]{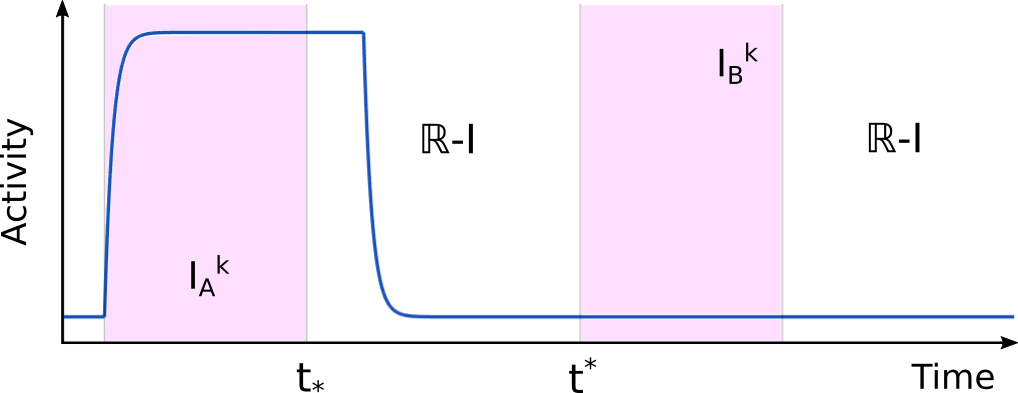}
  \caption{Illustration of Theorem~\ref{thm:uON} showing one unit's dynamics (blue) during one  $2T\!R$ period. Active tone intervals $I_A^k$ and $I_B^k$ are shown in purple. Note: the unit turns OFF at some time in $[t_*,t^*]$ due to the delayed inhibition from the the other unit, whose activity is omitted.}
  \label{cartoon1}
\end{figure}

This theorem is proved in the Supplementary Material \ref{thm_ON_appendix} and illustrated with an example in Figure \ref{cartoon1}. Due to this theorem we can classify network states as follows.

\begin{definition} [LONG and SHORT states] \label{long_short}
We define any state of system \ref{model}:
\begin{itemize}
 \item LONG if $\exists t \in \mathbb{R}-I$ when both units are ON
 \item SHORT if both units are OFF  $\forall t \in \mathbb{R}-I$ 
 \end{itemize}
\end{definition}

The choice of the names LONG and SHORT derives from the following considerations. Since both units are ON at some time $t \in \mathbb{R}-I$ of a LONG state, Theorem \ref{thm:uON} implies they must be ON at time the end of the active tone interval preceding $t$ and prolong their activation after the active tone interval up to time $t$. SHORT states by definition are OFF between each pair of successive tone intervals.

Theorem \ref{thm:uON} guarantees either unit can turn ON only during an active tone interval. This guarantees that the delayed synaptic variables are monotonically decreasing in the intervals $[\alpha_k^A,\alpha_k^A\!+\!D]$ and $[\alpha_k^B,\alpha_k^B\!+\!D]$ if the condition $T\!D\!+\!D\!<\!T\!R$ is guaranteed. The latter theorem is proven in the Supplementary Material \ref{syn_decay_appendix} and it is illustrated in Figure \ref{cartoon2}A. 

\begin{lemma}[synaptic decay] \label{lem:syn_decay}
If $T\!D\!+\!D<T\!R$ the delayed synaptic variables $s_A(t\!-\!D)$ and $s_B(t\!-\!D)$ are monotonically decreasing in $[\alpha_k^A,\alpha_k^A\!+\!D]$ or $[\alpha_k^B,\alpha_k^B\!+\!D]$, $\forall k \in \mathbb{N}$
\end{lemma}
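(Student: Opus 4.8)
The plan is to reduce this to an essentially combinatorial statement about the geometry of the active tone intervals, using the monotonicity structure already in place. By Remark~\ref{remark1}, $s_A(\cdot)$ is monotonically decreasing in time except possibly at an OFF-to-ON transition of the A unit, and likewise $s_B(\cdot)$ for the B unit; and by Theorem~\ref{thm:uON}, a unit that is OFF at a time in $\mathbb{R}\!-\!I$ stays OFF, so a unit can only perform an OFF-to-ON transition at a time lying in $I$. Consequently $s_A$ and $s_B$ are monotonically decreasing on any interval contained in $\mathbb{R}\!-\!I$. It therefore suffices to show that, when $T\!D\!+\!D<T\!R$, the pulled-back intervals $[\alpha_k^A\!-\!D,\alpha_k^A)$ and $[\alpha_k^B\!-\!D,\alpha_k^B)$ lie in $\mathbb{R}\!-\!I$; the substitution $t\mapsto t\!-\!D$ then carries them onto $[\alpha_k^A,\alpha_k^A\!+\!D)$ and $[\alpha_k^B,\alpha_k^B\!+\!D)$, which gives the claim on the half-open intervals, and continuity of the synaptic variables extends it to the closed intervals.

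First I would do the interval arithmetic. Two consecutive active tone intervals in $\Phi$ are $I_{k-1}^B=[(2k\!-\!1)T\!R,(2k\!-\!1)T\!R\!+\!T\!D]$ and $I_k^A=[2kT\!R,2kT\!R\!+\!T\!D]$ (and analogously $I_k^A$ then $I_k^B$), separated by a silent window of length $T\!R\!-\!T\!D>0$, namely $(\beta_{k-1}^B,\alpha_k^A)\subset\mathbb{R}\!-\!I$. The hypothesis $T\!D\!+\!D<T\!R$ is exactly the inequality $\alpha_k^A\!-\!D=2kT\!R\!-\!D>(2k\!-\!1)T\!R\!+\!T\!D=\beta_{k-1}^B$, so $[\alpha_k^A\!-\!D,\alpha_k^A)\subset(\beta_{k-1}^B,\alpha_k^A)\subset\mathbb{R}\!-\!I$; since $I$ is the union of \emph{all} tone intervals and these are spaced by $T\!R\!\geq\!T\!D$, no further interval of $\Phi$ can meet $[\alpha_k^A\!-\!D,\alpha_k^A)$ either. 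The same computation shifted by $T\!R$ gives $[\alpha_k^B\!-\!D,\alpha_k^B)\subset(\beta_{k}^A,\alpha_k^B)\subset\mathbb{R}\!-\!I$.

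Next I would close up the endpoints. On $[\alpha_k^A\!-\!D,\alpha_k^A)\subset\mathbb{R}\!-\!I$ neither unit performs an OFF-to-ON transition, so by Remark~\ref{remark1} both $s_A$ and $s_B$ are monotonically decreasing there; as $s_A,s_B$ are continuous (they solve ODEs with bounded right-hand side) this extends to $[\alpha_k^A\!-\!D,\alpha_k^A]$. In fact Theorem~\ref{thm:uON}(1) shows a unit that is OFF at any point of the silent window remains OFF up to and including $\alpha_k^A$, so the decay continues right to the right endpoint. Substituting $t\mapsto t\!-\!D$ then gives that $s_A(t\!-\!D)$ and $s_B(t\!-\!D)$ are monotonically decreasing on $[\alpha_k^A,\alpha_k^A\!+\!D]$, and the identical argument over $[\alpha_k^B\!-\!D,\alpha_k^B]$ handles the B-interval. (For $k=0$ the pulled-back interval is $[-D,0]$, where the statement becomes an assumption on the initial history, satisfied e.g.\ by the constant histories used in the simulations.)

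The argument is short, and the only genuinely delicate point is the behaviour at the right endpoint $\alpha_k^A$: there the A input switches on, and a priori this could trigger an instantaneous OFF-to-ON transition that would spoil monotonicity at $t=\alpha_k^A\!+\!D$. This is precisely what Theorem~\ref{thm:uON} rules out, forcing the unit to still be OFF at $\alpha_k^A$, so the main obstacle is really just making the translation $T\!D\!+\!D<T\!R \Leftrightarrow [\alpha_k^A\!-\!D,\alpha_k^A]\cap I\subseteq\{\alpha_k^A\}$ airtight and being careful to invoke Theorem~\ref{thm:uON} (not merely Remark~\ref{remark1}) for that endpoint. One should also note that ``monotonically decreasing'' here must be read in the weak (non-increasing) sense, since in a LONG state a unit may sit clamped at $s=1$ during the first part of the silent window before its decay begins.
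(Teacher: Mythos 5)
Your proof is correct and follows essentially the same route as the paper's: reduce via Remark~\ref{remark1} and Theorem~\ref{thm:uON} to showing that the pulled-back interval lies in $\mathbb{R}\!-\!I$, then verify this with the interval arithmetic $2kT\!R\!-\!D \geq \beta_{k-1}^B \Leftrightarrow T\!D\!+\!D<T\!R$. Your treatment of the right endpoint $\alpha_k^A$ (invoking Theorem~\ref{thm:uON} to rule out an OFF-to-ON jump there) is in fact slightly more careful than the paper's, which passes over that point silently.
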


A second important implication of Theorem \ref{thm:uON} under $T\!D\!+\!D\!<\!T\!R$ is that both units must turn OFF once between successive tone intervals (see next lemma). This guarantees that at the start of each active tone interval any state of the fast subsystem start from point $(0,0)$. The following lemma is proven in the Supplementary Material \ref{no_saturation_appendix} and it is illustrated in Figure \ref{cartoon2}A.

\begin{lemma}[no saturated states]\label{lem:no_saturation}
If $T\!D\!+\!D\!<\!T\!R$ both units are OFF in the intervals $(\alpha_k^A\!+\!T\!D+\!D,\alpha_{k}^B]$ and $(\alpha_k^B\!+\!T\!D+\!D,\alpha_{k+1}^A]$, $\forall k \in \mathbb{N}$.
\vspace{-0.5mm}
\end{lemma}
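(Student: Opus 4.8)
The plan is a proof by contradiction that couples Theorem~\ref{thm:uON} (a unit can switch ON only inside an active tone interval) with the elementary fact, recalled in Section~\ref{fast_dyn}, that whenever a unit is ON its synaptic variable relaxes to $1$ on the fast time scale. I would run the argument for the window $(\alpha_k^A\!+\!T\!D\!+\!D,\alpha_k^B]$; the window $(\alpha_k^B\!+\!T\!D\!+\!D,\alpha_{k+1}^A]$ then follows by exchanging the labels $A\leftrightarrow B$ (equivalently $c\leftrightarrow d$), since $\alpha_{k+1}^A=\alpha_k^B\!+\!T\!R$.

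First I would record the geometry that $T\!D\!+\!D<T\!R$ provides. The set $(\alpha_k^A\!+\!T\!D\!+\!D,\alpha_k^B)$ is then a nonempty open interval lying strictly between the closed tone intervals $I_A^k=[\alpha_k^A,\alpha_k^A\!+\!T\!D]$ and $I_B^k=[\alpha_k^B,\alpha_k^B\!+\!T\!D]$, hence $(\alpha_k^A\!+\!T\!D\!+\!D,\alpha_k^B)\subseteq\mathbb{R}\!-\!I$; moreover for every $t_0$ in it one has $t_*:=\max_{s\in I}\{s<t_0\}=\alpha_k^A\!+\!T\!D=\beta_A^k$, because the closest point of $I$ below $t_0$ is the right endpoint of $I_A^k$ (at distance $>D$) while the next tone interval only begins at $\alpha_k^B>t_0$. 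The remaining endpoint $t_0=\alpha_k^B$ is handled by continuity of $u_A,u_B$: if a unit were ON there it would still be ON at some $\alpha_k^B\!-\!\varepsilon$ inside the open interval, reducing to the case $t_0<\alpha_k^B$.

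The core of the argument is then: assume for contradiction that unit $A$ is ON at some $t_0\in(\alpha_k^A\!+\!T\!D\!+\!D,\alpha_k^B)$ (the case where $B$ is ON at such a $t_0$ is symmetric). Since $t_0\in\mathbb{R}\!-\!I$, the second part of Theorem~\ref{thm:uON} applies and both units are ON throughout $[t_*,t_0)=[\alpha_k^A\!+\!T\!D,t_0)$. In particular $u_B$ is ON on all of $(\alpha_k^A\!+\!T\!D,t_0)$, so $s_B$ relaxes (on the fast scale, via $s_B'=1\!-\!s_B$) to the value $1$ there; since $t_0>\alpha_k^A\!+\!T\!D\!+\!D$ the delayed argument satisfies $t_0\!-\!D\in(\alpha_k^A\!+\!T\!D,t_0)$, and in the slow--fast limit $s_B$ has reached $1$ essentially instantaneously after the left endpoint $\alpha_k^A\!+\!T\!D$, so $s_B(t_0\!-\!D)=1$. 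But $i_A(t_0)=0$, so $u_A$ being ON at $t_0$ requires $a\,u_B(t_0)-b\,s_B(t_0\!-\!D)\ge\theta$; with $u_B(t_0)\le1$ and $s_B(t_0\!-\!D)=1$ this yields $a-b\ge\theta$, contradicting~\ref{U1}. The mirror computation, with $s_A(t_0\!-\!D)=1$ and the inequality $a\,u_A(t_0)-b\,s_A(t_0\!-\!D)\ge\theta$, rules out $B$ being ON; so both units are OFF on the whole window, and symmetrically on $(\alpha_k^B\!+\!T\!D\!+\!D,\alpha_{k+1}^A]$.

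The step I expect to need the most care is showing that the delayed synaptic variable has already saturated to $1$ at the instant $t_0\!-\!D$; this is precisely where $T\!D\!+\!D<T\!R$ enters, because it forces $t_0\!-\!D$ to lie in the sub-interval over which the inhibiting unit has been continuously ON, and the fast relaxation of $s$ then pins it to $1$ in the singular limit. Everything else is endpoint bookkeeping together with the trivial degenerate cases — notably $a=0$, for which no unit can be ON anywhere in $\mathbb{R}\!-\!I$ and the statement is immediate.
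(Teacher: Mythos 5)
Your proposal is correct and follows essentially the same route as the paper's proof: a contradiction argument that uses Theorem~\ref{thm:uON} to propagate the ON state back to $t_*=\alpha_k^A\!+\!T\!D$, notes that the delayed synaptic variable has saturated to $1$ by the relevant delayed time (which is exactly where $T\!D\!+\!D<T\!R$ enters), and then contradicts condition~\ref{U1}. The only cosmetic difference is that the paper evaluates the contradiction at the earliest admissible time $p_*=t_*\!+\!D$ whereas you evaluate it at the assumed ON time $t_0$ itself; both are valid.
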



\begin{figure}[htbp]
  \centering
  \includegraphics[width=0.8\linewidth]{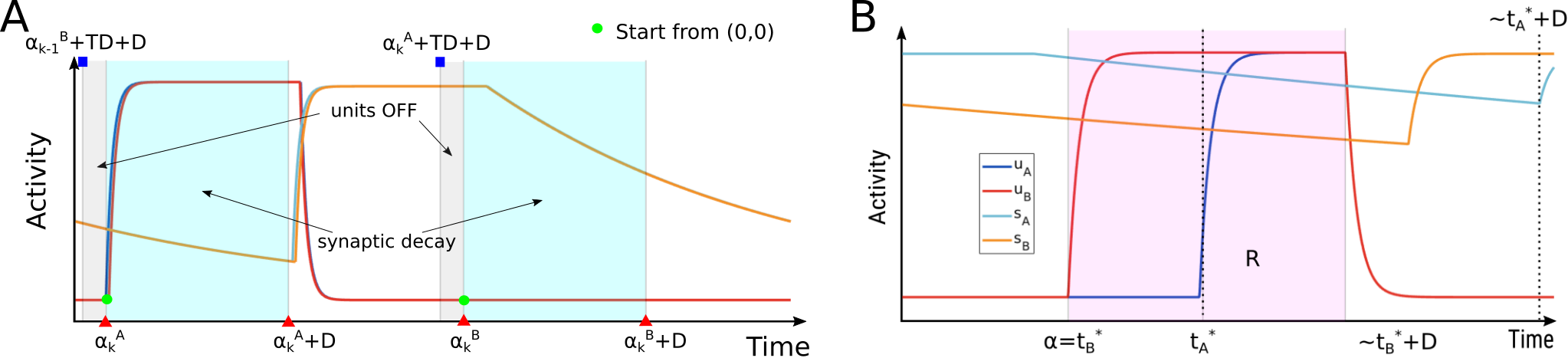}
  \caption{A. Example dynamics of the A and B units in each interval $L \subset \Gamma$ and $J$ illustrating lemmas \ref{lem:syn_decay} and \ref{lem:one_transition} during one period $2T\!R$. B. Dynamics in an active tone interval $R\!=\![\alpha,\beta] \in \Phi$ illustrating the quantities in (1--3) of Lemma~\ref{lem:one_transition}, where  $t_A^*$ ($t_B^*$) is the turning ON time for A (B), respectively. }
  \label{cartoon2}
\end{figure}

%

\section{Dynamics during the active tone intervals} \label{dyn_inputs}
We now study the possible dynamics of the full system during the active tone intervals $R \in \Phi$ under the condition $T\!D\!+\!D\!<\!T\!R$, for which lemmas \ref{lem:syn_decay} and \ref{lem:no_saturation} can be applied. We split this analysis by separating the cases $D\!>\!TD$ and $D\!\leq\!TD$. In this section we consider the case $D\!>\!TD$, and the other conditions is considered in section 8. The next theorem shows that the turning ON times of either unit can happen only at most once in $R$ and other results which will lead to the existence of only a limited number of states. 

\begin{lemma}[single OFF to ON transition]\label{lem:one_transition}
Consider an active tone interval $R\!=\![\alpha,\beta] \in \Phi$, and let A (B) be ON at a time $\bar{t} \in R$, then \newline
(1) A (B) is ON $\forall t \geq \bar{t}$, $t \in R$ \newline
(2) $\exists !\, t_A^* \, (t_B^*) \in R$ when A (B) turns ON  \newline
(3) $s_A(t\!-\!D)$ ($s_B(t\!-\!D)$) is decreasing for $t \in [\alpha,t_A^*\!+\!D]$ ($t \in [\alpha,t_B^*\!+\!D]$)
\end{lemma}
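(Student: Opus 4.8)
The plan is to reduce to a single active tone interval and exploit that, under $D>T\!D$, the delayed inhibition seen during that interval can only decay. Without loss of generality take $R=I_A^k=[\alpha,\alpha+T\!D]$; the case $R=I_B^k$ follows by exchanging $c$ and $d$. On $R$ the inputs are constant, $i_A\equiv c$ and $i_B\equiv d$, so the fast subsystem is system~\ref{fast-subsystem-I} with $\tilde s_A(t)=s_A(t-D)$ and $\tilde s_B(t)=s_B(t-D)$. Two facts drive the argument. First, by Lemma~\ref{lem:no_saturation} both units are OFF at $t=\alpha$, so on $R$ the orbit starts at $(0,0)$. Second, since $D>T\!D$ we have $R=[\alpha,\alpha+T\!D]\subset[\alpha,\alpha+D]$, so Lemma~\ref{lem:syn_decay} applies and $\tilde s_A$ and $\tilde s_B$ are strictly decreasing throughout $R$. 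This second fact is the linchpin, and the only place $D>T\!D$ enters.

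For (1) and (2), I would note that the equilibrium of system~\ref{fast-subsystem-I} near which the orbit sits can change only when one of the quantities $c-b\tilde s_B$, $a+c-b\tilde s_B$, $d-b\tilde s_A$, $a+d-b\tilde s_A$ crosses $\theta$, since these are exactly the quantities appearing in the existence conditions of Table~\ref{equilibria-fast}. Because $\tilde s_A$ and $\tilde s_B$ are non-increasing on $R$, all four are non-decreasing in $t$ there, hence cross $\theta$ only upwards. Reading off Table~\ref{equilibria-fast}, every such crossing is an OFF$\to$ON transition of one unit: the only possibilities are $(0,0)\to(1,0)$, $(0,0)\to(0,1)$, $(1,0)\to(1,1)$, $(0,1)\to(1,1)$, with $(1,1)$ absorbing on $R$; and the accompanying fast transient (a trajectory of system~\ref{fast-subsystem-I} for the current, frozen $\tilde s$, launched from an equilibrium) keeps both activities non-decreasing, as in the planar analysis of Section~\ref{fast_dyn} and the differential-convergence discussion of Section~\ref{differential}. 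Therefore $u_A$ and $u_B$ are non-decreasing on $R$. In particular, if A (resp. B) is ON at some $\bar t\in R$ it is ON for all $t\in R$ with $t\ge\bar t$, which is (1); and since it is OFF at $\alpha$ it makes exactly one OFF$\to$ON transition, at $t_A^*:=\inf\{t\in R:u_A(t)\sim1\}\in(\alpha,\bar t]$ (resp. $t_B^*$), which is (2).

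For (3), $s_A$ is strictly decreasing on $[\alpha-D,\alpha]$ (this is the content of Lemma~\ref{lem:syn_decay}; equivalently, A is OFF there) and on $[\alpha,t_A^*]$ by the definition of $t_A^*$ together with Remark~\ref{remark1} (A is OFF before it turns ON, so $s_A$ merely decays). Hence $s_A$ is strictly decreasing on $[\alpha-D,t_A^*]$, which is precisely the claim that $t\mapsto s_A(t-D)$ is decreasing on $[\alpha,t_A^*+D]$; the argument for $s_B(t-D)$ on $[\alpha,t_B^*+D]$ is identical, and the $R=I_B^k$ case is obtained by swapping $c$ and $d$.

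The hard part is (1): a priori a unit that is ON could be switched OFF when the other unit switches OFF and its excitatory contribution $a u$ disappears. This is exactly what the second fact excludes — on $R$ the delayed inhibition only decays, so none of the threshold-crossing quantities can cross $\theta$ downwards, and in particular the ON--ON state cannot be lost once reached, since $a+c-b\tilde s_B$ and $a+d-b\tilde s_A$ only grow. The remaining work is to confirm that the fast switching transients cannot produce a downward crossing either, which is why the argument leans on the qualitative picture of the fast subsystem from Section~\ref{fast_dyn} rather than re-deriving it. If instead $D\le T\!D$, this monotonicity of $\tilde s_A,\tilde s_B$ on $R$ fails — they would register the units' own OFF$\to$ON jumps occurring inside $R$ — which is why that regime must be treated separately.
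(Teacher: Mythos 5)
Your proof is correct and follows essentially the same route as the paper's: you use $D>T\!D$ together with Lemma~\ref{lem:syn_decay} to get monotone decay of $s_A(t-D)$, $s_B(t-D)$ on $R$, deduce that the fast-subsystem equilibria of Table~\ref{equilibria-fast} can only transition ``upward'' (giving (1) and, with Lemma~\ref{lem:no_saturation} and a uniqueness-by-contradiction step, (2)), and prove (3) by splitting the relevant interval into the no-input part handled by Theorem~\ref{thm:uON}/Lemma~\ref{lem:syn_decay} and the part before $t_A^*$ handled by Remark~\ref{remark1}. The only differences are cosmetic: you phrase (1) via monotonicity of the four threshold quantities rather than the paper's case analysis on $(1,0)$ and $(1,1)$, and you decompose $[\alpha-D,t_A^*]$ rather than $[\alpha,t_A^*+D]$.
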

The previous Lemma is illustrated in the cartoon shown in Figure \ref{cartoon2} right. The proof is given in the Supplementary Material \ref{appendix_one_transition} and it implies the following Lemma. 
\begin{lemma}
Given any active tone interval $R \in \Phi$ we have:
\begin{enumerate}
 \item A (B) turns ON at time $\alpha$ $\Leftrightarrow$ A (B) is ON $\forall t \in (\alpha,\beta]$
 \item A (B) is OFF at time $\beta$ $\Leftrightarrow$ A (B) is OFF $\forall t \in R$
\end{enumerate}
\label{lem:ON-OFF-main}
\end{lemma}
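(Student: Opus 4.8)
The plan is to deduce both equivalences directly from Lemma \ref{lem:one_transition}, treating only the A unit (the argument for B is identical by symmetry) and, by the convention established just before system \ref{fast-subsystem-I}, assuming $R = I_k^A$ for some $k$ without loss of generality. For part (1), the forward implication is immediate: if A turns ON at time $\alpha$ then in particular A is ON at a time $\bar t = \alpha \in R$, so Lemma \ref{lem:one_transition}(1) gives that A is ON for all $t \ge \alpha$, $t \in R$, hence for all $t \in (\alpha,\beta]$. For the converse, if A is ON for all $t \in (\alpha,\beta]$, then A is ON at times arbitrarily close to $\alpha$ from the right; since by Lemma \ref{lem:one_transition}(2) there is a \emph{unique} turning-ON time $t_A^* \in R$ and A is OFF on $[\alpha, t_A^*)$, the only way A can be ON on all of $(\alpha,\beta]$ is $t_A^* = \alpha$, i.e. A turns ON at $\alpha$. (Here I use the preceding analysis showing every state of the fast subsystem enters $R$ at the point $(0,0)$ — Lemma \ref{lem:no_saturation} — so that ``A ON throughout $(\alpha,\beta]$'' with $t_A^*>\alpha$ would contradict A being OFF at $\alpha$.)

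For part (2), the forward implication again uses Lemma \ref{lem:one_transition}(1) in contrapositive form: suppose A is OFF at time $\beta$ but ON at some $\bar t \in R$; then $\bar t < \beta$ and Lemma \ref{lem:one_transition}(1) forces A to be ON at $\beta$, a contradiction, so A must be OFF for all $t \in R$. The converse is trivial since $\beta \in R$. Putting the A and B versions together establishes the stated lemma.

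The only genuinely delicate point is the converse in part (1): one must rule out a turning-ON time strictly inside $R$ that would still leave A ON on the whole half-open interval $(\alpha,\beta]$. This is exactly where the ``enter at $(0,0)$'' consequence of Lemma \ref{lem:no_saturation} (valid under the standing hypothesis $T\!D\!+\!D\!<\!T\!R$) and the uniqueness in Lemma \ref{lem:one_transition}(2) are needed — without them A could conceivably arrive at $R$ already ON. I expect this to be the main obstacle, and it is resolved in one line once those two prior results are invoked; everything else is bookkeeping on the monotone ON-structure of $u_A$ within a tone interval.
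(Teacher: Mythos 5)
Your proof is correct and follows exactly the route the paper intends: the paper states Lemma~\ref{lem:ON-OFF-main} as an immediate consequence of Lemma~\ref{lem:one_transition} without spelling out the details, and you fill those details in faithfully, including the one genuinely needed extra ingredient (Lemma~\ref{lem:no_saturation} guaranteeing the orbit enters $R$ at $(0,0)$, which rules out A arriving at $R$ already ON in the converse of part~(1)). No gaps.
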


Due to Lemma \ref{lem:one_transition} each unit may turn ON only once during each interval $R \in \Phi$. Thus the dynamics any state is determined precisely at the jump up points $t_A^*$ and $t_B^*$ for the units in $R$ (if these exist). 

\begin{definition}[MAIN and CONNECT states]\label{main_connect}
Any state (solution) of system \ref{model} is: 
\begin{itemize}
 \item MAIN if $\forall R \in \Phi$, if $\exists t^* \! \in \! R$ turning ON time for A or B, then $t^* \! = \! \min(R)$ \label{main}
 \item CONNECT if $\exists R \in \Phi$ and $\exists t^* \! \in \! R$, $t^*\!>\!\min(R)$ turning ON time for A or B \label{connect}
\end{itemize}
\begin{figure}[htbp]
  \centering
  \includegraphics[width=0.5\linewidth]{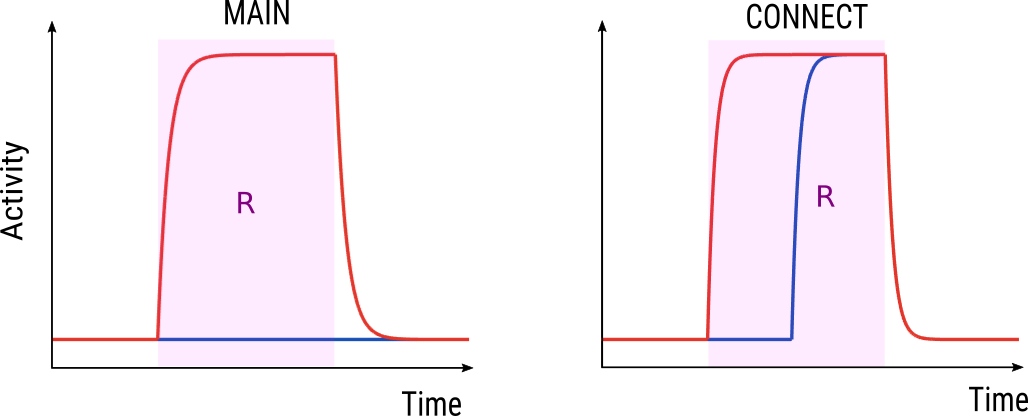}
  \caption{Example dynamics of the $u_A$ (red) and $u_B$ (blue) variables for MAIN and CONNECT states in an interval $R \in \Phi$. The left panel shows a MAIN state for which the A unit is OFF in $R$, while the B unit turns ON at time $t^*\!=\!\min(R)$. The right panel shows a CONNECT state for which the A unit turns ON at some time $t^*\!>\!\min(R)$, while the B unit turns ON at time $\min(R)$. }
  \label{cartoon4}
\end{figure}
\end{definition}

\begin{remark}
\label{matricial}
 MAIN states are either ON or OFF during any interval $R \in \Phi$, except (possibly) for a negligible interval of length $\sim 0$. Indeed due to differential convergence (Section \ref{differential}) one unit may turn ON at $\alpha$ following an infinitesimally small delay $\delta \sim \tau$, where $\delta$ is given by equation \ref{delta}. 
\end{remark}

\subsection{Classification of MAIN and CONNECT states - Matrix form} \label{classification_interval}
The results reported in the previous section constraint the possible dynamics during each active tone interval $R \in \Phi$. In this section we use these results to propose a classification of MAIN and CONNECT states based on their dynamics during these intervals and define the existence conditions for these states. 

Due to lemmas \ref{lem:no_saturation}, \ref{lem:one_transition} and \ref{lem:ON-OFF-main} the units of any state must be OFF at the start $R$ (orbits $(u_A,u_B)$ always start from $(0,0)$ at time $\alpha$), may turn ON at most once in $R$ and, if this occurs, it must remain ON until the end of $R$. Thus we have three possibilities: (1) both units are OFF in $R$, (2) only one unit turns ON once in $R$ or (3) both units turn ON once in $R$. These possibilities guarantee that any state in the network can be classified as MAIN or CONNECT. We note that condition \ref{U1} guarantees that (1) cannot occur $\forall R \in \Phi$, or $(0,0,0,0)$ would be an equilibria. Let us define the total inputs to the units for the A and B active tone intervals as a function of the synaptic quantity $s$:
\begin{equation}
f(s)\!=\!
\begin{cases} 
c\!-\!bs, & \mbox{if } R\!=\!I_A^k \\ 
d\!-\!bs, & \mbox{if } R\!=\!I_B^k 
\end{cases},
\quad 
g(s)\!=\!
\begin{cases} 
d\!-\!bs, & \mbox{if } R\!=\!I_A^k \\ 
c\!-\!bs, & \mbox{if } R\!=\!I_B^k
\end{cases}
\label{f_g}
\end{equation}

\textbf{Classification of MAIN states.} 
From the considerations given above the units' dynamics in $R\!=\! [\alpha,\beta]$ of any MAIN state is completely determined on the fast time scale at times $\alpha$ and $\beta$. Each unit can either turn ON at time $\alpha$ or be OFF at time $\beta$, depending on the system's parameters and on the following quantities: 
\[ \quad \underline{s}_A\!=\!s_A(\alpha\!-\!D),
   \quad \bar{s}_A\!=\!s_A(\beta\!-\!D),
   \quad \underline{s}_B\!=\!s_B(\alpha\!-\!D), 
   \quad \bar{s}_B\!=\!s_B(\beta\!-\!D) \]
Following the fixed point analyses we consider three conditions (summarized in Table \ref{M_conditions}):

\begin{itemize} \label{MAIN_definition}
\item \textbf{Both units turn ON at time $\alpha$}. This is equivalent to $(1,1)$ being the only equilibrium for the fast subsystem at time $\alpha$, which may occur under the conditions $M_{1-3}$. In summary, for case $M_1$ both units instantaneously turn ON at time $\alpha$. For case $M_2$ ($M_3$) unit B (A) turns ON after A (B) of an infinitesimal delay $\delta \sim \tau$ (see Section \ref{differential}).

\item \textbf{One unit turns ON at time $\alpha$ and the other unit is OFF at time $\beta$} - this corresponds to states satisfying one of conditions $M_{4-5}$. For case $M_4$ ($M_5$) A(B) turns ON at $\alpha$ and B(A) is OFF at $\beta$. Indeed $(1,0)$ ($(0,1)$) is the only stable equilibrium of the subsystem at times $\alpha$ and $\beta$, and thus $\forall t \in R$ due to Lemma \ref{lem:ON-OFF-main}. 

\item \textbf{A and B are OFF at time $\beta$} - it occurs when $(0,0)$ is the only stable equilibrium of the fast subsystem at time $\beta$, thus satisfying condition $M_6$. 
\end{itemize}

\begin {table}[h]
\begin{center}
\begin{tabular}{ |m{4em}|m{6em}|m{6em}|m{6em}|m{6em}|m{4em}| } 
 \hline
  \centering $M_1$ & \centering $M_2$ & \centering $M_3$ & \centering $M_4$ & \centering $M_5$ & $\quad M_6$ \\ \hline
 $\begin{aligned}[t] 
  f(\underline{s}_B) \!&\geq\! \theta \\ 
  g(\underline{s}_A) \!&\geq\! \theta 
 \end{aligned}$ & 
 $\begin{aligned}[t] 
  g(\underline{s}_A)\!&<\!\theta \\ 
  f(\underline{s}_B)\!&\geq\!\theta \\
  a\!+\!g(\underline{s}_A)\!&\geq\!\theta
 \end{aligned}$ & 
 $\begin{aligned}[t] 
  f(\underline{s}_B)\!&<\!\theta \\ 
  g(\underline{s}_A)\!&\geq\!\theta \\
  a\!+\!f(\underline{s}_B)\!&\geq\!\theta
 \end{aligned}$ & 
 $\begin{aligned}[t] 
  f(\underline{s}_B)\!&\geq\!\theta \\ 
  a\!+\!g(\bar{s}_A)\!&<\!\theta 
 \end{aligned}$ &  
 $\begin{aligned}[t] 
  g(\underline{s}_A)\!&\geq\!\theta \\ 
  a\!+\!f(\bar{s}_B)\!&<\!\theta 
 \end{aligned}$ & 
 $\begin{aligned}[t] 
  g(\bar{s}_A)\!&<\!\theta \\ 
  f(\bar{s}_B)\!&<\!\theta
 \end{aligned}$ 
 \\ \hline
\end{tabular}
\end{center}
\caption {Existence conditions for MAIN states in an interval $R \in \Phi$}
\label{M_conditions}
\end {table}

Figure \ref{cartoon5} shows the time histories of the MAIN states satisfying conditions $M_{1-5}$ in an interval $R \in \Phi$ ($M_6$ has been omitted since both units are inactive). Overall, this analysis proves that for a fixed interval $R \in \Phi$ any MAIN state of system \ref{model} satisfies only one of the above conditions $M_{1-6}$, and that any pair of MAIN states satisfying the same condition follow exactly the same dynamics in $R$. We can therefore define the dynamics of any MAIN state during any interval $R \in \Phi$ as follows.

\begin{definition} [MAIN classification]
We define the set of MAIN states in $R \in \Phi$ as $ M_R = \{ s=s(t) \mbox{ solutions of \ref{model} satisfying one of conditions } M_{1-6} \mbox{ in } R \} $
\end{definition}

\begin{figure}[htbp]
  \centering
  \includegraphics[width=0.95\linewidth]{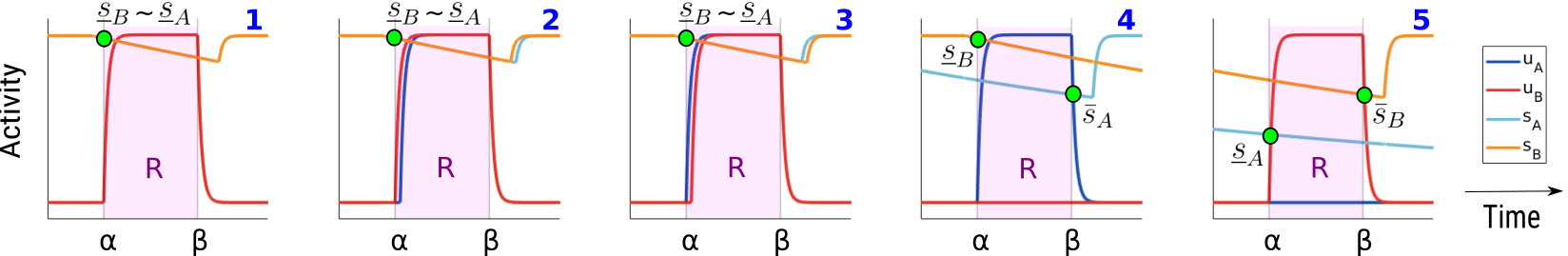}
  \caption{Example dynamics of a MAIN state satisfying condition $M_{1-5}$ in $R \in \Phi$.}
  \label{cartoon5}
\end{figure}

In next Theorem we construct a binary matrix representation for MAIN states defined by their existence conditions. This tool will enable us to define the existence conditions for $2T\!R$-periodic states and to rule out impossible ones (see Theorem \ref{MAIN_2TR}). 

\begin{theorem} \label{thm:matr_MAIN}
Let $R \in \Phi$. There is an injective map 
$$\rho^R \colon M_R \rightarrow B(2,2) \mbox{,} \quad s \mapsto V =
\begin{bmatrix}
    x_A & y_A \\
    x_B & y_B
\end{bmatrix}$$
with entries defined by
\begin{equation} \label{matricial_representation_MAIN}
x_A\!=\! H(f(\underline{s}_B)),
\; 
x_B\!=\! H(g(\underline{s}_A)),
\; 
y_A\!=\!
\begin{cases} 
1 & \textup{if } ax_B\!+\!f(\underline{s}_B) \!\geq\! \theta \\ 
0 & \textup{if } ax_B\!+\!f(\bar{s}_B) \!<\! \theta
\end{cases},
\; 
y_B\!=\!
\begin{cases} 
1 & \textup{if } ax_A\!+\!g(\underline{s}_A) \!\geq\! \theta \\ 
0 & \textup{if } ax_A\!+\!g(\bar{s}_A) \!<\! \theta
\end{cases}
\end{equation}
Moreover: 
\begin{equation}
\textup{Im}(\rho^R) = \Omega \eqdef \{V=\rho^R(s) : x_A \leq y_A, \: x_B \leq y_B, \: x_A=x_B=0 \Rightarrow y_A=y_B=0 \}
\label{im_rho_main}
\end{equation}
\end{theorem}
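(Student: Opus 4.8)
The plan is to read the four entries of $V=\rho^R(s)$ off the behaviour of the fast subsystem at the two ends of the active tone interval $R=[\alpha,\beta]$, to show that these readings are unambiguous \emph{because} $s$ is MAIN, and then to characterise the set of matrices so produced. Fix $s\in M_R$ and first collect the structural facts on $R$: by Lemma~\ref{lem:no_saturation} the fast orbit $(u_A,u_B)$ starts from $(0,0)$ at $t=\alpha$; since we are in the case $D>T\!D$ we have $R\subseteq[\alpha,\alpha\!+\!D]$, so Lemma~\ref{lem:syn_decay} gives that $s_A(t\!-\!D)$ and $s_B(t\!-\!D)$ are non-increasing for $t\in R$, hence $\bar s_A\le\underline s_A$, $\bar s_B\le\underline s_B$ and, $f,g$ in \ref{f_g} being affine and non-increasing in their argument, $f(\bar s_B)\ge f(\underline s_B)$ and $g(\bar s_A)\ge g(\underline s_A)$; and by Lemma~\ref{lem:one_transition} each unit is either OFF throughout $R$ or turns ON exactly once in $R$ and then stays ON, while Lemma~\ref{lem:ON-OFF-main} says a unit OFF at $\beta$ is OFF on all of $R$. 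With these in hand, and using $u_A(\alpha)=u_B(\alpha)=0$, the equilibrium table (Table~\ref{equilibria-fast} with $\tilde s_A=\underline s_A$, $\tilde s_B=\underline s_B$) shows $x_A=H(f(\underline s_B))=1$ precisely when $A$ turns ON at $t=\alpha$ itself, and $x_B$ likewise for $B$, whereas $y_A$ and $y_B$ will record whether $A$, resp.\ $B$, is ON at $t=\beta$.

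The second step is well-definedness of $\rho^R$: for $y_A$ (and symmetrically $y_B$) the two clauses in \ref{matricial_representation_MAIN} must be mutually exclusive and, on $M_R$, jointly exhaustive. Exclusivity is immediate from $f(\bar s_B)\ge f(\underline s_B)$: if $ax_B+f(\bar s_B)<\theta$ then $ax_B+f(\underline s_B)<\theta$ too. For exhaustiveness, suppose the ``gap'' $ax_B+f(\underline s_B)<\theta\le ax_B+f(\bar s_B)$ held. In an $A$-interval the drive to $A$ equals $au_B(t)+c-bs_B(t\!-\!D)$, which is $ax_B+f(\underline s_B)<\theta$ just after $\alpha$ (so $A$ does not turn ON at $\alpha$) but is non-decreasing on $R$ (since $u_B$ is non-decreasing and $s_B(t\!-\!D)$ decreasing there) and is at least $ax_B+f(\bar s_B)\ge\theta$ at $t=\beta$; hence $A$ turns ON at some $t^*\in(\alpha,\beta]$ with $t^*>\min(R)$, contradicting that $s$ is MAIN (Definition~\ref{main_connect}). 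The analogous argument handles $y_B$. Thus $\rho^R(s)$ is well defined, and the six conditions $M_1,\dots,M_6$ of Table~\ref{M_conditions} are exactly the six ways the pair $((x_A,y_A),(x_B,y_B))$ can arise for a MAIN state; they are pairwise mutually exclusive (again by $\bar s\le\underline s$) and, as one checks directly from \ref{matricial_representation_MAIN}, they map to six \emph{distinct} matrices of $B(2,2)$. Since by Section~\ref{classification_interval} and Remark~\ref{matricial} any two MAIN states obeying a common $M_i$ have identical dynamics on $R$, this is the sense in which $\rho^R$ is injective.

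It remains to identify $\textup{Im}(\rho^R)$ with $\Omega$; as $\Omega$ is defined in \ref{im_rho_main} to consist of the matrices of the form $\rho^R(s)$ satisfying the three constraints, the reverse inclusion is automatic and the content is $\textup{Im}(\rho^R)\subseteq\Omega$. If $x_A=1$ then $f(\underline s_B)\ge\theta$, so $ax_B+f(\underline s_B)\ge\theta$ and $y_A=1$; hence $x_A\le y_A$, and symmetrically $x_B\le y_B$. If $x_A=x_B=0$ then $f(\underline s_B)<\theta$ and $g(\underline s_A)<\theta$, so $(0,0)$ is a stable equilibrium of the fast subsystem at $\alpha$: neither unit turns ON at $\alpha$, and since $(1,1)$ is not the unique attractor the differential convergence alternative of Section~\ref{differential} cannot fire either; being MAIN, no unit turns ON later in $R$, so $u_A(\beta)=u_B(\beta)=0$, and, the ``gap'' being excluded, $f(\bar s_B)<\theta$ and $g(\bar s_A)<\theta$, i.e.\ $y_A=y_B=0$. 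So every matrix in $\textup{Im}(\rho^R)$ lies in $\Omega$, and one notes in passing that the three constraints single out exactly six matrices of $B(2,2)$, one for each $M_i$. I expect the main obstacle to be the exhaustiveness step for $y_A$ and $y_B$: it requires combining the monotone decay of the delayed synaptic variables on $R$ --- exactly where the hypothesis $D>T\!D$ is used --- with the single-transition Lemma~\ref{lem:one_transition} to see that the \emph{only} way both defining clauses can fail is the CONNECT ``gap'', which by definition never occurs for $s\in M_R$; everything else reduces to bookkeeping with Table~\ref{equilibria-fast} and the differential convergence dichotomy of Section~\ref{differential}.
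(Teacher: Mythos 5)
Your proof is correct and follows essentially the same route as the paper's: establish monotonicity of the delayed synaptic variables on $R$ to make the two clauses defining $y_A,y_B$ mutually exclusive, match the six conditions $M_{1-6}$ to six distinct matrices, verify the three constraints, and count. In fact you are more careful than the paper on two points: you explicitly prove \emph{exhaustiveness} of the two clauses (ruling out the CONNECT ``gap'' $ax_B+f(\underline{s}_B)<\theta\le ax_B+f(\bar{s}_B)$ via the MAIN hypothesis and Lemma~\ref{lem:one_transition}), a step the paper's proof silently assumes; and your derivation of $x_A=x_B=0\Rightarrow y_A=y_B=0$ from the state remaining OFF on $R$ is the right argument, whereas the paper's inequality chain $ax_B+f(\bar{s}_B)\le f(\underline{s}_B)$ runs in the wrong direction given its own relation \ref{f_g_monoto}.
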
 
\begin{proof}
A necessary condition for $\rho^R$ to be well defined is that $y_A$ and $y_B$ cannot be simultaneously equal to $0$ and $1$ (i.e. that both inequalities in their definition are not simultaneously satisfied). Due to the decay of the delayed synaptic variables in $R$ (Lemma \ref{lem:one_transition}) we have $\underline{s}_B\!\geq\!\bar{s}_B$. Moreover, since $f$ and $g$ are monotonically increasing, we have 
\begin{equation}
\label{f_g_monoto}
f(\underline{s}_B)\!\leq\!f(\bar{s}_B) \quad \mbox{and} \quad g(\underline{s}_B)\!\leq\!g(\bar{s}_B)
\end{equation}
Which proves that $y_A$ is exclusively equal to $0$ or $1$ (analogously for $y_B$).

Next, we notice that any matrix $V\!=\!\rho^R(s)$ satisfies the following:
\begin{equation} 
x_A \leq y_A, \quad x_B \leq y_B,  \quad x_A=x_B=0 \Rightarrow y_A=y_B=0
\label{yA-x_A}
\end{equation}
We prove the first inequality $x_A \leq y_A$ ($x_B \leq y_B$ is analogous). WLOG we assume $x_A\!=1$, and therefore $f(\underline{s}_B) \!\geq\! \theta$. Since $a\!\geq\!0$ and $x_B\!\geq\!0$ we have $ax_B\!+\!f(\underline{s}_B) \!\geq\! f(\underline{s}_B) \!\geq\! \theta$, thus implying $y_A\!=\!1$. The final part holds because, given $x_A\!=\!x_B\!=\!0$, we have $ ax_B\!+\!f(\bar{s}_B) \!\leq\! f(\underline{s}_B) \!<\! \theta, \quad ax_A\!+\!g(\bar{s}_A) \!\leq\! g(\underline{s}_A) \!<\! \theta $. 

From conditions \ref{f_g_monoto} and \ref{yA-x_A} it is easily checked that each element $s \in M_R$ satisfying condition $M_{i}$ has one of the following images $\rho^R(s)$:

\begin{small}
\begin{equation}
(M_1) \; 
\begin{bmatrix}
    1 & 1 \\
    1 & 1
\end{bmatrix}
, \;
(M_2) \; 
\begin{bmatrix}
    1 & 1 \\
    0 & 1
\end{bmatrix}
, \;
(M_3) \; 
\begin{bmatrix}
    0 & 1 \\
    1 & 1
\end{bmatrix}
, \;
(M_4) \; 
\begin{bmatrix}
    1 & 1 \\
    0 & 0
\end{bmatrix}
, \;
(M_5) \; 
\begin{bmatrix}
    0 & 0 \\
    1 & 1
\end{bmatrix}
, \;
(M_6) \; 
\begin{bmatrix}
    0 & 0 \\
    0 & 0
\end{bmatrix}
\nonumber
\end{equation}
\end{small}

Since any MAIN state has a distinct image, $\rho^R$ is well defined, injective, and $|Im(\rho^R)|\!=\!6$. Given that the total number of matrices $V \in B(2,2)$ satisfying conditions \ref{yA-x_A} are precisely 6 (no other matrix is possible), we have $Im(\rho^R)\!=\!\Omega$
\end{proof}

\textbf{Classification of CONNECT states.} Our classification and matrix form of CONNECT states follows analogously from that of MAIN states described previously. We recall that in such states at least one unit turns ON at some time in an active tone interval $R\!=\![\alpha,\beta]$. There are three cases to consider:
\begin{enumerate}
	\item Unit A(B) turns ON at time $\alpha$ and B(A) turns ON at time $t^*$, $\exists t^* \in (\alpha,\beta]$. 
	\item Unit A(B) is OFF at time $\beta$ and B(A) turns ON at time $t^*$, $\exists t^* \in (\alpha,\beta]$.  
	\item $\exists t^*, s^* \in (\alpha,\beta]$ times when the A and B unit turns ON. 
\end{enumerate}
These lead to the conditions in Table \ref{C_conditions}, which are explained in the Supplementary Material \ref{connect_appendix}. Case 1. lead to the conditions $C_{1-2}$, cases 2. lead to the conditions $C_{3-4}$, while case 3. leads to two possibilities depending on if A turns ON before or after B: $C_5^1$ and $C_5^2$. For simplicity do not distinguish between these cases and define ($C_5$) as referring to either condition. This leads to the following definition. 

\begin {table}[h]
\begin{center}
\begin{tabular}{ |m{5em}|m{5em}|m{5em}|m{5em}|m{5em}|m{5em}| } 
 \hline
 \centering $C_1$ & \centering $C_2$ & \centering $C_3$ & \centering $C_4$ & \centering $C_5^1$ & $\quad \quad C_5^2$ \\ \hline
 $\begin{aligned}[t] 
  f(\underline{s}_B)\!&\geq\!\theta \\ 
  a\!+\!g(\underline{s}_A)\!&<\!\theta \\
  a\!+\!g(\bar{s}_A)\!&\geq\!\theta
 \end{aligned}$ & 
 $\begin{aligned}[t] 
  g(\underline{s}_A)\!&\geq\!\theta \\ 
  a\!+\!f(\underline{s}_B)\!&<\!\theta \\
  a\!+\!f(\bar{s}_B)\!&\geq\!\theta
 \end{aligned}$ & 
 $\begin{aligned}[t] 
  g(\underline{s}_A)\!&<\!\theta \\ 
  g(\bar{s}_A)\!&\geq\!\theta \\
  a\!+\!f(\bar{s}_B)\!&<\!\theta
 \end{aligned}$ & 
 $\begin{aligned}[t] 
  f(\underline{s}_B)\!&<\!\theta \\ 
  f(\bar{s}_B)\!&\geq\!\theta \\
  a\!+\!g(\bar{s}_A)\!&<\!\theta
 \end{aligned}$ &  
 $\begin{aligned}[t] 
  t^*\!&\leq\!s^* \\ 
  f(\underline{s}_B)\!&<\!\theta \\
  f(\bar{s}_B)\!&\geq\!\theta \\
  a\!+\!g(\bar{s}_B)\!&\geq\!\theta
 \end{aligned}$ & 
 $\begin{aligned}[t] 
  t^* \!&>\! s^* \\ 
  g(\underline{s}_A)\!&<\!\theta \\
  g(\bar{s}_A)\!&\geq\!\theta \\ 
  a\!+\!f(\bar{s}_A)\!&\geq\!\theta
 \end{aligned}$ 
 \\ \hline
\end{tabular}
\end{center}
\caption {Existence conditions for CONNECT states in an interval $R \in \Phi$}
\label{C_conditions}
\end {table}

\begin{definition} [CONNECT classification]
We define the set of CONNECT states in $R \in \Phi$ as $ C_R = \{ s=s(t) \mbox{ solutions of \ref{model} satisfying one of conditions } C_{1-5} \mbox{ in } R \} $. 
\end{definition}

Similar to  MAIN states, the existence conditions for each CONNECT state in $R$ can equivalently be expressed using a binary matrix $W \in B(2,3)$.

\begin{theorem} \label{CONNECT_R}
Set $R \in \Phi$. There is an injective map: 
$$ \varphi^R\colon C_R \rightarrow B(2,3) \mbox{,} \quad
s \mapsto W = 
\begin{bmatrix}
    x_A & y_A & z_A \\
    x_B & y_B & z_B
\end{bmatrix} $$
With entries defined by:
\begin{align}
\begin{split}
x_A\!&=\!H(f(\underline{s}_B)), \: y_A\!=\!H(ax_B\!+\!f(\underline{s}_B)), \: z_A\!=\!H(a\!+\!f(\bar{s}_B)) \\
x_B\!&=\!H(g(\underline{s}_A)), \: y_B\!=\!H(ax_A\!+\!g(\underline{s}_A)), \: z_B\!=\!H(a\!+\!g(\bar{s}_A)) 
\end{split}
\label{matricial_representation_CONN}
\end{align}
And we have:
$$ Im(\varphi^R)\!=\! \{ 
W : x_A \!\leq\! y_A \!\leq\! z_A, 
x_B \!\leq\! y_B \!\leq\! z_B, 
x_A\!=\!x_B\!=\!0 \Rightarrow y_A\!=\!y_B\!=\!0, 
y_A\!<\!z_A \textup{ or } y_B\!<\!z_B
\} $$
\end{theorem}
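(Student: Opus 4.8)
The plan is to mirror the proof of Theorem~\ref{thm:matr_MAIN} with the obvious bookkeeping for the extra column coming from the third ``turning ON'' time. First I would check that $\varphi^R$ is well defined, i.e.\ that each of $y_A,z_A,y_B,z_B$ is unambiguously $0$ or $1$. For $z_A$ and $z_B$ this is automatic because they are plain Heaviside evaluations, so only $y_A$ needs argument: exactly as in the MAIN case, Lemma~\ref{lem:one_transition}(3) gives the decay $\underline{s}_B\!\geq\!\bar{s}_B$ on $R$, hence by monotonicity of $f$ we get $f(\underline{s}_B)\!\leq\!f(\bar{s}_B)$ (and similarly $g(\underline{s}_A)\!\leq\!g(\bar{s}_A)$), which is precisely the consistency needed in the piecewise definition of $y_A,y_B$ in Table~\ref{C_conditions}'s conditions $C_{1-5}$. (Strictly, $y_A$ appears only implicitly in those conditions; one reads it off the fixed-point analysis of Section~\ref{differential}, exactly as for MAIN.)

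Next I would establish the four containment relations in the claimed image. The chain $x_A\!\leq\!y_A\!\leq\!z_A$ follows because $a\!\geq\!0$, $x_B\!\geq\!0$ and $f$ is increasing with $\underline{s}_B\!\geq\!\bar{s}_B$: if $x_A\!=\!1$ then $f(\underline{s}_B)\!\geq\!\theta$ so $ax_B\!+\!f(\underline{s}_B)\!\geq\!\theta$, giving $y_A\!=\!1$; and if $y_A\!=\!1$ then $ax_B\!+\!f(\underline{s}_B)\!\geq\!\theta$, while $a\!+\!f(\bar{s}_B)\!\geq\!a x_B + f(\underline{s}_B)\!\geq\!\theta$ (using $x_B\!\leq\!1$ and $f(\underline{s}_B)\!\leq\!f(\bar{s}_B)$), so $z_A\!=\!1$. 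The analogous chain for the B row is identical with $c\!\leftrightarrow\!d$ swapped. The implication $x_A\!=\!x_B\!=\!0\Rightarrow y_A\!=\!y_B\!=\!0$ is verbatim the MAIN argument: $x_B\!=\!0$ kills the $ax_B$ term so $y_A\!=\!H(f(\underline{s}_B))\!=\!x_A\!=\!0$. Finally, the disjunction $y_A\!<\!z_A$ or $y_B\!<\!z_B$ is the defining feature of a CONNECT (as opposed to MAIN) state: Definition~\ref{main_connect} requires some unit to turn ON at a time $t^*\!>\!\min(R)$, and such a late OFF\,$\to$\,ON transition is exactly the situation where that unit is OFF at $\alpha\!-\!D$-driven level but ON by $\beta$, i.e.\ its middle entry $0$ and its last entry $1$; Lemma~\ref{lem:one_transition}(2) guarantees this happens for at most one of the two units in the ``one late transition'' cases and for (at least) one of them always, so at least one strict inequality holds.

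For the reverse inclusion I would enumerate: the matrices $W\in B(2,3)$ obeying $x_A\!\leq\!y_A\!\leq\!z_A$, $x_B\!\leq\!y_B\!\leq\!z_B$, $(x_A\!=\!x_B\!=\!0\Rightarrow y_A\!=\!y_B\!=\!0)$, and ($y_A\!<\!z_A$ or $y_B\!<\!z_B$) are finitely many, and each row independently is one of $(0,0,0),(0,0,1),(0,1,1),(1,1,1)$; imposing the cross-row constraint and the ``at least one late transition'' disjunction leaves exactly the six matrices realised by $C_1,\dots,C_4,C_5^1,C_5^2$ (which I would list, as in the MAIN proof). Matching each admissible $W$ to the corresponding condition in Table~\ref{C_conditions} shows $\mathrm{Im}(\varphi^R)$ contains all of them, and since distinct conditions give distinct matrices, $\varphi^R$ is injective with image exactly the stated set.

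The main obstacle is the cross-row book-keeping when $a\!>\!0$ couples the two units: unlike the diagonal entries, $y_A$ depends on $x_B$ and $y_B$ on $x_A$, so I must be careful that the piecewise definition in \eqref{matricial_representation_CONN} is evaluated at the correct synaptic argument ($\underline{s}$ versus $\bar{s}$) in each branch, and that the $C_5^1$ versus $C_5^2$ split (which of A, B turns ON first) does not produce a seventh matrix — it does not, because both sub-cases of case~3 yield the same pattern of zeros and ones in $W$ up to the row swap already accounted for, which is why the paper collapses them into a single $C_5$. Verifying that no admissible $W$ is left unmatched — in particular that a state with $y_A\!<\!z_A$ \emph{and} $y_B\!<\!z_B$ simultaneously is genuinely realisable by some $C_5$ state and is not excluded by Lemma~\ref{lem:one_transition} — is the one place where I would slow down and check the fast-subsystem fixed-point inequalities explicitly rather than appeal to the MAIN analogy.
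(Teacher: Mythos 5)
Your proposal is correct and follows essentially the same route as the paper's proof in the Supplementary Material: monotonic decay of the delayed synaptic variables together with $a\geq 0$ gives the row-wise chains and the cross-row implication, each condition $C_{1-5}$ is matched to its matrix (with $C_5^1$ and $C_5^2$ collapsing to a single matrix), and surjectivity follows by enumerating the admissible matrices. Two cosmetic remarks: in the CONNECT case every entry in (\ref{matricial_representation_CONN}) is a single Heaviside evaluation, so the well-definedness argument you import from the MAIN case is not actually needed here, and the admissible matrices number five rather than six.
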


The proof of this theorem is similar to the one of Theorem \ref{thm:matr_MAIN} and is given in the Supplementary Material \ref{appendix_CONN_matr}. As shown in this proof, each CONNECT state satisfying one of conditions $C_{1-5}$ has a corresponding image $\varphi^R(s)$ shown below. 

\begin{small}
\begin{equation}
(C_1) \; 
\begin{bmatrix}
    1 & 1 & 1 \\
    0 & 0 & 1
\end{bmatrix}
\quad 
(C_2) \; 
\begin{bmatrix}
    0 & 0 & 1 \\
    1 & 1 & 1
\end{bmatrix}
\quad 
(C_3) \; 
\begin{bmatrix}
    0 & 0 & 0 \\
    0 & 0 & 1
\end{bmatrix}
\quad 
(C_4) \; 
\begin{bmatrix}
    0 & 0 & 1 \\
    0 & 0 & 0
\end{bmatrix}
\quad 
(C_5) \; 
\begin{bmatrix}
    0 & 0 & 1 \\
    0 & 0 & 1
\end{bmatrix}
\quad 
\nonumber
\end{equation}
\end{small}

The previous two theorems naturally lead to the definition of the matrix form of the MAIN and CONNECT states in each interval $R \in \Phi$. 
\begin{definition} [Matrix form]
Let $R \in \Phi$ be an active tone interval:
\begin{itemize}
 \item The matrix form of a MAIN state $s \in M_R$ is $V\!=\!\rho^R(s)$ defined by \ref{matricial_representation_MAIN}.
 \item The matrix form of a CONNECT state $s \in C_R$ is $W\!=\!\varphi^R(s)$ defined by \ref{matricial_representation_CONN}.
\end{itemize}
\label{main_matr_def}
\end{definition}

\begin{remark} [Visualisation via the Matrix form] \label{visualization_matr}
The first (second) row of the matrix form of each MAIN state provide an intuitive visualization of its A (B) units' dynamics in $R$. Indeed, given $\delta$ as defined in Section \ref{differential} we may subdivide $R$ into $ R=[\alpha,\alpha\!+\!\delta] \cup [\alpha\!+\!\delta,\beta] $. The dynamics of the A unit at time $\alpha$ is given by $x_A$. If $x_A\!=\!1$ the A unit turns ON at time $\alpha$ and remains ON in $(\alpha,\beta]$. If $x_A=0$ and $y_A\!=\!1$ the A unit is OFF at time $\alpha$, turns ON at time $\alpha+\delta$ and remains ON in $(\alpha+\delta,\beta]$. If $y_A\!=\!0$ (which implies $x_A\!=\!0$) the A unit is OFF $\forall t \in R$. Similar considerations hold for the B unit. 

Similarly, the dynamics of the A (B) unit in $R$ of a CONNECT state is represented by the first (second) row of its matrix form. For example, for the state defined by condition $C_2$ unit A turns ON at some time $t^* \in (\alpha,\beta]$, while unit B turns ON at time $\alpha$. Given $\delta$ as defined in Section \ref{differential}, we may subdivide $R$ into $ R=[\alpha,\alpha\!+\!\delta] \cup [\alpha\!+\!\delta,t^*] \cup [t^*,\beta] $. From conditions $C_2$ we have $y_A\!=\!0$ (which implies $x_A\!=\!0$) and $z_A\!=\!1$. Thus A is OFF during $[\alpha,\alpha+\delta]$ and $[\alpha\!+\!\delta,t^*]$, turns ON at time $t^*$ and remains ON in $[t^*,\beta]$. Since $x_B\!=\!1$ (which implies $y_B\!=\!z_B\!=\!1$), the B unit turns ON at time $\alpha$ and remains ON in $[\alpha,\beta]$. 
\end{remark}

\begin{remark} [Matrix form extension for MAIN states] \label{link_main_connect_matr}
We showed that MAIN (CONNECT) states in an interval $R \in \Phi$ can be represented using a $2 \times 2$ ($2 \times 3$) binary matrix. However, MAIN states can also be equivalently represented using the same $2 \times 3$ matrix form $W$ defined for CONNECT states in the previous theorem, by replacing the definition of $z_A$ and $z_B$ with
$ z_A\!=\!H(ay_B\!+\!f(\bar{s}_B))$ and $z_B\!=\!H(ay_A\!+\!g(\bar{s}_A))$. One can check that each existence condition $M_{1-6}$  given in \ref{matricial_representation_CONN} defines one of the following $2 \times 3$ matrices:

\begin{footnotesize}
\begin{equation}
(M_1) 
\begin{bmatrix}
    1 & 1 & 1 \\
    1 & 1 & 1
\end{bmatrix}
(M_2)
\begin{bmatrix}
    1 & 1 & 1 \\
    0 & 1 & 1
\end{bmatrix}
(M_3)
\begin{bmatrix}
    0 & 1 & 1 \\
    1 & 1 & 1
\end{bmatrix}
(M_4) 
\begin{bmatrix}
    1 & 1 & 1 \\
    0 & 0 & 0
\end{bmatrix}
(M_5) 
\begin{bmatrix}
    0 & 0 & 0 \\
    1 & 1 & 1
\end{bmatrix}
(M_6)
\begin{bmatrix}
    0 & 0 & 0 \\
    0 & 0 & 0
\end{bmatrix}
\nonumber
\end{equation}
\end{footnotesize}

This result guarantees that we can represent all the states in the system using a general $2 \times 3$ matrix form (used in Section \ref{sec_2TR_states}).  
\end{remark}

So far we have shown the existence conditions for MAIN and CONNECT states in any active tone interval $R$. The following lemma regards the conditions for which LONG states can occur. This will enable us to complete the existence conditions for all states outside the active tone intervals. 
\begin{lemma} [LONG conditions] \label{lem:LONG_conditions}
A state is LONG if and only if $\exists R=[\alpha,\beta] \in \Phi$:
\vspace{1mm}
\begin{enumerate}
 \item A and B turn ON at times $t^*_A$ and $t^*_B \in R$, respectively. 
 \item $a\!-\!b s_A(\beta\!-\!D) \geq \theta$ and $a\!-\!b s_B(\beta\!-\!D) \geq \theta$.
\end{enumerate}
\vspace{1mm}
Moreover, both units are ON in $[\beta,t^*\!+\!D]$, turn OFF at time $t^*\!+\!D$, and are OFF in $(t^*\!+\!D,t_{up}]$, where $ t^*=\min\{ t^*_A,t^*_B \} \quad \mbox{and}$ and $t_{up}=\min_{s \in I}\{ s > t \}$. 
\end{lemma}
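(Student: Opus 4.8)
The plan is to characterise LONG states by tracing what must happen after both units turn ON inside an active tone interval. Recall from Definition \ref{long_short} that a state is LONG precisely when both units are ON at some $t\in\mathbb{R}-I$; by Theorem \ref{thm:uON}(2) such a $t$ forces both units ON on the whole interval $[t_*,t)$ back to the end of the preceding active tone interval. So the argument splits into the two implications. For the ``only if'' direction, suppose the state is LONG: pick $t\in\mathbb{R}-I$ with both units ON, let $R=[\alpha,\beta]\in\Phi$ be the active tone interval with $\beta=t_*=\max_{s\in I}\{s<t\}$. By Theorem \ref{thm:uON}(2) both units are ON throughout $[\beta,t)$, hence ON at $\beta$, hence (by Lemma \ref{lem:one_transition}(2) and Lemma \ref{lem:ON-OFF-main}) each turned ON exactly once in $R$ at some $t_A^*,t_B^*\in R$ — this is condition (1). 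For condition (2): at times just after $\beta$ the inputs are off ($t\in\mathbb{R}-I$), so by the analysis in Section \ref{fast_dynamics_I} the point $(1,1)$ is a fast-equilibrium there iff $a-b\tilde s_A\geq\theta$ and $a-b\tilde s_B\geq\theta$; since both units do remain ON immediately past $\beta$, evaluating the delayed synaptic variables at the relevant argument (which, by Lemma \ref{lem:one_transition}(3) and $T\!D\!+\!D<T\!R$, is governed by $s_A(\beta-D),s_B(\beta-D)$ up to the monotone decay) gives exactly $a-bs_A(\beta-D)\geq\theta$ and $a-bs_B(\beta-D)\geq\theta$.

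For the ``if'' direction, assume (1) and (2) hold for some $R=[\alpha,\beta]$. By (1) and Lemma \ref{lem:one_transition}(1), both units are ON at $\beta$, so $(1,1)$ is the state of the fast subsystem at $\beta$. Just past $\beta$ the inputs switch off; by Section \ref{fast_dynamics_I}, $(1,1)$ remains a fast-equilibrium as long as $a-bs_A(t-D)\geq\theta$ and $a-bs_B(t-D)\geq\theta$. At $t=\beta$ this is hypothesis (2); by Remark \ref{remark1} the synaptic variables $s_A,s_B$ are monotonically decreasing while their units stay ON, and the delayed quantities $s_A(t-D),s_B(t-D)$ continue to decrease at least until the argument $t-D$ reaches the next OFF$\to$ON transition time — which, using Lemma \ref{lem:no_saturation} and $D>T\!D$, does not happen before $t^*+D$ where $t^*=\min\{t_A^*,t_B^*\}$. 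Hence both inequalities persist on $[\beta,t^*+D)$, so both units stay ON there, in particular at some $t\in(\beta,t^*+D)\cap(\mathbb{R}-I)$ (nonempty since $\beta\in\mathbb{R}-I$ is not isolated on the right when $T\!R>T\!D$), proving the state is LONG.

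It remains to pin down the turn-OFF at $t^*+D$ and the subsequent OFF interval, which also completes the ``if'' direction's bookkeeping. At time $t=t^*+D$ the delayed synaptic variable of the unit that turned ON first (say A, with $t_A^*=t^*$) jumps: $s_A(t-D)=s_A(t^*)$ which by Lemma \ref{lem:one_transition} has just jumped up to $1$, so $a-bs_A(t-D)=a-b<\theta$ by condition \ref{U1}. Thus the fast subsystem (with inputs still off) loses $(1,1)$ and in fact only $(0,0)$ survives, so unit A turns OFF; and then the inhibition onto B collapses too, forcing B OFF as well — so both units turn OFF at $t^*+D$. From there, with inputs off and the synaptic variables decaying, $(0,0)$ stays the attractor until the next active tone interval begins, i.e. until $t_{up}=\min_{s\in I}\{s>t\}$ (here one uses $T\!D\!+\!D<T\!R$ so that the next tone onset is indeed the first place inputs can revive a unit). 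The main obstacle I expect is the careful handling of the \emph{delayed} argument: one must verify that on $[\beta,t^*+D]$ the quantities $s_A(t-D),s_B(t-D)$ are exactly the decaying tails of the jumps that occurred at $t_A^*,t_B^*$ (no intervening jump), which is precisely what Lemma \ref{lem:one_transition}(3) together with $D>T\!D$ and $T\!D+D<T\!R$ guarantees; getting the interval endpoints to line up without off-by-$D$ errors is the delicate part.
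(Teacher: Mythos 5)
Your proof is correct and follows essentially the same route as the paper's: the forward direction via Theorem~\ref{thm:uON} and Lemma~\ref{lem:one_transition}, the converse via persistence of the $(1,1)$ fast equilibrium on $[\beta,t^*\!+\!D]$ from the monotone decay of the delayed synaptic variables, and the turn-OFF at $t^*\!+\!D$ from the delayed variable jumping to $1$ so that $a\!-\!b\!<\!\theta$ (condition~\ref{U1}) leaves $(0,0)$ as the only equilibrium. One inessential slip: since $s_A(t\!-\!D)$ inhibits unit B, it is B that falls below threshold first at $t^*\!+\!D$ and A that follows by losing the excitation $au_B$ --- the reverse of your ordering --- but your conclusion is unaffected because $(0,0)$ being the sole surviving equilibrium already forces both units OFF.
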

\begin{figure}[htbp]
  \centering
  \includegraphics[width=0.5\linewidth]{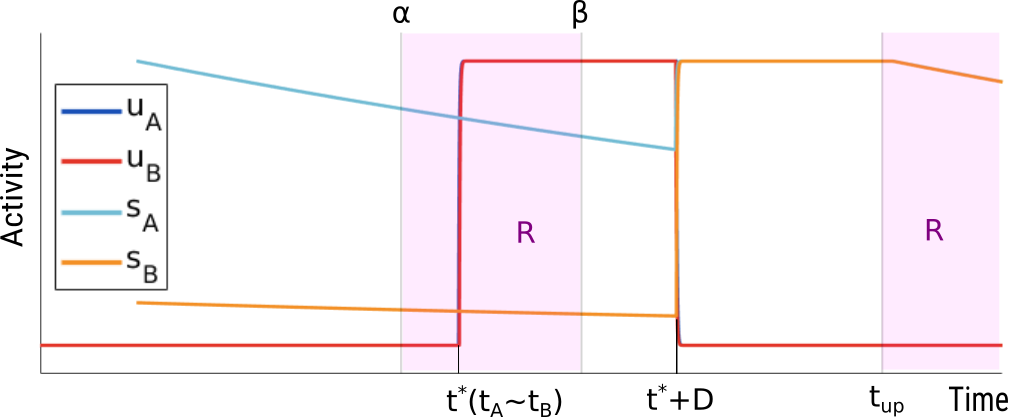}
  \caption{Example dynamics of a LONG state showing quantities used in Lemma \ref{lem:LONG_conditions}. }
  \label{cartoon6}
\end{figure}
The proof of this lemma is given in the Supplementary Material \ref{appendix_LONG_thm}. The idea of the proof is sketched in Figure \ref{cartoon6}. Both units of a LONG stats must be ON at time $\beta$ due to Theorem \ref{thm:uON}, which proves to 2. Furthermore, since both synaptic variables are monotonically decaying in the interval $[\alpha,t^*\!+\!D]$ this activity must persist until time $t^*\!+\!D$, when one of these variables (or both) jump to 1. 

\section{2TR-periodic states} \label{sec_2TR_states}
In the previous section we classified network states in a generic active tone interval. In this section we use this analysis to study $2T\!R$-periodic states under the conditions $D\!>\!TD$ and $T\!D\!+\!D\!<\!T\!R$. We analytically derive the parameter conditions leading to the existence of all $2T\!R$-periodic states in the system and use the matrix form to rule out which states cannot exist. 

\begin{definition}
A state $\psi \!=\! \psi(t) \!=\! (u_A(t),u_B(t),s_A(t),s_B(t))$ is $2T\!R$-periodic if $ \psi(t\!+\!2T\!R) = \psi(t)$, $\forall t \in \mathbb{R}$. We call $SM$ and $LM$ ($SC$ and $LC$) the sets of $2T\!R$-periodic MAIN (CONNECT) states of the SHORT and LONG type, respectively. 
\end{definition}

Before analyzing these states it is important to first assess the model's symmetry.  
\begin{remark} [$\mathbb{Z}_2$ symmetry]
\label{symmetry}
  System \ref{model} is symmetric under a transformation swapping the A and B indexes in system \ref{model} and by applying the time shift $T\!R$ to the active tone input functions. Indeed let us rewrite the model as a general non-autonomous dynamical system
$$ \dot{v}(t)=z(v(t),i_A(t),i_B(t)), \quad v\!=\!(u_A,u_B,s_A,s_B)$$
Now consider the map $\kappa$ whose action swaps the A and B indices of all variables, defined as
$$ \kappa: v=(u_A,u_B,s_A,s_B,i_A,i_B) \mapsto (u_B,u_A,s_B,s_A,i_B,i_A) $$
Since $i_A(t\!+\!T\!R)\!=\!i_B(t)$ and $i_B(t\!+\!T\!R)\!=\!i_A(t)$, $\forall t \in \mathbb{R}$, we have 
$$ \kappa(z(v(t),i_A(t),i_B(t))) \!=\! z(\kappa(v(t\!+\!T\!R),i_B(t\!+\!T\!R),i_A(t\!+\!T\!R))),$$
which proves the model is symmetric under the transformation $\kappa$ time shifted by $T\!R$. Given that no  symmetric transformation other than $\kappa$ and the identity exist, the system is $\mathbb{Z}_2$-equivariant. Thus, given a periodic solution $v(t)$ with period $T$, its $\kappa$-conjugate cycle $\kappa(v(t\!+\!TR))$ must also be a solution with equal period (asymmetric cycle), except in the case that $v(t)\!=\!\kappa(v(t)), \, \forall t \!\in\! [0,T]$ (symmetric cycle).  Asymmetric cycles always exist in pairs: the cycle and its conjugate. We note that in-phase and anti-phase limit cycles with period $2T\!R$ are both symmetric cycles. 
\end{remark}

To study  $T\!R$-periodic states we can replace the set of active tone intervals $I$ with:
$$I=I_1 \cup I_2=[0,T\!D] \cup [T\!R,T\!R\!+\!T\!D]$$
As shown in the previous section, for any state $\psi \in SM$ the activities of both units during each interval $I_i$, with $i=1,2$, can be represented by a matrix $V_i$. This matrix uniquely depends on the values of the delayed synaptic variables at times $\alpha_i=(i\!-\!1)T\!R$ and $\beta_i=(i\!-\!1)T\!R\!+\!T\!D$. More precisely, in equations \ref{matricial_representation_MAIN} we must substitute $\underline{s}_A$ with $s^{i-}_A$, $\bar{s}_A$ with $s^{i+}_A$, $\underline{s}_B$ with $s^{i-}_B$ and $\bar{s}_B$ with $s^{i+}_B$, where:
\begin{equation}
s^{i-}_A\!=\!s_A(\alpha_i-D), \quad
s^{i-}_B\!=\!s_B(\alpha_i\!-\!D), \quad
s^{i+}_A\!=\!s_A(\beta_i\!-\!D), \quad
s^{i+}_B\!=\!s_B(\beta_i\!-\!D) \\
\label{syn_values}
\end{equation}

\subsection{SHORT states} \label{short_2TR}
It turns out (see Theorem below) that for SHORT MAIN and CONNECT states these values depend on the following quantities, as stated in the next Theorem.  
\begin{equation}
 N^- \!=\! e^{-(TR\!-\!TD\!-\!D)/\tau_i}, \; N^+ = e^{-(TR\!-\!D)/\tau_i}, \; 
 M^- \!=\! e^{-(2TR\!-\!TD\!-\!D)/\tau_i}, \; M^+ = e^{-(2TR\!-\!D)/\tau_i} 
 \label{N_M_MAIN}
\end{equation}
We note that $N^- \!\geq\! N^+ \!\geq\! M^- \!\geq\! M^+$. The dependence of the synaptic variables on these quantities is crucial, because it guarantees that the existence conditions shown in Table \ref{M_conditions} depend uniquely on the model parameters for $2T\!R$-periodic states. 

\begin{theorem} \label{MAIN_2TR}
There is an injective map 
$$\rho\colon SM \rightarrow B(2,4) \mbox{,} \quad
\psi \mapsto V =
\begin{bmatrix} [c|c]
    V_1 & V_2 
\end{bmatrix}
=
\begin{bmatrix} [cc|cc]
    x_A^1 & y_A^1 & x_A^2 & y_A^2 \\
    x_B^1 & y_B^1 & x_B^2 & y_B^2
\end{bmatrix}$$

Where $V_1$ ($V_2$) is the matrix form of $\psi$ in $I_1$ ($I_2$) defined by \ref{matricial_representation_MAIN}, and: 
\begin{equation}
s_B^{i \pm}\!=\!N^{\pm}y_B^j\!+\!M^{\pm}(1-y_B^j)y_B^i, 
\quad  
s_A^{i \pm}\!=\!N^{\pm}y_A^j\!+\!M^{\pm}(1-y_A^j)y_A^i, 
\quad \forall j\!=\!1,2, j\!\neq\!i
\label{syn_values_MAIN_SHORT}
\end{equation} 
In addition, 
$$Im(\rho)=\Omega \eqdef \{V =
\begin{bmatrix} [c|c]
    V_1 & V_2 
\end{bmatrix} :
V_1 \in Im(\rho^{I_1}), V_2 \in Im(\rho^{I_1}) \mbox{ satisfying 1-4 below}  \}$$
\begin{enumerate} \label{MAIN_conditions}
 \item $y_A^1=y_B^2=1 \Rightarrow x_A^1=x_B^2$ and $y_A^2=y_B^1=1 \Rightarrow x_A^2=x_B^1$
 \item $y_B^1=y_B^2 \Rightarrow x_A^1 \geq x_A^2$ and $y_A^1=y_A^2 \Rightarrow x_B^2 \geq x_B^1$
 \item $y_A^2=1 \Rightarrow x_B^1 \leq r$ and $y_B^1=1 \Rightarrow x_A^2 \leq r$, for any entry $r$ in $V$
 \item $y_A^2=y_B^2$, $y_A^1=y_B^1 \Rightarrow x_A^1 \geq x_B^1$ and $x_B^2 \geq x_A^2$
\end{enumerate}
\end{theorem}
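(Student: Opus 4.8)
The plan is to split the statement into two independent tasks: (A) establish the closed‑form expressions \ref{syn_values_MAIN_SHORT} for the delayed synaptic values together with injectivity of $\rho$, and (B) characterise $\mathrm{Im}(\rho)$ by conditions 1--4. For (A) I would exploit the SHORT, $2T\!R$‑periodic structure. By Lemma \ref{lem:no_saturation} and the SHORT hypothesis, in the singular limit each unit is ON on (essentially all of) exactly those intervals $I_i$ for which the corresponding $y$‑entry is $1$, and OFF everywhere else; consequently $s_A$ equals $1$ at the right endpoint of every interval in which $A$ is ON and decays like $e^{-\Delta t/\tau_i}$ from $1$ in between (Remark \ref{remark1}). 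To evaluate $s_A^{i\pm}=s_A(\alpha_i-D)$ or $s_A(\beta_i-D)$ one therefore only has to locate the most recent ON‑episode of $A$ before that time. Using $T\!R\ge D$, $D>T\!D$, $T\!D\!+\!D<T\!R$ and $2T\!R$‑periodicity, each of the four shifted instants (and its periodic images) lands in one of the two inter‑tone gaps, and a short case analysis on the pair $(y_A^i,y_A^j)$ gives three possibilities: ``$y_A^j=1$'' means one gap of decay, i.e. $N^{\pm}$; ``$y_A^j=0,\ y_A^i=1$'' means almost a full period of decay, i.e. $M^{\pm}$; ``both $0$'' means $s_A\equiv 0$. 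This is exactly \ref{syn_values_MAIN_SHORT}, and the $s_B$ identities follow either verbatim or from the $\mathbb{Z}_2$ symmetry of Remark \ref{symmetry}. Injectivity is then immediate: a SHORT $2T\!R$‑periodic MAIN state decays to $(0,0)$ on $\mathbb{R}\!-\!I$, and its behaviour on $I_1\cup I_2$ is, by Remark \ref{visualization_matr}, completely encoded by $V_1,V_2$ (whose associated synaptic values are, by the identities just proved, themselves functions of $V$), so $\rho(\psi)=\rho(\psi')$ forces $\psi=\psi'$.

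For the forward inclusion in (B) write $V=\rho(\psi)$. Then $V_i=\rho^{I_i}(\psi|_{I_i})\in\mathrm{Im}(\rho^{I_i})=\mathrm{Im}(\rho^{I_1})$, since by Theorem \ref{thm:matr_MAIN} the admissible $2\times2$ matrices are the same six in either interval. Conditions 1--4 are obtained by substituting the explicit values \ref{syn_values_MAIN_SHORT} into the entry‑definitions \ref{matricial_representation_MAIN}, keeping in mind that $I_1$ is an A‑tone interval (so $f=c-bs$, $g=d-bs$ there) and $I_2$ a B‑tone interval (so $f=d-bs$, $g=c-bs$ there), and using the natural ordering $c\ge d$ of the input strengths, $c\ge\theta$ (\ref{U2}), the chain $N^-\!\ge\!N^+\!\ge\!M^-\!\ge\!M^+$ and monotonicity of $f,g$. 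For instance, condition 1 holds because $y_A^1\!=\!y_B^2\!=\!1$ forces $s_A^{2-}\!=\!s_B^{1-}\!=\!N^-$, whence $x_A^1\!=\!H(c-bN^-)\!=\!x_B^2$ (and similarly for the second implication with $d$ in place of $c$); condition 2 is a direct consequence of $c\ge d$ once the relevant $y$'s coincide, so that the two synaptic values coincide; condition 3 follows from the observation that $x_B^1\!=\!H(d-bN^-)\!=\!1$ already forces $d-bN^-\ge\theta$, hence, since every $s^{\pm}$ occurring is $\le N^-$ and $c\ge d$, every entry of $V$ equals $1$; and condition 4 follows because its hypotheses make $s_A^{1-}\!=\!s_B^{1-}$ (resp. $s_A^{2-}\!=\!s_B^{2-}$), after which $c\ge d$ gives the asserted inequalities.

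For the reverse inclusion, take $V$ with $V_1,V_2\in\Omega$ satisfying 1--4, \emph{define} candidate synaptic values by \ref{syn_values_MAIN_SHORT}, and build a trajectory $\psi_V$ by solving the fast subsystem \ref{fast-subsystem-I} from $(0,0)$ on each $I_i$ with these delayed inputs and letting it decay on $\mathbb{R}\!-\!I$, extended $2T\!R$‑periodically. One must check: (a) the synaptic variables of $\psi_V$, evaluated at the instants in \ref{syn_values}, reproduce the prescribed values and close the period --- this is the computation of (A) run in reverse; (b) inserting those values into \ref{matricial_representation_MAIN} returns $V$ --- here $V_i\in\Omega$ guarantees the $y$‑branches are well defined and internally consistent in each interval, while conditions 1--4 are precisely what is needed for consistency \emph{across} the two intervals (together with \ref{U1}, \ref{U2}); (c) $\psi_V$ is a genuine solution of \ref{model} --- licensed piecewise by the fixed‑point analysis of Section \ref{fast_dyn} and Lemmas \ref{lem:syn_decay}--\ref{lem:one_transition} --- and is SHORT, for which one invokes Lemma \ref{lem:LONG_conditions}: being LONG would require some $i$ with $y_A^i\!=\!y_B^i\!=\!1$ together with $a-b s_A^{i+}\ge\theta$ and $a-b s_B^{i+}\ge\theta$, which under \ref{U1} and the admissible‑matrix constraints is excluded.

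I expect the reverse inclusion to be the main obstacle. The forward direction is a careful but essentially mechanical substitution, whereas realising a prescribed admissible $V$ as an \emph{honest} $2T\!R$‑periodic SHORT solution requires assembling the piecewise trajectory and verifying \emph{simultaneously} that it closes up periodically, that the matrix it induces is exactly $V$, and that it does not covertly become a LONG state. This is where the precise shape of conditions 1--4 is indispensable and where one leans hardest on the structural results (Theorem \ref{thm:uON}, Lemmas \ref{lem:no_saturation}, \ref{lem:one_transition}, \ref{lem:LONG_conditions}); some bookkeeping with the $O(\tau)$ ``differential‑convergence'' delays of Section \ref{differential} is also needed, but these are negligible in the singular limit in which the whole analysis is carried out.
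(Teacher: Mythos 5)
Your part (A) and your forward inclusion in part (B) follow essentially the paper's own route: the same case analysis on the pair of $y$\nobreakdash-entries, locating the most recent ON\nobreakdash-episode before each shifted instant, yields the identities \ref{syn_values_MAIN_SHORT}, and conditions 1--4 come from the same substitutions using $c\ge d$ and $N^-\!\ge\!N^+\!\ge\!M^-\!\ge\!M^+$. The genuine gap is in your reverse inclusion, at the point where you argue that the constructed state is SHORT. You claim that the LONG alternative of Lemma \ref{lem:LONG_conditions} --- some $i$ with $y_A^i\!=\!y_B^i\!=\!1$ together with $a-bs_A^{i+}\ge\theta$ and $a-bs_B^{i+}\ge\theta$ --- is ``excluded under \ref{U1} and the admissible-matrix constraints.'' It is not: \ref{U1} only gives $a-b<\theta$, whereas the relevant synaptic values $s^{i+}$ equal $N^+$ or $M^+$, which are strictly less than $1$, so $a-bN^+\ge\theta$ or $a-bM^+\ge\theta$ is perfectly compatible with $a-b<\theta$. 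Indeed the paper's Table \ref{tab:MAIN_LONG_table} lists LONG MAIN states (e.g.\ $IL_1$, $SL$, $ASL$) whose blocks $V_1,V_2$ coincide with those of admissible SHORT matrices, and Remark \ref{extra_cond_remark} shows that the extra inequalities $C_9<\theta$ or $C_{10}<\theta$ must be imposed on precisely those matrices possessing a column with $y_A^i\!=\!y_B^i\!=\!1$ in order to rule out the LONG outcome. Without these additional conditions your $\psi_V$ may well be LONG, in which case it does not witness $V\in\mathrm{Im}(\rho)$.

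A related omission: your construction of $\psi_V$ is carried out ``for given parameters,'' but whether the Heaviside conditions return exactly the prescribed matrix $V$ is itself a system of parameter inequalities (the quantities $C_1,\dots,C_{10}$ of \ref{eq1}). Surjectivity onto $\Omega$ therefore also requires showing that, for each of the nine admissible matrix classes, the full set of inequalities --- including the SHORT-ness conditions above --- has a nonempty solution region. This is exactly the content of the middle and bottom rows of Table \ref{tab:MAIN_table}; the paper derives the conditions explicitly (working out $AS$ in detail) and checks that each system is well posed. Your proposal asserts that conditions 1--4 are ``precisely what is needed for consistency across the two intervals,'' but consistency of the matrix is not the same as satisfiability of the parameter constraints, and the latter step is missing.
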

\begin{proof}
The proofs of equations \ref{syn_values_MAIN_SHORT} and of conditions 1-4 is given in the Supplementary Material \ref{appendix_thm_MAIN_R}. The validity of these conditions implies $Im(\rho) \subseteq \Omega$. In the next paragraph we will prove that $Im(\rho)\!=\!\Omega$. Assume for now this to be true. The definition of the entries of $V$ and identities \ref{syn_values_MAIN_SHORT} give multiple necessary and sufficient conditions for determining the dynamics of the corresponding MAIN state $\psi \!=\! \rho^{-1}(V)$ in the intervals $I_1$ and $I_2$, respectively. Due to the model's symmetry (Remark \ref{symmetry}) $V$ is the image of either a symmetrical or an asymmetrical state $\psi$. In the latter case, there exists a matrix $V' \in \Omega$ for a state conjugate to $\psi$. One can easily show that $V'$ is simply defined given $V$ by swapping the first (second) row of $V_1$ with the second (first) row of $V_2$. Notably, both $\psi$ and $\psi'$, and thus also $V$ and $V'$, exist under the same parameter conditions. The second rows of Table \ref{tab:MAIN_table} shows all matrices $V \in \Omega$ that are an image of either of a symmetrical state or one of two conjugate states and their names (1st row). Given that $I$, $AP$ and $ID$ are the only symmetrical cycles (in-phase and anti-phase), from Remark \ref{symmetry} all other states have another existing conjugate cycles that exists under the same conditions.

\begin {table}[h]
\begin{center}
\begin{tabular}{ |m{1em}|m{3em}|m{3em}|m{3em}|m{3em}|m{3em}|m{3em}|m{3em}|m{3em}|m{3em}| } 
 \hline
  - & \centering $S$ & \centering $SB$ & \centering $SD$ & \centering $AP$ & \centering $AS$ & \centering $ASD$ & \centering $I$ & \centering $ID$ & $\quad IB$ \\ \hline

 \begin{turn}{90} Matrix \vspace{1mm} \end{turn} 
 & $\begin{array}{cccc}
  1100 \\ 
  0000 \\
 \end{array}$
 & $\begin{array}{cccc}
  1100 \\ 
  1100 \\
 \end{array}$
 & $\begin{array}{cccc}
  1100 \\ 
  0100 \\
 \end{array}$
 & $\begin{array}{cccc}
  1100 \\ 
  0011 \\
 \end{array}$
 & $\begin{array}{cccc}
  1111 \\ 
  0011 \\
 \end{array}$
 & $\begin{array}{cccc}
  1101 \\ 
  0011 \\
 \end{array}$
 & $\begin{array}{cccc}
  1111 \\ 
  0000 \\
 \end{array}$
 & $\begin{array}{cccc}
  1101 \\ 
  0111 \\
 \end{array}$
 & $\begin{array}{cccc}
  1111 \\ 
  1111 \\
 \end{array}$
 \\ \hline
 
 \begin{turn}{90} Conditions \vspace{1mm} \end{turn} & 
 $\begin{aligned}[t] 
  C_1 \!&<\! \theta \\ 
  C_2^+ \!&<\! \theta \\ 
  C_3^+ \!&<\! \theta 
 \end{aligned}$ & 
 
 $\begin{aligned}[t] 
  C_3^+ \!&<\! \theta \\ 
  C_8^- \!&\geq\! \theta 
 \end{aligned}$ & 

 $\begin{aligned}[t] 
  C_4^- \!&\geq\! \theta \\ 
  C_2^- \!&\geq\! \theta \\ 
  C_3^+ \!&<\! \theta \\ 
  C_8^- \!&<\! \theta  
 \end{aligned}$ & 
 
 $\begin{aligned}[t] 
  C_2^+ \!&<\! \theta \\ 
  C_3^- \!&\geq\! \theta 
 \end{aligned}$ &  
 
 $\begin{aligned}[t] 
  C_3^- \!&\geq\! \theta \\ 
  C_5^+ \!&<\! \theta \\ 
  C_8^- \!&\geq\! \theta 
 \end{aligned}$ & 
 
 $\begin{aligned}[t] 
  C_2^- \!&\geq\! \theta \\ 
  C_3^- \!&\geq\! \theta \\ 
  C_5^+ \!&<\! \theta \\ 
  C_8^- \!&<\! \theta 
 \end{aligned}$ & 
 
 $\begin{aligned}[t] 
  C_1 \!&\geq\! \theta \\ 
  C_6^+ \!&<\! \theta 
 \end{aligned}$ &  
 
 $\begin{aligned}[t] 
  C_3^- \!&\geq\! \theta \\ 
  C_5^- \!&\geq\! \theta \\ 
  C_7^- \!&<\! \theta 
 \end{aligned}$ &  
 
 $\begin{aligned}[t] 
  C_7^- \!\geq\! \theta 
 \end{aligned}$ 
 \\ \hline 
 \begin{turn}{90} Short \vspace{1mm} \end{turn} &  
 $-$ & 
 $C_{9}\!<\!\theta$ & 
 $C_{9}\!<\!\theta$ & 
 $-$ & 
 $\begin{aligned}[t] C_{10}\!<\!\theta \end{aligned}$ &
 $\begin{aligned}[t] C_{10}\!<\!\theta \end{aligned}$ &
 $-$ & 
 $\begin{aligned}[t] C_{10}\!<\!\theta \end{aligned}$ &
 $\begin{aligned}[t] C_{10}\!<\!\theta \end{aligned}$ 
 \\ \hline
\end{tabular}
\end{center}
\caption {Matrix form and existence conditions of all $2T\!R$-periodic SHORT MAIN states. Names (first row) were chosen following our proposed link between states and percepts in auditory streaming (see Section \ref{motivation}). Names starting with S correspond to segregation (no unit responds to both tones), I to integration (one unit responds to both tones, the other is inactive or responding to both tones, too) and AP to bistability (one unit responds to both tones, the other unit to every other tone). The letter D corresponds to states for which one units turns ON with a small delay after the other unit in one active tone interval. The letter B corresponds to states for which both units follow the same dynamics. }
\label{tab:MAIN_table}
\end {table}

In the next part we define the conditions for existence of each of the states reported in the third row of Table \ref{tab:MAIN_table}, which are equivalent to the well-definedness conditions of the corresponding matrix form $V \in \Omega$. These conditions depend on:

\begin{small}
\begin{equation} 
\label{eq1}
 \begin{split}
  C_1 \!=\! d, \quad
  C_2^{\pm} \!=\! a\!-\!bM^{\pm}\!+\!d, \quad 
  C_3^{\pm} \!=\! \!c\!-\!bN^{\pm}, \quad
  C_4^{\pm} \!=\! \!c\!-\!bM^{\pm}\!, \quad
  C_5^{\pm} \!=\! a\!-\!bN^{\pm}\!+\!d, \quad  \\
  C_6^{\pm} \!=\! a\!-\!bN^{\pm}\!+\!c, \quad
  C_7^{\pm} \!=\! \!d\!-\!bN^{\pm}\!, \quad
  C_8^{\pm} \!=\! \!d\!-\!bM^{\pm}\!, \quad
  C_9 \!=\! \!a-\!bM^{+} \quad  
  C_{10} \!=\!a\!-\!bN^{+}  
 \end{split}
\end{equation}
\end{small}

One determine conditions for the well-definiteness of each matrix $V \in \Omega$ from the definitions of the entries of $V_1$ and $V_2$ given in \ref{yA-x_A} and using formulas \ref{syn_values_MAIN_SHORT}. Notably, all the existence conditions uniquely depend on the system's parameters. When determining these conditions one notices that many of them are redundant, and can be simplified using the following properties: $N^- \!\geq\! N^+ \!\geq\! M^- \!\geq\! M^+$, $d \!\leq\! c$ and $a \geq 0$. In the next paragraph, we show one example ($AS$) and leave the remaining for the reader to prove. The names and the sets of inequalities defining each state is reported in the middle row of Table \ref{tab:MAIN_table}. We note such inequalities are well-posed, meaning that there is a region of parameter where they are all satisfied. This effectively proves that for each matrix $V \in \Omega$ there exists a state $\psi\!=\!\rho^{-1}(V) \in SM$ whose dynamics during intervals $I_1$ and $I_2$ are defined by the entries of $V$. 

We now prove that $AS$' existence conditions in Table \ref{tab:MAIN_table} are well-defined, that is:
\begin{equation}
 V_{AS}=\begin{bmatrix} [cc|cc]
    1 & 1 & 1 & 1 \\
    0 & 0 & 1 & 1
 \end{bmatrix} 
 \quad \Leftrightarrow \quad
 C_3^- \!\geq\! \theta, \quad
 C_5^+ \!<\! \theta, \quad
 C_8^- \!\geq\! \theta. \quad
 \label{AS}
\end{equation}
From condition (1) in \ref{MAIN_conditions} we have that
$$ x_A^1\!=\!1 \Rightarrow y_A^1\!=\!1, \quad x_A^2\!=\!1 \Rightarrow y_A^2\!=\!1, \quad x_B^2\!=\!1 \Rightarrow y_B^2\!=\!1, \quad y_B^1\!=\!0 \Rightarrow x_B^1\!=\!0, $$
This obviously leads to the follow equivalence
\begin{equation}
 V_{AS}=\begin{bmatrix} [cc|cc]
    1 & 1 & 1 & 1 \\
    0 & 0 & 1 & 1
 \end{bmatrix} 
 \quad \Leftrightarrow \quad
 x_A^1 \!=\! 1, \quad
 x_B^2 \!=\! 1, \quad
 x_A^2 \!=\! 1, \quad
 y_B^1\!=\!0.
 \nonumber
\end{equation}
Using the definition of the entries defined in \ref{yA-x_A} and the identities for the synaptic quantities given in equations \ref{syn_values_MAIN_SHORT} we observe the following: 
\vspace{2mm} \begin{enumerate}
 \item $y_A^1\!=\!1 \left( y_B^2\!=\!1 \right) \Rightarrow s_A^{2-}\!=\!N^- \left( s_B^{1-}\!=\!N^- \right) $, which implies $x_B^2\!=\!x_A^1\!=\!H(c\!-\!bN^-)$
 \item $y_B^1\!=\!0 \mbox{ and } y_B^2\!=\!1 \Rightarrow s_B^{2-}\!=\!M^-$. From this $x_A^2\!=\!H(d\!-\!bM^-)$  
 \item $y_A^2\!=\!1 \Rightarrow s_A^{1+}\!=\!N^+$. This and $y_B^1\!=\!0$ give $y_B^1\!=\!H(a\!+\!d\!-\!bN^+)$
\end{enumerate}
\vspace{2mm} Overall, from the cases (1-3) above we obtain \vspace{-2mm}
\begin{equation}
 x_A^1 \!=\! 1, x_B^2 \!=\! 1 \Leftrightarrow C_3^- \!\geq\! \theta, 
 \quad x_A^2 \!=\! 1 \Leftrightarrow C_8^- \!\geq\! \theta, 
 \quad y_B^1\!=\!0 \Leftrightarrow C_5^+ \!<\! \theta.
 \nonumber
\end{equation}
This completes the proof for both the claim \ref{AS} and the Theorem.
\end{proof}

\begin{remark} [Conditions $C_9$ and $C_{10}$] \label{extra_cond_remark}
The middle row of Table \ref{tab:MAIN_table} shows existence condition that determine the dynamics of each state in the intervals $I_1$ and $I_2$. However, they do not guarantee these states being OFF in $[0,2T\!R]\!-\!I$ (ie being SHORT). From Lemma \ref{lem:LONG_conditions} there are two cases to consider: 
\begin{enumerate}
 \item If both units turn ON during interval $I_1$ or $I_2$ one must guarantee that the second condition of Lemma \ref{lem:LONG_conditions} is not valid in each interval $I=[\alpha,\beta]=I_1$ or $I_2$ during which this occurs, one must impose 
 \begin{equation}
 \min \{ a\!-\!b s_A(\beta\!-\!D),a\!-\!b s_B(\beta\!-\!D) \} \!<\! \theta 
 \label{extra_conditions}
 \end{equation}
 This condition is expressed differently for each MAIN state in Table~\ref{tab:MAIN_table}:
 \begin{itemize}
  \item For $SB$ and $SD$ both units turn ON during interval $I_1$ ($I_2$ for their conjugate state). Equations \ref{syn_values_MAIN_SHORT} lead to $s_A(T\!D\!-\!D)\!=\!s_B(T\!D\!-\!D)=M^+$. Thus condition $C_9\!<\!0$ guarantees that inequalities \ref{extra_conditions} are satisfied. 
  \item For $AS$ and $ASD$ both units are ON during $I_2$ ($I_1$ for their conjugate state). To guarantee condition \ref{extra_conditions} at time $\beta\!=\!T\!R\!+\!T\!D$ one notices that equations \ref{syn_values_MAIN_SHORT} give $s_A(T\!R\!+T\!D\!-\!D)\!=N^+$ and $s_B(T\!R\!+T\!D\!-\!D)=M^+$. Thus, condition $C_{10}\!<\!0$ guarantees that condition \ref{extra_conditions} is satisfied. 
  \item For states $ID$ and $IB$ we notice that condition \ref{extra_conditions} is symmetrical on both intervals $I_1$ and $I_2$. Thus we may restrict the study on interval $I_1$. Similar to the two previous cases the application of equations \ref{syn_values_MAIN_SHORT} gives $s_A(T\!D\!-\!D)\!=\!s_B(T\!D\!-\!D)\!=\!N^+$. Thus we obtain $C_{10}\!<\!0$.
 \end{itemize}
 The bottom row of Table \ref{tab:MAIN_table} contains the additional conditions on $C_9$ and $C_{10}$ to be applied to each of the states analysed above. 
 \item If during both intervals $I_1$ and $I_2$ at least one unit is OFF the first condition of Lemma \ref{lem:LONG_conditions} is not satisfied, thus the state is SHORT with no extra conditions. These considerations hold for $S$, $AP$ and $I$. 
\end{enumerate}	
\end{remark}

\begin{remark} [Table \ref{tab:MAIN_table}]
The conditions in the middle and bottom rows of Table \ref{tab:MAIN_table} complete the existing conditions for all $2T\!R$-periodic SHORT MAIN states. Indeed these conditions covers all possible matrix forms and corresponding states. The middle row shows conditions determining the dynamics within in the intervals $I_1$ and $I_2$. The bottom row shows conditions that guarantee units to be OFF in $[0,2T\!R]\!-\!I$. 
\end{remark}

Figure \ref{figure4}A shows time histories for each $2T\!R$-periodic SHORT MAIN states in Table \ref{tab:MAIN_table}. We note that the conditions given in this table allow us to determine the regions where each of these states exists in the parameter space. To visualize 2-dimensional existence regions when varying pairs of model parameters we defined a new parameter $D\!F \in [0,1]$ and set $d\!=\!cD\!F$ ($D\!F$ is a scaling factor for the inputs from tonotopic locations). Figure \ref{figure4}B shows the two dimensional region of existence of states of each of these states at varying $DF$ and input strength $c$. 

\begin{figure}[htbp]
  \centering
  \includegraphics[width=0.8\linewidth]{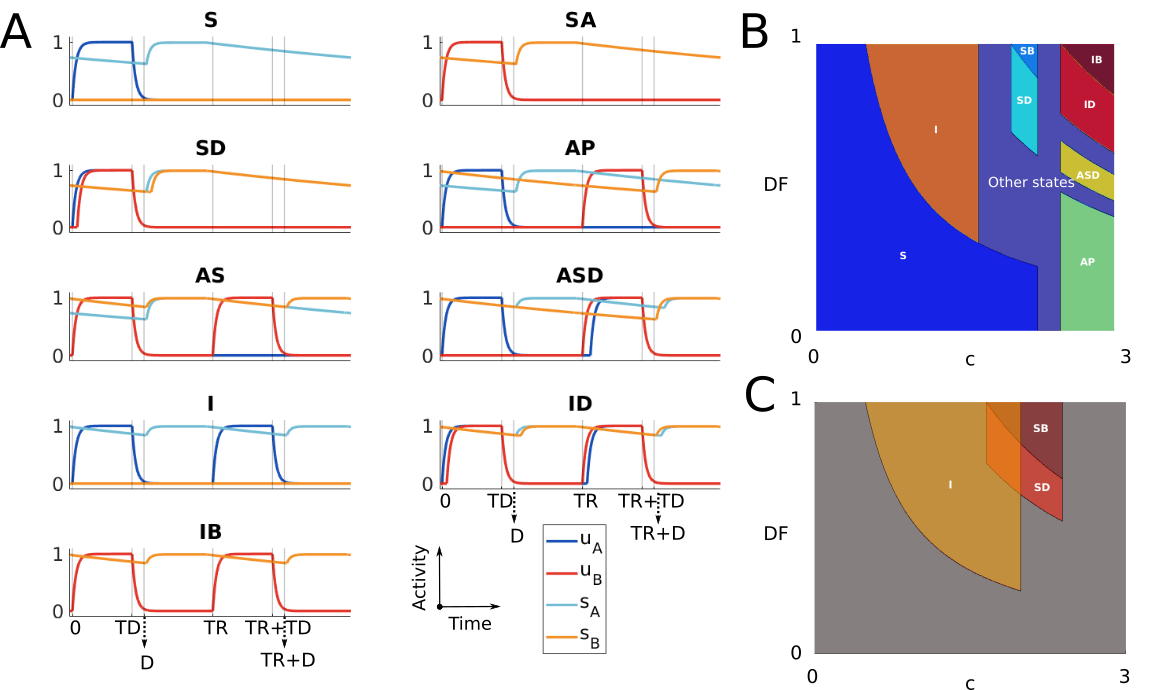}
  \caption{A. Time histories of $2T\!R$-periodic SHORT MAIN states B. Existence regions of states in A. when varying $DF$ and $c$. C. Existence regions for states $I$, $S\!B)$ and $S\!D$ at varying $c$ and $D\!F$. Parameters in B and C are $\tau_i\!=\!0.4$, $\theta\!=\!0.5$, and in B (C)  $T\!D \!=\! 0.03 \, (0.005)$, $D \!=\! 0.03 \, (0.015)$, $P\!R\!=\!17 \, (5)$, $a \!=\! 0.6 \, (0.4)$, $b \!=\! 2 \, (3)$. }
 \label{figure4}
\end{figure}

From Table \ref{tab:MAIN_table} we can establish the coexistence of MAIN states, as shown in the next theorem.

\begin{theorem} [Multistability] The state $I$ may coexist
  with $S\!B$ or $S\!D$. Any other pair of $2T\!R$-periodic SHORT MAIN
  states cannot coexist.
\end{theorem}

The proof of this theorem is in the Supplementary Material \ref{multistability_appendix}. Figure \ref{figure4}C shows a parameter regime show which states $I$ coexists with $S\!B$ and $S\!D$. 

The analysis for $2T\!R$-periodic SHORT CONNECT states is similar to that of SHORT MAIN states, which we now summarize.

\begin{theorem}  \label{CONNECT_2TR}
There is an injective map: 
$$ \varphi \colon SC \rightarrow B(2,6) \mbox{,} \quad
\psi \mapsto W =
\begin{bmatrix} [c|c]
    W_1 & W_1 
\end{bmatrix}
=
\begin{bmatrix} [ccc|ccc]
    x_A^1 & y_A^1 & z_A^1 & x_A^2 & y_A^2 & z_A^2 \\
    x_B^1 & y_B^1 & z_B^1 & x_B^2 & y_B^2 & z_B^2
\end{bmatrix} $$
Where, for $i\!=\!1,2$, $W_i$ is the matrix forms of $\psi$ in $I_i$ defined in \ref{matricial_representation_CONN}. Then:
$$Im(\varphi) \!=\! \{W\!=\!\varphi(\psi) \mbox{, where } W \mbox{ is one of the matrices shown in Table \ref{tab:CONNECT_table}} \}$$

\begin {table}[h]
\begin{tabular}{ |c|c|c|c|c|c|c|c|c| } 
 \hline
  \centering \scriptsize $ZcS^*$ & \centering \scriptsize $ZcAP$ & \centering \scriptsize $ZcAS^*$ & \centering \scriptsize $ZcI$ & \centering \scriptsize $ScAS^*$ & \centering \scriptsize $SDcAS^*$ & \centering \scriptsize $ScSD^*$ & \centering \scriptsize $APcAS^*$ & \scriptsize $APcI$ \\ \hline
  \centering \parbox{1cm}{\scriptsize $\begin{array}{c} 
  001 000 \\
  001 000 \\
 \end{array}$}
 & \centering \parbox{1cm}{\scriptsize $\begin{array}{c}
  001 000 \\
  000 001 \\
 \end{array}$}
 & \centering \parbox{1cm}{\scriptsize $\begin{array}{c}
  001 001 \\
  000 001 \\
 \end{array}$}
 & \centering \parbox{1cm}{\scriptsize $\begin{array}{c}
  001 001 \\
  001 001 \\
 \end{array}$}
 & \centering \parbox{1cm}{\scriptsize $\begin{array}{c}
  001 111 \\
  000 001 \\
 \end{array}$}
 & \centering \parbox{1cm}{\scriptsize $\begin{array}{c}
  001 111 \\
  000 011 \\
 \end{array}$}
 & \centering \parbox{1cm}{\scriptsize $\begin{array}{c}
  111 000 \\
  001 000 \\
 \end{array}$}
 & \centering \parbox{1cm}{\scriptsize $\begin{array}{c}
  111 001 \\
  000 111 \\
 \end{array}$}
 & \parbox{1cm}{\scriptsize $\begin{array}{c} 
  111 001 \\
  001 111 \\
 \end{array}$}
 \\ \hline
\end{tabular}
\caption {Matrix form of $2T\!R$-periodic SHORT CONNECT states (* asymmetrical states). These states' dynamics is connecting branches of pairs of MAIN states or the inactive state ($Z$). For example $ZcS$ connects the inactive state with one of the segregated states $S$, $SB$ or $SD$. The names chosen for CONNECT states (first row) contains the names of the two MAIN states separated by the letter c. }
\label{tab:CONNECT_table}
\end {table}
\end{theorem}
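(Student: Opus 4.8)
The plan is to mirror the proof of Theorem~\ref{MAIN_2TR}, using the per-interval CONNECT map $\varphi^{R}$ of Theorem~\ref{CONNECT_R} in place of the MAIN map $\rho^{R}$ of Theorem~\ref{thm:matr_MAIN}. Given a $2T\!R$-periodic SHORT CONNECT state $\psi$, restrict it to the active tone intervals $I_1\!=\![0,T\!D]$ and $I_2\!=\![T\!R,T\!R\!+\!T\!D]$, let $W_i\!=\!\varphi^{I_i}(\psi)\in B(2,3)$ be its matrix form on $I_i$ (equations~\ref{matricial_representation_CONN}), and set $W=\bigl[\,W_1\mid W_2\,\bigr]\in B(2,6)$; this defines $\varphi$. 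Injectivity will follow from that of each $\varphi^{R}$ (Theorem~\ref{CONNECT_R}) together with the closure relations of Step~1 below, since $W$ will be shown to determine the full dynamics of $\psi$ on $[0,2T\!R]$.

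\textbf{Step 1 (reduction to parameters).} First show that the delayed synaptic quantities $s_A^{i\pm},s_B^{i\pm}$ at the times $\alpha_i\!-\!D$ and $\beta_i\!-\!D$ entering $\varphi^{I_i}$ are functions of the model parameters and of the entries of $W$ alone. The key point is that, by Lemma~\ref{lem:one_transition}(1), once a unit turns ON inside a tone interval it stays ON until the interval ends; hence — whether the crossing is of MAIN type (at $\alpha_i$) or of CONNECT type (at an interior time $t^{*}$) — the corresponding synaptic variable is $\approx 1$ at the right endpoint $\beta_i$, and the precise value of $t^{*}$ leaves no trace. Since $T\!D\!+\!D\!<\!T\!R$ and $D\!>\!T\!D$, the evaluation points $\alpha_i\!-\!D,\beta_i\!-\!D$ lie in the silent gap preceding $I_i$, where (SHORT assumption, Lemma~\ref{lem:no_saturation} and Section~\ref{fast_dynamics_I}, together with the monotone decay of Remark~\ref{remark1}) both units are OFF and the synaptic variables decay exponentially with rate $1/\tau_i$ from the end of the last interval in which the relevant unit turned ON. This yields the CONNECT analogue of~\ref{syn_values_MAIN_SHORT},
\[
s_A^{i\pm}=N^{\pm}z_A^{j}+M^{\pm}(1\!-\!z_A^{j})z_A^{i},\qquad s_B^{i\pm}=N^{\pm}z_B^{j}+M^{\pm}(1\!-\!z_B^{j})z_B^{i},\qquad j\neq i,
\]
with $N^{\pm},M^{\pm}$ as in~\ref{N_M_MAIN} (the third, ``$z$'', columns now play the role the $y$ columns played for MAIN states, precisely because a CONNECT turn-ON need not occur at $\alpha_i$). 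Substituting these into~\ref{matricial_representation_CONN} makes $\varphi$ well defined and turns each entry into an explicit inequality in the parameters.

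\textbf{Step 2 ($Im(\varphi)\subseteq$ Table~\ref{tab:CONNECT_table}).} Collect the constraints a valid $W$ must satisfy: (i) $W_1\in Im(\varphi^{I_1})$, $W_2\in Im(\varphi^{I_2})$ from Theorem~\ref{CONNECT_R}; (ii) compatibility of the two interval matrices through the update rule of Step~1, which, via $N^{-}\!\geq\!N^{+}\!\geq\!M^{-}\!\geq\!M^{+}$, $d\!\leq\!c$ and $a\!\geq\!0$, rules out most combinations, exactly as the analogous simplifications did for Table~\ref{tab:MAIN_table}; (iii) the SHORT requirement, i.e.\ the negation of condition~2 of Lemma~\ref{lem:LONG_conditions} on every interval in which both units turn ON, which specialises to inequalities of the $C_9,C_{10}$ type evaluated at the synaptic values of Step~1. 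Reducing by the $\mathbb{Z}_2$-symmetry of Remark~\ref{symmetry} (the conjugate $W'$ is obtained from $W$ by swapping the first row of $W_1$ with the second row of $W_2$, and the second row of $W_1$ with the first row of $W_2$) and enumerating the surviving cases leaves precisely the nine matrices of Table~\ref{tab:CONNECT_table}, with the $*$-marked ones occurring in conjugate pairs.

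\textbf{Step 3 (Table~\ref{tab:CONNECT_table}$\subseteq Im(\varphi)$), and main obstacle.} For each matrix $W$ in the table, exhibit a nonempty open parameter region where all the inequalities defining its entries (via Step~1) hold simultaneously and are mutually consistent — the same direct check performed for the state $AS$ in the proof of Theorem~\ref{MAIN_2TR} — which produces a SHORT CONNECT state $\psi=\varphi^{-1}(W)$ and completes $Im(\varphi)=$ Table~\ref{tab:CONNECT_table}. I expect the delicate part to be Step~1: one must verify that the CONNECT-type interior crossing at $t^{*}$, together with the $D$-length ``tail'' window $(\beta_j,\beta_j\!+\!D)$ produced by the delayed inhibition (Lemma~\ref{lem:no_saturation}), never makes $s_A^{i\pm},s_B^{i\pm}$ depend on anything but the $z$-columns of $W$ — it is here that $2T\!R$-periodicity together with $T\!D\!+\!D\!<\!T\!R$ and $D\!>\!T\!D$ are essential, collapsing all exponential factors onto $N^{\pm},M^{\pm}$. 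A secondary, purely combinatorial obstacle is the size of the enumeration in Step~2 ($B(2,6)$ has $2^{12}$ elements), which is tamed by the monotonicity relations among $N^{\pm},M^{\pm}$ and by $d\!\leq\!c$, $a\!\geq\!0$.
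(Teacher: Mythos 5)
Your skeleton is the paper's (Supplementary Material~\ref{appendix1}): the synaptic identities with the $z$-columns replacing the $y$-columns are exactly \ref{syn_values_CONNECT_SHORT}, and the strategy of cross-interval compatibility conditions, enumeration of $B(2,6)$, and reduction by the $\mathbb{Z}_2$-symmetry is the one used there. Two points, however, are genuine gaps. First, a definitional one: $\varphi^{I_i}$ of Theorem~\ref{CONNECT_R} is only defined on $C_{I_i}$, and a state $\psi\in SC$ need only be CONNECT in \emph{one} of the two intervals (e.g.\ $ScSD$ is MAIN, satisfying $M_6$, on $I_2$). You need Remark~\ref{link_main_connect_matr} to extend the MAIN matrix form to the $2\times 3$ format before $W=[\,W_1\mid W_2\,]$ is even well defined; the paper does this explicitly at the start of its proof.

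Second, and more seriously, your Step~3 would fail as stated. You claim the surjectivity check is ``the same direct check performed for the state $AS$'', but the paper explicitly warns that for CONNECT states the well-definedness of the matrix entries is \emph{not} necessary and sufficient for determining the dynamics, unlike the MAIN case. Your Step~1 observation that ``the precise value of $t^{*}$ leaves no trace'' is correct for the synaptic values sampled at $\alpha_i\!-\!D$ and $\beta_i\!-\!D$ (the SHORT property forces turn-OFF at $\beta_i$ regardless of $t^{*}$), but it is false for the existence conditions themselves: verifying that a given matrix is realised requires tracking the continuous decay $N(t),M(t)$ \emph{inside} the intervals to locate the interior crossing $t^{*}$, to decide orderings such as $t^{*}\leq s^{*}$ versus $t^{*}>s^{*}$ in condition $C_5$, and — for $ZcAS$ — to derive a condition on the quantity $K=c-(d-\theta)e^{T\!R/\tau_i}$ that is not expressible through the entries of $W$ at all. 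The paper therefore analyses each of the $15$ surviving matrices case by case against $C_{1-5}$ and $M_{1-6}$, and establishes nonemptiness of each existence region by simulation rather than by the inequality consistency check you propose. Without this per-state analysis your argument establishes $Im(\varphi)\subseteq$ Table~\ref{tab:CONNECT_table} but not the reverse inclusion.
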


A complete version of this theorem (similar to Theorem \ref{MAIN_2TR}) proving the existence conditions for all SHORT CONNECT states is in the Supplementary Material \ref{appendix1}. Table \ref{tab:CONNECT_table} shows names (first row) and matrix forms (second row) of all possible $2T\!R$-periodic SHORT CONNECT states. We omit time histories for these states because they can be visualized from their matrix form (see Remark \ref{visualization_matr}).

\subsection{LONG MAIN states}
The analysis of LONG states is an extension of the SHORT states' one. In this section we briefly report the main ideas, the details in the Supplementary Material \ref{appendix2}. The first step is to extend the matrix form definition to LONG states by including a last column in the matrix form of SHORT MAIN states. This new column is selected to satisfy the properties of LONG states described in Lemma \ref{lem:LONG_conditions}. The matrix form for a state $\psi \in LM$ is the $2 \times 6$ binary matrix $V$ defined as
\begin{equation}
V=
\begin{bmatrix} [c|c||c|c]
    V_1 & \vec{w}^1 & V_2 & \vec{w}^2
\end{bmatrix}
=
\begin{bmatrix} [cc|c||cc|c]
    x_A^1 & y_A^1 & w^1 & x_A^2 & y_A^2 & w^2 \\
    x_B^1 & y_B^1 & w^1 & x_B^2 & y_B^2 & w^2
\end{bmatrix}
\nonumber
\end{equation}
Where $V_1$ and $V_2$ are the same matrix forms defined for MAIN SHORT states and the binary vectors $ \vec{w}^1$ and $\vec{w}^2$ are defined by
\begin{equation}
 w^1=H(ay_A^1-bs_A^{1+})H(ay_B^1-bs_B^{1+}) \quad  \mbox{and} \quad w^2=H(ay_A^2-bs_A^{2+})H(ay_B^2-bs_B^{2+}).
\end{equation}
We remind the reader that $s_A^{1+}\!=\!s_A(T\!D\!-\!D)$, $s_B^{1+}\!=\!s_B(T\!D\!-\!D)$, $s_A^{2+}\!=\!s_A(T\!R\!+\!T\!D\!-\!D)$ and $s_B^{2+}\!=\!s_B(T\!R\!+\!T\!D\!-\!D)$. 
Using a similar proof as the one of Theorem \ref{MAIN_2TR} we can use the matrix form to define the existence conditions of the states and exclude impossible ones. Table \ref{tab:MAIN_LONG_table} contains the names and matrix form of all the possible LONG MAIN states, and their existence conditions are reported in Table \ref{tab:LM_SC_table_main}. 


\begin {table}[h]
\begin{tabular}{ |c|c|c|c|c|c|c|c|c| } 
 \hline
  \centering \scriptsize $IL_1$ & \centering \scriptsize $IL_2^*$ & \centering \scriptsize $ASDL_1^*$ & \centering \scriptsize $ASL^*$ & \centering \scriptsize $SL^*$ & \centering \scriptsize $IDL_1$ & \centering \scriptsize $IDL_2^*$ & \centering \scriptsize $ASDL_2^*$ & \scriptsize $SDL^*$ \\ \hline
 \ \parbox{1cm}{\scriptsize $\begin{array}{c} 
  111 111 \\
  111 111 \\
 \end{array}$}
 & \centering \parbox{1cm}{\scriptsize $\begin{array}{c}
  111 110 \\
  111 110 \\
 \end{array}$}
 & \centering \parbox{1cm}{\scriptsize $\begin{array}{c}
  111 010 \\
  111 110 \\
 \end{array}$}
 & \centering \parbox{1cm}{\scriptsize $\begin{array}{c}
  111 000 \\
  111 110 \\
 \end{array}$}
 & \centering \parbox{1cm}{\scriptsize $\begin{array}{c}
  111 000 \\
  111 000 \\
 \end{array}$}
 & \centering \parbox{1cm}{\scriptsize $\begin{array}{c}
  111 011 \\
  011 111 \\
 \end{array}$}
 & \centering \parbox{1cm}{\scriptsize $\begin{array}{c}
  111 010 \\
  011 110 \\
 \end{array}$}
 & \centering \parbox{1cm}{\scriptsize $\begin{array}{c}
  111 000 \\
  011 110 \\
 \end{array}$}
 & \parbox{1cm}{\scriptsize $\begin{array}{c}
  111 000 \\
  011 000 \\
 \end{array}$}
 \\ \hline
\end{tabular}
\caption {Matrix form of the $2T\!R$-periodic LONG MAIN states (* asymmetrical states). Names (first row) are the same as the corresponding MAIN states, except from adding the final letter L, and an a subscript number to differentiate LONG states corresponding to the same MAIN state. }
\label{tab:MAIN_LONG_table}
\end {table}

The existence conditions of SHORT CONNECT and LONG MAIN states can be visualized as a 2D parameter projection, similar to Figure \ref{figure4}B for SHORT MAIN states. Figure \ref{figure6}A,C show two examples when varying parameters $(c,D\!F)$, and the remaining parameters have been fixed to satisfy $T\!D\!<\!D$ and $T\!D\!+\!D\!<\!T\!R$. Panels A. and C. respectively show the existence regions for SHORT CONNECT and LONG MAIN states. In panel A. SHORT MAIN states are shown in dark blue to help the comparison with Figure \ref{figure4}B (same parameters). Figure \ref{figure6}B,D show time histories for the SHORT CONNECT state $APcAS$ and the LONG MAIN state $SDL$.

\begin{figure}[htbp]
  \centering
  \includegraphics[width=0.7\linewidth]{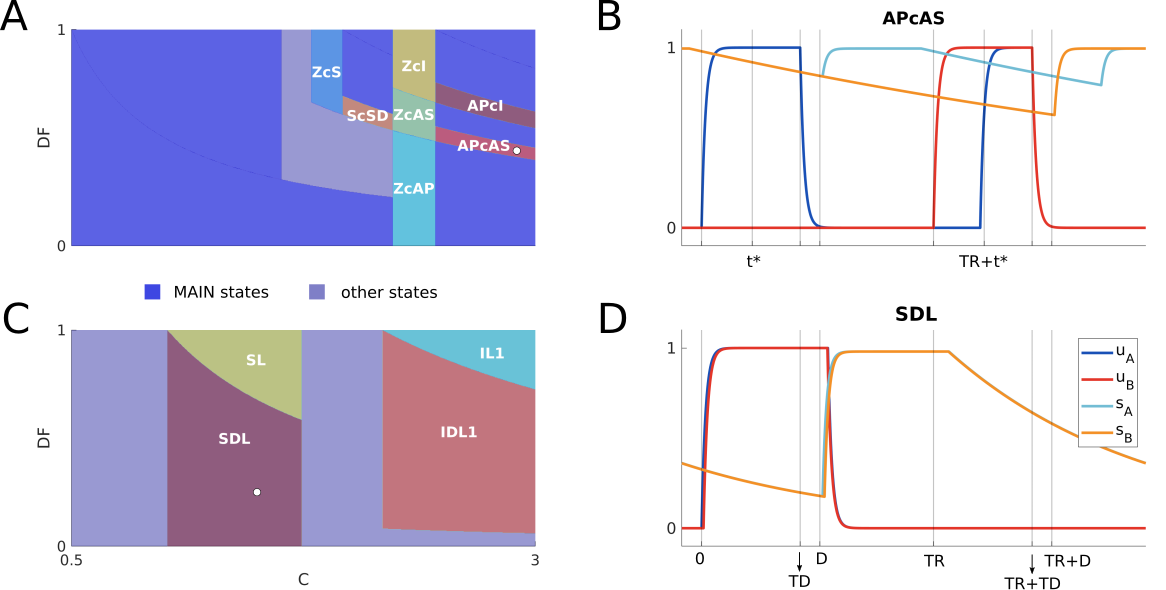}
  \caption{Visualization of SHORT CONNECT and LONG MAIN states. Panels A. and C. show regions of existence respectively for SHORT CONNECT and LONG MAIN states. SHORT MAIN states in panel A. are shown in dark blue.  States that are neither SHORT MAIN, SHORT CONNECT or LONG MAIN are shown in light blue. For the same parameters as in A. and D., panels B. and D. respectively show example time histories for a SHORT CONNECT state ($APcAS$) and a LONG MAIN state ($SDL$) with fixed $(c,D\!F)$ shown by white dots in A. and D. In panel A. the parameters are the same as in Figure \ref{figure4}B. In panel C. parameters are the same as in A. except for $\tau_i\!=\!0.05$ and $a\!=\!2$.}
  \label{figure6}
\end{figure}

\begin{remark} [CONNECT states] \label{MAIN_CONNECT_regions}
By comparing Figure \ref{figure6}A with Figure \ref{figure4}B (same parameters) we note that the union of the regions of existence of MAIN states is larger than the one of CONNECT states, hence why we call the first group MAIN. In addition SHORT CONNECT states connect branches of SHORT MAIN states, hence why we called them CONNECT (see Table \ref{tab:CONNECT_table}). 
\end{remark}

\subsubsection{Remaining states}
As shown in the Section \ref{sec_2TR_states}, $2T\!R$-periodic states can be SHORT MAIN ($SM$), SHORT CONNECT ($SC$), LONG MAIN ($LM$) or LONG CONNECT ($LC$) during each interval $I_1$ and $I_2$. We define $X|Y$ the set of states satisfying condition X during $I_1$ and Y during $I_2$, where $X,Y \in \{ SM,SC,LM,LC \}$. In Section \ref{sec_2TR_states} we have  the existence conditions of all possible states in some of these sets. More precisely:
\begin{itemize}
 \item The analysis of $SM|SM$ is summarized in Table \ref{tab:MAIN_table}
 \item The analysis of $SC|SM$, $SM|SC$ and $SC|SC$ is summarized in Table \ref{tab:CONNECT_table}
 \item The analysis of $LM|LM$, $SM|LM$ and $LM|SM$ is summarized in Table \ref{tab:MAIN_LONG_table}
\end{itemize}
The analysis of all remaining combinations of sets $X|Y$ are in the Supplementary Material \ref{appendix3} and concludes the existence conditions for all $2T\!R$-periodic states.

\section{Biologically relevant case: $2T\!R$-periodic states for $D\!\leq\!TD$} \label{biologically_relevant}
In this section we study model states and their link to auditory streaming under (1) $D\!\leq\!TD$ and (2) $T\!D\!+\!D\!<\!T\!R$. These inequalities are relevant to studying auditory streaming: condition (1) because delayed inhibition would be caused by factors that generate short delays, leading, condition (2) is guaranteed for the values of $T\!D$ and $T\!R$ typically tested in these experiments (further motivated in the Discussion).  

 By assuming that tonotopic inputs to the units are stronger than their mutual inhibition we derive analytically the existence conditions of all possible $2T\!R$-periodic states (Table \ref{tab:main_case2_1} and \ref{tab:MAIN_CONNECT_case2}). Overall, we find a total of 10 possible states (shown in Figure \ref{figure7}A). We link these states with the possible perceptual outcomes in the auditory streaming paradigm and find a qualitatively agreement between model and experiments when varying inputs' parameters $df$ and $P\!R$ (Figure \ref{figure7}B and C). Furthermore, the states' existence conditions let us formulate the coherence and fission boundaries separating the percepts as functions of $P\!R$ (Equations \ref{boundary_eqn}). 
 
 We now proceed to determine the detailed analysis of these $2T\!R$-periodic states. We consider active tone intervals $I\!=\!I_1 \cup I_2$, where $I_1\!=\![0,T\!D]$ and $I_2\!=\![T\!R,T\!R\!+\!T\!D]$. We assume that 
 \begin{equation}
	 c\!-\!b\!\geq\!\theta
	 \label{U3}
 \end{equation}
 a condition that allows unit A (B) to turn and remain ON at each A (B) active tone interval $I_1$ ($I_2$). Indeed from the model equations (\ref{model})--(\ref{inputs}), $\forall t \in I_1$, the total input to the A unit is $au_B\!-\!bs_B(t\!-\!D)\!+\!c\!\geq\!c\!-\!b$. Thus on the fast time scale, the A unit turns ON instantaneously at the start of $I_1$ and remains ON $\forall t \in I_1$. For analogous reasons the B unit is ON throughout $I_2$. This has two important consequences:
 \begin{enumerate}
  \item The synaptic variables $s_A(t\!-\!D)$ and $s_B(t\!-\!D)$ are constant and equal to 1 in $[D,T\!D\!+\!D]$ and $[T\!R\!+\!D,T\!R\!+\!T\!D\!+\!D]$, respectively. This implies that the total inputs to the B and A units are equal to $a\!-\!b\!+\!d$ in these intervals. 
  
  \item Both units are OFF $\forall t \in \mathbb{R}\!-\!I$ (i.e. no LONG states can exist). Indeed from point 1. above $s_A(t\!-\!D)$ ($s_B(t\!-\!D)$) is equal to 1 at time $T\!D$ ($T\!R\!+\!T\!D$) and the total input to the B (A) unit at this time is thus $a\!-\!b$, which is less than $\theta$ due to hypothesis \ref{U2}. Thus the B  (A) unit turns OFF instantaneously at time $T\!D$ ($T\!R\!+\!T\!D$), and it is followed by A (B) due to Section \ref{basin_01}. Since $(0,0)$ is an equilibrium for the fast subsystem with no input (see Section \ref{fast_dynamics_I}), we conclude that both unit are OFF until the next active tone input. 
 \end{enumerate}
 From point 1. the input to the B (A) unit in $[D,T\!D\!+\!D]$ ($[T\!R\!+\!D,T\!R\!+\!T\!D\!+\!D]$) is equal to $P\!=\!a\!-\!b\!+\!d$. This and point 2. imply that B and A can turn ON only in the intervals $L_1\!=\![0,D]$ and $L_2\!=\![T\!R,T\!R\!+\!D]$, respectively. We consider two cases.

\subsubsection{Case $P\!\geq\!\theta$} Since unit B is ON in $I_2$, unit A is ON in this interval, since its total input is $a\!-\!bs_A(t\!-\!D)\!+\!d\!\geq\!P\!\geq\!\theta$. This is true also for unit B in $I_1$. Moreover both unit turn OFF instantaneously at times $T\!D$ and $T\!R\!+\!T\!D$ (see point 2. above). Thus units evolve equally on each active tone interval (on the fast time scale). The only difference is that B (A) may turn ON a small delay $\delta \sim \tau$ after A (B) in $I_1$ ($I_B$). When evaluated at time $0$ ($T\!R$) the delayed variable $s_A$ ($s_B$) is equal to $N^-$. Due to the model symmetry there are only two possible states: $I$ and $I\!D$. For $I$ both units instantaneously turn ON at same time $0$ and $T\!R$, which occurs when $d\!-\!bN^-\!\geq\!\theta$ ($C_7^-\!\geq\!\theta$). If $d\!-\!bN^-\!<\!\theta$ we have the state $I\!D$, for which B (A) turns ON a small delay $\delta$ after A (B) in $I_1$ ($I_2$).
 
\begin {table}[h] \centering
\begin{tabular}{ |c|c| } 
 \hline
  \footnotesize $I$ & \footnotesize $ID$ \\ \hline
 $\begin{aligned}[t] 
  C_7^- \!\geq\! \theta \\
  P \!\geq\! \theta
 \end{aligned}$ & 

 $\begin{aligned}[t] 
  C_7^- \!<\! \theta \\
  P \!\geq\! \theta
 \end{aligned}$ 
 \\ \hline 
\end{tabular}
\caption {MAIN states existence conditions for $D\!<\!T\!D$ and $T\!D\!+\!D\!<\!T\!R$ and $P\!\geq\!\theta$. }
\label{tab:main_case2_1}
\end {table}

\subsubsection{Case $P\!<\!\theta$} In this case the B (A) unit is OFF in $[D,T\!D]$ ($[T\!R\!+\!D,T\!D]$) and outside the active tone intervals. The dynamics of the B and A units during the intervals $L_1$ and $L_2$ respectively is yet to be determined. Lemma \ref{lem:syn_decay} proves that the delayed synaptic variables are monotonically decaying in each of these intervals. We can use the classification of MAIN and LONG states presented in Sections \ref{classification_interval} by replacing interval $I$ with $L$, where $L\!=\!L_1$ or $L\!=\!L_2$. We fix $L\!=\!L_1$ ($L\!=\!L_2$). Since the A (B) unit is ON in $L$ due to condition \ref{U3}, MAIN states in $L$ can satisfy only conditions $M_1$, $M_2$ and $M_4$ ($M_1$, $M_3$ and $M_5$), since only these states are ON in $L$. By the same reasoning CONNECT states in $L$ can satisfy only condition $C_1$ ($C_2$). The matrix form of MAIN states can be extended to a $2 \times 3$ binary matrix (see Remark \ref{link_main_connect_matr}). Moreover, since A (B) is ON in $L_1$ ($L_2$) due to condition \ref{U3}, the matrix form of any $2T\!R$-periodic MAIN and CONNECT state can be written as
 $$ \begin{bmatrix} [ccc|ccc]
    1 & 1 & 1 & x_A^2 & y_A^2 & z_A^2 \\
    x_B^1 & y_B^1 & z_B^1 & 1 & 1 & 1 
  \end{bmatrix} $$ 
 The synaptic quantities defining the entries of the matrix form in $L_1$ and $L_2$ are
 \begin{equation}
  s_A^{2 \pm}\!=\!s_B^{1 \pm}\!=\!N^{\pm}, \quad
  s_A^{1 \pm}\!=\!\begin{cases} 
	R^{\pm} & \mbox{if } z_A^2=1 \\ 
	M^{\pm}  & \mbox{otherwise} 
  \end{cases}
  \quad \mbox{and} \quad
  s_B^{2 \pm}\!=\!\begin{cases} 
	R^{\pm} & \mbox{if } z_B^1=1 \\ 
	M^{\pm}  & \mbox{otherwise} 
  \end{cases}
  \label{syn_quantities_case2}
 \end{equation}
 Where $R^-\!=\!e^{-(T\!R\!-\!2D)/\tau_i}$ and $R^+\!=\!e^{-(T\!R\!-\!D)/\tau_i}$. Quantities $M^\pm$ and $N^\pm$ are defined in equations \ref{syn_values}. The proof of these identities is in the Supplementary Material \ref{appendix4}. By applying identities \ref{syn_quantities_case2} to the definition of the entries of the matrix form of MAIN or CONNECT states we obtain that $ z_A^2 = z_B^1 \Rightarrow x_A^2 = x_B^1 \mbox{ and } y_A^2 = y_B^1 $. 
 
 This condition reduces the total number of combination of binary matrices (and relative MAIN and CONNECT states) to the ones shown in Table \ref{tab:MAIN_CONNECT_case2}. The first 5 states in this table are MAIN and the last two are CONNECT and complete the set of all possible states. Using the identities \ref{syn_quantities_case2} on the definition of the entries in each state's matrix form and applying simplifications (i.e. the same analysis carried out in the previous sections) implies the existence conditions shown in the bottom row of Table \ref{tab:MAIN_CONNECT_case2}, where $ R_6^- = a-bR^-+d$ and $R_7^- = d-bR^-. $
  
 \begin {table}[h] \centering
\begin{tabular}{ |c|c|c|c|c|c|c| } 
 \hline
  \footnotesize $IS$ & \footnotesize $IDS$ & \footnotesize $AS^*$ & \footnotesize $ASD^*$ & \footnotesize $AP$ & \footnotesize $APcAS^*$ & \footnotesize $AScI$ \\ \hline
  
\ \parbox{1.3cm}{\footnotesize $\begin{array}{c} 
  111 | 111 \\
  111 | 111 \\
 \end{array}$}
 & \parbox{1.3cm}{\footnotesize $\begin{array}{c}
  111 | 011 \\
  011 | 111 \\
 \end{array}$}
 & \parbox{1.3cm}{\footnotesize $\begin{array}{c}
  111 | 000 \\
  111 | 111 \\
 \end{array}$}
 & \parbox{1.3cm}{\footnotesize $\begin{array}{c} 
  111 | 000 \\
  011 | 111 \\
 \end{array}$}
 & \parbox{1.3cm}{\footnotesize $\begin{array}{c}
  111 | 000 \\
  000 | 111 \\
 \end{array}$}
 & \parbox{1.3cm}{\footnotesize $\begin{array}{c} 
  111 | 000 \\
  001 | 111 \\
 \end{array}$}
 & \parbox{1.3cm}{\footnotesize $\begin{array}{c}
  111 | 001 \\
  001 | 111 \\
 \end{array}$}
 \\ \hline
 $ \begin{aligned}[t] 
  R_7^- \!\geq\! \theta \\
  P \!<\! \theta
 \end{aligned}$ & 

 $ \begin{aligned}[t] 
  R_7^- \!<\! \theta \\
  R_6^- \!\geq\! \theta \\
  P \!<\! \theta
 \end{aligned}$ & 

 $ \begin{aligned}[t] 
  C_5^+ \!<\! \theta \\
  C_8^- \!\geq\! \theta
 \end{aligned}$ & 
 
 $\begin{aligned}[t] 
  C_5^+ \!<\! \theta \\
  C_8^- \!<\! \theta \\
  C_2^- \!\geq\! \theta
 \end{aligned}$ & 

 $ \begin{aligned}[t] 
  C_2^+ \!<\! \theta
 \end{aligned}$ & 

 $ \begin{aligned}[t] 
  C_2^- \!<\! \theta \\
  C_2^+ \!\geq\! \theta \\
 \end{aligned}$ & 

 $ \begin{aligned}[t] 
  R_6^- \!<\! \theta \\
  C_5^+ \!\geq\! \theta \\
 \end{aligned}$ 
 \\ \hline 
\end{tabular}
\caption {Matrix forms of MAIN/CONNECT states for $D\!<\!T\!D$, $T\!D\!+\!D\!<\!T\!R$ and $P\!\geq\!\theta$. Asymmetrical states in *.}
\label{tab:MAIN_CONNECT_case2}
\end {table}

Figure \ref{figure7}A shows time histories for the states presented in Tables \ref{tab:main_case2_1} and \ref{tab:MAIN_CONNECT_case2}. Since the A(B) unit must be ON during the A(B) active tone interval for property \ref{U3} we there are no possible other network states. A proof analogous to that of multistability theorem in the Supplementary Material \ref{multistability_appendix} shows that all of these states exist in non-overlapping parameter regions. 

 \begin{figure}[htbp]
  \centering
  \includegraphics[width=0.9\linewidth]{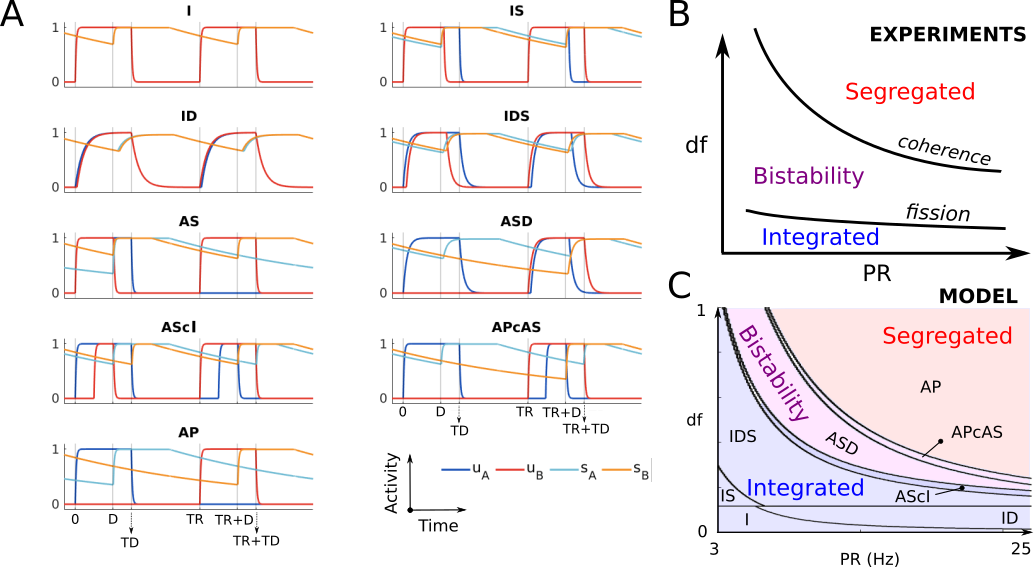}
  \caption{A. Time histories of all $2T\!R$-periodic states for $D\!<\!T\!D$ and $T\!D\!+\!D\!<\!T\!R$. B. Schematic diagram of the experimentally measured perceptual regions when varying $P\!R$ and $df$. C. Existence regions of the states in A. at varying $P\!R$ and $df$. States corresponding to integration, segregation or bistability are grouped by background colours (see Remark \ref{bistability_model}). Model parameters in C are: $\tau_i\!=\!0.2$, $\theta\!=\!0.5$, $T\!D\!=\!0.03$, $D\!=\!0.01$, $c\!=\!5$, $a\!=\!1$, $b\!=\!2$ and $m\!=\!6$. }
  \label{figure7}
\end{figure} 

\begin{remark} [Extension to the case $T\!D\!+\!D\!\geq\!T\!R$] \label{extension}
The condition $T\!D\!+\!D\!<\!T\!R$ enabled us to obtain a complete classification of network states via the application of Lemma \ref{lem:syn_decay}. However these states can exist also if $T\!D\!+\!D\!\geq\!T\!R$ with few adjustments in their existence conditions (see Supplementary Material \ref{appendix_TD_D_gr_TR}). We note that under this condition other $2T\!R$-periodic states exist, such as states where both units turn ON and OFF multiple times during each active tone interval (not shown). Since the condition $T\!D\!+\!D\!\geq\!T\!R$ is met for high values of $P\!R$ for which $T\!R \sim T\!D$, we explored this condition using computational tools (see Section \ref{comp_analysis}). 
\end{remark}

 \subsection{Model states and link with auditory streaming} \label{boundaries}
 We now show how states described in the previous section can explain the emergence of different percepts during auditory streaming. In the following framework each possible percepts is linked ($\leftrightarrow$) with the units' activities in the corresponding state: 
\begin{itemize}
 \item Integration $\leftrightarrow$ both units respond to all tones ($I$, $ID$, $IS$, $IDS$ and $AScI$).
 \item Segregation $\leftrightarrow$ no unit respond to both tones ($AP$).
 \item Bistability $\leftrightarrow$ one unit respond to both tones the other to only one tone ($AS$, $ASD$ and $APcAS$). This interpretation is motivated further in Remark \ref{bistability_model}. 
\end{itemize} 
 Thus all model states presented in the previous section belong to one perceptual class. The cartoon in Figure \ref{figure7}B shows the experimentally detected regions of parameters $d\!f$ and $P\!R$ where participants are more likely to perceive integration, segregation or bistability (van Noorden diagram - see Introduction). We now validate our proposed framework of rhythm tracking by comparing model states consistent with different perceptual interpretations (percepts) in the $(d\!f,P\!R)$-plane. In these tests the model parameter $d$ is scaled by $df$ according to the monotonically decreasing function $d\!=\!c \cdot (1\!-\!df^{1/m})$, where $m$ is a positive integer and $df$ is a unitless parameter in $[0,1]$ (motivated in Section \ref{motivation}). Figure \ref{figure7}C shows regions of existence of model states when fixing all other parameters (as reported in the caption). States classified as integration, segregation and bistability are grouped by blue, red and purple background colors to facilitate the comparison with Figure \ref{figure7}B. The existence regions of states corresponding to integration and segregation qualitatively matches the perceptual organization in the van Noorden diagram. 
 
 \textbf{Computation of the fission and coherence boundaries. } Our analytical approach enables us to formulate the coherence and fission boundaries as functions of $P\!R$ using the states' existence conditions. More precisely, the coherence boundary is the curve $d\!f_{coh}(P\!R)$ separating states $APcAS$ and $AP$, while the fission boundary is the curve $d\!f_{fiss}(P\!R)$ separating states $AScI$ and $IDS$: 

\begin{small}
 \begin{equation}
   \begin{gathered}
\label{boundary_eqn}
   d\!f_{coh}(P\!R) = [(a-bN^++c-\theta)/c]^m ,\\ d\!f_{fiss}(P\!R) = [(a-bM^++c-\theta)/c]^m,
 \end{gathered}
 \end{equation}
 \end{small}
where $N^+\!=\!e^{-(T\!R-D)/\tau_i}$ and $M^+\!=\!e^{-(2T\!R-T\!D)/\tau_i}$. The existence boundaries in Figure \ref{figure7}C (including these curves) naturally emerge from the model's properties and are robust to parameter perturbations. For example, parameters $a$ and $b$ can respectively shift and stretch the two curves $d\!f_{coh}(P\!R)$ and $d\!f_{fiss}(P\!R)$. For all parameter combinations these curves have an exponential decay in $T\!R$ that generates regions of existence similar to the van Noorden diagram. 
   
 \begin{remark} \label{bistability_model}
 The model predicts the emergence of integration, segregation and bistability in plausible regions of the parameter space. Yet, it currently cannot explain (1) how perception can switch between these two interpretations for fixed $d\!f$ and $P\!R$ values (i.e. perceptual  bistability) and (2) which of the two tone streams is followed during segregation (i.e. A-A- or -B-B). This could be resolved in a competition network model, such as the one proposed by \cite{rankin2015neuromechanistic}. The selection of which rhythm is being followed by listeners at a specific moment in time would be resolved by a mutually exclusive selection of either unit: the perception is either integration if a unit responding to both tones is selected or segregation if a unit responding to every other tone is selected (see Discussion). 
 \end{remark}
 
 \begin{remark} [A note on the word bistability]
   Bistability (as used in Figure \ref{figure7}C) corresponds to states that encode both integrated and segregated rhythms simultaneously, where one unit responds to both tones and the other to one tone (say unit A responds ABAB\ldots and unit B responds -B-B\ldots). This should not be confounded with the fact that this bistable state coexists with another --- by our definition --- bistable state (unit A responds A-A-\ldots and unit B responds ABAB\ldots).
 \end{remark}
 
 \section{Computational analysis with smooth gain and inputs} \label{comp_analysis}
 In this section we extend the analytical results by running numerical simulations that use a continuous rather than Heaviside gain function and inputs, and reducing the timescale separation ratio $\tau_i/\tau$ by an order of magnitude. We restrict our study to $D\!<\!T\!D$ (the biologically realistic case), but without imposing the condition $T\!D\!+\!D\!<\!T\!R$. This allows us to make predictions at high $P\!R$s, which go beyond the analytic predictions of the previous section (see Remark \ref{extension}). In summary, we find that this smooth, non-slow-fast regime generates similar states occupying slightly perturbed regions of stability. We consider a sigmoidal gain function $S(x)=[1+\exp(-\lambda x)]^{-1}$ with fixed slope $\lambda\!=\!30$, and we consider continuous inputs adapted from (\ref{continuous_inputs}).
 
 We classify integration (INT), segregation (SEG) and bistability (BIS) based on counting the number of threshold crossings during one periodic interval $[0,2T\!R]$. Let us call $n_A$ ($n_B$) the number of threshold crossings of unit A (B) and let $n\!=\!n_A\!+\!n_B$. Based on the correspondence between states and perception described in the previous section, states for which $n\!=\!4$ ($n\!=\!2$) correspond to integration (segregation) and states for which $n\!=\!3$ correspond to bistability. We run large parallel simulations to systematically study the convergence to the $2T\!R$-periodic states under changes in $d\!f$ and $P\!R$ and detect boundaries of transitions between different perceptual interpretations. We consider a grid of $l \times l$ uniformly spaced parameters $P\!R \in [1,40]$Hz and $d\!f\in[0,1]$ ($l\!=\!98$). For each node we run long simulations from the same initial conditions and compute the number of threshold crossings after the convergence to a stable $2T\!R$-periodic state for different values of $\tau$ (Figure \ref{figure8}A, B and C). There are 5 possible regions corresponding to one of four different values of $n \in \{0,2,3,4\}$. Three of these regions (as in panel A) correspond to the three colored regions found analytically in Figure \ref{figure7}C. Figure \ref{figure8}D shows example time histories of all the states in these five regions when $\tau\!=\!0.01$ (the values of $P\!R$ and $df$ are shown in white dots in panel B). 
 
 \begin{figure}[htbp]
  \centering
  \includegraphics[width=0.75\linewidth]{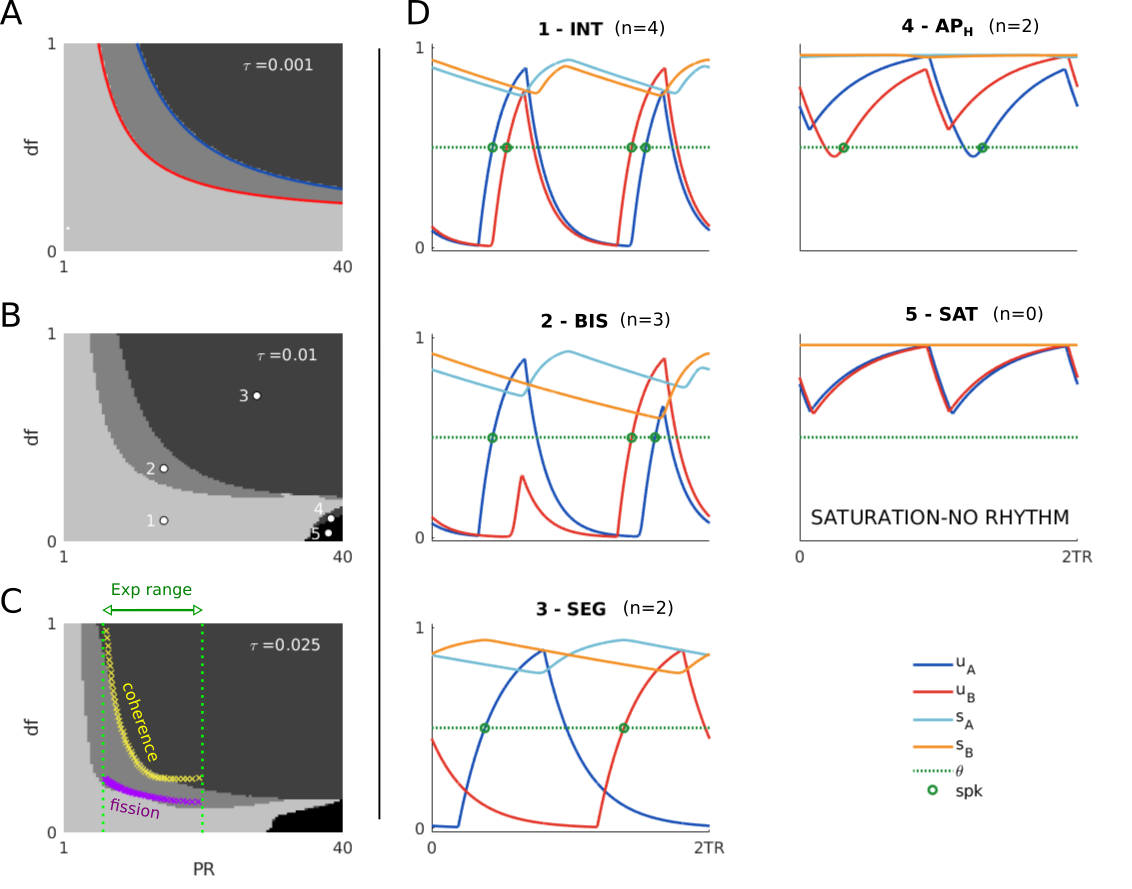}
  \caption{A-C. show the number of threshold crossings for both units $n$ in greyscale for simulated trajectories at varying $P\!R$ and $df$ (uniformly sampled in $96$ points) for different values of $\tau$ shown in top-right corner of each panel. Black corresponds to $n\!=\!0$ and the lightest gray to $n\!=\!4$. In A. the blue and red curves are the analytically predicted coherence and fission boundaries defined in equations \ref{boundary_eqn}. In C. yellow and purple crosses represent respectively the experimentally detected coherence and fission boundaries, replotted from Figure 2 in \cite{almonte2005integration}. D. Time histories for the model states in each of the five regions of panel B, with values of $P\!R$ and $df$ shown by white dots in panel B. All parameters are as in Figure \ref{fig:figure3new}. }
  \label{figure8}
\end{figure} 

 For low values of $\tau$ (panel A) the system is in the slow-fast regime. The blue and red curves show the analytically predicted coherence and fission boundaries for the Heaviside case under slow-fast regime defined in equations \ref{boundary_eqn}. These curves closely match the numerically predicted boundaries separating these regimes in the smooth system. For panel B and C $\tau$ is increased. All the existing states found in panel A persist and occupy the largest region of the parameter space, but the predicted fission and coherence boundaries perturb. We note that the selected values of $D$ and $T\!D$ in these figures lead to the condition $T\!D\!+\!D\!\geq\!T\!R$ for $P\!Rs$ greater than approximately $27$Hz, where the following two new $2T\!R$-periodic states appear: 
 
$AP_H$ - characterized by $n\!=\!2$. Both units oscillate at higher activity levels than the threshold $\sim \theta$. Since $n\!=\!2$ this state may correspond to segregation, but its perceptual relevance is difficult to assess, because it occurs in a small region of the parameter space and at high $P\!Rs$, which is outside the range tested in psychoacoustic experiments. 

$SAT$ - characterized by $n\!=\!0$. The activity of each units is higher than the threshold $\theta$ in $[0,2T\!R]$ (saturation). This state exists at (a) low $dfs$ and (b) high $P\!Rs$, greater than $30$Hz. Property (a) guarantees that inputs are strong enough to turn ON both units, while property (b) guarantees that that successive active tone intervals occur rapidly compared to the decay of the units' activities. If $\tau$ is high, although the units turn OFF between two successive tone intervals, their slow decays does not allow crossings of the threshold $\theta$. This state does not correspond to any percepts studied in the auditory streaming experiments (integration or segregation). However, $P\!R$ typically ranges between 5 and 20Hz in these experiments. The existence of this state may explain why perceivable isochronal rhythms above $\sim 30$ Hz are heard as a pure tone in the first (lowest) octave of human hearing. Indeed, when $df\!=\!0$ the model inputs represent the repetition of a single tone (B=A) with frequency $P\!R$. Our proposed framework linking percepts to neural states (see previous section) suggests that $SAT$ cannot track any rhythm simply because no unit crosses threshold. 

The coherence and fission boundaries detected from the network simulations in panel Figure \ref{figure8}C quantitatively match those from psychoacoustic experiments (yellow and purple crosses, the available data spans $P\!Rs$ in $\sim [7,20]$Hz). The model parameters chosen in the this figure (including $\tau$) have been manually tuned to match the data. Overall, we conclude that the proposed modelling framework is a good candidate for explaining the perceptual organization in the van Noorden diagram and for perceiving repeated tones (isochronal rhythms) at high frequencies as single pure tone in the lowest octave of human hearing. 

\section{Discussion}
We proposed a minimal firing rate model of ambiguous rhythm perception. Four delay differential equations represent two neural populations coupled by fast direct excitation and slow delayed inhibition that are forced by square-wave periodic inputs. Acting on different timescales, excitation and inhibition give rise to rich dynamics driven by cooperation and competition. We used analytical and computational tools to investigate periodic solutions 1:1 locked to the inputs (1:1 locked states) and their dependence on parameters influencing auditory perception.

The model incorporates neural mechanisms commonly found in auditory cortex (ACx). We hypothesised that pitch and rhythm are respectively encoded in tonotopic primary and secondary ACx \cite{musacchia2014thalamocortical}. Model units represent populations in secondary ACx - i.e. the belt or parabelt regions of auditory cortex - receiving inputs that mimic primary ACx responses \cite{hackett2014feedforward} to interleaved A and B tones \cite{fishman2004auditory}. This division of roles in ACx is supported by evidence for specific non-primary belt and parabelt regions encoding temporal features (i.e. rhythmicity) only present in sound envelope rather stimulus features (i.e. content like pitch) as in primary ACx~\cite{musacchia2014thalamocortical}. Model inputs depend on key parameters influencing psychoacoustic perception: the presentation rate ($P\!R$), the tones' pitch difference ($df$) and the tone duration ($T\!D$). The timescale separation between excitation and inhibition is consistent with AMPA and GABA synapses, respectively (widely found in cortex). The inhibition - with delay assumed fixed to $D$ - could be affected by factors including slower inhibitory activation times (vs excitatory), indirect connections and propagation times between the spatially separated A and B populations.

By posing the model in a slow-fast regime we studied 1:1 locked states for $T\!D\!+\!D\!<\!1/P\!R$, which enabled us to classify states and define a matrix representation (\emph{matrix form}). This mathematical tool helped us to formulate existence conditions and rule out impossible states, leading to a complete description of all 1:1 locked states. The condition $T\!D\!+\!D\!<\!1/P\!R$ is relevant to auditory streaming. Indeed, the factors that may play a role in generating delayed inhibition discussed above would most likely lead to short or moderate delays, for which this condition is guaranteed for the value of $P\!R$s and $T\!D$s typically considered in  experiments ($P\!R \in [5,20]$Hz and $T\!D \in [10,30]$ms; $T\!D$'s interpretation discussed below in Predictions). 


We proposed a classification of 1:1 locked states and for rhythms heard  during auditory streaming based on threshold crossing of the units' responses. More precisely, for ABAB integrated percepts both units respond to every tone and for segregated A-A- or -B-B percepts each unit responds to only one tone. Bistability corresponds to one unit responding to every tone and the other unit responding to every other tone. This interpretation of bistability can explain how both integrated and segregated rhythms may be perceived simultaneously, as reported in some behavioral studies \cite{denham2012characterising,denham2014stable}, but not the dynamic alternation between these two percepts~\cite{Pressnitzer2006,rankin2015neuromechanistic} (see the section ``Future work'' below). This classification enabled us to compare the states' existence regions to those of the corresponding percepts when varying $df$ and $P\!R$ in experiments (van Noorden diagram). A similar organization of these regions emerged naturally from the model and is robust to parameter perturbations. 

Finally, we carried out numerical analysis with a smooth gain function, smooth inputs and different levels of timescale separation to confirm the validity of the analytical approach. The simulations closely matched the analytical predictions under the slow-fast regime. Reducing the timescale separation shifts the regions of existence of the perceptually relevant states and produces a qualitatively close match the van Noorden diagram.  Numerical simulations extended this analysis to $T\!D\!+\!D\!\geq\!1/P\!R$, which led to the emergence of a high activity (saturated) state occurring at high $P\!R$s and low $df$. The case $T\!D\!+\!D\!\geq\!1/P\!R$ may lead to the existence other states not analyzed as they do not appear in the van Noorden $(P\!R,df)$-range.

\subsection{Models of neural competition}
Our proposed model addresses the formation of percepts but not switching between them, so-called auditory perceptual bistability~\cite{Pressnitzer2006,rankin2015neuromechanistic}. Future work will consider the present description acts as a front-end to a competition network (one can think of the present study as a reformulation of the pre-competition stages in \cite{rankin2015neuromechanistic}). Perceptual bistability (e.g. binocular rivalry) is the focus of many theoretical studies that feature mechanisms and dynamical states similar to those reported here. We note a key distinction here: the units are associated with tonotopic locations of the A and B tones, not with percepts as in many other models. In contrast with our study, firing rate models are widely used with fixed inputs, mutual inhibition (often assumed instantaneous), and a slow adaptation process that drives slow-fast oscillations~\cite{Laing2002,Shpiro2007,curtu2008mechanisms}. Periodic inputs associated with specific experimental paradigms have been considered in several models~\cite{Wilson2003,jayasuriya2012effects,vattikuti2016canonical,li2017attention,darki2020methods}.

\subsection{Models of auditory streaming}
The auditory streaming paradigm has been the focus of a wealth of electrophysiological and imaging studies in recent decades. However, it has received far less attention from modelers when compared with visual paradigms. Many existing models of auditory streaming have used signal-processing frameworks without a link to neural computations (recent reviews: \cite{snyder2017recent,szabo2016computational,rankin2019computational}). In contrast our model is based on a plausible network architecture with biophysically constrained and meaningful parameters. Simplifications (like the Heaviside gain function) provide the tractability to perform a detailed analysis of all states relevant to perceptual interpretations and find their existence conditions. Despite the model's apparent simplicity (4 DDEs) it produces a rich repertoire of dynamical states linked to perceptual interpretations. Our model is a departure from (purely) feature-based models because it incorporates a combination of mechanisms acting at timescales close to the interval between tones. By contrast, \cite{almonte2005integration} considers neural dynamics only on a fast time scale (less than TR). Further, \cite{rankin2015neuromechanistic} considers slow adaptation ($\tau>1$\,s) to drive perceptual alternations, assumes instantaneous inhibition and slow NMDA-excitation, a combination that precludes forward masking as reported in \cite{fishman2004auditory}. The entrainment of intrinsic oscillations to inputs was considered in \cite{wang2008oscillatory}, albeit using a highly redundant spatio-temporal array of oscillators.  Recently, a parsimonious neural oscillator framework was considered in \cite{perez2019uncoupling} but without addressing how the same percepts persist over a wide range of $P\!R$ (5-20\,Hz).

A central hypothesis for our model is that network states associated with different perceptual interpretations  are generated before entering into competition that produces perceptual bistability (as put forward in \cite{mill2013modelling} with a purely algorithmic implementation). Here network states are emergent from a combination of neural mechanisms: mutual fast, direct excitation and mutual slow acting, delayed inhibition. In contrast with \cite{rankin2015neuromechanistic} our model is sensitive to the temporal structure of the stimulus present in our stereotypical description of inputs to the model from primary auditory cortex and over the full range of stimulus presentation rates. 

\subsection{Predictions}\label{predictions}
In van Noorden's original work on auditory streaming boundaries in the $(df,P\!R)$-plane were identified: the temporal coherence boundary below which only integrated occurs and the fission boundary above which only segregated occurs. We  derived exact expressions for these behavioral boundaries   that match the van Noorden diagram. One of challenges in developing a model that reproduces the van Noorden diagram was to explain how a neural network can produce an integrated-like state at very large $df$-values and low $P\!R$s. Primary ACx shows no tonotopic overlap in this parameter range (A-location neurons exclusively respond to A tones) \cite{fishman2004auditory}. Our results show that  fast excitation can make this possible. Disrupting AMPA excitation is predicted to preclude the integrated state at large $df$-values. Furthermore, our results show that segregation relies on slow acting, delayed inhibition, which performs forward masking. Whilst the locus for this GABA-like inhibition cannot yet be specified, we predict that its disruption would promote the integrated percept.

Some model parameters (i.e.\ $T\!D$, $T\!R$, input strengths) can readily be tested in experiments by changing sound inputs. The model could predict the effect of such changes on perception. However, the role of $T\!D$ has yet to be investigated in experiments. In our model $T\!D$ better represents the duration of the primary ACx responses to tones, rather than the sound duration of each tone. This interpretation is supported by recordings of firing rates at tonotopic locations in Macaque primary ACx \cite{fishman2004auditory}. In these data $\sim 80 \%$ of the response is localized  shortly after the tone onset. This time window  is approximately constant $\sim 30$ms across different tone intervals, tone durations, $P\!R$ and $d\!f$ (unpublished results). 

Numerics for the smooth model predict a region at large $P\!R$s for which responses are saturated (no threshold crossings). These responses are consistent with rapidly repeating discrete sound events at rates above $30$Hz sounding like a low-frequency tone ($20$Hz is typically quoted as the lowest frequency for human hearing). At presentation rates above $30$Hz we predict a transition from hearing a modulated low-frequency tone to hearing two fast segregated streams as df is increased.

\subsection{Conclusion}

Our study proposed that sequences of tones are perceived as integrated or segregated through a combination of feature-based and temporal mechanisms. Here tone frequency is incorporated via  input-strengths and timing mechanisms are introduced via excitatory and inhibitory interactions at different timescales including delays. We suspect that the proposed architecture is not unique in being able to produce similar dynamic states and the van Noorden diagram. The implementation of globally excitatory inputs ($i_A(t)$ and $i_B(t)$ driving both units) rather than mutual fast-excitation is expected to produce similar results.

The resolution of competition between these states is not considered at present. Imaging studies implicate a network of brain areas (e.g. frontal and parietal) extending beyond auditory cortex for streaming \cite{cusack2005intraparietal,kanai2010human,kashino2012functional,kondo2018inhibition}, some of which are generally implicated in perceptual bistability \cite{vernet2015synchronous,wang2013brain,zaretskaya2010disrupting}.  The model could be extended to consider perceptual competition and bistability by incorporating a competition stage further downstream (in the same spirit as \cite{rankin2015neuromechanistic}). An extended framework would provide the ideal setting to explore perceptual entrainment through the periodic \cite{byrne2019auditory} or stochastic \cite{baker2019dynamic} modulation of a parameter like $df$.


\begin{backmatter}

\section*{Acknowledgements}
The authors thank Pete Ashwin and Jan Sieber for valuable feedback on earlier versions of this manuscript. 

\section*{Funding}
This work was funded by the EPSRC funding project Reference EP/R03124X/1.

\section*{Abbreviations}
ACx - auditory Cortex; $T\!D$ - tone duration; $T\!R$ - tone repetition time; $P\!R$ - presentation rate. 

\section*{Availability of data and materials}
Source code to reproduce the results presented will be made available on a public GitHub repository at the time of publication. 

\section*{Ethics approval and consent to participate}
Not applicable.

\section*{Competing interests}
The authors declare that they have no competing interests.

\section*{Consent for publication}
Not applicable.

\section*{Authors' contributions}
AF and JR were involved with the problem formulation, model design, discussion of results and writing the manuscript. AF carried out the mathematical analysis and numerical simulations. 


\bibliographystyle{vancouver} 
\bibliography{papers} 









\end{backmatter}

\section{Supplementary Material}

\subsection{Separatrices} \label{separatrices}
In this section we derive that separatrices of the degenerate fixed point $(s_1,s_2)$ of system
\begin{equation}
    \begin{array}{lcl} 
        u_A' & = & -u_A+H(a(u_B-s_2)) \\ 
        u_B' & = & -u_B+H(a(u_A-s_1))
    \end{array}
\end{equation} 
are given by
\begin{equation}
\begin{cases} 
(u_A-1)s_2/(s_1-1) & \mbox{if } u_A \leq s_1 \\ 
u_A(s_2-1)/s_1+1  & \mbox{otherwise}
\end{cases}
\nonumber
\end{equation}
We prove that these curves define the separatrices by showing the convergence of orbits from initial conditions $(u_A^0,u_B^0)$ in the top left corner in Figure \ref{fig:figure3} to $(1,1)$ (purple trajectories in Figure \ref{fig:figure3}). A similar proof holds for initial conditions in other regions of the phase-space and for convergence to $(0,0)$. Points $(u_A^0,u_B^0)$ in the top left corner belong to the set:
$$ \Omega_L = \{ (u_A,u_B) : u_A < s_1 \mbox{ and } u_B > (u_A-1)s_2/(s_1-1) \} $$
Since $\Omega_L \subset [0,u_A] \times [u_B,1]$, system \ref{fast-subsystem-simple} becomes: 
\begin{equation}
    \begin{array}{lcl} 
        u_A' & = & 1-u_A \\ 
        u_B' & = & -u_B 
    \end{array}
    \nonumber
\end{equation}
Consider an orbit starting from $(u_A^0,u_B^0) \in \Omega_L$. Since $u_A'>0$ the orbit will move towards the right until it reaches the vertical line $u_A=s_1$. The trajectory follows the same equations at all times $t$, since: 
$$ u_B(t) = u_B^0 \frac{u_A-1}{u_A^0-1} > s_2 \frac{u_A-1}{s_1-1}>s_2 $$
Where the last inequality holds because $s_1>u_A$. Thus, any trajectory ends on the top-right corner defined by: 
$$ \Omega_R = \{ (u_A,u_B) : u_A \geq s_1 \mbox{ and } u_B \geq s_2 \} $$
After the orbit reaches the curve $u_A=s_1$, $(u_A,u_B) \in \Omega_R$ it follows the system:
\begin{equation}
    \begin{array}{lcl} 
        u_A' & = & 1-u_A \\ 
        u_B' & = & 1-u_B 
    \end{array}
    \nonumber
\end{equation}
Since $u_A'>0$ and $u_B'>0$ the trajectory continues to satisfy these equations and will converge to $(1,1)$ (both turn ON simultaneously). Similar results hold for the Sigmoidal case (see Supplementary Material \ref{basin_sigmoid}).

\subsection{Basins of attraction for the fast subsystem with Sigmoid gain} \label{basin_sigmoid}
Here we numerically analyze the units' fast dynamics after replacing the Heaviside function $H$ with a Sigmoid gain function with threshold $0$ and slope $\lambda$ for parameter values for which points $(0,0)$ and $(1,1)$ coexist and compare with the results presented in Remark 3.1 for the Heaviside gain. We consider the following system: 
\begin{equation}
    \begin{array}{lcl} 
        u_A' & = & -u_A+S(a(u_B-s_2)) \\ 
        u_B' & = & -u_B+S(a(u_A-s_1))
    \end{array}
\end{equation} 
Parameter $a$ acts as a multiplicative factor on the slope $\lambda$. Figure \ref{s1} shows qualitatively similar phase portrait and the basins of attraction between the case with the Heaviside and Sigmoid gains (slope $\lambda\!=\!20$ and $a\!=\!1$). The stable equilibrium points $(0,0)$ and $(1,1)$ (black circles), the $u_A$- and $u_B$-nullclines (blue and red) and the saddle-separatrices (yellow and orange curves) discussed in Remark 3.1 for the Heaviside case persist and are slightly shift in the Sigmoid case. Furthermore, the degenerate $(s_1,s_2)$ saddle for the Heaviside case becomes a standard saddle point and slightly deviates from $(s_1,s_2)$ (red circles). The equilibia for the Sigmoidal case were detected numerically with Newton's method. Saddle separatrices (yellow and orange curves) were also found numerically via backward integration from an initial point near the saddle, in the unstable direction of the eigenvector. 

\begin{figure}[htbp]
  \centering
  \includegraphics[width=1\linewidth]{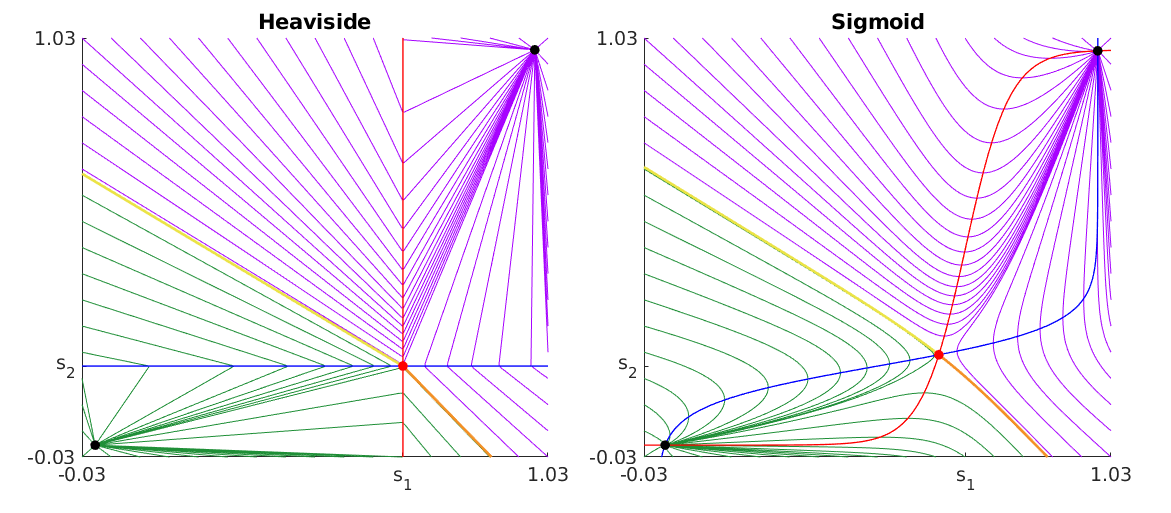}
  \caption{Phase portrait and basin of attraction for system \ref{fast-subsystem-simple} with $s_1\!=\!0.7$ and $s_2\!=\!0.4$ with gain function given by a Heaviside (left) or a Sigmoid with slope $\lambda\!=\!20$ and $a\!=\!1$ (right). The left panel is redrawn from Figure 3. Purple and green lines show orbits converge to $(1,1)$ and $(0,0)$, respectively in the Heaviside case, or to equilibria $\sim (1,1)$ and $\sim (0,0)$ in the Sidmoid case (black circles). The $u_A$- and $u_B$-nullclines are shown in blue and red, respectively. Yellow and orange lines show the saddle-separatrices of the point $(s_1,s_2)$ (red circle). Point $(s_1,s_2)$ is a degenerate saddle for the Heaviside case and a standard saddle for the Sidmoid case. }
  \label{s1}
\end{figure}

\subsection{Fast dynamics in the absence of inputs} \label{thm_ON_appendix}
\begin{theorem}[dynamics in $\mathbb{R}\!-\!I$]
For any $t \in \mathbb{R}\!-\!I$: 
\begin{enumerate}
 \item If A or B is OFF at time $t$, both units are OFF in $(t,t^*]$, where $$t^*=\min_{s \in I} \{s > t \}$$
 \item If A or B is ON at time $t$, both units are ON in $[t_*,t)$, where $$t_*=\max_{s \in I} \{s < t \}$$
\end{enumerate}
\end{theorem}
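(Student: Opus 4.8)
The plan is to run the argument inside the fast subsystem valid on $\mathbb{R}-I$, which is system~\ref{fast-subsystem-I} with $c=d=0$, namely $u_A' = -u_A + H\bigl(au_B - b\,s_B(\cdot-D)\bigr)$ and $u_B' = -u_B + H\bigl(au_A - b\,s_A(\cdot-D)\bigr)$. Two facts do all the work. First, the fixed-point analysis of Section~\ref{fast_dynamics_I} (equivalently Table~\ref{equilibria-fast} with $c=d=0$): on $\mathbb{R}-I$ the only fast quasi-equilibria are $(0,0)$, which always exists, and $(1,1)$, which exists iff $a-b\,s_A(\cdot-D)\ge\theta$ and $a-b\,s_B(\cdot-D)\ge\theta$; the mixed states $(1,0)$ and $(0,1)$ are never quasi-equilibria since $\theta>0$. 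Second, whenever one unit is OFF ($u\sim0$), the other unit's Heaviside argument equals $a\cdot(\text{something}\sim0)-b\,s_\bullet(\cdot-D)$, which is $\le 0<\theta$ because $s_\bullet\ge0$; hence that other unit can only decay and cannot turn ON while its companion is OFF. Near $(0,0)$ both Heaviside arguments are $\le 0<\theta$, so $(0,0)$ is a stable quasi-equilibrium that no orbit can leave while in $\mathbb{R}-I$.

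For part 1, take WLOG A OFF at $t_0\in\mathbb{R}-I$ (the B case is identical by the evident A/B symmetry of the fast subsystem on $\mathbb{R}-I$); here ``OFF at $t_0$'' is read as $u_A(t_0)\sim0$ together with $au_B(t_0)-b\,s_B(t_0-D)<\theta$, i.e.\ A is not mid-transition. Argue by contradiction: if some unit is $\sim1$ somewhere in $(t_0,t^*]$, let $t_1$ be the infimum of such times. On $[t_0,t_1)$ neither unit is $\sim1$, so A stays OFF; by the second fact above B can then only decay, and since on the fast $O(\tau)$ timescale the delayed term $s_B(\cdot-D)$ is essentially frozen (it evolves on the slow scale; see equation~\ref{slow-model}) while $u_B$ drops by a constant factor, A's Heaviside argument $au_B-b\,s_B(\cdot-D)$ stays below its value at $t_0$, hence below $\theta$. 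So A is genuinely OFF on $[t_0,t_1)$, $u_B$ collapses to $\sim0$ within an $O(\tau)$ window, thereafter both Heaviside arguments are $\le 0<\theta$ and the orbit sits at $(0,0)$, so no unit can turn ON at $t_1$ --- contradiction. Hence no unit is $\sim1$ in $(t_0,t^*]$, and then the second fact forces both units to be OFF throughout $(t_0,t^*]$.

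Part 2 then follows from part 1 by contraposition plus the geometry of $I$. The contrapositive of part 1 reads: \emph{if some unit is ON at some $s\in(t,t^*]$ with $t^*=\min_{r\in I}\{r>t\}$, then both units are ON at $t$.} Now fix $t_0\in\mathbb{R}-I$ with, say, A ON at $t_0$, and set $t_*=\max_{r\in I}\{r<t_0\}$, the offset of the preceding active-tone interval. For every $t\in(t_*,t_0)$ we have $t\in\mathbb{R}-I$ and $\min_{r\in I}\{r>t\}$ is the onset of the next active-tone interval, which lies strictly after $t_0$; thus $t_0\in\bigl(t,\min_{r\in I}\{r>t\}\bigr]$ and A is ON there, so the contrapositive gives that both units are ON at $t$. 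Passing to the limit $t\uparrow t_0$ through $(t_*,t_0)$ and using continuity of $u_A,u_B$ covers the closed endpoint $t=t_*$ as well, giving both units ON on $[t_*,t_0)$.

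The step I expect to be the real obstacle is the transient bookkeeping in part 1: a priori, at $t_0$ the companion unit could still be $\sim1$ with a depleted synaptic variable, and one must check this cannot flip the OFF unit back ON before the companion itself decays. The resolution is exactly the slow--fast separation --- over the $O(\tau)$ transient the delayed inhibitory inputs change by only $O(\tau/\tau_i)$, whereas $u_B$ decays geometrically, so the OFF unit's Heaviside argument can only slip further below $\theta$ rather than reach it. Everything else (the contrapositive manipulation and the identification of $t^*$ and $t_*$ with tone onsets/offsets from the explicit form of $I$) is routine.
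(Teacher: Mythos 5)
Your proof is correct and follows essentially the same route as the paper's: both rest on the fast-subsystem equilibrium analysis with $c=d=0$ (only $(0,0)$ and $(1,1)$ are quasi-equilibria, $(0,0)$ always exists and is absorbing, and the mixed states are impossible), and both obtain part 2 from part 1 by contradiction/contraposition. The only cosmetic differences are that the paper anchors its case analysis at $t_*$ whereas you argue forward from $t$, and that in your part 2 the continuity limit should be taken as $t \downarrow t_*$ rather than $t \uparrow t_0$.
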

\begin{proof}
 We begin by proving 1. Due to Section \ref{fast_dynamics_I} the fast subsystem \ref{fast-subsystem-I} with no inputs ($c\!=\!d\!=\!0$) has only two possible equilibria at any time in $[t_*,t^*]$: $P\!=\!(0,0)$ and $Q\!=\!(1,1)$. At time $t_*$, if $Q$ is not an equilibium or $(u_A,u_B)$ is in the basin of attraction of $P$ the system instantaneously converges to $P$ (i.e. both units are/turn OFF). Since $P$ is an equilibium at any time in $\mathbb{R}\!-\!I$ the units remain OFF throughout $[t_*,t^*] \subset \mathbb{R}\!-\!I$, which proves the theorem. Next, assume that $Q$ is also an equilibium  and that $(u_A,u_B)$ instantaneously converges to $Q$ at time $t_*$ (i.e. both units are/turn ON at time $t^*$). By hypothesis of point 1. one unit is OFF at time $t$. By continuity there must be a turning OFF time in $\tilde{t} \in [t_*,t)$. This can occur only if $Q$ is not an equilibrium at time $\tilde{t}$, due to the dynamics of the slow variables. Thus since $P$ is an equilibrium at any time in $\mathbb{R}\!-\!I$ both units turn OFF at time $\tilde{t}$ and remain OFF in $(t,t^*] \subset [\tilde{t},t^*]$. This concludes the proof of 1. %

  We prove {2.}~by contradiction. Suppose there $\exists \bar{t} \in [t_*,t)$ when one unit is OFF. From 1. we have both units  OFF in $(\bar{t},t^*]$. This is absurd given that one unit is ON at time $t \in (\bar{t},t^*]$.
\end{proof}

\subsection{Synaptic decay lemma} \label{syn_decay_appendix}
\begin{lemma}[synaptic decay] 
If $T\!D\!+\!D<T\!R$ the delayed synaptic variables $s_A(t\!-\!D)$ and $s_B(t\!-\!D)$ are monotonically decreasing in $[\alpha_k^A,\alpha_k^A\!+\!D]$ or $[\alpha_k^B,\alpha_k^B\!+\!D]$, $\forall k \in \mathbb{N}$
\end{lemma}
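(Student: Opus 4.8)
The plan is to reduce the claim to Remark~\ref{remark1}, which says that $s_A$ (respectively $s_B$) can fail to be non-increasing only at an instant where unit A (respectively B) undergoes an OFF-to-ON transition. Since $t \mapsto s_A(t-D)$ is just $s_A$ read on the shifted window, it suffices to show that, over the window in which $t$ ranges, neither $s_A$ nor $s_B$ sees such a transition. By the $T\!R$ time shift that swaps the A and B tone sequences it is enough to treat the interval $[\alpha_k^A,\alpha_k^A+D]$, for which the relevant window for the synaptic variables is $[\alpha_k^A-D,\alpha_k^A]$; the case $[\alpha_k^B,\alpha_k^B+D]$ follows verbatim.

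The key tool is Theorem~\ref{thm:uON}: outside the active tone intervals no unit can turn ON, so every OFF-to-ON transition time of either unit lies in $I=\bigcup_{R\in\Phi}R$. It therefore remains to check that $[\alpha_k^A-D,\alpha_k^A)$ is disjoint from $I$. For $k\ge 1$ the active tone interval immediately preceding $I_k^A=[\alpha_k^A,\beta_k^A]$ is $I_{k-1}^B=[(2k-1)T\!R,(2k-1)T\!R+T\!D]$, so the inter-tone gap has length $\alpha_k^A-\beta_{k-1}^B=T\!R-T\!D$. The hypothesis $T\!D+D<T\!R$ gives $T\!R-T\!D>D$, hence $\alpha_k^A-D>\beta_{k-1}^B$; together with $T\!R\ge T\!D$ (which forces the active tone intervals to be pairwise disjoint and ordered along the real line) this yields $[\alpha_k^A-D,\alpha_k^A)\subset(\beta_{k-1}^B,\alpha_k^A)\subset\mathbb{R}-I$. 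For $k=0$ one argues identically, with $(-\infty,0)$ in place of the gap since the inputs vanish for $t<0$.

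Combining the two steps, no OFF-to-ON transition of A or of B occurs for times in $[\alpha_k^A-D,\alpha_k^A)$, so by Remark~\ref{remark1} both $s_A$ and $s_B$ are monotonically decreasing there; equivalently $s_A(t-D)$ and $s_B(t-D)$ are monotonically decreasing for $t\in[\alpha_k^A,\alpha_k^A+D)$, which is the assertion. The only delicate point is the right endpoint $t=\alpha_k^A+D$, i.e.\ the left endpoint $\alpha_k^A$ of the $s$-window: the tone interval $I_k^A$ begins there, a unit may make an OFF-to-ON transition at precisely that instant, and so $s_A(\alpha_k^A)$ must be read as the pre-jump (left) limit; this is a pure boundary effect at the end of the window and does not spoil monotonicity on it. I expect this endpoint bookkeeping to be the only subtlety; the rest is the one-line inequality $T\!R-T\!D>D$ fed into Theorem~\ref{thm:uON}.
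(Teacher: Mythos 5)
Your proof is correct and follows essentially the same route as the paper's: reduce via Remark~\ref{remark1} to showing no OFF-to-ON transition occurs in the shifted window, invoke Theorem~\ref{thm:uON} to confine such transitions to $I$, and use $T\!R-T\!D>D$ to place $[\alpha_k^A-D,\alpha_k^A]$ in the silent gap after $\beta_{k-1}^B$. Your explicit treatment of the $k=0$ case and of the endpoint $t=\alpha_k^A+D$ is slightly more careful than the paper's, but the argument is the same.
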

\begin{proof}
 This lemma is illustrated in Figure \ref{cartoon2}A. From Remark \ref{remark1} the synaptic variable $s_A$ ($s_B$) is monotonically decreasing except for when A (B) turns ON. Due to Theorem \ref{thm:uON} such an event cannot occur at any time $t \in \mathbb{R}\!-\!I$. Thus, it is sufficient to prove that $t\!-\!D \in \mathbb{R}\!-\!I$. Without loss of generality (WLOG) consider $L\!=\![2k T\!R,2k T\!R\!+\!D]$ and $t \in L$, which implies:
$ 2k T\!R\!-\!D \leq t\!-\!D \leq 2k T\!R $. To complete the proof, the condition $T\!D\!+\!D\!<\!T\!R$ implies:
$$ 2k T\!R\!-\!D \geq (2(k\!-\!1)\!+\!1)T\!R\!+\!T\!D = \beta_{k-1}^B \implies \beta_{k-1}^B \leq t\!-\!D \leq \alpha_k^A $$ 
The last inequalities imply $t\!-\!D \in \mathbb{R}-I$ and conclude the proof.
\end{proof}

\subsection{No saturation lemma} \label{no_saturation_appendix}
\begin{lemma}[no saturated states] 
If $T\!D\!+\!D\!<\!T\!R$ both units are OFF in the intervals $(\alpha_k^A\!+\!T\!D+\!D,\alpha_{k}^B]$ and $(\alpha_k^B\!+\!T\!D+\!D,\alpha_{k+1}^A]$, $\forall k \in \mathbb{N}$.
\vspace{-0.5mm}
\end{lemma}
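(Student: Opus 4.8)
The plan is a short contradiction argument using only Theorem~\ref{thm:uON} and the parameter restriction~\ref{U1}. It suffices to treat the interval $(\alpha_k^A\!+\!T\!D\!+\!D,\alpha_k^B]=(\beta_A^k\!+\!D,\alpha_k^B]$; the interval $(\alpha_k^B\!+\!T\!D\!+\!D,\alpha_{k+1}^A]$ follows by the identical argument with the roles of $A$ and $B$ (and of $I_A^k$, $I_B^k$) interchanged. Note that $T\!D\!+\!D<T\!R$ gives $\beta_A^k\!+\!D<\alpha_k^B$, so the interval is nonempty, and that $(\beta_A^k,\alpha_k^B)\subset\mathbb{R}\!-\!I$ because no active tone interval lies strictly between $I_A^k$ and $I_B^k$.

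Suppose the claim fails: some unit is ON at a time $\bar t\in(\beta_A^k\!+\!D,\alpha_k^B)$. Since $\bar t\in\mathbb{R}\!-\!I$ and $I_A^k$ is the last active tone interval before $\bar t$, we have $t_*=\max_{s\in I}\{s<\bar t\}=\beta_A^k$, so Theorem~\ref{thm:uON}(2) forces both units to be ON at every point of $[\beta_A^k,\bar t)$. Now pick $\delta$ with $0<\delta<\bar t-\beta_A^k-D$ and set $t_1=\beta_A^k\!+\!D\!+\!\delta$. Then $t_1\in(\beta_A^k,\bar t)\subset(\beta_A^k,\alpha_k^B)$, so $i_A(t_1)=i_B(t_1)=0$ and $u_A(t_1)=u_B(t_1)=1$; and $t_1\!-\!D=\beta_A^k\!+\!\delta\in[\beta_A^k,\bar t)$ with unit $A$ ON throughout $[\beta_A^k,\beta_A^k\!+\!\delta]$. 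Since a unit that is ON drives its synaptic variable to $1$ on the fast time scale (Section~\ref{fast_dyn}), this gives $s_A(t_1\!-\!D)=1$.

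Finally, unit $B$ sits at the quasi-equilibrium $u_B=1$ with zero input at $t_1$, so its activation argument must be at least $\theta$: $a\,u_A(t_1)-b\,s_A(t_1\!-\!D)\ge\theta$, i.e.\ $a-b\ge\theta$, contradicting~\ref{U1}. Hence no unit is ON on $(\beta_A^k\!+\!D,\alpha_k^B)$, so both units are OFF there (each activity being $\sim 0$ or $\sim 1$), and the left limit covers the endpoint $\alpha_k^B$. The one point needing care is the placement of $t_1$ and of $t_1\!-\!D$: $t_1$ must sit strictly inside the input-free gap (so its activation argument carries no $+c$ or $+d$ term), and $t_1\!-\!D$ strictly inside the window $[\beta_A^k,\bar t)$ on which Theorem~\ref{thm:uON}(2) already guarantees $A$ to be continuously ON (so $s_A(t_1\!-\!D)=1$) --- and it is precisely $T\!D\!+\!D<T\!R$ together with $D\ge 0$ that makes both placements possible.
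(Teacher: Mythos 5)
Your proposal is correct and follows essentially the same route as the paper's proof: a contradiction argument that invokes Theorem~\ref{thm:uON} to force both units ON on $[\beta_A^k,\bar t)$, observes that the delayed synaptic variable equals $1$ at a time just past $\beta_A^k+D$, and derives $a-b\ge\theta$, contradicting \ref{U1}. Your version is in fact slightly more careful than the paper's (which evaluates at $p_*=t_*+D$ exactly and, incidentally, cites condition \ref{U2} where \ref{U1} is meant), but the underlying mechanism is identical.
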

\begin{proof}
 We prove the theorem for the interval $(\alpha_k^A\!+\!TD+\!D,\alpha_{k}^B]$ (extension to  other intervals is analogous). By contradiction suppose $\exists \bar{t}$ in this interval when either unit, say A, is ON. Since $T\!D\!+\!D<T\!R$ we have  $\bar{t} \in \mathbb{R}\!-\!I$. Theorem \ref{thm:uON} implies both units are ON in $[t_*,\bar{t})$, where $t_*\!=\!\alpha_k^A\!+\!T\!D$. Thus, at time $p_*\!=\!t_*\!+\!D \in [t_*,\bar{t})$ the delayed synaptic variables tends to 1 following the fast system \ref{fast-model}. From this and condition \ref{U2} we have that $ a\!-\!bs_A(p_*\!-\!D) \sim a\!-\!b<\theta$ and $a\!-\!bs_B(p_*\!-\!D) \sim a\!-\!b<\theta$. Hence $(0,0)$ must be the only stable equilibrium at time $p^*$, which is absurd since BOTH units are ON at this time. 
\end{proof}

\subsection{Single OFF to ON transition Lemma} \label{appendix_one_transition}
Here we prove the following Lemma, that derives from Lemma \ref{lem:syn_decay} and Lemma \ref{lem:no_saturation}. 
\begin{lemma}[single OFF to ON transition]
Let $D\!>\!TD$ and $T\!D\!+\!D\!<\!T\!R$ and consider an active tone interval $R\!=\![\alpha,\beta] \in \Phi$. Let A (B) be ON at a time $\bar{t} \in R$, then \newline
(1) A (B) is ON $\forall t \geq \bar{t}$, $t \in R$ \newline
(2) $\exists !\, t_A^* \, (t_B^*) \in R$ when A (B) turns ON  \newline
(3) $s_A(t\!-\!D)$ ($s_B(t\!-\!D)$) is decreasing for $t \in [\alpha,t_A^*\!+\!D]$ ($t \in [\alpha,t_B^*\!+\!D]$)
\end{lemma}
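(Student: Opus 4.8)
The plan is to fix, without loss of generality, an $A$-tone interval $R=I_A^k=[\alpha,\beta]$ with $\beta=\alpha+T\!D$ and to argue for the $A$ unit only; the statement for the $B$ unit and the case $R=I_B^k$ then follow from the $A\leftrightarrow B$ symmetry of system~\ref{fast-subsystem-I} (which swaps $c$ and $d$). The first thing I would record is what the two hypotheses buy us. Since $D>T\!D$ we have $R=[\alpha,\alpha+T\!D]\subseteq[\alpha,\alpha+D]$, so Lemma~\ref{lem:syn_decay} (which uses $T\!D+D<T\!R$) applies throughout $R$: the delayed quantities $\tilde s_A(t)=s_A(t-D)$ and $\tilde s_B(t)=s_B(t-D)$ appearing in system~\ref{fast-subsystem-I} are \emph{monotonically decreasing on all of $R$}. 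I would also note that $D>T\!D$ forces $t^*+D>\beta$ whenever either unit turns ON at some $t^*\in R$, so such a jump of $s_A$ or $s_B$ to $1$ is not yet ``seen'' by the delayed variables inside $R$, consistent with the previous sentence. Finally, by Lemma~\ref{lem:no_saturation} the fast orbit $(u_A,u_B)$ starts from $(0,0)$ at time $\alpha$.

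For part~(1), suppose $A$ is ON at $\bar t\in R$. On the fast scale the orbit then lies at, or converges to, a quasi-equilibrium of system~\ref{fast-subsystem-I} with $u_A=1$, hence $(1,0)$ or $(1,1)$. If $(1,1)$: its existence conditions in Table~\ref{equilibria-fast}, $a+c\ge b\tilde s_B+\theta$ and $a+d\ge b\tilde s_A+\theta$, are preserved as $\tilde s_A,\tilde s_B$ decrease, so the orbit stays at $(1,1)$ and $A$ is ON on $[\bar t,\beta]$. If $(1,0)$: the condition $c\ge b\tilde s_B+\theta$ persists, and if the other one $a+d<b\tilde s_A+\theta$ should fail at some $t_1\in[\bar t,\beta]$, then at $t_1$ the fast subsystem has $(1,1)$ as its unique stable equilibrium (the first condition holds since $c\ge b\tilde s_B+\theta$ and $a\ge0$; the second is exactly what just became true), while $u_A$ stays pinned at $1$ because its drive $a u_B-b\tilde s_B+c\ge c-b\tilde s_B\ge\theta$, and $u_B$ rises to $1$; from there $(1,1)$ persists by the previous case. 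In all cases $A$ is ON throughout $[\bar t,\beta]$.

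Parts~(2) and~(3) are then short. For~(2): $A$ is OFF at $\alpha$ but ON at $\bar t$ and, by~(1), the ON-set of $A$ in $R$ is a terminal subinterval of $R$; so $t_A^*:=\inf\{t\in R:\ A\text{ is ON at }t\}$ is the unique OFF-to-ON transition of $A$ in $R$ (a second one is impossible, since $A$ cannot turn OFF in $R$ once ON). For~(3): by Remark~\ref{remark1}, $s_A$ decreases except at an OFF-to-ON transition of $A$, so it suffices to check that $A$ makes no such transition on $[\alpha-D,t_A^*)$; then, rescaling $r=t-D$, this is precisely the assertion that $s_A(t-D)$ is decreasing on $[\alpha,t_A^*+D]$. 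On $[\alpha,t_A^*)$, $A$ is OFF by the definition of $t_A^*$. On $[\alpha-D,\alpha)$, the preceding active tone interval $I_B^{k-1}$ ends at $\beta_{k-1}^B=(2k-1)T\!R+T\!D$, and $T\!D+D<T\!R$ gives $\beta_{k-1}^B<2kT\!R-D=\alpha-D$, so $[\alpha-D,\alpha)\subset\mathbb{R}-I$, where by Theorem~\ref{thm:uON} no OFF-to-ON transition can occur. Hence $s_A$ is monotonically decreasing on $[\alpha-D,t_A^*]$, giving~(3).

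I expect the only real obstacle to be part~(1): ruling out that $A$'s excitatory drive collapses before $\beta$. This reduces to knowing that the inhibitory feedback $b\tilde s_B$ onto $A$ (and $b\tilde s_A$ onto $B$) is non-increasing throughout $R$, which is exactly what $D>T\!D$ secures via Lemma~\ref{lem:syn_decay}; the remaining work is a careful bookkeeping of the fast planar flow through the slowly-varying quasi-equilibrium configuration — including the degenerate instants where $u_A$ or $u_B$ is mid-jump — to turn that monotonicity into the ``once ON, stays ON'' conclusion.
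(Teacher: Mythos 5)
Your proposal is correct and follows essentially the same route as the paper's proof in Supplementary Material~\ref{appendix_one_transition}: it uses $D>T\!D$ together with Lemma~\ref{lem:syn_decay} to get monotone decay of the delayed synaptic variables on all of $R$, the persistence of the $(1,0)$/$(1,1)$ quasi-equilibrium conditions from Table~\ref{equilibria-fast} for part~(1), continuity plus part~(1) for part~(2), and Remark~\ref{remark1} with Theorem~\ref{thm:uON} for part~(3). The only cosmetic difference is that you argue part~(3) directly on $[\alpha-D,t_A^*]$ rather than splitting $[\alpha,t_A^*+D]$ into $R\cup[\beta,t_A^*+D]$ as the paper does, which is an equivalent bookkeeping choice.
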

\begin{proof}
We prove this Lemma for the A unit and for the interval $R=I_k^A$, i.e. we assume that $\alpha\!=\!\alpha_k^A$ and $\beta\!=\!\beta_k^A$, where $R\!=\![\alpha,\beta]$. The extension to the other intervals and for the B unit is analogous. Let us call $\gamma\!=\!\gamma_k^A$. Since $T\!D\!+\!D\!<\!T\!R$ we can apply Lemma \ref{lem:syn_decay}, which implies $s_A(t\!-\!D)$ and $s_B(t\!-\!D)$ to be monotonically decreasing in $[\alpha,\gamma]$. Moreover, since $D\!>\!T\!D$ we have that $R \subseteq [\alpha,\gamma]$. Thus the delayed synaptic variables are monotonically decreasing in $R$. \\
We now prove (1). On the fast time scale $(u_A,u_B)$ follow the fast subsystem \ref{fast-subsystem-I} at time $\bar{t}$ and may converge to one of the four equilibria described in \ref{equilibria-fast}. However, since A is ON at time $\bar{t}$ trajectories converge to either  $(1,0)$ or $(1,1)$. 

In the first case (convergence to $(1,0)$) we have 
$$ c \geq b s_B(\bar{t}\!-\!D)+\theta. $$
Due to the decay of the synaptic variables, the same inequality holds $\forall t \geq \bar{t} \in R$. This condition is guaranteed only for the two equilibrium points $(1,0)$ and $(1,1)$. Therefore any orbit either remains fixed at $(1,0)$ or undergo a transition to $(1,1)$.

In the second case (convergence to $(1,1)$) we have 
$$ a\!+\!c \geq b s_B(\bar{t}\!-\!D)\!+\!\theta $$
$$ a\!+\!d \geq b s_A(\bar{t}\!-\!D)\!+\!\theta. $$
Due to the decay of the synaptic variables these inequalities hold $\forall t \geq \bar{t} \in R$. Therefore $(1,1)$ remains an equilibrium at such times. In both cases (convergence to $(1,0)$ or $(1,1)$) the A unit is ON $\forall t \geq \bar{t} \in R$, proving (1).

We now prove (2). Lemma \ref{lem:no_saturation} implies that A is OFF for some $t\!<\!\alpha$. Suppose that A is ON at time $\bar{t}$. For continuity, there $\exists t_A^* \in R$ when the A unit undergoes an OFF to ON transition, thus proving the first claim. The uniqueness of $t_A^*$ follows by contradiction. Suppose the existence of two distinct OFF to ON transition times $p^*$, $q^* \in R$ for the A unit. We can assume that $p^*\!<\!q^*$. Since A turns ON at time $q^*$, there $\exists r^* \in R$ with  $p^*\!<\!r^*\!<\!q^*$ such that A is OFF at time $r^*$. The fact that A turns ON at time $p^*$ and is OFF at time $r^*\!>\!p^*$ contradicts (1). 

Lastly we prove (3) for $s_A(t\!-\!D)$. Since $[\alpha,t_A^*\!+\!D]$ is the union of closed intervals $R$ and $[\beta,t_A^*\!+\!D]$, proving that $s_A(t\!-\!D)$ is monotonically decreasing in each of these subintervals would suffice. We previously proved that $s_A(t\!-\!D)$ is monotonically decreasing in $R$. Thus, we are left to prove that the same property holds in $[\beta,t_A^*\!+\!D]$. 

Due to Remark \ref{remark1} we have to prove that A cannot turn ON at any time in the interval $[\beta\!-\!D,t_A^*]$. Due to point (2) of the current lemma the turning ON time $t_A^*$ for A exists and is unique in the interval $R$. Therefore A does not turn ON in $[\alpha,t_A^*]$. Moreover since $D\!<\!T\!R$ we have $\beta\!-\!D\!=\!\beta_k^A\!-\!D\!>\!\alpha_{k-1}^B$, which leads to $[\beta\!-\!D,\alpha] \subset \mathbb{R}\!-\!I$. From Theorem \ref{thm:uON} we have that A cannot turn ON in $[\beta\!-\!D,\alpha]$. Thus we have that A cannot turn ON in $[\beta\!-\!D,\alpha] \cup [\alpha,t_A^*] \!=\! [\beta\!-\!D,t_A^*]$, which yields the desired result.  
\end{proof}

\subsection{Classification of CONNECT states} \label{connect_appendix}
To define a classification and matrix form for CONNECT states we consider the following cases:
\begin{itemize}
	\item \textbf{A(B) turns ON at time $\alpha$ and B(A) turns ON at time $t^*$, $\exists t^* \in (\alpha,\beta]$}. These two conditions are equivalent to $(1,0)$ ($(0,1)$) and $(1,1)$ being equilibria for the the fast subsystem at time $\alpha$ and $\beta$, respectively. We  note that the validity of the previous statement is due to $(1,0)$ being in the basin of attraction of $(1,1)$ for any set of parameters (as shown in Figure \ref{fig:figure3}). There are two conditions for which this occurs:
\begin{enumerate} [label=(\subscript{C}{{\arabic*}})]
 \item $f(\underline{s}_B)\!\geq\!\theta$, $a\!+\!g(\underline{s}_A)\!<\!\theta$ and $a\!+\!g(\bar{s}_A)\!\geq\!\theta$
 \item $g(\underline{s}_A)\!\geq\!\theta$, $a\!+\!f(\underline{s}_B)\!<\!\theta$ and $a\!+\!f(\bar{s}_B)\!\geq\!\theta$
\end{enumerate}
$C_1$ ($C_2$) describes the case where the B (A) units turn ON within the interval $R$ and the A (B) unit is ON at time $\alpha$. 
\item \textbf{A(B) is OFF at time $\beta$ and B(A) turns ON at time $t^*$, $\exists t^* \in (\alpha,\beta]$}. These two events correspond to $(0,0)$ and $(0,1)$ ($(1,0)$) being equilibria for the the fast subsystem at time $\alpha$ and $\beta$, respectively. The following conditions lead to the following cases:
\begin{enumerate} [label=(\subscript{C}{{\arabic*}})]
 \setcounter{enumi}{2}
 \item $g(\underline{s}_A)\!<\!\theta$, $g(\bar{s}_A)\!\geq\!\theta$ and $a\!+\!f(\bar{s}_B)\!<\!\theta$
 \item $f(\underline{s}_B)\!<\!\theta$, $f(\bar{s}_B)\!\geq\!\theta$ and $a\!+\!g(\bar{s}_A)\!<\!\theta$
\end{enumerate}
$C_3$ ($C_4$) describes the case where the A (B) units is OFF at time $\beta$ and the B (A) turns ON within $R$. 

\item \textbf{$\exists t^*, s^* \in (\alpha,\beta]$ times when the A and B unit turns ON}. The conditions leading to this case are different depending on if A turns ON before or after B, that is: 
\begin{enumerate}
 \item A turns ON before B - if $t^*\!\leq\!s^*$, $f(\underline{s}_B)\!<\!\theta$, $f(\bar{s}_B)\!\geq\!\theta$ and $a\!+\!g(\bar{s}_B)\!\geq\!\theta$
 \item B turns ON before A - if $t^* \!>\! s^*$, $g(\underline{s}_A)\!<\!\theta$, $g(\bar{s}_A)\!\geq\!\theta$ and $a\!+\!f(\bar{s}_A)\!\geq\!\theta$
\end{enumerate} 
In both cases, $(0,0)$ and $(1,1)$ are equilibria for the fast subsystem respectively for $t\!<\!\min\{t^*,s^*\}$ and $t\!\geq\!\max\{t^*,s^*\}$. In the first and second cases respectively $(1,0)$ and $(0,1)$ are equilibria for $t \in [t^*,s^*)$ ($t \in [s^*,t^*)$). For simplicity we decide not to distinguish between the cases 1. and 2. and define ($C_5$) as referring to either condition.
\end{itemize}

\subsection{CONNECT matrix form} \label{appendix_CONN_matr} 
\begin{theorem} \label{appendix_CONNECT_R}
Set $R \in \Phi$. There is an injective map: 
\begin{align*}
\varphi^R\colon C_R & \rightarrow B(2,3)\\
s &\mapsto W = 
\begin{bmatrix}
    x_A & y_A & z_A \\
    x_B & y_B & z_B
\end{bmatrix} 
\end{align*}
With entries defined by:
\begin{align}
\begin{split}
x_A\!&=\!H(f(\underline{s}_B)), \: y_A\!=\!H(ax_B\!+\!f(\underline{s}_B)), \: z_A\!=\!H(a\!+\!f(\bar{s}_B)) \\
x_B\!&=\!H(g(\underline{s}_A)), \: y_B\!=\!H(ax_A\!+\!g(\underline{s}_A)), \: z_B\!=\!H(a\!+\!g(\bar{s}_A)) 
\end{split}
\label{appendix_matricial_representation_CONN}
\end{align}
And we have:
$$ Im(\varphi^R)\!=\!\Gamma\!\eqdef\! \{ 
W : x_A \!\leq\! y_A \!\leq\! z_A, 
x_B \!\leq\! y_B \!\leq\! z_B, 
x_A\!=\!x_B\!=\!0 \Rightarrow y_A\!=\!y_B\!=\!0, 
y_A\!<\!z_A \textup{ or } y_B\!<\!z_B
\} $$
\end{theorem}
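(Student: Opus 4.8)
The proof runs parallel to that of Theorem~\ref{thm:matr_MAIN}: first identify what the entries of $W=\varphi^R(s)$ mean dynamically, then show $Im(\varphi^R)\subseteq\Gamma$, then show every matrix of $\Gamma$ is realised by a CONNECT state. For the first step I would read off the entries from the fixed-point analysis of the fast subsystem~\ref{fast-subsystem-I} together with the differential-convergence picture of Section~\ref{differential}. By Lemma~\ref{lem:no_saturation} the orbit $(u_A,u_B)$ is at $(0,0)$ at the left endpoint $\alpha$ of $R=[\alpha,\beta]$, so $u_B=0$ there and A turns ON at $\alpha$ exactly when $f(\underline{s}_B)\ge\theta$, i.e.\ $x_A=1$. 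If A has not turned ON at $\alpha$ but B has ($x_B=1$), then over the infinitesimal window $[\alpha,\alpha+\delta]$, with $\delta$ as in \eqref{delta}, the delayed synaptic variables barely change and A's total input there is $\approx a+f(\underline{s}_B)$; hence A is ON at $\alpha+\delta$ iff $ax_B+f(\underline{s}_B)\ge\theta$, i.e.\ $y_A=1$. Finally, Lemma~\ref{lem:one_transition} shows $s_B(t-D)$ decreases on $R$ and A undergoes at most one OFF$\to$ON transition there, so A's total input $au_B(t)+f(s_B(t-D))$ is non-decreasing on $R$; evaluating it at $\beta$ (and using Lemma~\ref{lem:ON-OFF-main} to handle the case where B is off throughout $R$) gives that A is ON at $\beta$ iff $a+f(\bar{s}_B)\ge\theta$, i.e.\ $z_A=1$. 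The B row is obtained by swapping indices, and since each entry is a single Heaviside, $\varphi^R$ is a well-defined map into $B(2,3)$.

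For $Im(\varphi^R)\subseteq\Gamma$ I would check the four defining conditions of $\Gamma$ on an arbitrary $W=\varphi^R(s)$. The inequality $x_A\le y_A$ is immediate from $a\ge0$, $x_B\ge0$: $x_A=1$ gives $f(\underline{s}_B)\ge\theta$, so $ax_B+f(\underline{s}_B)\ge\theta$ and $y_A=1$. For $y_A\le z_A$ I would use that $f$ is increasing and $\underline{s}_B\ge\bar{s}_B$ (synaptic decay on $R$, Lemma~\ref{lem:one_transition}), so $f(\underline{s}_B)\le f(\bar{s}_B)$; then $y_A=1$ yields $\theta\le ax_B+f(\underline{s}_B)\le a+f(\bar{s}_B)$, i.e.\ $z_A=1$. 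The implication $x_A=x_B=0\Rightarrow y_A=y_B=0$ holds since then $y_A=H(f(\underline{s}_B))=x_A=0$ and likewise $y_B=0$. The disjunction $y_A<z_A$ or $y_B<z_B$ is precisely the defining property of a CONNECT state (Definition~\ref{main_connect}): some unit turns ON at a time strictly greater than both $\min(R)$ and $\alpha+\delta$, which in the matrix is the row pattern $(0,0,1)$. The B-row statements follow by symmetry.

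For the reverse inclusion I would enumerate $\Gamma$: each row must be a ``staircase'' $x\le y\le z$, hence one of $000,001,011,111$; the implication rules out every pairing with $x_A=x_B=0$ except those with both rows in $\{000,001\}$; and the disjunction forces at least one row to be $001$. These constraints leave exactly the five matrices displayed after Theorem~\ref{CONNECT_R}. Matching each matrix against the formulas \eqref{appendix_matricial_representation_CONN}, using the fixed-point data, Lemma~\ref{lem:ON-OFF-main}, and the fact that $(1,0)$ lies in the basin of attraction of $(1,1)$ (Figure~\ref{fig:figure3}), identifies it with one of the conditions $C_1,\dots,C_5$ of the classification above, so it is the image of a CONNECT state. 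As the five conditions are pairwise incompatible and give pairwise distinct matrices, $\varphi^R$ is injective and $Im(\varphi^R)=\Gamma$.

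The step I expect to be the main obstacle is the bookkeeping for condition $C_5$, where both units turn ON at interior times of $R$ and, depending on parameters, one may precede the other (the split $C_5^1$ versus $C_5^2$): one must check that this sub-case distinction leaves the matrix form unchanged ($y_A=y_B=0$, $z_A=z_B=1$) and that the monotone total inputs trigger exactly one OFF$\to$ON transition per unit, both of which rest on Lemma~\ref{lem:one_transition}. The rest — turning each inequality system $C_i$ into Heaviside values of the entries via \eqref{appendix_matricial_representation_CONN} and simplifying with $a\ge0$ and the decay ordering $\underline{s}_A\ge\bar{s}_A$, $\underline{s}_B\ge\bar{s}_B$ — is routine in view of the entirely analogous computation already done for MAIN states in the proof of Theorem~\ref{thm:matr_MAIN}.
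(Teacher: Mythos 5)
Your proposal follows essentially the same route as the paper's proof: verify that the entries of $\varphi^R(s)$ satisfy the staircase inequalities, the $x_A\!=\!x_B\!=\!0$ implication, and the disjunction $y_A\!<\!z_A$ or $y_B\!<\!z_B$; then compute the image of each of $C_1,\dots,C_5$ (treating the two sub-cases of $C_5$ separately, exactly as the paper does) and conclude by counting that $\Gamma$ has precisely five elements, so the inclusion is an equality. The only difference is cosmetic — you enumerate $\Gamma$ explicitly where the paper just asserts its cardinality (and in fact contains a typo, claiming $|\Gamma|=6$ where your count of five is the correct one).
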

\begin{proof}
We first prove that the entries of any matrix $W\!=\!\varphi^R(s)$ satisfy the three conditions in $\Gamma$. It is easy to show that, since $a\! \geq \!0$, $f(\underline{s_A})\! \leq f(\bar{s_A})$ and $f(\underline{s_B})\! \leq f(\bar{s_B})$ the first conditions, i.e. $x_A \leq y_A \leq z_A$ and $x_B \leq y_B \leq z_B$, hold. The condition $x_A\!=\!x_B\!=\!0 \Rightarrow y_A\!=\!y_B\!=\!0$ simply follows from identities \ref{appendix_matricial_representation_CONN}. One can  check that any CONNECT state defined by conditions $C_i$, $\forall i=1,..,5$ satisfies  $y_A\!<\!z_A \mbox{ or 	} y_B\!<\!z_B$. 

Using $x_A \leq y_A \leq z_A$ and $x_B \leq y_B \leq z_B$ one can easily see that each CONNECT state satisfying one of conditions $C_{1-4}$ has a corresponding image $\varphi^R(s)$ shown below. The case $C_5$ is treated separately, since both A and B turn ON at times $t^*$ and $s^*$, respectively. 
\begin{itemize}
 \item If $t^*\!\leq\!s^*$ it is clear that $f(s_B(t^*))\!=\!\theta$ and $g(s_A(t^*))\!<\!\theta$. Thus, since $s_A$ and $g$ are respectively decreasing and increasing functions in $R$, we must have $g(\underline{s}_A)=g(s_A(0))\!<\!g(s_A(t^*))\!<\theta$. In addition $a\!+\!f(\bar{s}_B)\!\geq\!f(\bar{s}_B)\!\geq\!\theta$ and $a\!+\!g(\bar{s}_B)\!\geq\!\theta$. 
 \item If $t^* \!>\! s^*$ similar considerations lead to $f(\underline{s}_B)\!<\theta$. In addition $a\!+\!g(\bar{s}_A)\!\geq\!g(\bar{s}_A)\!\geq\!\theta$ and $a\!+\!f(\bar{s}_A)\!\geq\!\theta$. 
\end{itemize}
In both cases we thus have $x_A\!=\!x_B\!=\!0$ (which leads to $y_A\!=\!y_B\!=\!0$) and $z_A\!=\!z_B\!=\!1$. 
\begin{equation}
(C_1) \; 
\begin{bmatrix}
    1 & 1 & 1 \\
    0 & 0 & 1
\end{bmatrix}
\quad 
(C_2) \; 
\begin{bmatrix}
    0 & 0 & 1 \\
    1 & 1 & 1
\end{bmatrix}
\quad 
(C_3) \; 
\begin{bmatrix}
    0 & 0 & 0 \\
    0 & 0 & 1
\end{bmatrix}
\quad 
(C_4) \; 
\begin{bmatrix}
    0 & 0 & 1 \\
    0 & 0 & 0
\end{bmatrix}
\quad 
(C_5) \; 
\begin{bmatrix}
    0 & 0 & 1 \\
    0 & 0 & 1
\end{bmatrix}
\quad 
\nonumber
\end{equation}
Since any CONNECT state has a distinct image, $\varphi^R$ is well defined and injective. It is trivial to prove that $Im(\varphi^R) \subseteq \Gamma$. However, since $|\Gamma|\!=\!6$, we must have $Im(\varphi^R) \!=\! \Gamma$. 
\end{proof}

\subsection{Proof of the LONG states theorem} \label{appendix_LONG_thm}

\begin{lemma} [LONG states] 
A state is LONG if and only if $\exists R=[\alpha,\beta] \in \Phi$ such that
\vspace{1mm}
\begin{enumerate}
 \item A and B turn ON at times $t^*_A$ and $t^*_B \in R$, respectively. 
 \item $a\!-\!b s_A(\beta\!-\!D) \geq \theta$ and $a\!-\!b s_B(\beta\!-\!D) \geq \theta$.
\end{enumerate}
\vspace{1mm}
Moreover, both units are ON in $[\beta,t^*\!+\!D]$, turn OFF at time $t^*\!+\!D$, and are OFF in $(t^*\!+\!D,t_{up}]$, where 
$$ t^*=\min\{ t^*_A,t^*_B \} \quad \mbox{and} \quad t_{up}=\min_{s \in I}\{ s > t \}. $$
\end{lemma}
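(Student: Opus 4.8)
The plan is to prove both directions of the equivalence by carefully tracking the fast/slow dynamics just after the active tone interval $R=[\alpha,\beta]$. Recall that by Definition \ref{long_short} a state is LONG precisely when both units are ON at some time $t\in\mathbb{R}-I$, and by Theorem \ref{thm:uON} this forces both units to be ON on the whole interval $[t_*,t)$ where $t_*=\max_{s\in I}\{s<t\}=\beta$. So the first observation is that a state is LONG if and only if both units are ON at time $\beta$ (equivalently, at times slightly greater than $\beta$). First I would establish the ``only if'' direction: if the state is LONG, Theorem \ref{thm:uON} gives both units ON at $\beta$, hence both turned ON somewhere in $R$ — by Lemma \ref{lem:one_transition}(2) at unique times $t_A^*,t_B^*\in R$ — which is condition 1. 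For condition 2, note that on the interval $(\beta,t_{up})$ there are no inputs, so by Section \ref{fast_dynamics_I} the point $(1,1)$ must be an equilibrium of the fast subsystem for both units to remain ON just past $\beta$; that equilibrium condition is exactly $a-b\tilde s_A\geq\theta$ and $a-b\tilde s_B\geq\theta$ evaluated with $\tilde s_A=s_A(\beta-D)$, $\tilde s_B=s_B(\beta-D)$, which is condition 2.

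Next I would prove the ``if'' direction together with the ``Moreover'' statement, which is really the heart of the lemma. Assume conditions 1 and 2 hold. By Lemma \ref{lem:one_transition}, once A (resp. B) turns ON at $t_A^*$ (resp. $t_B^*$) it stays ON for the rest of $R$, and both delayed synaptic variables $s_A(t-D),s_B(t-D)$ are monotonically decreasing on $[\alpha,t_A^*+D]$ and $[\alpha,t_B^*+D]$ respectively. Setting $t^*=\min\{t_A^*,t_B^*\}$, both $s_A(t-D)$ and $s_B(t-D)$ are decreasing on $[\alpha,t^*+D]$. Since they are decreasing, condition 2 (which holds at $\beta$) continues to hold on $[\beta,t^*+D]$, i.e. $a-bs_A(t-D)\geq\theta$ and $a-bs_B(t-D)\geq\theta$ there; because there are no inputs on this interval (as $t^*+D\le \beta - TD + D + TD = \beta+D\le\beta+ (TR-TD)$ using $T\!D\!+\!D\!<\!T\!R$, so $[\beta,t^*+D]\subset\mathbb{R}-I$), the fast subsystem has $(1,1)$ as an equilibrium throughout, so both units stay ON on $[\beta,t^*+D]$; in particular the state is LONG. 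At time $t^*+D$, by Remark \ref{remark1} the synaptic variable associated with the unit that turned ON first at $t^*$ jumps up to $1$ (its OFF-to-ON transition at $t^*$ feeds through the delay $D$), so at $t^*+D$ at least one of $s_A(t-D),s_B(t-D)$ equals $1$. Then the corresponding total input to the opposite unit is $a-b\cdot 1=a-b<\theta$ by hypothesis \ref{U1}, so $(1,1)$ ceases to be an equilibrium; by the basin-of-attraction analysis in Section \ref{basin_01} the one unit turns OFF and drags the other OFF, so both units turn OFF at $t^*+D$. Since $(0,0)$ is an equilibrium of the inputless fast subsystem (Section \ref{fast_dynamics_I}) and there are no inputs on $(t^*+D,t_{up}]$, both units remain OFF there, completing the ``Moreover'' claim.

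The one genuinely delicate point — and the step I expect to be the main obstacle — is the claim that at $t^*+D$ a synaptic variable jumps to $1$ and thereby destabilizes $(1,1)$. This needs the facts that (i) the first OFF-to-ON transition in $R$ occurs at $t^*=\min\{t_A^*,t_B^*\}\ge\alpha$, so that $t^*+D$ lies in the window where we control the delayed synapse, and (ii) no earlier OFF-to-ON transition of that unit has happened (guaranteed by uniqueness in Lemma \ref{lem:one_transition}(2) together with the fact that both units are OFF just before $\alpha$, from Lemma \ref{lem:no_saturation}), so that on $[\alpha,t^*]$ the relevant synaptic variable has been purely decaying and its jump to $1$ genuinely occurs exactly at $t^*$ (on the fast scale), hence its delayed copy jumps at $t^*+D$. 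One must also check $t^*+D\le t_{up}$ so that these transitions occur before the next active tone interval begins, which again follows from $D\le T\!R$ and $t^*\le\beta=\alpha+TD$ together with $T\!D\!+\!D\!<\!T\!R$. Once these bookkeeping facts about where $t^*+D$ sits relative to $\beta$ and $t_{up}$ are pinned down, the rest is a direct application of the fast-subsystem equilibrium conditions already catalogued in Table \ref{equilibria-fast} and Section \ref{basin_01}.
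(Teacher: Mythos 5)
Your plan follows essentially the same route as the paper's proof: reduce LONGness to both units being ON at $\beta$ via Theorem~\ref{thm:uON}, invoke Lemma~\ref{lem:one_transition} for the unique turning-ON times and the monotone decay of the delayed synaptic variables on $[\beta,t^*\!+\!D]$, and use the jump of the delayed synaptic variable to $1$ at $t^*\!+\!D$ together with $a-b<\theta$ (condition~\ref{U1}, which you cite correctly) to force both units OFF on $(t^*\!+\!D,t_{up}]$. The only bookkeeping detail you leave implicit in the converse direction is that concluding ``the state is LONG'' requires $t^*\!+\!D>\beta$, which the paper obtains from the standing assumption $D>T\!D$ of that section (so that $(\beta,t^*\!+\!D]$ meets $\mathbb{R}-I$ nontrivially).
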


\begin{proof}
($\Rightarrow$) Consider a LONG state. By definition one unit is ON at time $t$, for some $t \!\geq\! t_0 \in \mathbb{R}\!-\!I$. Thus $t \in T \cup S = (\beta_k^A,\alpha_k^B) \cup (\beta_k^B,\alpha_{k+1}^A)$, for some $k \in \mathbb{N}$ (where $T \cap S \!=\! \emptyset$). WLOG suppose $t \in T$. We will prove the claim for $R=[\alpha_k^A,\beta_k^A]$. Theorem \ref{thm:uON} implies both units being ON in $[\beta,t)$, where $\beta\!=\!\beta_k^A$. The application of Lemma \ref{lem:one_transition} at time $\bar{t}\!=\!\beta \in R$ implies the existence of (unique) OFF to ON transition times $t_A^*, t_B^* \in R$ for the A and B units, respectively, which proves point 1. Since both units are ON in $[\beta,t)$ for $t\!>\!\beta$, they are ON at time $\beta\!+\!h$, for $h\!>\!0$ arbitrarily small. At this time the inputs are OFF ($\beta\!+\!h \in \mathbb{R}\!-\!I$) and the delayed synaptic variables act on the slow time scale (due to point 3. in Lemma \ref{lem:one_transition}). Therefore $(1,1)$ must be an equilibrium point for $(u_A,u_B)$ in the fast subsystem with no inputs at time $\beta\!+\!h$, and must satisfy the condition given in Section \ref{fast_dynamics_I}: $a\!-\!b s_A(\beta\!-\!D\!+\!h) \geq \theta$ and $a\!-\!b s_B(\beta\!-\!D\!+\!h) \geq \theta$. Taking the limit as $h \rightarrow 0$ concludes the first part of the proof. 

($\Leftarrow$) Point 1 of Lemma \ref{lem:one_transition} guarantees both unit being ON at time $t\!=\!\beta$. Since $a\!-\!bs_B(\beta)\! \!\geq \theta\! $ and $a\!-\!bs_A(\beta) \!\geq\! \theta$ we have that $(1,1)$ is a stable fixed point for the fast subsystem \ref{fast-subsystem-I}. Moreover, from point 3 of Lemma \ref{lem:one_transition} $s_A(t\!-\!D)$ and $s_B(t\!-\!D)$ are monotonically decreasing for $t \in [\beta,t^*+D]$, where $t^*=\min\{ t^*_A,t^*_B \}$. Thus, on the fast time scale, $a\!-\!bs_B(t\!-\!D)\!\geq\!\theta $ and $a\!-\!bs_A(t\!-\!D)\!\geq\!\theta $, which implies that $(1,1)$ is a stable equilibrium for the system in $[\beta,t^*\!+\!D]$. Since $TD\!<\!D$, $t^*\!+\!D \!>\! \beta$. Therefore, there $\exists t \in [\beta,t^*\!+\!D] \in \mathbb{R}\!-\!I$ where both units are ON, ending this part of the proof. 

Lastly we prove the remaining claims of the Lemma. We already proved that both units are ON in $[\beta,t^*\!+\!D]$ in ($\Leftarrow$) above. To prove the remaining claims we assume $t^*\!=\!t^*_A$ (a similar proof holds if $t^*\!=\!t^*_B$). At time $t\!=\!t^*\!+\!D$, $s_A(t\!-\!D)$ jumps up to 1. Since $a\!-\!bs_A(t\!-\!D)\!=\!a\!-\!b\!<\!\theta$ due to condition \ref{U2}, $(0,0)$ is the only equilibrium at time $t$. Therefore the B units instantaneously turns OFF at time $t$. For Theorem \ref{thm:uON}, also the A unit turns OFF instantantaneously after a small delay $\delta \sim \tau$. Both units are OFF in $[t^*\!+\!D\!+\!\delta,t_{up}]$. By taking the limit $\tau \rightarrow 0$ we thus have that A and B are OFF in $(t^*\!+\!D,t_{up}]$
\end{proof}

\subsection{Proof of the remaining claims of Theorem \ref{MAIN_2TR}} \label{appendix_thm_MAIN_R}
We restate Theorem \ref{MAIN_2TR} for clarity. 
\begin{theorem} 
There is an injective map: 
\begin{align*} 
\rho\colon SM & \rightarrow B(2,4)\\
\psi &\mapsto V =
\begin{bmatrix} [c|c]
    V_1 & V_2 
\end{bmatrix}
=
\begin{bmatrix} [cc|cc]
    x_A^1 & y_A^1 & x_A^2 & y_A^2 \\
    x_B^1 & y_B^1 & x_B^2 & y_B^2
\end{bmatrix}
\end{align*}
Where, for $i\!=\!1,2$, $V_i$ are the matrix forms of $\psi$ during the interval $I_i$ defined in \ref{matricial_representation_MAIN}, and: 
\begin{equation}
s_B^{i \pm}\!=\!N^{\pm}y_B^j\!+\!M^{\pm}(1-y_B^j)y_B^i, 
\quad \mbox{and} \quad 
s_A^{i \pm}\!=\!N^{\pm}y_A^j\!+\!M^{\pm}(1-y_A^j)y_A^i, 
\quad \forall i,j\!=\!1,2, i\!\neq\!j
\label{syn_quant_appendix}
\end{equation} 
In addition, 
$$Im(\rho)=\Omega \eqdef \{V =
\begin{bmatrix} [c|c]
    V_1 & V_2 
\end{bmatrix} :
V_1 \in Im(\rho^{I_1}), V_2 \in Im(\rho^{I_1}) \mbox{ satisfying 1-4 below}  \}$$
\begin{enumerate} 
 \item $y_A^1=y_B^2=1 \Rightarrow x_A^1=x_B^2$ and $y_A^2=y_B^1=1 \Rightarrow x_A^2=x_B^1$
 \item $y_B^1=y_B^2 \Rightarrow x_A^1 \geq x_A^2$ and $y_A^1=y_A^2 \Rightarrow x_B^2 \geq x_B^1$
 \item $y_A^2=1 \Rightarrow x_B^1 \leq r$ and $y_B^1=1 \Rightarrow x_A^2 \leq r$, for any entry $r$ in $V$
 \item $y_A^2=y_B^2$, $y_A^1=y_B^1 \Rightarrow x_A^1 \geq x_B^1$ and $x_B^2 \geq x_A^2$
\end{enumerate}
\end{theorem}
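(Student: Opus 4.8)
The plan is to prove the three assertions in turn --- injectivity of $\rho$, the synaptic identities \eqref{syn_quant_appendix}, and $\textup{Im}(\rho)=\Omega$ --- noting that the reverse inclusion $\Omega\subseteq\textup{Im}(\rho)$ (that each admissible matrix is actually realised) together with the well-posedness of the resulting existence conditions has already been handled in the main text via Table~\ref{tab:MAIN_table} and the worked case $AS$. So the substance here is (i) the identities \eqref{syn_quant_appendix} and (ii) deducing conditions 1--4 from them, which is exactly the forward inclusion $\textup{Im}(\rho)\subseteq\Omega$; injectivity falls out on the way.

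For the identities, first set down the qualitative picture for $\psi\in SM$ under $T\!D<D$ and $T\!D+D<T\!R$: by Remark~\ref{matricial} unit A is ON throughout $I_j$ --- up to the negligible delay $\delta$, dropped in the singular limit --- exactly when $y_A^j=1$, and OFF throughout $I_j$ when $y_A^j=0$; by Remark~\ref{remark1} and Theorem~\ref{thm:uON}, $s_A$ is reset to $1$ at the onset of each such ON interval and decays as $\dot s_A=-s_A/\tau_i$ everywhere on $[0,2T\!R]\setminus I$. The first real step is the elementary but fiddly check that each of the four delayed query points $\alpha_1-D,\ \beta_1-D,\ \alpha_2-D,\ \beta_2-D$, reduced mod $2T\!R$, lands strictly inside one of the two inter-tone gaps --- the first pair in $(T\!R+T\!D,\,2T\!R)$, the second pair in $(T\!D,\,T\!R)$ --- which uses $T\!D+D<T\!R$ throughout and additionally $T\!D<D$ to exclude $[T\!R,T\!R+T\!D]$ for $\beta_2-D$. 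On such a gap $s_A$ equals $e^{-\Delta/\tau_i}$, with $\Delta$ the time elapsed since the end of the most recent tone interval in which A was ON, and $\Delta$ is read off a four-case split on $(y_A^1,y_A^2)$: $(0,0)$ gives $s_A\equiv0$; $(1,1)$ gives $s_A^{i\pm}=N^{\pm}$ (the decay never runs for more than $T\!R-T\!D$ before the next reset); $(1,0)$ gives $s_A^{1\pm}=M^{\pm}$, $s_A^{2\pm}=N^{\pm}$ (a full $2T\!R-T\!D$ decay after the single ON interval $I_1$); and $(0,1)$ is the $T\!R$-shift of the last, $s_A^{1\pm}=N^{\pm}$, $s_A^{2\pm}=M^{\pm}$. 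In each case the value equals $N^{\pm}y_A^{j}+M^{\pm}(1-y_A^{j})y_A^{i}$ with $j\neq i$, which is \eqref{syn_quant_appendix}; the $s_B$ statement follows verbatim (or from the $\mathbb{Z}_2$-symmetry of Remark~\ref{symmetry}). Injectivity of $\rho$ is then immediate, since a SHORT $2T\!R$-periodic state is determined by its behaviour on $I_1\cup I_2$, encoded by $(V_1,V_2)$, and its synaptic variables are pinned down by \eqref{syn_quant_appendix}.

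Conditions 1--4 are then pure algebra. Recall from \eqref{f_g}--\eqref{matricial_representation_MAIN} that $x_A^i=H(f(s_B^{i-}))$ and $x_B^i=H(g(s_A^{i-}))$, where $f(s)=c-bs$ and $g(s)=d-bs$ on the A-tone interval $I_1$ while the roles of $c$ and $d$ are swapped on the B-tone interval $I_2$, and use $0\le d\le c$ together with $N^-\ge N^+\ge M^-\ge M^+>0$. Then: if $y_A^1=y_B^2=1$ then $s_B^{1-}=s_A^{2-}=N^-$, so $x_A^1=H(c-bN^-)=x_B^2$, which is condition~1 (its second half being the mirror with $d$ in place of $c$); if $y_B^1=y_B^2$ then $s_B^{1-}=s_B^{2-}$, so $x_A^1=H(c-bs_B^{1-})\ge H(d-bs_B^{1-})=x_A^2$ since $c\ge d$, which is condition~2; if $y_A^2=1$ then $x_B^1=H(d-bN^-)$, and because $d-bN^-$ is a lower bound for every argument of $H$ occurring in the entries of $V$, $x_B^1=1$ forces every entry of $V$ to equal $1$, i.e.\ $x_B^1\le r$ for every entry $r$ of $V$, which is condition~3; and if $y_A^1=y_B^1$ and $y_A^2=y_B^2$ then $s_A^{1-}=s_B^{1-}$ and $s_A^{2-}=s_B^{2-}$, giving $x_A^1\ge x_B^1$ and $x_B^2\ge x_A^2$, which is condition~4. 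The mirror halves of conditions~1 and 2 follow identically after interchanging the roles of $A$ and $B$ (equivalently of $c$ and $d$, and of $I_1$ and $I_2$). This gives $\textup{Im}(\rho)\subseteq\Omega$, and combined with the main-text inclusion $\Omega\subseteq\textup{Im}(\rho)$ we conclude $\textup{Im}(\rho)=\Omega$.

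I expect the main obstacle to be the time-bookkeeping in the synaptic identities: locating all four delayed query points in the correct OFF-segment --- simultaneously juggling $T\!D<D<T\!R$ and $T\!D+D<T\!R$ with period wrap-around --- and then correctly tracking the decay exponent through the four $(y_A^1,y_A^2)$ cases. Once \eqref{syn_quant_appendix} is established, conditions 1--4 are one-line consequences of the monotonicity of $H$, the ordering $N^-\ge N^+\ge M^-\ge M^+$, and $d\le c$.
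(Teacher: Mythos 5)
Your proposal is correct and follows essentially the same route as the paper: the synaptic identities are obtained by a case analysis on the $y$-entries tracking the last ON interval and the exponential decay at the four delayed query points (the paper splits into three cases on $(y_B^1,y_B^2)$ for one representative quantity, you into four on $(y_A^1,y_A^2)$ for all at once), conditions 1--4 are then derived by the same one-line monotonicity arguments using $c\geq d$ and $N^-\geq N^+\geq M^-\geq M^+$, and the reverse inclusion $\Omega\subseteq\textup{Im}(\rho)$ is deferred to the main-text construction of Table~\ref{tab:MAIN_table}, exactly as the paper itself does.
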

\begin{proof}
Here we prove equations \ref{syn_values_MAIN_SHORT} and conditions 1-4. The remaining claims of the theorem are proven in the main text. 
From Theorem \ref{thm:matr_MAIN} it is clear that the map $\rho\!=\!\rho(\psi)$ is well defined and injective. We now prove \ref{syn_values_MAIN_SHORT} for $i\!=\!2$, $j\!=\!1$ and $s_B$, since all other cases are similar. That is: 
$$ s_B^{2 \pm}\!=\!N^{\pm}y_B^1\!+\!M^{\pm}(1-y_B^1)y_B^2 $$
Since $y_B^1$ and $y_B^2$ are binary, we have three cases to consider:

\begin{itemize}
 \item Case $y_B^1\!=\!1$. From Remark \ref{visualization_matr}, $y_B^1\!=\!1$ implies the B unit to be ON at time $T\!D$. Since $\phi$ is SHORT the B unit turns OFF at time $T\!D$, and due to Remark \ref{thm:uON} it remains OFF $\forall t \in (T\!D,T\!R]$. Thus the delayed synaptic variable $s_B(t\!-\!D)$ is equal $\sim 1$ at time $T\!D\!+\!D$ and decays (slowly) in the interval $I_2$, evolving according to: 
\begin{equation}
s_B(t\!-\!D)\!=\!e^{-(t-T\!D\!-\!D)/\tau_i}, \quad \forall t \in I_2
\nonumber
\end{equation}
Thus evaluating this function at times $T\!R \in I_2$ and $\: T\!R\!+\!T\!D \in I_2$ yields: 
$$ s_B^{2-}\!=\!s_B(T\!R\!-\!D)\!=\!N^{-} \quad \mbox{and} \quad s_B^{2+}\!=\!s_B(T\!R\!+T\!D\!-\!D)\!=\!N^{+}. $$
\item Case $y_B^1\!=\!0$ and $y_B^2\!=\!1$. With a proof similar to the case above, the second condition ($y_B^2\!=\!1$) implies the B unit being ON at time $T\!R\!+\!T\!D$, and being OFF $\forall t \in (TR\!+\!T\!D,2T\!R]$. The first condition ($y_B^1\!=\!0$) implies B being OFF at time $T\!D\!$, and therefore $\forall t \in [0,T\!D]$, due to Lemma \ref{lem:ON-OFF-main}. Thus, since $\psi$ is $2T\!R$-periodic, B must be OFF in $[2T\!R,2T\!R\!+\!T\!D]$. Moreover, since $\phi$ is SHORT, B is OFF in $(T\!D,T\!R] \cup (T\!R\!+\!T\!D,2T\!R] \subset\mathbb{R}\!-\!I$. In particular, since $\psi$ is $2T\!R$-periodic, B must be OFF also in $(2T\!R\!+\!T\!D,3T\!R] \subset\mathbb{R}\!-\!I$. Overall we have that B is ON at time $T\!R\!+\!TD$ and OFF during $(T\!R\!+\!TD,3T\!R]$. Thus the delayed synaptic variable $s_B(t\!-\!D)$ is equal $\sim 1$ at time $T\!D\!+\!D$ and decays (slowly) in the interval $T\!=\!(T\!R\!+\!T\!D\!+\!D,3T\!R\!+\!D]$, evolving according to: 
\begin{equation}
s_B(t\!-\!D)\!=\ e^{-(t-T\!R\!-T\!D\!-\!D)/\tau_i}, \quad \forall t \in T
\nonumber
\end{equation}
Since $T\!D\!+\!D\!<\!T\!R$ and $T\!D\!<\!D$ we have $3T\!R \in T$ and $\: 3T\!R\!+\!T\!D \in T$. Evaluating $s_B(t\!-\!D)$ at these times leads to $s_B(3T\!R\!-\!D)\!=\!M^{-}$ and $s_B(3T\!R\!+T\!D\!-\!D)\!=\!M^{+}$. Therefore the $2T\!R$ periodicity of $\psi$ implies:
$$ s_B^{2-}\!=\!s_B(T\!R\!-\!D)\!=\!M^{-} \quad \mbox{and} \quad s_B^{2+}\!=\!s_B(T\!R\!+T\!D\!-\!D)\!=\!M^{+}.$$
\item Case $y_B^1\!=\!0$ and $y_B^2\!=\!0$. These conditions imply B being OFF during both $[0,T\!D]$ and $[T\!R,T\!R\!+\!T\!D]$. Moreover it must be OFF also in $[T\!D,T\!R] \cup [T\!R\!+\!T\!D,2T\!R] \subset\mathbb{R}\!-\!I$ since $\phi$ is SHORT. Overall, the B unit is thus OFF $\forall t \in [0,3TR]$. This means that the delayed synaptic variables $(s_A,s_B)$ follow the slow subsystem, which have only one possible periodic solution: the fixed point $(0,0)$. This leads to $s_B\!=\!0$. 
\end{itemize}

We now show that the entries of $V\!=\! \rho(\psi)$ satisfy conditions 1-4, which proves that $Im(\rho) \subseteq \Omega$. We only prove one of the two statements for points 1,2 and 3. The proof of second statements is analogous. We recall that, given the definition of function $f$ and $g$ given in \ref{f_g}, the 1st and 3rd columns of $V$ are defined by:
$$ x_A^1=H(c-bs_B^{1-}), \quad x_B^1=H(d-bs_A^{1-}), \quad x_A^2=H(d-bs_B^{2-}), \quad x_B^2=H(c-bs_A^{2-}) $$

\begin{enumerate} 
 \item Given the $x_A^1$ and $x_B^2$ equations above, we need to prove $s_B^{1-}=s_A^{2-}$. Assuming $y_A^1\!=\!y_B^2\!=\!1$, from equations \ref{syn_quant_appendix} we have:
 $$ s_B^{1-} = N^{-}y_B^2\!+\!M^{-}(1-y_B^2)y_B^1 = N^{-} = N^{-}y_A^1\!+\!M^{-}(1-y_A^1)y_A^2 = s_A^{2-} $$
 \item If $y_B^1\!=\!y_B^2$ simple substitutions in \ref{syn_quant_appendix} lead to $s_B^{1-}\!=\!s_B^{2-}$. Since $c\! \geq\!d$ we have:
 $$x_A^1=H(c-bs_B^{1-})=H(c-bs_B^{2-}) \geq H(d-bs_B^{2-})=x_A^2$$
 \item Substituting $y_A^2\!=\!1$ in the formula for $s_A^{1-}$ in \ref{syn_quant_appendix} implies $s_A^{1-}\!=\!N^-$ and $s_B^{i-}, s_A^{i-}\!\leq\!N^-\!=\!s_A^{1-}$, $\forall i\!=\!1,2$. The latter inequalities and $c\! \geq\!d$ imply
 $$ x_B^1 \leq x_B^i, \quad x_B^1 \leq x_A^i, \quad \forall i\!=\!1,2, $$ 
 since $V_1$ and $V_2$ are matrix forms of $\psi$ in $I_1$ and $I_2$, respectively, their entries satisfy the first line of system \ref{yA-x_A}, which imply $x_B^i\! \leq \!y_B^i$ and $x_A^i\! \leq \!y_A^i$, $\forall i\!=\!1,2$. This proves that $x_B^1 \leq y_B^i$ and $x_B^1 \leq y_A^i$, $\forall i\!=\!1,2$, and concludes the proof. 
 \item If $y_A^2\!=\!y_B^2$ and $y_A^1\!=\!y_B^1$, simple substitutions in \ref{syn_quant_appendix} lead to $s_B^{1-}\!=\!s_A^{1-}$ and $s_B^{2-}\!=\!s_A^{2-}$. These equalities, together with $c\! \geq\!d$ imply:
 $$ x_A^1\!=\!H(c\!-\!bs_B^{1-}) \!\geq\! H(d\!-\!bs_A^{1-})\!=\!x_B^1 \quad \mbox{and} \quad x_B^2\!=\!H(c\!-\!bs_A^{2-}) \!\geq\! H(d\!-\!bs_B^{2-})\!=\!x_A^2 $$
\end{enumerate}
End
\end{proof}

\subsection{Multistability} \label{multistability_appendix}
\begin{theorem} [Multistability] The state $I$ may coexist
  with $S\!B$ or $S\!D$. Any other pair of $2T\!R$-periodic SHORT MAIN
  states cannot coexist.
\end{theorem}
\begin{proof}
The inequalities shown in black in Table \ref{tab:multistability} report all the existence conditions for MAIN SHORT states analyzed in the main manuscript and summarized in Table \ref{tab:MAIN_table}. Using the properties $a \geq 0$, $N^+ \!\geq\! M^+$ and $c \geq d$ on the quantities $C_i^\pm$ defined in \ref{eq1} one can easily show that 
\begin{equation}
 1)\; C_2^-\!\geq\!C_8^- \quad 2)\; C_3^+\!\geq\!C_6^+ \quad 3)\; C_3^-\!\geq\!C_7^-  \quad 4)\; C_3^-\!\geq\!C_7^-  \quad 5)\; C_5^-\!\geq\!C_7^-,
 \label{multistability_cond}
\end{equation}
which imply the inequalities reported in blue in Table \ref{tab:multistability}. 

\begin{table}[h] \centering
\begin{center}
\renewcommand{\arraystretch}{1.3}
\begin{tabular}{ |c|c|c|c|c|c|c|c|c|c|} 
 \hline
 $C$ & $S$ & $SB$ & $SD$ & $AP$ & $AS$ & $ASD$ & $I$ & $ID$ & $IB$ \\ \hline
 $1$ & $C_1\!<\!\theta$ & & & & & & $C_1\!\geq\!\theta$ & & \\ \hline
 $2$ & $C_2^+\!<\!\theta$ & \textcolor{blue}{$C_2^-\!\geq\!\theta$} & $C_2^-\!\geq\!\theta$ & $C_2^+\!<\!\theta$ & \textcolor{blue}{$C_2^-\!\geq\!\theta$} & $C_2^-\!\geq\!\theta$ & &  & \\ \hline
 $3$ & $C_3^+\!<\!\theta$ & $C_3^+\!<\!\theta$ & $C_3^+\!<\!\theta$ & $C_3^-\!\geq\!\theta$ & $C_3^-\!\geq\!\theta$ & $C_3^-\!\geq\!\theta$ & \textcolor{blue}{$C_3^-\!<\!\theta$} & $C_3^-\!\geq\!\theta$ & \textcolor{blue}{$C_3^-\!\geq\!\theta$} \\ \hline
 $4$ & & & $C_4^-\!\geq\!\theta$ & & & &  & & \\ \hline
 $5$ & & & & \textcolor{blue}{$C_5^+\!<\!\theta$} & $C_5^+\!<\!\theta$ & $C_5^+\!<\!\theta$ & & $C_5^-\!\geq\!\theta$ & \textcolor{blue}{$C_5^-\!\geq\!\theta$} \\ \hline 
 $6$ & & & & & & & $C_6^+\!<\!\theta$ & &  \\ \hline
 $7$ & & & & & & & & $C_7^-\!<\!\theta$ & $C_7^-\!\geq\!\theta$  \\ \hline
 $8$ & & $C_8^-\!\geq\!\theta$ & $C_8^-\!<\!\theta$ & & $C_8^-\!\geq\!\theta$ & $C_8^-\!<\!\theta$ & & &  \\ \hline
\end{tabular}
\renewcommand{\arraystretch}{1}
\end{center}
\caption {Existence conditions for MAIN SHORT states (black) and of the conditions derived from \ref{multistability_cond} (blue). }
\label{tab:multistability}
\end {table}

Inspecting this tables demonstrates that for each pair of MAIN SHORT states $(\psi_1,\psi_2)$ except $(I,S\!B)$ and $(I,S\!D)$ there exist at least one index $i$ for which either (a) $C_i^-\!<\!\theta$ for $\psi_1$ ($\psi_2$) and $C_i^-\!\geq\!\theta$ for $\psi_1$ ($\psi_2$) or (b) $C_i^+\!<\!\theta$ for $\psi_1$ ($\psi_2$) and $C_i^-\!\geq\!\theta$ for $\psi_1$ ($\psi_2$). Both (a) and (b) lead to conditions that cannot be satisfied simultaneously in the parameter space. This is obvious for case (a). For case (b) this holds because, since $N^- \!\geq\! N^+$ and $M^- \!\geq\! M^+$, we have $C_i^- \leq C_i^+$, $\forall i=2,..,8$. Figure \ref{figure4}C shows the stability regions for states $I$, $S\!B$ and $S$ at varying $c$ and $D\!F$, demonstrating that bistability between the pairs $(I,S\!B)$ and $(I,S\!D)$ can occur (note $I$ and $SD$ have a conjugate, hence we talk of \emph{multistability} for this Theorem). 
\end{proof}

\subsection{Analysis of $2T\!R$-periodic SHORT CONNECT states} \label{appendix1}
\begin{theorem} 
There is an injective map: 
\begin{align*}
\varphi \colon SC & \rightarrow B(2,6)\\
\psi &\mapsto W =
\begin{bmatrix} [c|c]
    W_1 & W_1 
\end{bmatrix}
=
\begin{bmatrix} [ccc|ccc]
    x_A^1 & y_A^1 & z_A^1 & x_A^2 & y_A^2 & z_A^2 \\
    x_B^1 & y_B^1 & z_B^1 & x_B^2 & y_B^2 & z_B^2
\end{bmatrix}
\end{align*}
Where, for $i\!=\!1,2$, $W_i$ is the matrix forms of $\psi$ during the interval $I_i$ defined in \ref{matricial_representation_CONN}, and: 
\begin{equation}
s_B^{i \pm}\!=\!N^{\pm}z_B^j\!+\!M^{\pm}(1-z_B^j)z_B^i, 
\quad \mbox{and} \quad 
s_A^{i \pm}\!=\!N^{\pm}z_A^j\!+\!M^{\pm}(1-z_A^j)z_A^i, 
\quad \forall i,j\!=\!1,2, i\!\neq\!j
\label{syn_values_CONNECT_SHORT}
\end{equation} 
In addition, let $\varphi^{I_1}$ ($\varphi^{I_2}$) be the map defined in Theorem \ref{CONNECT_R} for $\psi$ in $I_1$ ($I_2$). Then:
$$Im(\varphi) \!=\! \Gamma_{2T\!R}\!\eqdef\! \{W \!=\!
\begin{bmatrix} [c|c]
    W_1 & W_2 
\end{bmatrix} :
W_1 \in Im(\varphi^{I_1}), W_2 \in Im(\varphi^{I_1}) \mbox{ satisfy conditions 1-11}  \}$$
\begin{enumerate} 
 \label{connect_permutations} 
 \item (a) $z_A^i \geq y_A^i \geq x_A^i$ and (b) $z_B^i \geq y_B^i \geq x_B^i$, for $i=1,2$
 \item (a) If $x_A^i=x_B^i=0 \Rightarrow y_A^i=y_B^i=0$, for $i=1,2$
 \item (a) If $z_A^1=z_B^2=1 \Rightarrow x_A^1=x_B^2$ and (b) if $z_A^2=z_B^2=1 \Rightarrow x_A^2=x_B^1$
 \item (a) If $z_B^1=z_B^2 \Rightarrow x_A^1 \geq x_A^2$ and (b) if $z_A^1=z_A^2 \Rightarrow x_B^2 \geq x_A^1$
 \item (a) If $z_A^2=1 \Rightarrow x_B^1 \leq r$ and (b) if $z_B^1=1 \Rightarrow x_A^2 \leq r$, for any entry $r$ in $V$
 \item If $z_A^2=z_B^2$ and $z_A^1=z_B^2 \Rightarrow x_A^1 \geq x_B^1$ and $x_B^2 \geq x_A^2$ 
 \item If $z_A^1>y_A^1$ or $z_A^2>y_A^2$ or $z_B^1>y_B^1$ or $z_B^2>y_B^2$
 \item (a) $z_A^1 \neq 0$ or $z_A^2 \neq 0$ and (b) $z_B^1 \neq 0$ or $z_B^2 \neq 0$
 \item (a) $z_A^1=z_B^1=1, y_A^2=y_B^2 \Rightarrow z_B^2 \geq z_A^2$ and (b) $z_A^2=z_B^2=1, y_A^1=y_B^1 \Rightarrow z_A^1 \geq z_B^1$
 \item $z_A^2=1, z_B^1=1 \Rightarrow z_B^2=1$ 
 \item $z_A^1=z_B^2, z_B^1=z_A^2, x_A^1=x_B^2 \Rightarrow y_A^2=y_B^1$
\end{enumerate}
\end{theorem}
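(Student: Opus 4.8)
\textbf{Proof plan for Theorem \ref{appendix_CONNECT_R} (SHORT CONNECT states).}

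The plan is to mirror exactly the structure of the proof of Theorem \ref{MAIN_2TR} (the SHORT MAIN case), replacing the $2\times 2$ blocks $V_i$ by the $2\times 3$ blocks $W_i$ and the column vector $(y_A^i,y_B^i)$ of ``activation indicators'' by the column vector $(z_A^i,z_B^i)$, since for a CONNECT state it is $z$ (not $y$) that records whether a unit is ON at the end of the active tone interval. Concretely, first I would establish the synaptic identities \ref{syn_values_CONNECT_SHORT}. This is the analogue of equations \ref{syn_quant_appendix} in Section \ref{appendix_thm_MAIN_R}: fix a unit (say $B$) and an interval (say $I_2$, $j=1$), and split into three cases according to the binary values of $z_B^1$ and $z_B^2$. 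If $z_B^1=1$ then $B$ is ON at time $T\!D$, hence (being SHORT, and using Theorem \ref{thm:uON} / Remark \ref{remark1}) it turns OFF at $T\!D$ and $s_B(t-D)$ decays as $e^{-(t-T\!D-D)/\tau_i}$ on $I_2$, giving $s_B^{2\pm}=N^\pm$; if $z_B^1=0,\,z_B^2=1$ then $B$ is OFF on $[0,T\!D]$ and ON only at $T\!R+T\!D$, so (pushing one more period forward by $2T\!R$-periodicity and using $T\!D+D<T\!R$, $T\!D<D$) the relevant decay starts at $T\!R+T\!D+D$, giving $s_B^{2\pm}=M^\pm$; if $z_B^1=z_B^2=0$ then $B$ is OFF on all of $[0,3T\!R]$, the slow subsystem \ref{slow-model} forces the periodic synaptic value to the fixed point $0$. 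These three cases are precisely packaged by the single formula $s_B^{i\pm}=N^\pm z_B^j+M^\pm(1-z_B^j)z_B^i$, and the $A$-cases and the $I_1$-cases are verbatim identical after swapping indices.

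Next I would prove the inclusion $Im(\varphi)\subseteq\Gamma_{2T\!R}$ by checking conditions (1)--(11) one at a time. Conditions (1), (2), (7), (8) are intra-interval: they are exactly the membership conditions for $W_i\in Im(\varphi^{I_i})$ from Theorem \ref{CONNECT_R} (monotone ordering of the row entries because $a\ge 0$ and $f,g$ are increasing along the decaying synaptic trace — inequality \ref{f_g_monoto}; the implication $x=0\Rightarrow y=0$ directly from \ref{matricial_representation_CONN}; the ``$y<z$ somewhere'' clause because a CONNECT state by definition has a turn-ON strictly inside some $R$; and at least one $z$ nonzero in each row because condition \ref{U1} forbids the all-OFF state). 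Conditions (3)--(6), (9)--(11) are the inter-interval constraints: each follows by substituting the identities \ref{syn_values_CONNECT_SHORT} into the entry definitions \ref{matricial_representation_CONN} and using the ordering $N^-\ge N^+\ge M^-\ge M^+$ together with $d\le c$ and $a\ge 0$, exactly as conditions 1--4 were derived in Section \ref{appendix_thm_MAIN_R}. For instance, for (3)(a): $z_A^1=z_B^2=1$ forces via \ref{syn_values_CONNECT_SHORT} that $s_B^{1-}=N^-=s_A^{2-}$, whence $x_A^1=H(c-bN^-)=x_B^2$; for (5)(a): $z_A^2=1$ forces $s_A^{1-}=N^-$, which is the maximal possible value of any $s^{i-}$, so $x_B^1=H(g(s_A^{1-}))\le$ every other entry by monotonicity and $c\ge d$; for (10): $z_A^2=z_B^1=1$ forces $s_B^{2-}=s_A^{1-}=N^-$ and (tracing the relevant $s_A^{2+}$) one shows the input to $B$ in $I_2$ at time $\beta$ stays above $\theta$, so $z_B^2=1$. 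Conditions (9) and (11) are the CONNECT-analogues of conditions 2 and 4 for MAIN states and are handled the same way.

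Finally, for the reverse inclusion $\Gamma_{2T\!R}\subseteq Im(\varphi)$, I would argue as in the SHORT MAIN proof: given any $W$ satisfying (1)--(11), the entry definitions \ref{matricial_representation_CONN} together with \ref{syn_values_CONNECT_SHORT} translate into a finite consistent list of parameter inequalities (the CONNECT analogue of the inequalities of Table \ref{tab:MAIN_table}, worked out explicitly in the longer version), and one exhibits a nonempty parameter region realizing them, so $W=\varphi(\psi)$ for some $\psi\in SC$; the $\mathbb{Z}_2$-symmetry of Remark \ref{symmetry} again pairs asymmetric matrices with their conjugates (row-swap of $W_1$'s rows with $W_2$'s rows), both realized under the same parameters, and the resulting list of realizable $W$ is precisely Table \ref{tab:CONNECT_table}. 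The main obstacle I anticipate is bookkeeping, not conceptual: the $2\times 6$ matrix has many more a priori combinations than the $2\times 4$ MAIN matrix, so conditions (1)--(11) must be shown jointly to cut the possibilities down to exactly the nine matrices listed, and verifying that no further hidden constraint is needed (i.e.\ that each of the nine is genuinely realizable) requires carefully checking the simplified inequality systems are well-posed — the same ``many redundant conditions, simplify using $N^-\ge N^+\ge M^-\ge M^+$, $d\le c$, $a\ge0$'' bookkeeping as in Theorem \ref{MAIN_2TR}, just longer.
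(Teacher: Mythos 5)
Your overall architecture matches the paper's: derive the synaptic identities \ref{syn_values_CONNECT_SHORT} by the same three-way case split on the $z$-entries used for equations \ref{syn_values_MAIN_SHORT} (the paper in fact skips this derivation and simply cites the MAIN proof), verify conditions (1)--(11) entry-by-entry using $N^-\!\geq\!N^+\!\geq\!M^-\!\geq\!M^+$, $d\leq c$, $a\geq 0$, enumerate the surviving matrices, and pair asymmetric ones via the $\mathbb{Z}_2$ symmetry. Up to and including the forward inclusion $Im(\varphi)\subseteq\Gamma_{2T\!R}$ your plan is sound and essentially identical to what the paper does.

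The gap is in your final step, where you assert that the reverse inclusion reduces to ``the same bookkeeping as Theorem \ref{MAIN_2TR}, just longer,'' i.e.\ that the well-definedness of the matrix entries translates directly into a consistent list of parameter inequalities. The paper explicitly flags that this equivalence, which holds for MAIN states, \emph{fails} for CONNECT states: the entries of $W_i$ only record the state of the fast subsystem at the endpoints $\alpha$ and $\beta$ of each active tone interval, whereas the defining feature of a CONNECT state is a turn-ON at an interior time $t^*\in(\alpha,\beta]$. Consequently the existence conditions require information that the endpoint quantities $s^{i\pm}$ do not carry. Two concrete failure points: (i) for any state satisfying $C_5$ in an interval one must determine which unit turns ON first, which requires comparing the full decaying traces $N(t)$, $M(t)$ rather than their endpoint values (the paper rules out $s^*<t^*$ for $ZcS$ and $ZcI$ by such an argument); (ii) for $ZcAS$ one must solve $c-bN(t^*)=\theta$ for the turn-ON time $t^*$ explicitly and then evaluate the \emph{other} synaptic trace at $t^*$, producing the dichotomy on $K=c-(d-\theta)e^{T\!R/\tau_i}$ in \ref{ZcAS} --- a condition that is not expressible through $s^{i\pm}$ and hence would never appear in your proposed scheme. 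The paper therefore analyzes each of the $15$ admissible matrices separately against the interval-wise conditions $C_{1-5}$ and $M_{1-6}$, and verifies realizability of each by numerical simulation rather than by a general well-posedness argument. Without this extra layer your proof would produce incomplete (and for $ZcAS$, incorrect) existence conditions, and would not establish that every matrix in $\Gamma_{2T\!R}$ is actually attained.
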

\begin{proof}
By definition, for each state $\psi \in SC$ at least one unit turns ON at some time $t^* \in (0,T\!D] \cup (T\!R,T\!R\!+\!T\!D]$. This means that	 $\psi$ may be MAIN during interval $I_1$ ($I_2$) and CONNECT during interval $I_2$ ($I_1$), or CONNECT during both intervals $I_1$ and $I_2$. In the latter scenario Theorem \ref{main_connect} implies that $\psi$ has a 2 by 3 matrix form $W_1$ ($W_2$) defined during interval $I_1$ ($I_2$). If $\psi$ is MAIN during $I_1$ ($I_2$), Remark \ref{link_main_connect_matr} guarantees that $\psi$ can still be represented during interval $I_1$ by the same matrix form of CONNECT states given in Theorem \ref{main_connect}. These considerations guarantee that the transformation given in Theorem \ref{main_connect} can be applied to both intervals $I_1$ and $I_2$, thus proving that the map $\varphi$ is well-defined and injective. 

We skip the proof of the identities \ref{syn_values_CONNECT_SHORT}, since it is analogous the one given in the proof of identies \ref{syn_values_MAIN_SHORT} of Theorem \ref{MAIN_2TR}. We now prove that each matrix $W \in Im(\varphi)$ satisfies conditions \ref{connect_permutations}. For conditions 1-5 and 8-9 we prove only conditions (a) since the (b) ones are analogous. The proof of the first two conditions follows trivially from the definition of the entries of $W$. We thus prove the other conditions below
\begin{enumerate}
 \setcounter{enumi}{2}
 \item $z_A^1\!=\!z_B^2\!=\!1 \Rightarrow s_A^{2-}\!=\!s_B^{1-}\!=\!N^- \Rightarrow x_B^1\!=\!x_A^1\!=\!H(c\!-\!bN^-)$ 
 \item $z_B^1\!=\!z_B^2 \Rightarrow s_B^{1-}\!=\!s_B^{2-} \Rightarrow x_A^1\!=\!H(c\!-\!bs_B^{1-})\!\geq\!H(d\!-\!bs_B^{2-})\!=\!x_A^2$
 \item Since $z_A^1\!=\!1 \Rightarrow s_A^{1-}\!=\!N^-$. Therefore, $x_B^1\!=\!H(d\!-\!N^-)$. Any entry $r$ of $V$ is either $H(c\!-\!s_A^{i\pm})$, $H(d\!-\!s_A^{i\pm})$, $H(c\!-\!s_B^{i\pm})$ or $H(d\!-\!s_B^{i\pm})$, for some $i\!=\!1,2$. Since $s_A^{i\pm},s_B^{i\pm}\!\leq\!N^-$ and $d\!\leq\!c$, we must have that $r \!\geq\!H(d\!-\!N^-)\!=\!z_A^1$
 \item Given $z_A^2\!=\!z_B^2$ and $z_A^1\!=\!z_B^1$ and $c\!\geq\!d$ we have that
 $$ x_A\!=\!H(c\!-\!bN^-z_B^2\!-\!bM^-(1\!-\!z_B^2)z_B^1)\! \geq \!H(d\!-\!bN^-z_A^2\!-\!bM^-(1\!-\!z_A^2)z_A^1)\!=\!x_B $$
 \item By definition, for any CONNECT state $s$ at least one must turn ON within the interval $I\!=\!I_1$ or $I\!=\!I_2$ (or both). If $I\!=\!I_1$, from Theorem \ref{CONNECT_R} we have that the matrix form $W_1 \in \Gamma$. In particular, it must satisfy $y_A^1\!<\!z_A^1$ or $y_B^1\!<\!z_B^1$. Similarly, if $I\!=\!I_2$, then $W_2 \in \Gamma$ and $y_A^2\!<\!z_A^2$ or $y_B^2\!<\!z_B^2$
 \item By contradiction suppose there exist a CONNECT state $s$ such that $z_A^i \!=\! 0$, for $i=1,2$. This leads to $s_A^{i\pm}=0$ and to $x_A^i \!=\!y_A^i \!=\! 0$ (from 1). Thus, since we hypothesise $c\!\geq\!\theta$ then $x_B^2\!=\!H(c)\!=\!1$, which guarantees $z_B^2\!=\!y_B^2\!=\!1$ (again from 1). Since $z_A\!=\!0$ we also have that $x_A^1\!=\!0$. This leads to $x_B^2\!=\!y_B^2\!=\!z_B^2\!=\!H(d)$. This leads to the matrix form 
\[\begin{bmatrix} [ccc|ccc]
    0 & 0 & 0 & 0 & 0 & 0 \\
    H(d) & H(d) & H(d) & 1 & 1 & 1
\end{bmatrix}\]
Since $y_A^i\!=\!z_A^i$ or $y_B^i\!=\!z_B^i$ for $i\!=\!1,2$, we have $W_1 \notin Im(\varphi^{I_1})$ and $W_2 \notin Im(\varphi^{I_2})$, which is absurd. 
 \item Given $z_A^1\!=\!z_B^1\!=\!1$ we have $s_A^{1+}\!=s_A^{1+}\!=\!N^+$. Since $y_A^2\!=\!y_B^2$ and $d\!\leq\!c$ we have that $z_B^2 \!=\!H(ay_A^2\!-\!bN^+\!+\!c) \!\geq\! H(ay_B^2\!-\!bN^+\!+\!d)\!=\!z_A^2$
 \item If $y_B^2\!=\!1$ from (1) we have $z_B^2\!=\!1$, which proves the claim. Thus we can assume that $y_B^2\!=\!0$. Condition $z_B^1\!=\!1$ implies $s_B^{2+}\!=\!N^+$. This identity and $y_B^2\!=\!0$ implies that $z_A^2\!=\!H(d\!-\!bN^+)$. Thus from the hypothesis $z_A^2\!=\!1$ we have $d\!-\!bN^+ \!\geq\! \theta$. Moreover, since $d\! \leq \! c$, $ay_B^2 \geq \! 0$, and $s_A^{2+}\! \leq \!N^+$ we must have $z_B^2\!=\!H(ay_B^2\!+\!c\!-\!bs_A^{2+})\!\geq\!H(d\!-\!bN^+)\!=\!1$
 \item Given $z_A^1\!=\!z_B^2, z_B^1\!=\!z_A^2, x_A^1\!=\!x_B^2$ it obviously follows that 
 $$ y_A^2\!=\!H(ax_B^2\!-\!bN^-z_B^1\!-\!bM^-(1\!-\!z_B^1)z_B^2\!+\!d)\!=\!H(ax_A^1\!-\!bN^-z_A^2\!-\!bM^-(1\!-\!z_A^2)z_A^1\!+\!d)\!=\!y_B^1 $$
\end{enumerate}

Next, we algorithmically find all matrices in $\Gamma_{2T\!R}$. We proceed by generating all 2 by 6 binary matrices matrices $W\!=\!\begin{bmatrix} [c|c] W_1 \!&\!W_2 \end{bmatrix}$ with entries satysfying conditions 1-11. In total, we find that $|\Gamma_{2T\!R}|\!=\!15$, thus implying $|Im(\varphi)|\!\leq\!15$. 

Due to the model's symmetry, for any matrix $W=\varphi(\psi)$ of an asymmetrical state $\psi$ there exist a matrix $W' \in \Gamma_{2T\!R}$ image of the state $\psi'$ conjugate to $\psi$, and this matrix is defined by swapping the first row of $W_1$ with the second row of $W_2$ and the second row of $W_1$ with the first row of $W_2$. Notably, both $\psi$ and $\psi'$, and thus also $W$ and $W'$, exist under the same parameter conditions. The top rows of Table \ref{tab:MAIN_table} shows all matrices $V \in \Omega$ that are an image of either of a symmetrical state or one of two conjugate states and their corresponding names (1st row). 

\begin {table}[h]
\begin{tabular}{ |c|c|c|c|c|c|c|c|c| } 
 \hline
  \footnotesize $ZcS^*$ & \footnotesize $ZcAP$ & \footnotesize $ZcAS^*$ & \footnotesize $ZcI$ & \footnotesize $ScAS^*$ & \footnotesize $SDcAS^*$ & \footnotesize $ScSD^*$ & \footnotesize $APcAS^*$ & \footnotesize $APcI$ \\ \hline
 \ \parbox{1.26cm}{\scriptsize $\begin{array}{c} 
  001 000 \\
  001 000 \\
 \end{array}$}
 & \parbox{1.26cm}{\scriptsize $\begin{array}{c}
  001 000 \\
  000 001 \\
 \end{array}$}
 & \parbox{1.26cm}{\scriptsize $\begin{array}{c}
  001 001 \\
  000 001 \\
 \end{array}$}
 & \parbox{1.26cm}{\scriptsize $\begin{array}{c}
  001 001 \\
  001 001 \\
 \end{array}$}
 & \parbox{1.26cm}{\scriptsize $\begin{array}{c}
  001 111 \\
  000 001 \\
 \end{array}$}
 & \parbox{1.26cm}{\scriptsize $\begin{array}{c}
  001 111 \\
  000 011 \\
 \end{array}$}
 & \parbox{1.26cm}{\scriptsize $\begin{array}{c}
  111 000 \\
  001 000 \\
 \end{array}$}
 & \parbox{1.26cm}{\scriptsize $\begin{array}{c}
  111 001 \\
  000 111 \\
 \end{array}$}
 & \parbox{1.26cm}{\scriptsize $\begin{array}{c}
  111 001 \\
  001 111 \\
 \end{array}$}
 \\ \hline
 $\begin{aligned}[t] 
  C_4^-\!&<\!\theta \\ 
  C_4^+\!&\geq\!\theta \\ 
  C_2^+\!&\geq\!\theta
 \end{aligned}$ & 
 
 $\begin{aligned}[t] 
  C_2^+\!&<\!\theta \\
  C_3^-\!&<\!\theta \\
  C_3^+\!&\geq\!\theta
 \end{aligned}$ & 

 see \ref{ZcAS} & 
 
 $\begin{aligned}[t] 
  C_3^-\!&<\!\theta \\
  C_3^+\!&\geq\!\theta \\
  C_5^+\!&\geq\!\theta
 \end{aligned}$ &  
 
 $\begin{aligned}[t] 
  C_3^+\!&\geq\!\theta \\ 
  C_5^+\!&<\!\theta \\ 
  C_8^-\!&\geq\!\theta \\ 
  C_6^-\!&<\!\theta
 \end{aligned}$ & 
 
 $\begin{aligned}[t]
 C_3^-\!&<\!\theta \\ 
 C_3^+\!&\geq\!\theta \\ 
 C_5^+\!&<\!\theta \\ 
 C_8^-\!&\geq\!\theta \\ 
 C_6^-\!&\geq\!\theta
 \end{aligned}$ & 
 
 $\begin{aligned}[t] 
  C_4^-\!&\geq\!\theta \\ 
  C_2^-\!&<\!\theta \\ 
  C_2^+\!&\geq\!\theta \\ 
  C_3^+\!&<\!\theta
 \end{aligned}$ &  
 
 $\begin{aligned}[t] 
  C_3^-\!&\geq\!\theta \\ 
  C_5^+\!&<\!\theta \\ 
  C_2^-\!&<\!\theta \\ 
  C_2^+\!&\geq\!\theta
 \end{aligned}$ &  
 
 $\begin{aligned}[t] 
  C_3^-\!&\geq\!\theta \\ 
  C_5^-\!&<\!\theta \\ 
  C_5^+\!&\geq\!\theta
 \end{aligned}$ 
 \\ \hline

 $\begin{aligned}[t] C_{9}\!<\!\theta \end{aligned}$ & 
 $-$ & 
 $\begin{aligned}[t] C_{10}\!<\!\theta \end{aligned}$ & 
 $\begin{aligned}[t] C_{10}\!<\!\theta \end{aligned}$ & 
 $\begin{aligned}[t] C_{10}\!<\!\theta \end{aligned}$ &
 $\begin{aligned}[t] C_{10}\!<\!\theta \end{aligned}$ &
 $\begin{aligned}[t] C_{9}\!<\!\theta \end{aligned}$ & 
 $\begin{aligned}[t] C_{10}\!<\!\theta \end{aligned}$ &
 $\begin{aligned}[t] C_{10}\!<\!\theta \end{aligned}$ 
 \\ \hline
\end{tabular}
\label{tab:CONNECT_table_appendix}
\caption {Matrix form and existence conditions of $2T\!R$-periodic SHORT CONNECT states. Asymmetrical states in *.}
\end {table}

The analysis of existence conditions for SHORT CONNECT states is slightly more involved than the one done in Theorem \ref{MAIN_2TR} for SHORT MAIN states. The reason is that for the well-definedness conditions for the entries of each SHORT MAIN state's matrix form are necessary and sufficient for determining the dynamics of each state in $I_1$ and $I_2$. In the case of CONNECT states, this property is not valid. Therefore, we analyse each of the remaining 15 matrices given in Table \ref{tab:CONNECT_table_appendix} separately using conditions $C_{1\!-\!5}$ and $M_{1\!-\!6}$. 
Similar to the proof of formulas \ref{syn_values_MAIN_SHORT} of Theorem \ref{MAIN_2TR}, one may show that that variables $s_A(t\!-\!D)$ and $s_B(t\!-\!D)$ of any SHORT CONNECT states are monotonically decreasing and depend on functions
$$ N(t)=e^{(-T\!R\!-\!D\!-\!t)/\tau_i} \quad \mbox{and} \quad M(t)=e^{(-2T\!R\!-\!D\!-\!t)/\tau_i}. $$
More precisely, these variables satisfy the following $\forall t \in I_1 \cup I_2$:
\begin{equation}
s_B(t\!-\!D)\!=\!N(t)z_B^j\!+\!M(t)(1-z_B^j)z_B^i, 
\mbox{ and } 
s_A(t\!-\!D)\!=\!N(t)z_A^j\!+\!M(t)(1-z_A^j)z_A^i, 
\; \forall i,j\!=\!1,2, i\!\neq\!j.
\nonumber
\end{equation} 
Obviously, this is an extension of the proof of \ref{syn_values_CONNECT_SHORT}, since these quantities can be obtained by evaluating the equations above at time $t\!=\!0, TD, T\!R$ and $T\!R\!+\!T\!D$. Using these identities we now prove that the existence conditions for each state shown in the third row of Table \ref{tab:CONNECT_table_appendix}. 
\begin{enumerate}
 \item \textbf{ZcS} - This state is CONNECT during interval $I_1$ (satisfying condition $C_5$) and MAIN during interval $I_2$ (satisfying condition $M_6$). Since $z_A^1\!=\!z_B^1\!=\!1$ and $z_A^2\!=\!z_B^2\!=\!0$ we have 
 $$ s_A(t\!-\!D)\!=\!s_B(t\!-\!D)\!=\!M(t), \; \forall t \in I_1 \quad \mbox{and} \quad s_A(t\!-\!D)\!=\!s_B(t\!-\!D)\!=\!N(t), \: \forall t \in I_2. $$
 In particular, evaluating these equations at time $t\!=\!0, TD, T\!R$ and $T\!R\!+\!T\!D$ we obtain 
 $$ s_A^{1 \pm}\!=\!s_B^{1 \pm}\!=\!M^{\pm} \quad \mbox{and} \quad s_A^{2 \pm}\!=\!s_B^{2 \pm}\!=\!N^{\pm}. $$
 Condition $C_5$ on the interval $I_1$ requires that A(B) turns ON at the (unique) times $t^*$($s^*$) in $(0,T\!D]$. It must be that $t^*\!\leq\!s^*$. Indeed, on the contrary suppose that B turns ON at time $s^*\!<\!t^*$. Thus we must have $d\!-\!bs_A(s^*\!-\!D)\!=\!\theta$ (i.e.\ point $(0,1)$ is an equilibrium for the fast subsystem at time $s^*$) and $c\!-\!bs_B(s^*\!-\!D)\!<\!\theta$ (i.e.\ point $(1,0)$ is not an equilibrium for the fast subsystem at time $s^*$). This is absurd because $c\!\geq\!d$ and $s_A(s^*\!-\!D)\!=\!s_B(s^*\!-\!D)\!=\!M(s^*)$. Thus necessary and sufficient existence conditions for ZcS are given by conditions $C_5$ under the case $t^*\!\leq\!s^*$, which are
$$ C_4^-\!=\!c-\!bM^-\!<\!\theta, \quad C_4^+\!=\!c-\!bM^+\!\geq\!\theta \quad \mbox{and} \quad C_2^+\!=\!a\!-\!bM^+\!d\!\geq\!\theta.$$	
 Lastly we need to ensure that ZcS satisfies condition $M_6$ on the interval $I_1$. More precisely, these conditions are $ c\!-\!bN^+\!<\!\theta$ and $ d\!-\!bN^+\!<\!\theta$. We notice that, since $N^+\!\geq\!M^-$, both of these conditions automatically hold due to $C_4^-\!=\!c-\!bM^-\!<\!\theta$. 
 \item \textbf{ZcAP} - This state is CONNECT for both intervals $I_1$ (satisfying condition $C_4$) and $I_2$ (satisfying condition $C_3$). Since $z_A^1\!=\!z_B^2\!=\!1$ and $z_A^2\!=\!z_B^1\!=\!0$ we have $s_B^{1 \pm}\!=\!s_A^{2 \pm}\!=\!N^{\pm}$ and $s_A^{1+}\!=\!s_B^{2+}\!=\!M^{+}$. Thus from the conditions given in $C_3$ we have that
 $$ C_3^-\!=\!c\!-\!bN^-\!<\!\theta, \quad C_3^+\!=\!c\!-\!bN^+\!\geq\!\theta, \quad C_2^+\!=\!a\!-\!bM^+\!+\!d\!<\!\theta. $$
 \item \textbf{ZcI} - This state is CONNECT for both intervals $I_1$ (satisfying condition $C_5$) and $I_2$ (satisfying condition $C_5$). Conditions $z_A^1\!=\!z_B^1\!=\!z_A^2\!=\!z_B^2\!=\!1$ lead to 
 $$ s_A(t\!-\!D)\!=\!s_B(t\!-\!D)\!=\!N(t) \quad \forall t \in I_1 \cup I_2. $$ 
 In particular, evaluating these equations at time $t\!=\!0, TD, T\!R$ and $T\!R\!+\!T\!D$ we obtain $ s_A^{1 \pm}\!=\!s_B^{1 \pm}\!=\!s_A^{2 \pm}\!=\!s_B^{2 \pm}\!=\!N^{\pm} $. Since the synaptic variables evolve equally on both intervals and due to the model's symmetry (see \ref{symmetry}) it must be that A and B turn ON at the same time $t^*$ during intervals $I_1$ and $I_2$ respectively, and B and A turn ON at the same time $s^*$ during intervals $I_1$ and $I_2$ respectively (applying condition $C_5$ on both intervals). Similar considerations made for the case ZcS lead to $t^*\!\leq\!s^*$. Thus the existence conditions for ZcAP are given by conditions $C_5$ under the case $t^*\!\leq\!s^*$, and they are
  $$ C_3^-\!=\!c\!-\!bN^-\!<\!\theta, \quad C_3^+\!=\!c\!-\!bN^+\!\geq\!\theta, \quad C_5^+\!=\!a\!-\!bN^+\!+\!d\!\geq\!\theta. $$
 \item \textbf{ScAS} - This state is CONNECT for both intervals $I_1$ (satisfying condition $C_4$) and $I_2$ (satisfying condition $C_1$). Since $z_A^1\!=\!z_A^2\!=\!z_B^2\!=\!1$ and $z_B^1\!=\!0$ we have $s_A^{1 \pm}\!=\!s_A^{2 \pm}\!=\!s_B^{1 \pm}\!=\!N^{\pm}$ and $s_B^{2\pm}\!=\!M^{\pm}$. Condition $C_4$ on interval $I_1$ leads to (1) $c\!-\!bN^-\!<\!\theta$, (2) $c\!-\!bN^+\!\geq\!\theta$ and (3) $a\!-\!bN^+\!+\!d\!<\!\theta$. Condition $C_1$ on interval $I_2$ lead to (4) $d\!-\!bM^-\!\geq\!\theta$, (5) $a\!-\!bN^-\!+\!c\!<\!\theta$ and (6) $a\!-\!bN^+\!+\!c\!\geq\!\theta$. Conditions (1) and (6) can be discarded because they derive respectively from conditions (5) and (2) (using the properties $N^-\!\geq\!N^+$ and $a\!\geq\!0$). Thus, the remaining conditions are 
 $$ C_3^+\!=\!c\!-\!bN^+\!\geq\!\theta, \quad C_5^+\!=\!a\!-\!bN^+\!+\!d\!<\!\theta, \quad C_8^-\!=\!d\!-\!bM^-\!\geq\!\theta \quad \mbox{and} \quad C_6^-\!=\!a\!-\!bN^-\!+\!c\!<\!\theta. $$
 \item \textbf{SDcAS} - This state is CONNECT for interval $I_1$ (satisfying condition $C_4$) and MAIN for interval $I_2$ (satisfying condition $M_2$). Like in the case of ScAS, since $z_A^1\!=\!z_A^2\!=\!z_B^2\!=\!1$ and $z_B^1\!=\!0$ we have $s_A^{1 \pm}\!=\!s_A^{2 \pm}\!=\!s_B^{1 \pm}\!=\!N^{\pm}$ and $s_B^{2\pm}\!=\!M^{\pm}$. Condition $C_4$ on the interval $I_1$ implies conditions (1-3) in ScAS. Condition $M_2$ on interval $I_2$ implies (4) $d\!-\!bM^-\!\geq\!\theta$, (5) $c\!-\!bN^-\!<\!\theta$ and (6) $a\!-\!bN^-\!+\!c\!\geq\!\theta$. Obviously, condition (1) can be discarded because it is the same as (5), and the remaining conditions thus are
 $$ C_3^-\!=\!c\!-\!bN^-\!<\!\theta, C_3^+\!=\!c\!-\!bN^+\!\geq\!\theta, C_5^+\!=\!a\!-\!bN^+\!+\!d\!<\!\theta, C_8^-\!=\!d\!-\!bM^-\!\geq\!\theta, C_6^-\!=\!a\!-\!bN^-\!+\!c\!\geq\!\theta. $$
 \item \textbf{ScSD} - This state is CONNECT for interval $I_1$ (satisfying condition $C_1$) and MAIN for interval $I_2$ (satisfying condition $M_6$). As in case ZcS we have $s_A^{1 \pm}\!=\!s_B^{1 \pm}\!=\!M^{\pm}$ and $s_A^{2 \pm}\!=\!s_B^{2 \pm}\!=\!N^{\pm}$. Condition $C_1$ on interval $I_1$ leads to $c\!-\!bM^-\!\geq\!\theta$, $a\!-\!bM^-\!+\!d\!<\!\theta$ and $a\!-\!bM^+\!+\!d\!\geq\!\theta$. Condition $M_6$ on interval $I_2$ implies (1) $d\!-\!bN^+\!<\!\theta$ and (2) $c\!-\!bN^+\!<\!\theta$. Obviously, since $d\!\leq\!c$, (2) implies (1), and thus (1) can be discarded. The remaing conditions are
  $$ C_4^-\!=\!c\!-\!bM^-\!\geq\!\theta, \quad C_2^-\!=\!a\!-\!bM^-\!+\!d\!<\!\theta, \quad C_2^+\!=\!a\!-\!bM^+\!+\!d\!\geq\!\theta, \quad C_3^+\!=\!c\!-\!bN^+\!<\!\theta. $$
 \item \textbf{APcAS} - This state is CONNECT for interval $I_1$ (satisfying condition $M_5$) and MAIN for interval $I_2$ (satisfying condition $C_2$). Similarly to the case ScAS we have that $s_A^{1 \pm}\!=\!s_A^{2 \pm}\!=\!s_B^{1 \pm}\!=\!N^{\pm}$ and $s_B^{2\pm}\!=\!M^{\pm}$. Condition $M_5$ on interval $I_1$ leads to $c\!-\!bN^-\!\geq\!\theta$ and $a\!-\!bN^+\!+\!d\!<\!\theta$. Condition $C_2$ on interval $I_2$ leads to $c\!-\!bN^-\!\geq\!\theta$ (again), $a\!-\!bM^-\!+\!d\!<\!\theta$ and $a\!-\!bM^+\!+\!d\!\geq\!\theta$. In summary these conditions are
 $$ C_3^-\!=\!c\!-\!bN^-\!\geq\!\theta, \quad C_5^+\!=\!a\!-\!bN^+\!+\!d\!<\!\theta, \quad C_2^-\!=\!a\!-\!bM^-\!+\!d\!<\!\theta, \quad C_2^+\!=\!a\!-\!bM^+\!+\!d\!\geq\!\theta. $$
 \item \textbf{APcINT} - This state is CONNECT for both intervals $I_1$ and $I_2$, satisfying condition $C_1$ and $C_2$ respectively. As for ZcI, conditions $z_A^1\!=\!z_B^1\!=\!z_A^2\!=\!z_B^2\!=\!1$ lead to
  $$ s_A(t\!-\!D)\!=\!s_B(t\!-\!D)\!=\!N(t) \quad \forall t \in I_1 \cup I_2. $$ 
 Thus we obtain $ s_A^{1 \pm}\!=\!s_B^{1 \pm}\!=\!s_A^{2 \pm}\!=\!s_B^{2 \pm}\!=\!N^{\pm} $. Moreover, since the synaptic variables evolve equally on both intervals and due to the model's symmetry it must be that A and B turn ON at the same time $t^*$ during intervals $I_1$ and $I_2$ respectively (applying conditions $C_{1-2}$ on $I_{1-2}$). Moreover conditions $C_1$ and $C_2$ are equal and lead to $C_3^-\!=\!c\!-\!bN^-\!\geq\!\theta$, $C_5^-\!=\!a\!-\!bN^-\!+\!d\!<\!\theta$ and $C_5^+\!=\!a\!-\!bN^+\!+\!d\!\geq\!\theta$. 
 \item \textbf{ZcAS} - Showing the existence conditions for this state is the most involved case. This state is CONNECT for both intervals $I_1$ (satisfying condition $C_4$) and $I_2$ (satisfying condition $C_5$). Since $z_A^1\!=\!z_A^2\!=\!z_B^2\!=\!1$ and $z_B^1\!=\!0$ we have 
 $$ s_A(t\!-\!D)\!=\!N(t) \quad \mbox{and} \quad s_B(t\!-\!D)\!=\!M(t), \forall t \in I_2. $$ 
 In particular, evaluating these equations at time $t\!=\!0, TD, T\!R$ and $T\!R\!+\!T\!D$ we obtain $s_A^{1 \pm}\!=\!s_A^{2 \pm}\!=\!s_B^{1 \pm}\!=\!N^{\pm}$ and $s_B^{2\pm}\!=\!M^{\pm}$. For condition $C_5$ on $I_2$ we have that B and A turn ON at times $t^*$ and $s^*$ in $(T\!R,T\!R\!+\!T\!D]$, respectively. We have two cases to consider: 
\begin{itemize}
  \item Case $t^*\!<\!s^*$. From the evolution of the synaptic variables and since they are monotonically decaying we may express existence conditions as follows:
  
(P1) $\exists t^* \!\in\! (T\!R,T\!R\!+\!T\!D]\!:  c\!-\!bN(t^*)\!=\!\theta \Leftrightarrow C_3^-\!=\!c\!-\!bN^-\!<\!\theta$ and $C_3^+\!=\!c\!-\!bN^+\!\geq\!\theta$. 

(P2) $\forall s \in (0,t^*): d\!-\!bM(s)\!<\!\theta \Leftrightarrow d\!-\!bM(t^*)\!<\!\theta$ 

(P3) $\exists s^* \in (t^*,T\!R\!+\!T\!D]: a\!-\!bM(s^*)\!+\!d\!\geq\!\theta \Leftrightarrow C_2^+\!=\!a\!-\!bM^+\!+\!d\!\geq\!\theta$

 Where (P1) guarantees that B turns ON at $t^*$, (P2) that A is OFF $\forall s\!\leq\!t^*, s \in I_2$ and (P3) that A turns ON at time $s^*$. Thus (P2) guarantees $s^*\!>\!t^*$. From (P1) we have that 
  $$t^* \!=\! N^{-1}((c\!-\!\theta)/b) \!=\! \tau_i \log((c\!-\!\theta)/b)\!+\!(T\!R\!-\!D)$$
 By substituting this identity in (P2) and we obtain that $d\!-\!bM(t^*)\!<\!\theta$ if and only if $K\!=\!c\!-\!(d\!-\!\theta)e^{T\!R/\tau_i}\!>\!\theta$. Lastly we need to guarantee conditions $C_4$ on $I_1$. Two conditions are $C_3^-\!=\!c\!-\!bN^-\!<\!\theta$ and $C_3^+\!=\!c\!-\!bN^+\!\geq\!\theta$, which are equivalent to case (P1). The second condition is that $C_5^+\!=\!a\!-\!bN^+\!+\!d\!<\!\theta$. 
 \item Case $t^*\!\geq\!s^*$. Similar to the previous case we can formulate the following conditions:

(Q1) $\exists s^* \in (T\!R,T\!R\!+\!T\!D]: d\!-\!bM(s^*)\!=\!\theta \Leftrightarrow C_8^-\!=\!d\!-\!bM^-\!<\!\theta$ and $C_8^+\!=\!d\!-\!bM^+\!\geq\!\theta$

(Q2) $\exists t^* \in (s^*\!-\!T\!R,T\!D]: c\!-\!bN(t^*)\!=\!\theta \Leftrightarrow c\!-\!bN(s^*)\!<\!\theta$ and $C_3^+\!=\!c\!-\!bN^+\!\geq\!\theta$

 Where (Q1) guarantees that A turns ON at $s^* \in (T\!R,T\!R\!+\!T\!D]$ and (Q2) that it turns ON at time $t^*$, wher $t^*\!-\!T\!R\!\geq\!s^*$ (ie one of conditions $C_4$ on $I_1$). From (Q1) we have that 
 $$s^* \!=\! N^{-1}((d\!-\!\theta)/b) \!=\! \tau_i \log((d\!-\!\theta)/b)\!+\!(2T\!R\!-\!D),$$
 Thus the first condition in (Q2) is equivalent to $K\!\leq\!\theta$. Condition $C_3^+\!\geq\!\theta$ and $a\!\geq\!0$ imply $a\!+\!c\!-\!bN^+\!\geq\!\theta$, thus completing conditions $C_2$ on $I_2$. Analogously to the previous case, the last condition to be ensures is $C_5^+\!=\!a\!-\!bN^+\!+\!d\!<\!\theta$. 
\end{itemize}
Thus, in summary, the conditions for both cases are: 
\begin{equation}
 \begin{cases} 
 C_3^-\!<\!\theta, C_3^+\!\geq\!\theta, C_2^+\!\geq\!\theta, C_5^+\!<\!\theta, & \mbox{if } K\!>\!\theta \\ 
 C_8^-\!<\!\theta, C_8^+\!\geq\!\theta, C_3^+\!\geq\!\theta, C_5^+\!<\!\theta, & \mbox{if } K\!\leq\!\theta. 
 \end{cases}
 \label{ZcAS}
\end{equation}
This completes the proof of the existence conditions for ZcAS. 
\end{enumerate}
Notably, we numerically simulated each state that correspong to a matrix in $\Gamma_{2T\!R}$, thus proving that this its conditions can be satisfied in a non-empty region of parameters. This proves that $Im(\rho)\!=\!\Gamma_{2T\!R}$.
\end{proof}

\subsection{Analysis of $2T\!R$-periodic MAIN LONG states} \label{appendix2}
In this section we analyze the existence conditions for $2T\!R$-periodic LONG MAIN states. To do so we use a similar analysis to the one described in the section \ref{short_2TR} of the main text. The first step is to extend the matrix form definition to LONG states. Due to Lemma \ref{lem:LONG_conditions}, LONG states can exist only if there exist one active tone interval  $R\!=\!I_1$ or $R\!=\!I_2$ for which two conditions are satisfied. Let us name $R\!=\![\alpha,\beta]$. The conditions are: 
\begin{enumerate}
 \item Both units must be ON at time $\beta$
 \item $a\!-\!bs_A(\beta\!-\!D)\!\geq\!\theta$ and $a\!-\!bs_B(\beta\!-\!D)\!\geq\!\theta$
\end{enumerate}

We can then extend the definition of the matrix form of MAIN LONG states by including a last column in the matrix form of SHORT MAIN states. More precisely, the matrix form for a state $\psi \in LM$ is the $2 \times 6$ binary matrix $V$ defined as
\begin{equation}
V=
\begin{bmatrix} [c|c||c|c]
    V_1 & \vec{w}^1 & V_2 & \vec{w}^2
\end{bmatrix}
=
\begin{bmatrix} [cc|c||cc|c]
    x_A^1 & y_A^1 & w^1 & x_A^2 & y_A^2 & w^2 \\
    x_B^1 & y_B^1 & w^1 & x_B^2 & y_B^2 & w^2
\end{bmatrix}
\nonumber
\end{equation}
Where $V_1$ and $V_2$ are the same matrix forms defined for MAIN SHORT states, respectively, with entries defined by equations \ref{matricial_representation_MAIN}. Entries of the binary vectors $ \vec{w}^1$ and $\vec{w}^2$ guarantee that condition 2. is met for LONG states and they are defined by
\begin{equation}
 w^1=H(ay_A^1-bs_A^{1+})H(ay_B^1-bs_B^{1+}) \quad  \mbox{and} \quad w^2=H(ay_A^2-bs_A^{2+})H(ay_B^2-bs_B^{2+}).
\end{equation}
We remind the reader that $s_A^{1+}\!=\!s_A(T\!D\!-\!D)$, $s_B^{1+}\!=\!s_B(T\!D\!-\!D)$, $s_A^{2+}\!=\!s_A(T\!R\!+\!T\!D\!-\!D)$ and $s_B^{2+}\!=\!s_B(T\!R\!+\!T\!D\!-\!D)$. 
These quantities appear also in the definition of the $V_1$ and $V_2$ entries. In the case of LONG MAIN states they depend on both $N^\pm$ and $M^\pm$ defined in equations \ref{N_M_MAIN} and on the following quantities: 
\begin{equation}
 N_L^- \!=\! e^{-(TR\!-\!2D)/\tau_i}, \; N_L^+ = e^{-(TR\!+\!TD\!-\!2D)/\tau_i}, \; 
 M_L^- \!=\! e^{-(2TR\!-\!2D)/\tau_i}, \; M_L^+ = e^{-(2TR\!+\!TD\!-\!2D)/\tau_i}. 
 \label{N_M_LONG}
\end{equation}
We note that $N_L^+\!\geq\!N^+$, $N_L^-\!\geq\!N^-$, $M_L^+\!\geq\!M^+$ and $M_L^-\!\geq\!M^-$. Using a similar analysis carried to prove equations \ref{syn_values_MAIN_SHORT} in Theorem \ref{MAIN_2TR} one can easily show that:
\begin{equation} 
\label{syn_values_MAIN_LONG}
 \begin{split}
s_B^{i \pm}\!&=\!w^jN_L^{\pm}\!+\!(1\!-\!w^j)y_B^jN^{\pm}\!+\!(1\!-\!w^j)(1\!-\!y_B^j)w_B^iM_L^{\pm}\!+\!(1\!-\!w^j)(1\!-\!y_B^j)(1\!-\!w^i)y_B^iM^{\pm} \\
s_A^{i \pm}\!&=\!w^jN_L^{\pm}\!+\!(1\!-\!w^j)y_A^jN^{\pm}\!+\!(1\!-\!w^j)(1\!-\!y_A^j)w^iM_L^{\pm}\!+\!(1\!-\!w^j)(1\!-\!y_A^j)(1\!-\!w^i)y_A^iM^{\pm}
 \end{split}
\end{equation}
To analyse LONG MAIN states $\psi \in LM$ we may restrict to the case where the interval $R$ for which properties (1-2) given above are satisfy is $R\!=\!I_1$ (the case $R\!=\!I_2$ will be analysed using symmetry principles). Properties (1-2) may then be rewritten as (a) both units are ON at time $\beta\!=\!T\!D$, and (b) $a\!-\!bs_A^{1+}\!\geq\!\theta$ and $a\!-\!bs_B^{1+}\!\geq\!\theta$. From (a) we have that $(1,1)$ is an equilibrium for the fast subsystem at time $T\!D$, which implies that $V_1$ satisfies one of $M_{1-3}$ during the interval $I_1$ (see Section \ref{MAIN_definition}). From (b) we obtain $w^1\!=\!1$. Before we consider separately each of cases $M_{1-3}$, we note that the entries of the matrix form of any MAIN LONG state $\psi$ satisfy the properties stated in the next theorem. 
\begin{theorem}
The matrix form $V$ of any LONG MAIN state $\psi \in LM$ satisfies:
\begin{enumerate}
 \item $x_A^2 \leq x_B^2$
 \item If $w^2=1 \Rightarrow x_A^2\!=\!x_B^1, x_B^2\!=\!x_A^1, y_A^2\!=\!y_B^1$ and $y_B^2\!=\!y_A^1$
 \item $x_A^2 \leq x_B^1$ and $x_B^2 \leq x_A^1$
 \item If $w^2=y_A^2=y_B^2=0 \Rightarrow x_A^1\!\geq\!x_B^1$ 
 \item $x_A^2 \leq y_A^2$ and $x_B^2 \leq y_B^2$
 \item $x_A^2=x_B^2=0 \Rightarrow y_A^2=y_B^2=0$
\end{enumerate}
\end{theorem}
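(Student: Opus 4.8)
The plan is to reduce all six claims to the explicit formulas \ref{syn_values_MAIN_LONG} for the delayed synaptic quantities, together with the inequalities $N^\pm \le N_L^\pm$ (recorded just before the statement), $M_L^\pm \le N_L^\pm$ (immediate, since $M_L^\pm/N_L^\pm = e^{-T\!R/\tau_i} < 1$), and the standing bounds $d \le c$, $a \ge 0$, $b \ge 0$. By the symmetry reduction made just before the statement I take the LONG interval to be $I_1$, so $w^1 = 1$ and $V_1$ is one of $M_{1-3}$; in particular $y_A^1 = y_B^1 = 1$. First I would substitute $w^1 = 1$ into \ref{syn_values_MAIN_LONG}: this collapses $s_A^{2\pm}$ and $s_B^{2\pm}$ to $N_L^\pm$, and collapses $s_A^{1\pm}$ to $N_L^\pm$, $N^\pm$ or $M_L^\pm$ according as $w^2 = 1$, ($w^2 = 0$ and $y_A^2 = 1$), or ($w^2 = 0$ and $y_A^2 = 0$), with the analogous statement for $s_B^{1\pm}$ using $y_B^2$; in every case $s_A^{1-}, s_B^{1-} \le N_L^-$. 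Since $I_1 = [0,T\!D]$ is an A-tone interval and $I_2 = [T\!R,T\!R\!+\!T\!D]$ a B-tone interval, \ref{f_g} gives $f(s) = c-bs$, $g(s) = d-bs$ on $I_1$ and $f(s) = d-bs$, $g(s) = c-bs$ on $I_2$, so \ref{matricial_representation_MAIN} expresses every entry of $V_1$ and $V_2$ as a Heaviside of an explicit affine combination of $a,b,c,d$ and one of $N_L^\pm, N^\pm, M_L^\pm$.

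With that in hand the first four claims are one-line monotonicity arguments. Claim 1: $x_A^2 = H(d-bN_L^-) \le H(c-bN_L^-) = x_B^2$ since $d \le c$. Claim 3: $x_B^1 = H(d-bs_A^{1-}) \ge H(d-bN_L^-) = x_A^2$ because $s_A^{1-} \le N_L^-$, and symmetrically $x_A^1 = H(c-bs_B^{1-}) \ge H(c-bN_L^-) = x_B^2$. Claim 4: under $w^2 = y_A^2 = y_B^2 = 0$ one gets $s_A^{1-} = s_B^{1-} = M_L^-$, hence $x_A^1 = H(c-bM_L^-) \ge H(d-bM_L^-) = x_B^1$. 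Claim 2: under $w^2 = 1$ all four quantities $s_A^{1\pm}, s_B^{1\pm}, s_A^{2\pm}, s_B^{2\pm}$ equal $N_L^\pm$; then \ref{matricial_representation_MAIN} gives $x_A^1 = H(c-bN_L^-) = x_B^2$ and $x_B^1 = H(d-bN_L^-) = x_A^2$, and the case-defining inequalities for $y_A^1$ and for $y_B^2$ become identical (each involves the equal entries $x_B^1 = x_A^2$ and the thresholds $c-bN_L^\pm$), so $y_A^1 = y_B^2$; likewise $y_B^1 = y_A^2$.

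For claims 5 and 6 I would avoid arguing straight from the formulas — the strict gap $N_L^+ < N_L^-$ means $x_A^2 = 0$ need not by itself force the $y_A^2$-threshold below $\theta$ — and instead invoke Theorem \ref{thm:matr_MAIN}. Since $\psi \in LM$ is a MAIN state, it satisfies one of $M_{1-6}$ in the interval $I_2$, so $\psi$ belongs to $M_{I_2}$ and $V_2 = \rho^{I_2}(\psi) \in \mathrm{Im}(\rho^{I_2}) = \Omega$. The characterisation \ref{im_rho_main} of $\Omega$ is precisely $x_A^2 \le y_A^2$, $x_B^2 \le y_B^2$, and $x_A^2 = x_B^2 = 0 \Rightarrow y_A^2 = y_B^2 = 0$, which are claims 5 and 6.

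The only delicate point is the specialisation in the first paragraph: one must verify that substituting $w^1 = 1$ (and, for claims 2 and 4, the extra hypotheses on $w^2, y_A^2, y_B^2$) really does collapse each synaptic quantity in \ref{syn_values_MAIN_LONG} to a single one of $N_L^\pm, N^\pm, M_L^\pm$, and that $s_A^{1-}, s_B^{1-} \le N_L^-$ then holds in all cases — the latter resting on $N^- \le N_L^-$, which is exactly where the hypothesis $D > T\!D$ enters (it makes the exponent difference $T\!D - D$ negative). Everything after that is routine Heaviside monotonicity, so I expect no further obstruction.
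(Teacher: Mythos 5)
Your proposal is correct and follows essentially the same route as the paper's own proof: substitute $w^1=1$ (and, where hypothesised, the values of $w^2,y_A^2,y_B^2$) into the identities for $s_{A,B}^{i\pm}$ to collapse them to one of $N_L^\pm$, $N^\pm$, $M_L^\pm$, conclude (1)--(4) by Heaviside monotonicity from $d\le c$ and $s_A^{1-},s_B^{1-}\le N_L^-$, and obtain (5)--(6) by citing Theorem~\ref{thm:matr_MAIN} applied on $I_2$. Your write-up is in fact slightly more careful than the paper's, in that you explicitly check the case split for $s_A^{1\pm},s_B^{1\pm}$ and identify $D>T\!D$ as the hypothesis making $N^-\le N_L^-$ hold.
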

\begin{proof}
Since $w^1\!=\!1$, from the identities \ref{syn_values_MAIN_LONG} we have $s_B^{2-}\!=\!s_A^{2-}\!=\!N_L^-$, which leads to $x_A^2\!=\!H(d\!-\!bN_L^-)$ and $x_B^2\!=\!H(c\!-\!bN_L^-)$. Since $d\!\leq\!c$, we have (1). Similarly, if $w^2\!=\!1$, we have $s_B^{1\pm}\!=\!s_A^{1\pm}\!=\!N_L^\pm$. This implies $x_A^2\!=\!H(d\!-\!bN_L^-)\!=\!x_B^1$ and $x_B^2\!=\!H(c\!-\!bN_L^-)\!=\!x_A^1$. Analogously, one can easily show that $y_A^2\!=\!y_B^1$ and $y_B^2\!=\!y_A^1$ using the definition of these entries given in the definitions \ref{matricial_representation_MAIN}. Since $w^1\!=\!1$ we have that $s_A^{2-}\!=\!s_B^{2-}\!=\!N_L^-\!\geq\!s_A^{1-}$, which proves (3). Under the hypothesis of (4) we have that $s_B^{1\pm}\!=\!s_A^{1\pm}\!=\!M_L^\pm$. This and $c\!\geq\!d$ implies $x_A^1\!=\!H(c\!-\!bM_L^-)\!\geq\!H(d\!-\!bM_L^-)\!=\!x_B^1$, proving (4). Since $\psi$ is MAIN, conditions (5-6) derive from Theorem \ref{thm:matr_MAIN}.
\end{proof}

The previous theorem allow us to restrict the number of possible LONG MAIN states. Indeed the possible matrix forms for states satisfying one of condition $M_{1-3}$ on the interval $I_1$ and satisfying conditions (1-7) are only the ones shown in the top rows of Table \ref{tab:MAIN_LONG_table_appendix}. These can be divided into: 
\begin{itemize}
 \item The first 5 matrices in Table \ref{tab:MAIN_LONG_table_appendix} correspond to the states satisfying $M_1$ in $I_1$
 \item The last 4 matrices in Table \ref{tab:MAIN_LONG_table_appendix} correspond to the states satisfying $M_2$ in $I_1$
 \item $\psi$ cannot satisfy $M_3$ in $I_1$ since conditions (1-7) lead to no possible matrix forms
\end{itemize}
Symmetry arguments lead to the obvious symmetrical conjugates for these states, and they complete the case where both units are ON at time $\beta\!=\!T\!R\!+\!T\!D$, and $a\!-\!bs_A^{2+}\!\geq\!\theta$ and $a\!-\!bs_B^{2+}\!\geq\!\theta$. 

\begin {table}[h]
\begin{tabular}{ |c|c|c|c|c|c|c|c|c| } 
 \hline
  \scriptsize $IL_1$ & \scriptsize $IL_2^*$ & \scriptsize $ASDL_1^*$ & \scriptsize $ASL^*$ & \scriptsize $SL^*$ & \scriptsize $IDL_1$ & \scriptsize $IDL_2^*$ & \scriptsize $ASDL_2^*$ & \scriptsize $SDL^*$ \\ \hline
 \ \parbox{1.24cm}{\scriptsize $\begin{array}{c} 
  111 111 \\
  111 111 \\
 \end{array}$}
 & \parbox{1.24cm}{\scriptsize $\begin{array}{c}
  111 110 \\
  111 110 \\
 \end{array}$}
 & \parbox{1.24cm}{\scriptsize $\begin{array}{c}
  111 010 \\
  111 110 \\
 \end{array}$}
 & \parbox{1.24cm}{\scriptsize $\begin{array}{c}
  111 000 \\
  111 110 \\
 \end{array}$}
 & \parbox{1.24cm}{\scriptsize $\begin{array}{c}
  111 000 \\
  111 000 \\
 \end{array}$}
 & \parbox{1.24cm}{\scriptsize $\begin{array}{c}
  111 011 \\
  011 111 \\
 \end{array}$}
 & \parbox{1.24cm}{\scriptsize $\begin{array}{c}
  111 010 \\
  011 110 \\
 \end{array}$}
 & \parbox{1.24cm}{\scriptsize $\begin{array}{c}
  111 000 \\
  011 110 \\
 \end{array}$}
 & \parbox{1.24cm}{\scriptsize $\begin{array}{c}
  111 000 \\
  011 000 \\
 \end{array}$}
 \\ \hline
 $\begin{aligned}[t] 
  D_7^-\!&\geq\!\theta
 \end{aligned}$ & 
 
 $\begin{aligned}[t]
  D_7^-\!&\geq\!\theta
 \end{aligned}$ & 

 $\begin{aligned}[t]
  D_7^-\!&<\!\theta \\
  D_5^-\!&\geq\!\theta \\
  D_3^-\!&\geq\!\theta
 \end{aligned}$ & 

 $\begin{aligned}[t] 
  D_3^-\!&\geq\!0 \\
  D_5^+\!&<\!\theta \\
  D_8^-\!&\geq\!\theta
 \end{aligned}$ &  
 
 $\begin{aligned}[t] 
  D_3^+\!&<\!\theta \\
  D_8^-\!&\geq\!\theta \\
 \end{aligned}$ & 
 
 $\begin{aligned}[t]
  D_3^-\!&\geq\!\theta \\ 
  D_7^-\!&<\!\theta \\ 
  D_5^-\!&\geq\!\theta \\ 
 \end{aligned}$ & 
 
 $\begin{aligned}[t] 
  D_3^-\!&\geq\!\theta \\
  C_7^-\!&<\!\theta \\
  D_5^-\!&\geq\!\theta
 \end{aligned}$ &  
 
 $\begin{aligned}[t] 
  D_3^-\!&\geq\!\theta \\ 
  D_5^-\!&\geq\!\theta \\ 
  D_8^-\!&<\!\theta \\ 
  D_2^-\!&\geq\!\theta
 \end{aligned}$ &  
 
 $\begin{aligned}[t] 
  D_4^-\!&\geq\!\theta \\ 
  D_8^-\!&<\!\theta \\ 
  D_2^-\!&\geq\!\theta \\
  D_3^+\!&<\!\theta 
 \end{aligned}$ 
 \\ \hline

 $\begin{aligned}[t] D_{10}\!&\geq\!\theta \\ \end{aligned}$ & 
 $\begin{aligned}[t] D_{10}\!&<\!\theta \\ C_{10}\!&\geq\!\theta \end{aligned}$ & 
 $\begin{aligned}[t] D_{10}\!&<\!\theta \\ C_{10}\!&\geq\!\theta \end{aligned}$ & 
 $\begin{aligned}[t] C_{10}\!&\geq\!\theta \\ \end{aligned}$ & 
 $\begin{aligned}[t] D_9\!&\geq\!\theta \\ \end{aligned}$ &
 $\begin{aligned}[t] D_{10}\!\geq\!\theta \\ \end{aligned}$ &
 $\begin{aligned}[t] D_{10}\!&<\!\theta \\ C_{10}\!&\geq\!\theta \end{aligned}$ & 
 $\begin{aligned}[t] C_{10}\!&\geq\!\theta \\ \end{aligned}$ & 
 $\begin{aligned}[t] D_{9}\!\geq\!\theta \\ \end{aligned}$ 
 \\ \hline
\end{tabular}
\caption {Matrix form and existence conditions of $2T\!R$-periodic LONG MAIN states (asymmetrical states in *).}
\label{tab:MAIN_LONG_table_appendix}
\end {table}

Next we prove the conditions for the MAIN LONG states shown in the middle row of Table \ref{tab:MAIN_LONG_table_appendix} using equations \ref{syn_values_MAIN_LONG}. For simplicity we write the following conditions using the analogous version of quantities \ref{eq1} in the case of LONG states.
\begin{equation} 
\label{eq2}
 \begin{split}
  D_2^{\pm} \!=\! a\!-\!bM_L^{\pm}\!+\!d, \quad 
  D_3^{\pm} \!=\! \!c\!-\!bN_L^{\pm}, \quad
  D_4^{\pm} \!=\! \!c\!-\!bM_L^{\pm}\!, \quad
  D_5^{\pm} \!=\! a\!-\!bN_L^{\pm}\!+\!d, \quad  \\
  D_6^{\pm} \!=\! a\!-\!bN_L^{\pm}\!+\!c, \quad
  D_7^{\pm} \!=\! \!d\!-\!bN_L^{\pm}\!, \quad
  D_8^{\pm} \!=\! \!d\!-\!bM_L^{\pm}\!, \quad
  D_9 \!=\! \!a-\!bM_L^{+} \quad  
  D_{10} \!=\!a\!-\!bN_L^{+}  
 \end{split}
\end{equation}

Next, we prove the existence conditions for each state separately. 

\begin{itemize}
 \item \textbf{$IL_1$} - This state satisfies conditions $M_1$ during both intervals $I_1$ and $I_2$. Due to the symmetry of the matrix form conditions $M_1$ are equal to conditions $M_2$. Since $w^2\!=\!1$ we have that $s_A^{1\pm}\!=\!s_B^{1\pm}\!=\!N_L^{\pm}$. From this, conditions $M_1$ on interval $I_1$ are $c\!-\!bN_L^-\!\geq\!\theta$ and $D_{7}^-\!=\!d\!-\!bN_L^-\!\geq\!\theta$. Since $c\!\geq\!d$, the condition $D_{7}^-\!\geq\!\theta$ is sufficient to imply $c\!-\!bN_L^-\!\geq\!\theta$. Since $y_A^1\!=\!y_B^1\!=\!1$, the identity $w^1\!=\!H(a\!-\!bN_L^+)\!=\!1$ implies $D_{10}\!\geq\!\theta$. 
 
 \item \textbf{$IL_2$} - Analogously to the previous case, this state satisfies conditions $M_1$ during both intervals $I_1$ and $I_2$. Since $w^1\!=\!1$, $w^2\!=\!0$ and $y_A^2\!=\!y_B^2\!=\!1$ we have $s_A^{1\pm}\!=\!s_B^{1\pm}\!=\!N^{\pm}$ and $s_A^{2\pm}\!=\!s_B^{2\pm}\!=\!N_L^{\pm}$. Since $c\!\geq\!d$ and $N_L^-\!\geq\!N^-$, conditions $M_1$ during both intervals $I_1$ and $I_2$ are simplified to obtain $D_7^-\!=\!d-bN_L^-\!\geq\!\theta$. In addition, $w^1\!=\!1$ and $w^2\!=\!0$ are equvalent to $D_{10}\!<\!\theta$ and $C_{10}\!\geq\!\theta$.
 
 \item \textbf{$ASDL_1$} - We notice that the same arguments used for $IL_2$ lead to $D_{10}\!<\!\theta$ and $C_{10}\!\geq\!\theta$, and to $s_A^{1\pm}\!=\!s_B^{1\pm}\!=\!N^{\pm}$ and $s_A^{2\pm}\!=\!s_B^{2\pm}\!=\!N_L^{\pm}$. This state satisfies conditions $M_1$ during interval $I_1$ and $M_3$ during interval $I_2$. The first set of conditions ($M_1$) lead to $C_7^-\!=\!d\!-\!bN^-\!\geq\!\theta$ (which implies also the second condition in $M_1$, ie $c\!-\!bN^+\!\geq\!\theta$). The second set of conditions ($M_3$) lead to $D_7^-\!=\!d\!-\!N_L^-\!<\!\theta$, $D_5^-\!=\!a\!+\!d\!-\!N_L^-\!\geq\!\theta$ and $D_3^-\!=\!c\!-\!N_L^-\!\geq\!\theta$.
 
 \item \textbf{$ASL$} - This state satisfies conditions $M_1$ during interval $I_1$ and $M_5$ during interval $I_2$. Since $w^1\!=\!1$ we have that $s_A^{2\pm}\!=\!s_B^{2\pm}\!=\!N_L^{\pm}$. Since $w^2\!=\!1$, $y_A^2\!=\!0$ and $y_B^2\!=\!1$ we have that $s_A^{1\pm}\!=\!N^{\pm}$ and $s_B^{1+}\!=\!M_L^{+}$. Conditions leading to $M_5$ during interval $I_2$ are $D_3^-\!=\!c\!-\!bN_L^-\!\geq\!0$ and $D_5^+\!=\!a\!-\!bN_L^+\!+\!d\!<\!\theta$. Conditions leading to $M_1$ during $I_1$ are $c\!-\!bN^-\!\geq\!\theta$, which is implied by $D_3^-\!\geq\!\theta$ (due to $N_L^-\!\geq\!N^-$) and $D_8^-\!=\!d\!-\!M_L^-\!\geq\!\theta$. Finally, as in case $IL_1$, $w^1\!=\!1$ implies $D_{10}\!=\!a\!-\!bN_L^-\!\geq\!\theta$ and $a\!-\!bM_L^-\!\geq\!\theta$. Since $N_L^-\!\geq\!M_L^-$ this second  condition derives from $D_{10}\!\geq\!\theta$ and it can therefore be excluded. Moreover we note that, since $y_A^2\!=\!y_A^2\!=0$, we must have $w^2\!=\!0$. Thus no other conditions are required. 
 
 \item \textbf{$SL$} - This state satisfies conditions $M_1$ during interval $I_1$ and $M_6$ during interval $I_2$. Given that $w^1\!=\!1$ we have $s_A^{2+}\!=\!s_B^{2+}\!=\!N_L^{+}$. Condition $M_6$ requires $D_3^+\!=\!c\!-\!bN_L^+\!<\!\theta$ (since it implies $d\!-\!bN_L^+\!<\!\theta$). Since $w^2\!=\!0$ and $y_A^2\!=\!y_B^2\!=\!0$ we have that $s_A^{1-}\!=\!s_B^{1-}\!=\!M_L^{-}$. Condition $M_1$ requires $D_8^-\!=\!d\!-\!bM_L^+\!\geq\!\theta$ (since it implies $c\!-\!bM_L^+\!\geq\!\theta$). Condition $D_9\!=\!d\!-\!bM_L^-\!\geq\!\theta$ guarantees that $w^1\!=\!1$. We note that, since $y_A^2\!=
\!y_A^2\!=0$, we must have $w^2\!=\!0$ with no extra conditions. 

 \item \textbf{$IDL_1$} - This state satisfies conditions $M_2$ during the interval $I_1$ and conditions $M_3$ during the interval $I_2$. Since this state is symmetrical $M_2$ and $M_3$ are give equal conditions. Analogously to the case $IL_1$ we have that $s_A^{1-}\!=\!s_B^{1-}\!=\!N_L^{-}$. Thus conditions for $M_2$ are $D_3^-\!=\!c\!-\!bN_L^-\!\geq\!\theta$, $D_7^-\!=\!d\!-\!bN_L^-\!<\!\theta$ and $D_5^-\!=\!a\!-\!bN_L^-\!+\!d\!\geq\!\theta$. Condition $w^1\!=\!1$ leads to $D_{10}=a\!-\!bN_L^+\!\geq\!\theta$. 
 
 \item \textbf{$IDL_2$} - Analogously to case $IL_2$ we obtain $s_A^{1\pm}\!=\!s_B^{1\pm}\!=\!N^{\pm}$ and $s_A^{2\pm}\!=\!s_B^{2\pm}\!=\!N_L^{\pm}$. This state ($IDL_2$) satisfies conditions $M_2$ on interval $I_1$ and $M_3$ on interval $I_2$. This leads to $D_3^-\!=\!c\!-\!bN_L^-\!\geq\!\theta$ (which implies $c\!-\!bN^-\!\geq\!\theta$), $C_7^-\!=\!d\!-\!bN^-\!<\!\theta$ (which implies $d\!-\!bN_L^-\!<\!\theta$) and $D_5^-\!=\!a\!+\!d\!-\!bN_L^-\!\geq\!\theta$ (which implies $d\!-\!bN^-\!<\!\theta$, hence $y_B^2\!=\!1$). Similar arguments to the ones shown in case $IL_2$ lead to $D_{10}\!<\!\theta$ and $C_{10}\!\geq\!\theta$
 
 \item \textbf{$ASDL_2$} - As in case $ASL$ we have that $s_A^{2\pm}\!=\!s_B^{2\pm}\!=\!N_L^{\pm}$, $s_A^{1-}\!=\!N^{-}$ and $s_B^{1-}\!=\!M_L^{-}$. This state satisfies conditions $M_2$ on interval $I_1$ and $M_5$ on interval $I_2$. For the same arguments as case $IDL_2$ we must have $D_3^-\!=\!c\!-\!bN_L^-\!\geq\!\theta$. Completing the conditions on $I_1$ requires $D_8^-\!=\!d\!-\!bM_L^+\!<\!\theta$ and $D_2^-\!=\!a\!-\!bM_L^-\!+\!d\!\geq\!\theta$. Completing the conditions on $I_2$ requires $D_5^-\!=\!a\!+\!d\!-\!bM_L^-\!\geq\!\theta$. As in case $ASL$ we also require $D_{10}\!\geq\!\theta$.  
 
 \item \textbf{$SDL$} - Analogously to case $SL$ we have $s_A^{2+}\!=\!s_B^{2+}\!=\!N_L^{+}$, $s_A^{1-}\!=\!s_B^{1-}\!=\!M_L^{-}$ and $D_9\!\geq\!\theta$. This state satisfies conditions $M_2$ during interval $I_1$ and $M_6$ during interval $I_2$. As shown in $SL$, conditions $M_6$ on interval $I_2$ implies $D_3^+\!<\!\theta$. Instead, conditions $M_3$ on interval $I_1$ are $D_4^-\!=\!c\!-\!bM_L^-\!\geq\!\theta$, $D_8^-\!=\!d\!-\!bM_L^-\!<\!\theta$ and $D_2^-\!=\!a\!-\!bM_L^-\!+\!d\!\geq\!\theta$.	
\end{itemize}
This conlcudes the proof of the existence conditions for all the LONG MAIN states shown in Table \ref{tab:MAIN_LONG_table_appendix}.

\subsection{Analysis of $2T\!R$-periodic $LM|SC$,  $LC|SC$, $LC|LC$ and $LC|SM$ states} \label{appendix3}
As shown in the Section \ref{sec_2TR_states}, $2T\!R$-periodic states can be SHORT MAIN ($SM$), SHORT CONNECT ($SC$), LONG MAIN ($LM$) or LONG CONNECT ($LC$) during each interval $I_1$ and $I_2$. We define $X|Y$ the set of states satisfying condition X during $I_1$ and Y during $I_2$, where $X,Y \in \{ SM,SC,LM,LC \}$. In Section \ref{sec_2TR_states} we have  the existence conditions of all possible states in some of these sets. More precisely:
\begin{itemize}
 \item The analysis of $SM|SM$ is summarised in Table \ref{tab:MAIN_table}
 \item The analysis of $SC|SM$, $SM|SC$ and $SC|SC$ is summarized in Table \ref{tab:CONNECT_table_appendix}
 \item The analysis of $LM|LM$, $SM|LM$ and $LM|SM$ is summarized in Table \ref{tab:MAIN_LONG_table_appendix}
\end{itemize}
In this section we study the remaining combinations of $X|Y$ sets. For all such sets at least one between $X$ and $Y$ are of the LONG type (ie $LC$ or $LM$). Due to the model's symmetry, we can limit our analysis to the sets where $X$ is LONG, i.e. for LONG states during $I_1$ ($LC|Z$ and $LM|Z$, where $Z \in \{ SM,SC,LM,LC \}$). Indeed, states $Z|LC$ and $Z|LM$ can be obtained respectively from states in $LC|Z$ and $LM|Z$ and by applying the symmetry principles. 

The next theorem shows that the matrix form for these states allow us to determining all states that can exist in the parameter space. Indeed the entries of these matrices must satisfy properties (1-6) below. 

\begin{theorem} [Conditions for LONG states in $I_1$] \label{remainig_thm}
Any LONG state in $I_1$ satisfies:
\begin{enumerate}
 \item If $w^2\!=\!0 \Rightarrow x_A^2\!\leq\!x_B^1, x_B^2\!\leq\!x_A^1, y_A^2\!\leq\!y_B^1$ and $y_B^2\!\leq\!y_A^1$ 
 \item If $w^2\!=\!0, y_A^2\!=\!y_B^2\!=\!1 \Rightarrow x_A^1\!\geq\!x_B^1$
 \item If $w^2\!=\!1 \Rightarrow x_A^1\!\geq\!x_B^1$
 \item If $w^2\!=\!1$ and $x_A^2\!=\!1$ or $x_B^2\!=\!1 \Rightarrow x_A^1\!\geq\!x_B^2, x_B^1\!\geq\!x_A^2, y_A^1\!\geq\!y_B^2$ and $y_B^1\!\geq\!y_A^2$
 \item If $w^2\!=\!1$ and $x_A^1\!=\!1$ or $x_B^1\!=\!1 \Rightarrow x_A^2\!\geq\!x_B^1, x_B^2\!\geq\!x_A^1, y_A^2\!\geq\!y_B^1$ and $y_B^2\!\geq\!y_A^1$
 \item If $V_2$ has all zero entries $\Rightarrow x_A^1\!\geq\!x_B^1$ 
\end{enumerate}
\end{theorem}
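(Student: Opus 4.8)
The plan is to exploit the fact that being LONG in $I_1$ forces $w^1 = 1$, which through identities \ref{syn_values_MAIN_LONG} pins down most of the synaptic values appearing in the matrix-form entries. First I would apply Lemma~\ref{lem:LONG_conditions} with $R = I_1 = [0, T\!D]$: both units are then ON at $T\!D$ and $a - b s_A(T\!D - D) \geq \theta$, $a - b s_B(T\!D - D) \geq \theta$, which is exactly the statement $w^1 = 1$ (for a state that is CONNECT rather than MAIN on $I_1$ one first passes to the extended matrix form of Remark~\ref{link_main_connect_matr}, after which the computation below is unchanged). Substituting $w^1 = 1$ into \ref{syn_values_MAIN_LONG} kills every summand carrying a factor $(1-w^1)$: taking $(i,j) = (2,1)$ leaves $s_A^{2\pm} = s_B^{2\pm} = N_L^{\pm}$, and taking $(i,j) = (1,2)$ leaves $s_A^{1\pm} = w^2 N_L^{\pm} + (1-w^2) y_A^2 N^{\pm} + (1-w^2)(1-y_A^2) M_L^{\pm}$, with the analogous expression for $s_B^{1\pm}$ (replace $y_A^2$ by $y_B^2$). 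Recalling from \ref{f_g} that on the A-tone interval $I_1$ one has $f(s) = c-bs$, $g(s) = d-bs$ while these roles swap on the B-tone interval $I_2$, the entries of $V_1$ and $V_2$ in \ref{matricial_representation_MAIN} become explicit threshold functions of these four synaptic quantities.

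Next I would split on $w^2$. If $w^2 = 1$, the formula collapses to $s_A^{1\pm} = s_B^{1\pm} = s_A^{2\pm} = s_B^{2\pm} = N_L^{\pm}$; feeding this into \ref{matricial_representation_MAIN} with the swapped roles of $c$ and $d$ between $I_1$ and $I_2$ produces the exact equalities $x_A^1 = x_B^2$, $x_B^1 = x_A^2$, $y_A^1 = y_B^2$, $y_B^1 = y_A^2$, plus $x_A^1 = H(c - bN_L^-) \geq H(d - bN_L^-) = x_B^1$ from $c \geq d$, which give properties (3), (4), (5) at once (the extra hypotheses in (4) and (5) being automatic or vacuous once the equalities are in hand). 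If $w^2 = 0$, the formula shows $s_A^{1-} \in \{N^-, M_L^-\}$ according to whether $y_A^2$ is $1$ or $0$, and likewise $s_B^{1-} \in \{N^-, M_L^-\}$ according to $y_B^2$; since $N^- \leq N_L^-$ and $M_L^- \leq N_L^-$ one always has $s_A^{1-} \leq N_L^- = s_A^{2-}$ and $s_B^{1-} \leq N_L^- = s_B^{2-}$, so monotonicity of $H$ gives $x_B^1 \geq x_A^2$ and $x_A^1 \geq x_B^2$, and propagating these bounds through the definitions of the $y$-entries (using $a \geq 0$) gives $y_B^1 \geq y_A^2$ and $y_A^1 \geq y_B^2$ — this is property (1). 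Property (2) is the sub-case $y_A^2 = y_B^2 = 1$, where $s_A^{1-} = s_B^{1-} = N^-$, and property (6) the sub-case $V_2 \equiv 0$ (forcing $w^2 = 0$, $y_A^2 = y_B^2 = 0$, hence $s_A^{1-} = s_B^{1-} = M_L^-$); in both, $x_A^1 = H(c - b s_B^{1-}) \geq H(d - b s_A^{1-}) = x_B^1$ because $c \geq d$ and $s_A^{1-} = s_B^{1-}$.

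I expect the main obstacle to be the comparison of $y$-entries across the two intervals, because each $y$-entry is defined by a two-sided rule — it is $1$ if a linear form evaluated at the $\alpha$-synaptic value is $\geq \theta$ and $0$ if the same form evaluated at the $\beta$-synaptic value is $< \theta$ — so I must use the well-definedness result of Theorem~\ref{thm:matr_MAIN} (via the monotonicity inequalities \ref{f_g_monoto}--\ref{yA-x_A}) to know that exactly one branch fires and that the monotone chain I build does not straddle a boundary where the dichotomy flips. For example, to obtain $y_B^1 \geq y_A^2$ when $y_A^2 = 1$, I would start from $a x_B^2 + d - b s_B^{2-} \geq \theta$ and combine $x_A^1 \geq x_B^2$ with $s_A^{1-} \leq s_B^{2-} = N_L^-$ to conclude $a x_A^1 + d - b s_A^{1-} \geq \theta$, i.e. $y_B^1 = 1$. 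Once the synaptic quantity governing each entry in each case is tabulated, everything else is routine monotonicity bookkeeping using only $N_L^{\pm} \geq N^{\pm}$, $M_L^{\pm} \geq M^{\pm}$, $N_L^- \geq N_L^+ \geq M_L^- \geq M_L^+$, $c \geq d$ and $a \geq 0$ — the same toolkit as in the proof of equations \ref{syn_values_MAIN_SHORT} and conditions 1--4 of Theorem~\ref{MAIN_2TR} — and requires no new idea.
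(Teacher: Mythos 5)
There is a genuine gap, and it sits at the very first step. You substitute $w^1=1$ into the identities \ref{syn_values_MAIN_LONG} and conclude $s_A^{2\pm}=s_B^{2\pm}=N_L^{\pm}$ exactly, and likewise exact values for $s^{1\pm}$ in each sub-case. But \ref{syn_values_MAIN_LONG} is proved only for states in $LM$, i.e.\ LONG \emph{and MAIN} in the relevant interval, where turn-ons occur at the start of the active tone interval. Theorem~\ref{remainig_thm} is stated for \emph{any} LONG state in $I_1$, and the section it serves ($LC|SC$, $LC|LC$, $LC|SM$, \dots) is precisely about LONG CONNECT states, for which Lemma~\ref{lem:LONG_conditions} only gives a turn-off at $t^*+D$ for some $t^*\in[0,T\!D]$ that need not be $0$. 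The correct statement is $s_A^{2-}=s_B^{2-}=e^{-(T\!R-t^*-2D)/\tau_i}\geq N_L^-$, with equality only when $t^*=0$; the paper's proof works directly with the decay dynamics, keeps $t^*$ (and the analogous $s^*$ for $I_2$) free, and extracts only the inequality.

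This matters most for your treatment of properties (3)--(5). Under $w^2=1$ alone you claim $s^{1\pm}=s^{2\pm}=N_L^{\pm}$ and hence \emph{equalities} $x_A^1=x_B^2$, $x_B^1=x_A^2$, $y_A^1=y_B^2$, $y_B^1=y_A^2$, declaring the extra hypotheses of (4) and (5) ``automatic or vacuous.'' They are neither: for a state that is LONG CONNECT in both intervals one has $s^{1-}=e^{-(T\!R-s^*-2D)/\tau_i}$ and $s^{2-}=e^{-(T\!R-t^*-2D)/\tau_i}$ with $t^*,s^*$ a priori unrelated, so the two sides cannot be compared. The hypothesis ``$x_A^2=1$ or $x_B^2=1$'' in (4) is exactly what forces $s^*=0$, hence $s^{1-}=N_L^-\leq s^{2-}$ and the stated one-directional inequalities (and symmetrically for (5), which forces $t^*=0$); note that (4) and (5) assert opposite inequalities, which is only consistent because they hold under different hypotheses --- your version would collapse them into equalities with no hypothesis at all. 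Your arguments for (1), (2) and (6) reach the right conclusions, but only by accident: the exact values you quote ($N_L^-$ in (1), $M_L^-$ in (6)) are wrong for CONNECT states, and what saves you is that the true values satisfy $s^{2-}\geq N_L^-$ and $s_A^{1-}=s_B^{1-}$, which happen to be all those steps actually use. To repair the proof you must abandon \ref{syn_values_MAIN_LONG} for the LONG-in-$I_1$ interval and argue, as the paper does, from Lemma~\ref{lem:LONG_conditions} and the explicit exponential decay starting at $t^*+2D$.
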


\begin{proof}
Due to Lemma \ref{lem:LONG_conditions} for any LONG state in $I_1$ both units turn are ON at time $T\!D$, and turn OFF at time $t^*\!+\!D$, for some $t^* \in [0,T\!D]$. Consequently both delayed synaptic variables exponentially decay during the interval $I_2$ starting from $t^*\!+\!2D$. This leads to $s_A^{2-}\!=\!s_B^{2-}\!=\!e^{-(T\!R\!-t^*\!-\!2D)/\tau_i}$. We notice that, since $t^*\!\geq\!0$ we have 
\begin{equation}
s_A^{2-}\!=\!s_B^{2-}\!\geq\!N_L^-
\label{long_I1_cond1}
\end{equation}
If $w^2\!=\!0$ (the hypothesis in 1.)\ the state is SHORT in $I_2$ (both units turn/are OFF at time $T\!D$). This means we can apply identies \ref{syn_values_MAIN_LONG} on the interval $I_1$ and obtain 
\begin{equation}
s_A^{1-}\!=\!s_B^{1-}\!\leq\!N^-
\label{long_I1_cond2}
\end{equation}
Inequalities \ref{long_I1_cond1} and \ref{long_I1_cond2} thus imply $s_A^{1-}\!\leq\!s_B^{2-}$ and $s_B^{1-}\!\leq\!s_A^{2-}$. By definition $x_A^1\!=\!H(c\!-\!bs_B^{1-})$ and $x_B^2\!=\!H(c\!-\!bs_A^{2-})$. Thus we have $x_B^2\!\leq\!x_A^1$ (analogously we have $x_A^2\!\leq\!x_B^1$). Moreover, $y_B^2\!=\!H(ax_A^2\!+\!c\!-\!bs_A^{2-})\leq\!H(ax_B^1\!+\!c\!-\!bs_B^{1-})\!=\!y_A^1$ (and $y_A^2\!\leq\!y_B^1$), proving 1. 

One of the hypothesis of 2. is $w^2\!=\!0$. Thus we can apply identies \ref{syn_values_MAIN_LONG} analogously to the previous case. Since $y_A^2\!=\!y_B^2\!=\!1$, these identies lead to $s_A^{1-}\!=\!s_B^{1-}\!=\!N^-$. Condition $c\!\geq\!d$ guarantees that $x_A^1\!=\!H(c\!-\!bN^-)\!\geq\!H(d\!-\!bN^-)=x_B^1$, thus proving 2. 

We proceed by proving 3. Condition $w^2\!=\!1$ guarantees the corresponding states to be LONG in $I_2$. Due to the $2T\!R$ periodicity we have $s_A^{1-}\!=\!s_B^{1-}\!=\!e^{-(T\!R\!-\!s^*\!-\!2D)/\tau_i}$, for some $s^* \in [0,T\!D]$. This and $d\!\leq\!c$ imply $x_A^1\!=\!H(c\!-\!bs_B^{1-})\!\geq\!H(d\!-\!bs_A^{1-})\!=\!x_B^1$, which proves 3. 

Assuming the hypothesis of 4 (5) at least one unit turns ON at time $T\!D$ ($0$). Lemma \ref{lem:LONG_conditions} thus implies $s^*\!=\!0$ ($t^*\!=\!0$). Therefore we have that $s_A^{1-}\!=\!s_B^{1-}\!=\!N_L^-$ ($s_A^{2-}\!=\!s_B^{2-}\!=\!N_L^-$), which implies $s_A^{1-}\!\leq\!s_B^{2-}$ and $s_B^{1-}\!\leq\!s_A^{2-}$ ($s_A^{2-}\!\leq\!s_B^{1-}$ and $s_B^{2-}\!\leq\!s_A^{1-}$). Using a proof similar to 1 we conclude 4 (5). 

Assuming the hypothesis of 6. both units are OFF in $I_2$. Therefore, both delayed synaptic variables decay monotonically starting from time $t\!=\!t^*\!+\!2D$ until time $t\!=\!2T\!R$. For the $2T\!R$ periodicity we thus have $s_A^{1-}\!=\!s_B^{1-}\!=\!e^{-(2T\!R\!-\!t^*\!-\!2D)/\tau_i}$. This, $d\!\leq\!c$ and the definition of $x_A^1$ and $x_B^1$ yield 6. 
\end{proof}

We applied Theorem \ref{remainig_thm} to investigate the possible combinations of states in all remaining sets $LC|Z$ and $LM|Z$, where $Z \in \{ SM,SC,LM,LC \}$. We subdivide this analysis in the following cases.

\textbf{Sets} $\mathbf{LM|SC}$ \textbf{and} $\mathbf{LM|LC}$ - Any state $\psi$ in either of these two sets is LONG and MAIN in $I_1$, and CONNECT in $I_2$. The LONG condition in $I_1$ implies that (a) both units are ON at time $\beta\!=\!T\!D$, and (b) $a\!-\!bs_A^{1+}\!\geq\!\theta$ and $a\!-\!bs_B^{1+}\!\geq\!\theta$.\ Condition (a) implies that $V_1$ must satisfy one of $M_{1-3}$ during the interval $I_1$.\ From (b) we obtain $w^1\!=\!1$. As shown in the proof of property 5.\ above, we have that $s_B^{2 \pm}\!=\!s_A^{2 \pm}\!=\!N_L^{\pm}$.\ The CONNECT condition in $I_2$ implies that $\psi$ must satisfy one of conditions $C_{1-5}$.\ However, since $d\!\leq\!c$, we must have $x_A^2\!=\!H(d\!-\!bN_L^-)\!\leq\!H(c\!-\!bN_L^-)\!\leq\!x_B^2$ and $z_A^2\!=\!H(a\!+\!d\!-\!bN_L^-)\!\leq\!H(a\!+\!c\!-\!bN_L^-)\!\leq\!z_B^2$. This excluded conditions the states satifying conditions $C_1$ and $C_4$ in $I_2$. Property 2.\ above guarantees that $LM|SC$ states satisfying condition $M_3$ in $I_1$ and $C_2$ or $C_5$ in $I_2$ cannot exist. The remaining set of $LM|SC$ states can exist in the parameter space and their name and matrix are given in in the first two rows of Table \ref{tab:LM_SC_table_main}. We numerically verified their existence by finding a parameter set for which they are stable using linear programming on their sets of existing conditions and by simulating their dynamics. For states in $LM|LC$ we notice that, since they are LONG in $I_2$, they cannot satisfy condition $C_3$ in this interval (both units would otherwise be OFF at time $T\!R\!+\!T\!D$). Due to properties 3.\ and 5.\ above none of remaining states (the ones satisfying conditions $C_2$ and $C_5$) can exist. Therefore, no $LM|LC$ state can exist. 
 
\begin {table}[h]
\begin{tabular}{ |c|c|c|c|c|c|c| } 
 \hline
  \footnotesize $AScIL$ & \footnotesize $ScASL^*$ & \footnotesize $ScIL^*$ & \footnotesize $AScIDL^*$ & \footnotesize $ScASDL^*$ & \footnotesize $ScIDL$ & \footnotesize $ScASDL_2^*$ \\ \hline
 \ \parbox{1.7cm}{\footnotesize $\begin{array}{c} 
  1111 0010 \\
  1111 1110 \\
 \end{array}$}
 & \parbox{1.7cm}{\footnotesize $\begin{array}{c}
  1111 0000 \\
  1111 0010 \\
 \end{array}$}
 & \parbox{1.7cm}{\footnotesize $\begin{array}{c}
  1111 0010 \\
  1111 0010 \\
 \end{array}$}
 & \parbox{1.7cm}{\footnotesize $\begin{array}{c} 
  1111 0010 \\
  0111 1110 \\
 \end{array}$}
 & \parbox{1.7cm}{\footnotesize $\begin{array}{c}
  1111 0000 \\
  0111 0010 \\
 \end{array}$}
 & \parbox{1.7cm}{\footnotesize $\begin{array}{c}
  1111 0010 \\
  0111 0010 \\
 \end{array}$}
 & \parbox{1.7cm}{\footnotesize $\begin{array}{c}
  011 0000 \\
  111 0010 \\
 \end{array}$}
 
 \\ \hline
 $\begin{aligned}[t] 
  D_3^-\!&\geq\!\theta \\
  C_7^-\!&\geq\!\theta \\
  D_5^-\!&<\!\theta \\
  D_2^+\!&\geq\!\theta
 \end{aligned}$ & 
 
 $\begin{aligned}[t]
  C_3^-\!&\geq\!\theta \\
  D_5^+\!&<\!\theta \\
  D_3^-\!&<\!\theta \\
  D_3^+\!&\geq\!\theta \\
  D_8^-\!&\geq\!\theta
 \end{aligned}$ & 

 $\begin{aligned}[t]
  C_3^-\!&\geq\!\theta \\
  D_5^+\!&\geq\!\theta \\
  D_3^-\!&<\!\theta \\
  D_3^+\!&\geq\!\theta \\
  D_7^-\!&\geq\!\theta
 \end{aligned}$ & 

 $\begin{aligned}[t] 
  D_3^-\!&\geq\!\theta \\
  D_5^-\!&<\!\theta \\
  D_5^+\!&\geq\!\theta \\
  C_5^-\!&\geq\!\theta \\
  D_7^-\!&<\!\theta
 \end{aligned}$ &  
 
 $\begin{aligned}[t] 
  C_3^-\!&\geq\!\theta \\
  D_5^+\!&<\!\theta \\
  D_3^-\!&<\!\theta \\
  D_3^+\!&\geq\!\theta \\
  D_8^-\!&<\!\theta \\
  D_2^-\!&\geq\!\theta 
 \end{aligned}$ & 
 
 $\begin{aligned}[t]
  C_3^-\!&\geq\!\theta \\
  D_5^+\!&\geq\!\theta \\
  D_3^-\!&<\!\theta \\
  D_3^+\!&\geq\!\theta \\
  D_7^-\!&<\!\theta \\
  C_5^-\!&\geq\!\theta 
 \end{aligned}$ & 
 
 $\begin{aligned}[t] 
  C_3^-\!&<\!\theta \\
  C_6^-\!&\geq\!\theta \\
  D_3^+\!&\geq\!\theta \\
  D_5^+\!&<\!\theta \\
  D_8^-\!&\geq\!\theta 
 \end{aligned}$ 
 \\ \hline

 $\begin{aligned}[t] D_{10}\!&<\!\theta \\ C_{10}\!&\geq\!\theta \end{aligned}$ & 
 $\begin{aligned}[t] C_{10}\!&\geq\!\theta \end{aligned}$ & 
 $\begin{aligned}[t] D_{10}\!&<\!\theta \\ C_{10}\!&\geq\!\theta \end{aligned}$ & 
 $\begin{aligned}[t] D_{10}\!&<\!\theta \\ C_{10}\!&\geq\!\theta \end{aligned}$ & 
 $\begin{aligned}[t] C_{10}\!&\geq\!\theta \end{aligned}$ & 
 $\begin{aligned}[t] D_{10}\!&<\!\theta \\ C_{10}\!&\geq\!\theta \end{aligned}$ & 
 $\begin{aligned}[t] C_{10}\!&\geq\!\theta \end{aligned}$
 \\ \hline
\end{tabular}
\caption {Matrix form and existence conditions of $2T\!R$-periodic $LM|SC$ states (asymmetrical states in *).}
\label{tab:LM_SC_table_main}
\end {table}

\textbf{Sets} $\mathbf{LC|SC}$, $\mathbf{LC|LC}$ \textbf{and} $\mathbf{LC|SM}$ - Any state in either of these two sets is LONG and CONNECT in $I_1$. The LONG condition implies that both units are ON at time $\beta\!=\!T\!D$, thus excluding CONNECT conditions $C_3$ or $C_4$ in $I_1$. Furthermore, as shown in the case of $LM|LC$ (previous case), this LONG condition also excludes CONNECT conditions $C_1$ and $C_4$ for $LC|SC$ and $LC|LC$ states in $I_2$, and conditions $M_2$ and $M_4$ for $LC|SM$ states in $I_2$. For states in $LC|LC$ we notice that, since they are LONG in $I_2$, they cannot satisfy condition $C_3$ in this interval (for an analogue reason of case $LM|LC$). Of the remaining states, the ones described by the following matrix forms cannot exist:
 \begin{equation}
	\begin{matrix} [cc]
		1111 & 0000 \\
		0011 & 0010
	\end{matrix}
	\quad \quad \mbox{and} \quad \quad
	\begin{matrix} [cc]
		1111 & 0000 \\
		0011 & 1110
	\end{matrix}
	\nonumber
 \end{equation}
 Indeed entries $w^1\!=\!1$ and $y_B^1\!=\!0$ imply respectively $a\!-\!bN^+\!\geq\!\theta$ and $a\!-\!M_L^-\!+\!d\!<\!\theta$. These two conditions imply $d\!<\!be^{(D\!-\!T\!R)/\tau_i}(e^{(D\!-\!T\!R)/\tau_i}\!-\!1)\!<\!0$. This is absurd since by hypothesis we must have $T\!R\!>\!D$ and $d\!\geq\!0$. Finally the application of properties 1-6 above reduces the number of possible states.  The remaining set of $LM|SC$ states can exist in the parameter space and their name and matrix are given in the first two rows of Table \ref{tab:remaining_table_main}. 
 
  \begin {table}[h]
\begin{tabular}{ |c|c|c|c|c|c|c|} 
 \hline
  \footnotesize $ScASDL_3^*$ & \footnotesize $APcIDL^*$ & \footnotesize $ScSDL^*$ & \footnotesize $APcIL$ & \footnotesize $ScASDL_4^*$ & \footnotesize $ScASL_2^*$ & \footnotesize $ZcIL^*$ \\ \hline
 \ \parbox{1.7cm}{\footnotesize $\begin{array}{c} 
  1111 0010 \\
  0011 0010 \\
 \end{array}$}
 & \parbox{1.7cm}{\footnotesize $\begin{array}{c}
  1111 0010 \\
  0011 1110 \\
 \end{array}$}
 & \parbox{1.7cm}{\footnotesize $\begin{array}{c}
  1111 0000 \\
  0011 0000 \\
 \end{array}$}
 & \parbox{1.7cm}{\footnotesize $\begin{array}{c} 
  1111 0011 \\
  0011 1111 \\
 \end{array}$}
 & \parbox{1.7cm}{\footnotesize $\begin{array}{c}
  0011 0000 \\
  1111 0010 \\
 \end{array}$}
 & \parbox{1.7cm}{\footnotesize $\begin{array}{c}
  0011 0000 \\
  0011 0010 \\
 \end{array}$}
 & \parbox{1.7cm}{\footnotesize $\begin{array}{c}
  0011 0010 \\
  0011 0010 \\
 \end{array}$} 
 
 \\ \hline
 $\begin{aligned}[t] 
  C_3^-\!&\geq\!\theta \\
  D_3^-\!&<\!\theta \\
  D_3^+\!&\geq\!\theta \\
  D_5^-\!&<\!\theta \\
  D_5^+\!&\geq\!\theta
 \end{aligned}$ & 
 
 $\begin{aligned}[t]
  D_3^-\!&\geq\!\theta \\
  C_5^-\!&<\!\theta \\
  D_5^+\!&\geq\!\theta 
 \end{aligned}$ & 

 $\begin{aligned}[t]
  D_3^+\!&<\!\theta \\
  D_4^-\!&\geq\!\theta \\
  D_2^-\!&<\!\theta \\
  D_2^+\!&\geq\!\theta 
 \end{aligned}$ & 

 $\begin{aligned}[t] 
  D_3^-\!&\geq\!\theta \\
  D_5^-\!&<\!\theta \\
  D_5^+\!&\geq\!\theta 
 \end{aligned}$ &  
 
 $\begin{aligned}[t] 
  C_6^-\!&<\!\theta \\
  D_6^+\!&\geq\!\theta \\
  D_5^+\!&<\!\theta \\
  D_8^-\!&\geq\!\theta \\
 \end{aligned}$ & 
 
 \parbox{1.7cm}{ See \ref{ScASL_2}} &
 
 \parbox{1.7cm}{ See \ref{ZcIL}} 
 \\ \hline

 $\begin{aligned}[t] D_{10}\!&<\!\theta \\ C_{10}\!&\geq\!\theta \end{aligned}$ & 
 $\begin{aligned}[t] D_{10}\!&<\!\theta \\ C_{10}\!&\geq\!\theta \end{aligned}$ & 
 $\begin{aligned}[t] D_{9}\!&\geq\!\theta \end{aligned}$ & 
 $\begin{aligned}[t] D_{10}\!&\geq\!\theta \end{aligned}$ & 
 $\begin{aligned}[t] C_{10}\!&\geq\!\theta \end{aligned}$ & 
  & 
 
 \\ \hline
\end{tabular}
\caption {Matrix form and existence conditions of $2T\!R$-periodic $LC|SC$, $LC|LC$ and $LC|SM$ states (asymmetrical states in *).}
\label{tab:remaining_table_main}
\end {table}

 The last two rows of Tables \ref{tab:LM_SC_table_main} and \ref{tab:remaining_table_main} show the conditions of existence of the corresponding $LM|SC$,  $LC|SC$, $LC|LC$ and $LC|SM$ states. Determining these is straightforward in most cases. Indeed, it requires using formulas \ref{syn_values_CONNECT_SHORT} and \ref{syn_values_MAIN_LONG} on the definition of the entries of each matrix form, and application of simplifications, analogously to the previous considered cases, except for $ScASL_2$ and $ZcIL$ (see Table \ref{tab:remaining_table_main}). These two need special attention, because they satisfy property $C_5$ in $I_1$, we cannot apply the formulas \ref{syn_values_CONNECT_SHORT} and \ref{syn_values_MAIN_LONG}. For $ZcIL$ the A unit turns ON before the B units in $I_1$ ($t^*<s^*$), because both synaptic variables $s_A$ and $s_B$ evolve equally during in this interval (on the fast time scale) and the total input to the A unit is greater than the one to the B unit at time $t^*$ , i.e.\ $c\!-\!bs_B(t^*)\!\leq\!d\!-\!bs_A(t^*)$. For $ScASL_2$ the two synaptic variables evolve differently on $I_1$, which may lead to $t^*\!<\!s^*$ or $t^*\!\geq\!s^*$. Later we will show that case $t^*\!\geq\!s^*$ cannot exist. Lastly there are three degenerate states that exist only under $\tau\!=\!0$, which cannot be numerically simulated. These states conclude all set of existing $2T\!R$-periodic states in the system under the case $T\!R\!\leq\!T\!D\!+\!D$ and $D\!\geq\!T\!D$. 

We proceed by describing the existence conditions for $ScASL_2$ for $t^*\!<\!s^*$ and state $ZcIL$. 
\begin{itemize}
 \item Case $ScASL_2$ for $t^*\!<\!s^*$ - This state satisfies conditions $C_5$ on $I_1$ and $C_3$ on $I_2$. Since $y_B^2\!=\!1$ and $w^2\!=\!0$ the B unit turns OFF at time $T\!R\!+\!T\!D$. Due to Lemma \ref{lem:LONG_conditions} the synaptic variable $s_B(t)$ exponentially decays starting from time $T\!R\!+\!T\!D\!+\!D$ and due to the $2\!T\!R$-periodicity we must have $s_B(t)\!=\!e^{-(T\!R\!+\!t\!-\!T\!D\!-\!D)/\tau_i}$, for $t \in [0,T\!D]$. From this we obtain $s_B(0)\!=\!N^-$ and $s_B(T\!D)\!=\!N^+$. Condition $C_5$ on $I_1$ with $t^*<s^*$ requires $C_3^-\!=\!c\!-\!bN^-\!<\!\theta$ and $c\!-\!bN^+\!\geq\!\theta$. The turning ON time for the A unit in $I_1$ is therefore given by 
 $$ t^* = s_B^{-1} ((c-\theta)/b)) = T\!R-T\!D-D-\tau_i \log((c-b)/\theta). $$ 
 From Lemma \ref{lem:LONG_conditions} and from $t^*\!<\!s^*$ we obtain that both units instantaneously turn OFF at time $t^*\!-\!2D$. Thus the synaptic variable $s_A(t)$ and $s_B(t)$ exponentially decay following the same dynamics on the slow time scale starting from time $t^*\!+\!2D$. This leads to $s_A(t)\!=\!s_B(t)\!=\!e^{-(t\!-\!t^*\!-\!2D)/\tau_i}$, for $t \in [T\!R,T\!R\!+\!T\!D]$. Moreover, since the A unit is OFF in $I_2$ and due to the $2\!T\!R$-periodicity we have $s_A(t)\!=\!e^{-(2T\!R\!+\!t\!-\!t^*\!-\!2D)/\tau_i}$, for $t \in [0,T\!D]$. These properties yield $s_A^{2+}\!=\!s_A(T\!R\!+\!T\!D)\!=\!e^{(D\!-\!2T\!D)/\tau_i}(c\!-\!\theta)/b$ and $s_A^{1+}\!=\!s_A(T\!D)\!=\!e^{(D\!-\!2T\!D\!-\!T\!R)/\tau_i}(c\!-\!\theta)/b$. To complete the conditions $C_5$ on $I_1$ we need to guarantee that the B unit turns ON at some time $s^*\!\geq\!t^* \in [0,T\!D]$. These are equivalent to $d\!-\!bs_A(t^*)\!\!<\!\theta$ and $a\!-\!bs_A^{1+}\!+d\!\geq\!\theta$, which can respectively be rewritten as $d\!-\!be^{2(D\!-\!T\!R)/\tau_i}\!<\!\theta$ and $a\!-\!Lc\!+\!d\!\geq\!(1\!-\!L)\theta$, where $L\!=\!e^{(D\!-\!2T\!D\!-\!T\!R)/\tau_i}$. Condition $C_3$ on $I_2$ requires $c\!-\!bs_A^{2-}\!<\!\theta$ and $c\!-\!bs_A^{2+}\!\geq\!\theta$. This first of these conditions is not necessary, since it is implied by the already existing condition $C_3^-\!<\!\theta$ (since $s_A^{2+}\!\geq\!N^-$). The second is equivalent to $(c\!-\!\theta)(1\!-\!K)\!\geq\!0$, with $K\!=\!e^{(D\!-\!2T\!D)/\tau_i}$, which occurs if and only if $D\!\leq\!2T\!D$ (since $c\!\geq\!\theta$). To complete condition $C_3$ we need to guarantee that the A unit stays OFF in $I_2$, ie that $a\!-\!bs_B^{2+}\!+d\!<\!\theta$, which is equivalent to $a\!-\!Kc\!+\!d\!<\!(1\!-\!K)\theta$. Finally, the last condition derives from $w^1\!=\!1$ ($w^2\!=\!0$ is authomatically guaranteed since the A is OFF at time $T\!R\!+\!T\!D$), ie $C_{10}\!=\!a\!-\!bN^+\!\geq\!\theta$. This guarantees also $a\!-\!bs_A^{1+}\!=\!a\!-\!bM_L^+\!\geq\!\theta$, since $M_L^+\!\leq\!N^+$. Thus in summary the list of conditions for this state is:
 \begin{equation}
  \begin{aligned}[t]
   C_3^-\!&<\!\theta \\
   C_{10}\!&\geq\!\theta \\
   D\!&\leq\!2T\!D \\
   d\!&<\!\theta\!-\!be^{2(D\!-\!T\!R)/\tau_i} \\
   a\!-\!Lc\!+\!d\!&\geq\!(1\!-\!L)\theta \\
   a\!-\!Kc\!+\!d\!&<\!(1\!-\!K)\theta
  \end{aligned}
 \label{ScASL_2}
 \end{equation}
 \item Case $ZcIL$. This state satisfies conditions $C_5$ during both intervals $I_1$ and $I_2$. Both units turn OFF at time $T\!R\!+\!T\!D$. Lemma \ref{lem:LONG_conditions} implies that both synaptic variables exponentially decay starting from time $T\!R\!+\!T\!D\!+\!D$ and due to the $2\!T\!R$-periodicity we must have $s_A(t)\!=\!s_B(t)\!=\!e^{-(T\!R\!+\!t\!-\!T\!D\!-\!D)/\tau_i}$, for $t \in [0,T\!D]$. From the A unit turns ON before the B unit in interval $I_1$, precisely at time $t^*$, and both units turn OFF at time $t^*\!+\!D$ for lemma \ref{lem:LONG_conditions}. Thus the delayed synaptic variables exponentially decay from time $t^*\!+\!2\!D$ and we have $s_A(t)\!=\!s_B(t)\!=\!e^{-(t\!-\!t^*\!-\!2D)/\tau_i}$, for $t \in [T\!R,T\!R\!+\!T\!D]$. Thus both variables evolve equally (on the slow time scale) respectively on $I_1$ and on $I_2$. Although condition $C_5$ on both intervals could lead to potentially 4 cases, we only have one case to consider, the A (B) unit turns ON before the B (A) unit in interval $I_1$ ($I_2$). Analogously to the case $ScASL_2$, condition $C_5$ on $I_1$ requires $C_3^-\!=\!c\!-\!bN^-\!<\!\theta$ and $c\!-\!bN^+\!\geq\!\theta$, and $t^*$ is given by
 $$ t^* = s_B^{-1} ((c-\theta)/b)) = T\!R-T\!D-D-\tau_i \log((c-b)/\theta). $$ 
 As in case $ScASL_2$ condition $C_3$ on $I_2$ requires $D\!\leq\!2T\!D$. To complete the conditions $C_5$ we require $a\!-\!bs_B^{2+}\!+\!d\!\geq\!\theta$, which is equivalent to $a\!-\!Kc\!+\!d\!\geq\!\theta(1\!-\!K)$. Lastly, we need to guarantee $w^1\!=\!1$ and $w^2\!=\!0$, which are equivalent respectively to $C_{10}\!\geq\!\theta$ and $a\!-\!Kc\!<\!\theta(1\!-\!K)$ (ie $a\!-\!bs_B^{2+}\!<\!\theta$). Thus in summary the list of conditions for this states are:
 \begin{equation}
  \begin{aligned}[t]
   C_3^-\!&<\!\theta \\
   C_{10}\!&\geq\!\theta \\
   D\!&\leq\!2T\!D \\
   a\!-\!Kc\!+\!d\!&\geq\!(1\!-\!K)\theta \\
   a\!-\!Kc\!&<\!(1\!-\!K)\theta
  \end{aligned}
 \label{ZcIL}
 \end{equation}
\end{itemize}

Lastly, we show that the following three states may exist only if $\tau\!=\!0$ (degenerate cases). These states complete all the existing states after application of conditions \ref{remainig_thm}. This finally concludes the existence conditions for all $2T\!R$-periodic states in the system. 

 \begin{equation}
	ScASL_2 \!=\! \begin{bmatrix} [cc]
		0011 & 0000 \\
		0011 & 0010
	\end{bmatrix} 
	\mbox{ for } t^* \geq s^* ,
	ZcIL_2 \!=\! \begin{bmatrix} [cc]
		0011 & 0011 \\
		0011 & 0011
	\end{bmatrix}
	\mbox{ and }
	ZcSL \!=\! \begin{bmatrix} [cc]
		0011 & 0000 \\
		0011 & 0000
	\end{bmatrix}.
	\nonumber
 \end{equation}
 We show that these three states cannot exist unless $\tau\!=\!0$ (degenerate for the model). Firstly we note that each case satisfies condition $C_5$ on interval $I_1$, so that A turns ON at time $t^*$ and B turns ON at time $s^*$, for some $t^*, s^* \in [0,T\!D]$. Next we divide the proof for the three cases above:
 \begin{enumerate}
  \item $ZcSL$ - This state satisfies condition $C_5$ on interval $I_1$. That A turns ON at time $t^*$ and B turns ON at time $s^*$, for some $t^*, s^* \in I_1$. Since both units turn OFF instantaneously and at the same time in $\mathbb{R}\!-\!I$, both synaptic variables evolve equally (on the slow time scale) in $I_1$. Therefore we have that $t^*\!\geq\!s^*$. Let us rename $t_1\!=\!t^*$. On the fast time scale $r$ the variable $s_A(t)$ converges to $1$ at time $t_1$ following 
  \begin{eqnarray*}
  s_A(r)'&=&(1-s_A(r)) \\
  s_A(0)&=&s_A(t_1)
  \end{eqnarray*}
  where $'$ is the derivative with respect to the fast time scale $r$. The analytic solution is given by $s_A(r)\!=\!1\!-\!(1\!-\!s_A(t_1))e^{-r}$. Therefore, this equation describes the (fast) evolution of the delayed synaptic variable $s_A(t\!-\!D)$ at time $t\!=\!t_1\!+\!D$. At this time the B unit instantaneously turn OFF, since $a\!-\!bs_A(t\!-\!D) \!\rightarrow\! a\!-\!b\!<\!\theta$ for hypothesis \ref{U2}. We can use the equation for $s_A(r)$ and derive the precise time when $u_B$ turns OFF. Since $a\!-\!bs_A(t_1)\!\geq\!\theta$ and $a\!-\!b\!<\!\theta$  there $\exists s^* \in [s_A(t_1),1]$ for which $a\!-\!bs^*\!=\!\theta$. Given the evolution of $s_A$, the time when B unit turns OFF is precisely $r^*\!=\!r^*(t_1)\!=\!\log((1\!-\!s_A(t_1))/(1\!-\!s^*))$. The latter equality highlights the dependence on $t_1$. By adding the delay and returning to the normal time scale the B unit turns OFF at time $t_1\!+\!D\!+\!\delta(t_1)$, where $\delta(t_1)\!=\!\tau r^*$. Since the dynamics of delayed synaptic variable $s_B(t\!-\!D)$ is dictated by the B unit activity, it starts to exponentially decay at time $t_1\!+\!2D\!+\!\delta(t_1)$. Thus it evolves according to $s_B(t\!-\!D)\!=\!e^{-(t\!-t_1\!-\!2D\!-\!\delta(t_1))/\tau_i}$, for $t \in I_3\!=\![2T\!R,2T\!R\!+\!T\!D]$. A necessary condition for this state to exist is that it satisfies $C_5$ is that A turns ON within $I_3$. This occurs if and only if $c\!-\!bs_B(T\!R\!-\!D)\!<\!\theta$ and $c\!-\!bs_B(T\!R\!+\!T\!D\!-\!D)\!\geq\!\theta$. This is equivalent to $\exists t_2 \in I_3^0$ (the open set) such that $c\!-\!bs_B(t_2\!-\!D)\!=\!\theta$. From the analytic solution of $s_B(t_2\!-\!D)$ we can solve this equation and obtain $t_2\!=\!t_1\!+\delta(t_1)\!+\!Q$, where $Q\!=\!2D\!-\!\log((c\!-\!\theta)/b)$ is a constant. By repeating this process across subsequent the periodic intervals $I_k\!=\![2(k\!-\!1)T\!R,2(k\!-\!1)T\!R\!+\!T\!D]$ we obtain that the $k$ turning ON time for the A unit is given by the map 
  \begin{equation}
  t_{k+1}\!=\!t_k\!+\!\delta(t_k)\!+\!Q.
  \label{map_ZcSL}
  \end{equation}
  Since we are interested in the limit $\tau\!\rightarrow\!0$ and on $T\!R$-periodic solution it must be that $2T\!R\!=\!Q$. However, assuming true this condition and $\tau\!>\!0$ arbitrarily small, this map shows that the A unit turns ON with after a small delay $\delta$ across subsequent intervals $I_k$ (ie the map has no fixed point). Therefore, $ZcIL$ cannot exist.
  \item $ScASL_2$ for $s^*\!<\!t^*$ - The proof is analogous to the case above ($ZcSL$) after swapping the A and B units. Briefly, if the B unit turns ON at time $t_1 \in [0,T\!D]$ the A unit turns OFF at time $t_1\!+\!D\!+\!\delta(t_1)$, where $\delta(t_1)\!\sim\!\tau$. This means that $s_A$ evolves according to $s_A(t\!-\!D)\!=\!e^{-(t\!-t_1\!-\!2D\!-\!\delta(t_1))/\tau_i}$, for $t \in I_3\!=\![2T\!R,2T\!R\!+\!T\!D]$. The $k$ turning ON time for the B unit is given by the map \ref{map_ZcSL}, where $Q\!=\!2D\!-\!\log((d\!-\!\theta)/b)$. As in the previous case, $ScASL_2$ cannot exist because this map has no fixed point unless $\tau\!=\!0$.
  \item $ScASL_2$ - This state satisfies condition $C_5$ in both intervals $I_1$ and $I_2$. Let us call $t_1$ and $s_1$ the turning ON times for A and B in $I_1$ respectively, and $t_2$ and $s_2$ the turning ON times for B and A in $I_2$ respectively. On the slow time scale both (delayed) synaptic variables $s_A$ and $s_B$ evolve equally in $I_1$ and $I_2$, because both units turn OFF instantaneously and at the same time in $\mathbb{R}\!-\!I$ for Lemma \ref{lem:LONG_conditions}. Since $d\!\leq\!c$ the total input to A is greater than the one to B for $t \in I_1$, ie $c\!-\!bs_B(t\!-\!D)\!\geq\!d\!-\!bs_A(t\!-\!D)$, which leads to $t_1\!\leq\!s_1$. Analogous considerations lead to $t_2\!\leq\!s_2$. Moreover, it turns out that $t_1\!=\!t_2\!-\!T\!R$. Indeed, WLOG suppose that $t_2\!-\!T\!R\!>\!t_1$. Since $c\!-\!bs_B(t_1\!-\!D)\!=\!\theta$ and due to the monotonic decay of the delayed synaptic variables in $I_1$ we must have $c\!-\!bs_B(t_2\!-\!T\!R\!-\!D)\!\geq\!\theta$. Moreover, since $c\!-\!bs_A(t_2\!-\!D)\!=\!\theta$ we have that $s_B(t_2\!-\!T\!R\!-\!D)\!<\!s_A(t_2\!-\!D)$. Since A turns ON at time $t_1$ the B unit turns OFF at time $t_1\!+\!D\!+\!\delta(t_1)$, where $\delta(t_1)\!\sim\!\tau$. Thus $s_A(t\!-\!D)\!=\!e^{-(t\!-t_1\!-\!2D\!-\!\delta(t_1))/\tau_i}$, for $t \in I_2$. Similarly, since B turns ON at time $t_2$ and for the $2T\!R$-periodicity we have that $s_B(t\!-\!D)\!=\!e^{-(2T\!R\!+t\!-t_2\!-\!2D\!-\!\delta(t_1))/\tau_i}$, for $t \in I_1$. On the slow time scale ($\tau\!\rightarrow\!0$) these identities evaluated at time $t_2$ imply $s_B(t_2\!-\!T\!R\!-\!D)\!=\!e^{(-\!T\!R\!+\!2D)/\tau_i}$ and $s_A(t_2\!-\!D)\!=\!e^{(t_1\!-\!t_2\!+\!2D)/\tau_i}$. Due to the hypothesis $t_2\!-\!T\!R\!>\!t_1$ the latter lead to $s_B(t_2\!-\!T\!R\!-\!D)\!\geq\!s_A(t_2\!-\!D)$, which is absurd. Therefore we have that $t_1\!=\!t_2\!-\!T\!R$. This in turn leads to $s_B(t\!-\!D)\!=\!e^{-(t\!-t_1\!-T\!R\!-\!2D\!-\!\delta(t_1))/\tau_i}$, for $t \in I_3\!=\![2T\!R,2T\!R\!+\!T\!D]$. Due to the $2T\!R$ periodicity the second turning ON time for A (after $t_1)$ must be at a time $t_3\!=\!t_1\!+\!2T\!R \in I_3$ such that $c\!-\!bs_B(t_3\!-\!D)\!=\!\theta$. From the analytic solution of $s_B(t_2\!-\!D)$ we obtain $t_3\!=\!t_1\!+\delta(t_1)\!+\!Q$, where $Q\!=\!T\!R\!+\!2D\!-\!\log((c\!-\!\theta)/b)$ is a constant. Thus the $k$ turning ON time for the A unit is given by the map \ref{map_ZcSL}. Due to the dependence on $\tau$, this map has no fixed point unless $\tau\!=\!0$, thus proving that $ScASL_2$ cannot exist. 
\end{enumerate}

\subsection{$2T\!R$-periodic states for $D\!<\!T\!D$ and $T\!D\!+\!D\!<\!T\!R$ and $a\!+\!d\!-\!b\!<\!\theta$} \label{appendix4}
\begin{theorem}
 Let us now consider $2T\!R$-periodic states for $D\!<\!T\!D$, $T\!D\!+\!D\!<\!T\!R$ and $a\!+\!d\!-\!b\!<\!\theta$, and define $L_1\!=\![0,D]$ and $L_2\!=\![T\!R,T\!R\!+\!D]$. The synaptic quantities defining the entries of the matrix form in $L_1$ and $L_2$ are given by
 \begin{equation}
  s_A^{2 \pm}\!=\!s_B^{1 \pm}\!=\!N^{\pm}, \quad
  s_A^{1 \pm}\!=\!\begin{cases} 
	R^{\pm} & \mbox{if } z_A^2=1 \\ 
	M^{\pm}  & \mbox{otherwise} 
  \end{cases}
  \quad \mbox{and} \quad
  s_B^{2 \pm}\!=\!\begin{cases} 
	R^{\pm} & \mbox{if } z_B^1=1 \\ 
	M^{\pm}  & \mbox{otherwise} 
  \end{cases}
 \end{equation}
 Where $R^-\!=\!e^{-(T\!R\!-\!2D)/\tau_i}$ and $R^+\!=\!e^{-(T\!R\!-\!D)/\tau_i}$. Quantities $M^\pm$ and $N^\pm$ were defined in equations \ref{syn_values}. 
\end{theorem}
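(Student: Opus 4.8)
The plan is to pin down the ON/OFF schedule of the two units over one period $[0,2T\!R)$ forced by the hypotheses, and then read the four delayed synaptic values off the resulting piecewise-exponential trajectories of $s_A$ and $s_B$. First I would assemble the structural facts already established for this regime: under \ref{U3} unit A is ON throughout $I_1=[0,T\!D]$ and unit B throughout $I_2=[T\!R,T\!R\!+\!T\!D]$; by point 2.\ of Section~\ref{biologically_relevant} there are no LONG states, so both units are OFF on $\mathbb{R}\!-\!I$; and, besides these forced onsets, A can turn ON only inside $L_2$ and B only inside $L_1$, where Lemma~\ref{lem:syn_decay} gives monotone decay of the delayed variables, so the single-transition reasoning of Section~\ref{classification_interval} applies with $I$ replaced by $L$ and each unit turns ON at most once there. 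If A does activate in $L_2$, the delayed inhibition from B's onset at $T\!R$ reaches A's equation at $t=T\!R\!+\!D$, where the total drive becomes $a\!-\!b\!+\!d=P<\theta$, forcing A OFF again; symmetrically for B in $L_1$ at $t=D$. Hence the only binary degrees of freedom are whether A activates in $L_2$ and whether B activates in $L_1$, which are exactly the matrix-form entries $z_A^2$ and $z_B^1$ that are not already pinned to $1$.

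With the schedule fixed, Remark~\ref{remark1} tells us $s_A$ equals $1$ whenever A is ON and otherwise decays as $e^{-\cdot/\tau_i}$ from the value $1$ last attained, so I would split into the cases $z_A^2\in\{0,1\}$ and write $s_A$ explicitly on $[0,2T\!R)$. When $z_A^2=0$, A is ON only on (periodic copies of) $I_1$, so the last reset before the current $I_1$ is the end of the previous $I_1$; when $z_A^2=1$ it is instead the turn-OFF instant $T\!R\!+\!D$ of the previous period's $L_2$-activation. Evaluating the decay law at the arguments that enter the matrix form — using $2T\!R$-periodicity to make sense of negative times, and substituting $s_B(t\!-\!D)\equiv 1$ while B is ON — the decay durations that appear are the long ones (wrapping past one full $I_1$) in the $z_A^2=0$ case and the short ones (only from $T\!R\!+\!D$) in the $z_A^2=1$ case, reproducing respectively the $M^\pm$ and $R^\pm$ of the statement; and for the remaining pair the evaluation point lies strictly between the end of $I_1$ and the earliest possible $L_2$-onset, so the same decay duration $T\!R\!-\!T\!D\!-\!D$ (and $T\!R\!-\!D$) appears regardless of $z_A^2$, giving $N^\pm$. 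The ordering of break points that makes each evaluation point land where claimed is exactly what the hypotheses $D<T\!D$ and $T\!D\!+\!D<T\!R$ supply. Finally, the identities for $s_B$ in terms of $z_B^1$ follow from those for $s_A$ by the $\mathbb{Z}_2$-equivariance of Remark~\ref{symmetry}, which exchanges the A-side description on $I_1,L_2$ with the B-side description on $I_2,L_1$ after the time shift $T\!R$.

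The step I expect to be the real work is the interval bookkeeping in the active ($z_A^2=1$ or $z_B^1=1$) branch: one must verify that the $L_2$-activation of A begins no earlier than $T\!R$ and terminates exactly at $T\!R\!+\!D$ (i.e.\ that $s_B(t\!-\!D)$ is still the decayed value for $t<T\!R\!+\!D$ but has jumped to $1$ for $t>T\!R\!+\!D$), that in this branch the argument used for $s_A^{2\pm}$ still sits before the start of the $L_2$-activation, and that the periodicity wrap of the negative arguments lands inside the post-$L_2$ decay stretch rather than overrunning it — all of which reduce to comparisons among $D$, $T\!D$, $2D$ and $T\!R$ under the standing hypotheses. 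The remainder (the exponential decay formula, the reset-to-$1$ rule, and the $\tau\to 0$ interpretation of values taken at jump instants) is routine and parallels the proof of equations~\ref{syn_values_MAIN_SHORT} in Theorem~\ref{MAIN_2TR}.
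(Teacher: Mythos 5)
Your proposal is correct and follows essentially the same route as the paper's own proof: first pin down the forced ON/OFF schedule (A on $I_1$, B on $I_2$, the only remaining freedom being activation in $L_2$ and $L_1$, which is terminated at $T\!R\!+\!D$ and $D$ respectively by the arrival of the delayed inhibition since $a\!+\!d\!-\!b\!<\!\theta$), then read $N^{\pm}$, $R^{\pm}$ and $M^{\pm}$ off the piecewise-exponential decay of $s_A$ and $s_B$ evaluated at the matrix-form arguments via $2T\!R$-periodicity. If anything your write-up is more complete than the paper's, which only works out the $z_A^2\!=\!1$ branch explicitly and dismisses the $M^{\pm}$ branch and the $s_B$ identities as analogous, whereas you treat both branches and obtain the $s_B$ case from the $\mathbb{Z}_2$ symmetry.
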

 \begin{proof}
 Since A (B) is ON in $[0,T\!D]$ ($[0,T\!R\!+\!T\!D]$) and turn OFF instantaneously at time $T\!D$ ($T\!R\!+\!T\!D$) due to property \ref{U3}. The synaptic variable $s_A$ ($s_B$) thus exponentially decays on the slow time scale starting from time $T\!D$ ($T\!R\!+\!T\!D$) and ending at time $T\!R$ ($2T\!R$). Due to this and to the $2T\!R$-periodicity the delayed synaptic variable $s_A(t\!-\!D)$ ($s_B(t\!-\!D)$) evaluated at times $T\!R$ and $T\!R\!+\!T\!D$ ($0$ and $T\!D$) are equal to $N^\pm$, which proves the first identity of the theorem. If $z_A^2\!=\!1$ the A unit is ON in $L_2$ and turns OFF instantaneously at time $T\!R\!+\!D$ for both MAIN or CONNECT states. Thus the synaptic variable $s_A$ slowly decays starting from time $T\!R\!+\!D$ until the A unit turns ON at time $2T\!R$. This implies $s_A(t\!-\!D)\!=\!e^{-(t\!-\!T\!R\!-\!2D)}$, for $t\!\in\![2T\!R,2T\!R\!+\!D]$. The $2T\!R$-periodicity leads to $s_A^{1-}\!=\!s_A(2T\!R\!-\!D)\!=\!e^{-(T\!R\!-\!2D)/\tau_i}$ and $s_A^{1+}\!=\!s_A(2T\!R)\!=\!e^{-(T\!R\!-\!D)/\tau_i}$, proving the second identity of the theorem. The proof of the third identity is analogous to the previous one. 
 \end{proof}

\subsection{Case $T\!D\!+\!D\!\geq\!T\!R$} \label{appendix_TD_D_gr_TR}
The condition $T\!D\!+\!D\!<\!T\!R$ enabled us to obtain a complete classification of network states via the application of Lemma \ref{lem:syn_decay}. Each of these states (except for $AScI$) has the same existence conditions given in the two tables also for $T\!D\!+\!D\!\geq\!T\!R$ with two adjustments. More precisely, if $T\!D\!+\!D\!\geq\!T\!R$ and $2D\!<\!T\!R$ we must replace the quantity $N^-$ with unity in the existing condition $C_7^-$ in Table \ref{tab:main_case2_1}. If $2D\!\geq\!T\!R$ (which implies $T\!D\!+\!D\!\geq\!T\!R$) we must replace the quantity $R^-$ with unity in the existing $R_6^-$ and $R_7^-$ in Table \ref{tab:MAIN_CONNECT_case2}). This is valid for all states except for $AScI$, for which we additionally need to impose that the turning ON time $t^*$ for the B unit in $[0,D]$, or equivalently the turning ON time $T\!R\!+\!t^*$ for the A unit in $[T\!R,T\!R\!+\!D]$, satisfies 
$$ t^*+D < T\!R .$$
Where $t^*$ is given by the solution of $a-be^{-(T\!R\!-\!D-\!t^*)/\tau_i}\!+\!d\!=\!\theta$. We now proof that state $AScI$ cannot exist if $t^*\!+\!D\!\geq\!T\!R$ and $D\!<\!T\!D$, where $t^* \in [0,D]$ ($t^*\!+\!T\!R \in [T\!R,T\!R\!+\!D]$) is the turning ON time for the B (A) unit in the interval $I_1$ ($I_2$). We need to show that the B (A) unit cannot be OFF for $t\!<\!t^*$ ($t\!<\!t^*+\!T\!R$) and ON for $D\!\geq\!t\!>\!t^*$ ($D+\!T\!R\!\geq\!t\!>\!t^*+\!T\!R$). By absurd suppose the contrary. We now determine the dynamics of the the delayed synaptic variable $s_B(t\!-\!D)$ during the interval $[T\!R,T\!R\!+\!D]$. The B unit turns ON at time $t^*\!\geq\!T\!R\!-\!D$ and is ON in $I_2$ (due to properly \ref{U3}). These properties and the $2T\!R$-periodicity of $AScI$ imply that $s_B(t\!-\!D)$ evolves according to 
$$ s_B(t\!-\!D)=e^{-(T\!R+t-T\!D-D))/\tau_i}, \quad \forall t\!\in\![T\!R,T\!R\!+\!t^*).$$
Evaluating this equation at time $t_1\!=\!T\!R$ leads to $s_B(t_1\!-\!D)\!=\!e^{-(2T\!R-T\!D-D))/\tau_i}$. Secondly we have that 
$$ s_B(t\!-\!D)=e^{-(t-2D))/\tau_i}, \quad \forall t\!\in\!(2D,T\!R\!+\!D].$$
Evaluating this equation at time $t_2\!=\!T\!R\!+\!D$ leads to $s_B(t_2\!-\!D)\!=\!e^{-(T\!R\!-\!D))/\tau_i}$. This implies:  
$$ s_B(t_1\!-\!D) \leq s_B(t_2\!-\!D).$$
However by hypothesis A is OFF at time $t_1\!<\!t^*\!+\!T\!R$ and ON at time $t_2\!>\!t^*\!+\!T\!R$, i.e. t$a+d-bs_B(t_1\!-\!D) \!<\! \theta$ and $a+d-bs_B(t_2\!-\!D) \!\geq\! \theta$, which is absurd. 


\end{document}